\newcommand{\id}{id}
\newcommand{\N}{\mathbbm{N}}                     
\newcommand{\Z}{\mathbbm{Z}}                     
\newcommand{\R}{\mathbbm{R}}                     
\newcommand{\C}{\mathbbm{C}}                     
\newcommand{\J}{\mathcal{J}}
\newcommand{\D}{\mathbb{D}}
\renewcommand{\P}{\mathcal{P}}
\renewcommand{\Im}{\mathrm{Im}}
\newcommand{\dist}{\mathrm{dist\,}}             
\newcommand{\loc}{\mathrm{loc}} 
\newcommand{\CH}{\mathrm{CH}} 
\newcommand{\wtil}{\widetilde{w}}
\newcommand{\ot}{\mathbf{t}}
\newcommand{\topo}{\mathrm{top}}
\newcommand{\os}{\mathbf{s}}
\newtheorem{assump}{Assumption}           
\newtheorem{mainthm}{\sc Theorem}           
\newtheorem{maincor}{\sc Corollary}
\newtheorem{thm}{Theorem}[section]               
\newtheorem*{thm*}{Theorem}               
\newtheorem{cor}[thm]{Corollary}        
\newtheorem*{cor*}{Corollary}        
\newtheorem{lem}[thm]{Lemma}  
\newtheorem*{lem*}{Lemma}
\newtheorem*{assump*}{Standing assumption}
\newtheorem{prop}[thm]{Proposition}     
\newtheorem{conj}[thm]{Conjecture}      
\theoremstyle{definition}
\newtheorem{defn}[thm]{Definition}      
\newtheorem{rec}[thm]{Recollection}      
\newtheorem{rem}[thm]{Remark}           
 \newtheorem*{acknowledgement*}{\protect\acknowledgementname}
\newcounter{claim}
\newenvironment{claim}[1][]{\refstepcounter{claim}\par\noindent\underline{Claim~\theclaim:}\space#1}{}
\newenvironment{claimproof}[1]{\par\noindent\underline{Proof:}\space#1}{\qed}
 \providecommand{\acknowledgementname}{Acknowledgement}
\author[M.R.R. Alves]{Marcelo R.R. Alves}
\thanks{M.R.R. Alves was supported by the Senior Postdoctoral fellowship of the Research Foundation - Flanders (FWO) in fundamental research 1286921N}
\address{Marcelo R.R. Alves, Faculty of Science,\\
University of Antwerp,
 Campus Middelheim,
 Middelheimlaan 1,
BE-2020 Antwerpen,
Belgium.}
\email{\texttt{marcelorralves@gmail.com}}
\author[L. Dahinden]{Lucas Dahinden}
\thanks{L. Dahinden was funded by the Deutsche Forschungsgemeinschaft (DFG, German Research Foundation) – Project-ID 281071066 - TRR 191.}
\address{Lucas Dahinden, Fakult\"at f\"ur Mathematik \\
Ruhr-Universit\"at Bochum, Geb\"aude IB, Etage 3, Raum 67
D-44780 Bochum, Germany.}
\email{\texttt{Lucas.Dahinden@ruhr-uni-bochum.de}}
\author[M. Meiwes ]{Matthias Meiwes}
\thanks{M. Meiwes was supported by the ERC Starting Grant 757585 and by the ISF Grant 938/22}
\address{Matthias Meiwes,
School of Mathematical Sciences, Tel Aviv University, Ramat Aviv, Tel Aviv 69978, Israel.}
\email{\texttt{matthias.meiwes@live.de}}
\author[A. Pirnapasov]{Abror Pirnapasov}
\thanks{A. Pirnapasov was supported by the DFG SFB/TRR 191 “Symplectic Structures in Geometry, Algebra
and Dynamics”, Projektnummer 281071066-TRR 191, and the ANR CoSyDy “Conformally
Symplectic Dynamics beyond symplectic dynamics” (ANR-CE40-0014)}
\address{Abror Pirnapasov, Department of Mathematics\\
University of Maryland \linebreak 
 William E. Kirwan Hall 4176 Campus Dr.  College Park, MD 20742. USA.}
\email{\texttt{abrorpirnapasov@gmail.com }}
\title{${C}^0$-stability of topological entropy for {R}eeb flows in dimension $3$}
\begin{document}
\begin{abstract}
We study stability properties of the topological entropy of Reeb flows on contact $3$-manifolds with respect to the $C^0$-distance on the space of  contact forms. Our main results show that a $C^\infty$-generic contact form on a closed co-oriented contact $3$-manifold $(Y,\xi)$ is a lower semi-continuity point for the topological entropy, seen as a functional on the space of contact forms of $(Y,\xi)$ endowed with the $C^0$-distance. 

We also study the stability of the topological entropy of geodesic flows of Riemannian metrics on closed surfaces. In this setting, we show that a non-degenerate Riemannian metric on a closed surface $S$ is a lower semi-continuity point of the topological entropy, seen as a functional on the space of Riemannian metrics on $S$ endowed with the $C^0$-distance.
\end{abstract}

\maketitle
\tableofcontents

\section{Introduction}

The objective of this article is to study stability properties of the topological entropy of Reeb flows on contact $3$-manifolds with respect to $C^0$-small perturbations of the contact form. Our main Theorem~\ref{thm:main} below states that the topological entropy, seen as a function on the space of contact forms endowed with the $C^0$-topology, is lower semi-continuous on a $C^\infty$-open and dense set. A similar argument  implies the lower semi-continuity of the topological entropy of geodesic flows of Riemannian metrics on surfaces with respect to the $C^0$-distance on the space of non-degenerate Riemannian metrics; this is the content of Theorem \ref{thm:geod}. These results can be seen as stability results for the topological entropy: they say that, in low dimensions, sufficiently $C^0$-small perturbations of Riemannian metrics and contact forms cannot collapse the  
 positive topological entropy of the system under consideration to zero.
In order to state precisely our results, we introduce some preliminary notions. 
\subsection{Preliminary notions} 
\subsubsection*{The space of contact forms} 
Let $(Y,\xi)$ be a closed co-oriented contact $3$-manifold. The 1-form $\alpha$ is a contact form on $(Y,\xi)$ if $\xi=\ker \alpha$ (the condition that $\alpha\wedge d\alpha$ is a volume form follows from $\xi$ being a contact plane field). Any other contact form on $(Y,\xi)$ with the same co-orientation is of the form~$f\alpha$, where $f$ is a positive function. We denote by $\mathcal{R}(Y,\xi)$ the space of contact forms on $(Y,\xi)$. The equations 
\begin{align*}
 \alpha (R_\alpha) &=1, \\   d\alpha (R_\alpha,\cdot)&=0,    
\end{align*} 
define the Reeb vector field $R_\alpha$. From $R_\alpha$ we can recover $\alpha$ by setting \mbox{$\alpha(R_\alpha)=1$}, $\alpha|_\xi=0$. Thus, $\mathcal{R}(Y,\xi)$ can also be thought of as the space of Reeb vector fields. Conversely, we can think of the dynamical properties of the Reeb flow $\phi_{\alpha}$ as properties of the contact form $\alpha$.

\subsubsection*{Dynamical properties} 
A periodic orbit $\gamma$ of the Reeb flow of $\alpha$ is called a Reeb orbit of $\alpha$. 
Its action $A(\gamma):=\int_{\gamma} \alpha$ coincides with its period $T$ since $\alpha(\dot \gamma)=\alpha(R_\alpha)=1.$ The period can be used to filter the Reeb orbits: we are interested in the number of Reeb orbits of length at most $T$. The asymptotic growth rate of this number is linked to topological entropy.

\subsubsection*{The $C^0$-distance for contact forms} 
Given two contact forms $\alpha_1$ and $\alpha_2$ on $(Y,\xi)$ let $f^{\alpha_2}_{\alpha_1}:Y \to (0,+\infty)$ be the function such that $f^{\alpha_2}_{\alpha_1} \alpha_1 = \alpha_2$. We define the $C^0$-distance $d_{C^0}$ between $\alpha_1$ and $\alpha_2$ by
\begin{equation}
    d_{C^0}(\alpha_1,\alpha_2) := \max_{x \in Y} \{ |\log(f^{\alpha_2}_{\alpha_1}(x)) | \}.
\end{equation}
The distance $d_{C^0}$ is natural from the point of view of symplectic topology, and it can be thought of as the ``symplectic distance'' between contact forms (see~\cite{SZ,Usher} and references therein). It can also be thought of as an analogue for Reeb flows of the Hofer metric on the space of Hamiltonian diffeomorphisms. 

However, the dynamical properties of Reeb flows can undergo dramatic changes under arbitrarily small perturbations with respect to $d_{C^0}$. The reason for this is that the Reeb vector field $R_\alpha$ depends on $d\alpha$, and therefore $C^0$-small perturbations of $\alpha$ can lead to drastic changes of $R_\alpha$. We illustrate this in the next paragraphs by a $C^0$-small perturbation of a geodesic flow in Figure~\ref{fig:ElevatedDisk}.

\subsubsection{Geodesic flows as Reeb flows}

One of the most important examples of closed contact $3$-manifolds are the unit cotangent bundles of closed surfaces endowed with the so-called geodesic contact structures. More precisely, given a closed surface $S$ there exists a contact structure $\xi_{\rm geo}$ on the unit cotangent bundle $T^1 S$ of $S$, with the property that  given a Riemannian metric $g$ on $S$, there is a unique contact form $\alpha_g$ on $(T^1 S, \xi_{\rm geo})$ such that the geodesic flow $\phi_g$ of $g$ restricted to the unit tangent bundle $(T_1)_gS=\{v \in TS\, |\, |v|_g=1\}$ and the Reeb flow $\phi_{\alpha_g}$  are the same  under the natural identification between $(T_1)_gS$ and $T^1S$.  A comprehensible exposition of this fact can be found in Section 1.5 of \cite{GeigesBook}.

We will denote by $\mathrm{Met}(S)$ the space of $C^{\infty}$-smooth Riemannian metrics on $S$. It is easy to see that the map $\upphi:\mathrm{Met}(S) \to \mathcal{R}(T^1 S,\xi_{\rm geo})  $ which takes $g$ to $\alpha_g$ is injective.

\subsubsection{The $C^0$-distance for Riemannian metrics}\label{subsec:c0distRiemann}
We consider the following distance on the space of Riemannian metrics of a closed orientable surface $S$. If $g_1$ and $g_2$ are Riemannian metrics on $S$, we denote by $| \cdot |_{g_1}$ and $| \cdot |_{g_2}$ the norms induced by $g_1$ and $g_2$ on the fibers of $TS$. We then define:
\begin{equation}
    \overline{d}_{C^0}(g_1,g_2):= \inf \{ \epsilon > 0 \ \Big\rvert \ e^{-\epsilon}|v|_{g_2} \leq |v|_{g_1} \leq e^{\epsilon}|v|_{g_2} \mbox{ for all } v \in TS  \}.
\end{equation}
From a purely metric point of view, $\overline{d}_{C^0}$ is an interesting distance to consider in $\mathrm{Met}(S)$, since geometric quantities, such as the volume of subsets of $(S,g)$, the diameter of $(S,g)$, and the Riemannian distance function $d_g$ on $S$, are all continuous with respect to $\overline{d}_{C^0}$. In studying these quantities in $\mathrm{Met}(S)$, it is therefore more natural to consider the topology induced by $\overline{d}_{C^0}$ than the stronger $C^k$-topologies, for $k\geq 1$. 

It is elementary to show that the distances $\overline{d}_{C^0}$ and ${d}_{C^0}$ satisfy
\begin{equation}
    \overline{d}_{C^0}(g_1,g_2) = {d}_{C^0}(\alpha_{g_1},\alpha_{g_2}),
\end{equation}
for any Riemannian metrics $g_1,g_2\in\mathrm{Met}(S)$.
In other words, the map \linebreak  $\upphi: (\mathrm{Met}(S),\overline{d}_{C^0}) \to (\mathcal{R}(T^1 S,\xi_{\rm geo}),{d}_{C^0}) $ is an isometric embedding, i.e. \linebreak $\upphi^*({d}_{C^0}) = \overline{d}_{C^0}$. 
This allows us to obtain information about $\overline{d}_{C^0}$  from information about ${d}_{C^0}$.

\subsubsection{The topological entropy.} The dynamical invariant studied in this article is the topological entropy. Recall that the topological entropy $h_{top}$ is a non-negative number that one associates to a dynamical system and which measures the complexity of the dynamics. Positivity of the topological entropy for a dynamical system implies some type of exponential instability. 

{The following definition of $h_{\rm top}$ for flows on compact manifolds is due to Bowen. Let $M$ be a closed manifold, $X$ be a $C^\infty$-smooth vector field on $M$ and $\phi$ be the flow of $X$. We consider an auxiliary distance function $d$ on $M$. Given positive numbers $T,\delta$ we say that a subset $S \subset M$  is  $T,\delta$-separated for $\phi$ if, for all points $p,q \in S$ with $p\neq q$, we have
$$\max_{t \in [0,T]}\{ d(\phi^t(p) ,\phi^t(q)) \} > \delta.  $$ 
We let $n^{\delta}_X(T)$ be the maximal cardinality of a $T,\delta$-separated set for the flow $\phi$ of $X$. The $\delta$-entropy $h_\delta$ is then defined by 
$$h_\delta(\phi):= \limsup_{T\to +\infty} \frac{\log(n^{\delta}_X(T))}{T},$$
and the topological entropy $h_{\rm top}$ is defined by
\begin{equation}
    h_{\rm top}(\phi):= \lim_{\delta \to 0} h_{\delta}(\phi).
\end{equation}
Geometrically, we see that $h_\delta$ measures the exponential growth rate of the number of orbits which are distinguishable with precision $\delta$ as time advances: $h_{\topo}$ is then the limit of these growth rates as $\delta$ goes to $0$. The topological entropy of a $C^1$-smooth fixed-point free flow on a closed manifold is always finite: this implies the finiteness of $h_{\rm top}$ for smooth Reeb flows on closed contact manifolds.
We refer the reader to \cite{Hasselblatt-Katok} for the basic properties of $h_{\topo}$.}

\subsection{Main results} 
In the present paper we investigate stability properties of the topological entropy with respect to the distance $d_{C^0}$ on $\mathcal{R}(Y,\xi)$. For simplicity, we introduce the following terminology:

\begin{defn}
Consider $h_{\rm top}: \mathcal{R}(Y,\xi) \to [0,+\infty) $ as a functional on $\mathcal{R}(Y,\xi)$.
Given a subset $\mathcal{A}$ of $\mathcal{R}(Y,\xi)$ we say that $h_{\rm top}$ is \textbf{lower semi-continuous on \linebreak $\mathcal{A} \subset \mathcal{R}(Y,\xi)$ with respect to $d_{C^0}$}, if every point of $\mathcal{A}$ is a lower semi-continuity point of $h_{\rm top}: \mathcal{R}(Y,\xi) \to [0,+\infty) $ for the topology induced by $d_{C^0}$ on $\mathcal{R}(Y,\xi)$.
The analogous notion for $\mathrm{Met}(S)$ and $\overline{d}_{C^0}$ will also be used in this paper.
\end{defn}

Our main result is the following:
\begin{mainthm} \label{thm:main}
 Let $(Y,\xi)$ be a closed co-oriented contact $3$-manifold. 
 The entropy functional 
 \begin{align*}
 h_{\rm top}:   \mathcal{R}(Y,\xi) &\to [0,+\infty), \\
 \alpha &\mapsto h_{\rm top}(\phi_\alpha), 
 \end{align*}
 is lower semi-continuous with respect to $d_{C^0}$ on a $C^\infty$-open and dense set of $\mathcal{R}(Y,\xi)$.
\end{mainthm}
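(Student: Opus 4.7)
The plan is to reduce the $C^0$-lower semi-continuity of $h_{\rm top}$ to the same property for a Floer-theoretic invariant that bounds $h_{\rm top}$ from below and coincides with it on a $C^\infty$-open and dense set. At any $\alpha_0 \in \mathcal{R}(Y,\xi)$ with $h_{\rm top}(\phi_{\alpha_0})=0$ the conclusion is automatic from $h_{\rm top}\geq 0$, so all the content lies at contact forms with positive topological entropy.

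The candidate invariant is $\Gamma(\alpha)\in[0,+\infty]$, the exponential growth rate, in the action filtration, of a contact-topological Floer invariant such as cylindrical contact homology or, in dimension $3$, embedded contact homology of $\alpha$. Building on Alves's thesis and subsequent work (notably with Meiwes), exponential growth of the number of generators below action $T$ in the Floer complex forces exponential proliferation of geometrically distinct Reeb orbits of period at most $T$, and a Bowen-type argument then yields $\Gamma(\alpha)\leq h_{\rm top}(\phi_\alpha)$ for every non-degenerate $\alpha$.

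For the $C^0$-lower semi-continuity of $\Gamma$, I would exploit that although $R_\alpha$ depends on $d\alpha$ and hence $C^0$-small perturbations of $\alpha$ can drastically change $R_\alpha$, the action of any Reeb orbit $\gamma$ of $\alpha$ is $\int_\gamma\alpha$, so the full action spectrum deforms by a factor bounded by $d_{C^0}(\alpha,\alpha')$. Using exact symplectic cobordisms between $\alpha_1$ and $\alpha_2$ whose energy is controlled by $d_{C^0}(\alpha_1,\alpha_2)$, one constructs action-shift-controlled continuation maps, from which $\Gamma$ inherits its $C^0$-lower semi-continuity.

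The principal obstacle is to isolate a $C^\infty$-open and dense subset $\mathcal{U}\subset\mathcal{R}(Y,\xi)$ on which $\Gamma=h_{\rm top}$. My approach would use a dichotomy available in dimension $3$: a Katok-type theorem produces, from positive topological entropy of a generic smooth flow on a $3$-manifold, a hyperbolic periodic orbit with a transverse homoclinic and hence a robust horseshoe; transversality of the homoclinic is $C^\infty$-stable, and the horseshoe's entropy is encoded by exponentially many holomorphic cylinders asymptotic to the hyperbolic orbit and its iterates, thereby saturating $\Gamma$. Combined with the $C^\infty$-interior of $\{h_{\rm top}=0\}$, this should yield the sought open and dense set. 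The sharpest difficulty is to arrange \emph{simultaneously} that the horseshoe's entropy is detected by $\Gamma$ on a $C^\infty$-open neighborhood and that this detection is $C^0$-robust — two demands in tension, since Floer-theoretic detection relies on transversality precisely of the type that $C^0$-perturbations threaten to destroy.
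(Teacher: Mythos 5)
There is a genuine gap, and it sits at the heart of your plan: the two properties you require of the invariant $\Gamma$ are incompatible. If $\Gamma(\alpha)$ is the action-filtered growth rate of a \emph{global} Floer-type theory (cylindrical contact homology or ECH) and its $C^0$-lower semi-continuity is obtained from energy-controlled continuation maps in exact cobordisms, then $\Gamma$ is, up to the multiplicative rescaling of actions by $e^{\pm d_{C^0}(\alpha_1,\alpha_2)}$, an invariant of the contact structure: the same cobordism argument that gives lower semi-continuity gives two-sided comparability of $\Gamma(\alpha_1)$ and $\Gamma(\alpha_2)$ for \emph{any} two contact forms on $(Y,\xi)$, with constants depending only on $\max f^{\alpha_2}_{\alpha_1}$ and $\min f^{\alpha_2}_{\alpha_1}$. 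Hence, if in addition $\Gamma\le h_{\rm top}$ for all non-degenerate forms and $\Gamma(\alpha_0)=h_{\rm top}(\phi_{\alpha_0})>0$ at even one form, you would conclude a uniform positive lower bound for the entropy of \emph{every} Reeb flow on $(Y,\xi)$. This is false exactly on the manifolds where the theorem has content, e.g.\ $(S^3,\xi_{\rm tight})$ or $(T^1S^2,\xi_{\rm geo})$, which carry both zero-entropy Reeb flows (the round ones) and flows of arbitrarily large entropy. So no globally defined, continuation-robust growth invariant can coincide with $h_{\rm top}$ on a dense set; your "principal obstacle" is not a technical tension but an obstruction, and the Katok/horseshoe step cannot "saturate" such a $\Gamma$.

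What is missing is a \emph{relative} invariant anchored to the specific dynamics of $\alpha_0$, together with a mechanism to transport that anchor to the perturbed form. The paper's route: by Meiwes's theorem, $h_{\rm top}(\phi_{\alpha_0})$ is recovered (up to $\epsilon$) by the exponential growth of the number of free homotopy classes in the complement of a link $\mathcal{L}_0$ of hyperbolic Reeb orbits of $\alpha_0$ containing a unique orbit of action $\le T$; adding the binding of a global surface of section makes $\alpha_0$ hypertight in the complement of $\mathcal{L}_0$, and the $C^\infty$-open and dense set is precisely the set of forms admitting such a section with non-degenerate binding (Contreras--Mazzucchelli, Colin--Dehornoy--Hryniewicz--Rechtman) — not a Katok dichotomy, and not the interior of $\{h_{\rm top}=0\}$, which by Newhouse's upper semi-continuity is only a $G_\delta$, not open. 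The hard step, absent from your outline, is that $\mathcal{L}_0$ consists of Reeb orbits of $\alpha_0$, not of $\alpha$: one must follow each component through a homotopy of exact cobordisms ending in a neck-stretch along $\alpha$ by a family of holomorphic cylinders, where the $C^0$-bound $\delta$ enters through action/energy estimates ruling out bubbling and breaking (compactness of the parametrized moduli spaces), then extract from the limiting buildings a link of $\alpha$-orbits, preserve the linking data by an ambient isotopy and intersection-positivity arguments, and — crucially — use Siefring's relative asymptotic formulas to show that at most linearly many (in the multiplicity) of the distinguished homotopy classes can collapse onto multiples of a single $\alpha$-orbit, so the exponential growth rate, and with it the entropy bound of Alves--Pirnapasov, survives up to the factor $e^{\delta}$. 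Without a substitute for this transport-and-non-collapsing argument, the reduction you propose does not close.
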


{As explained above $(\mathrm{Met}(S),\overline{d}_{C^0})$ can be thought of as isometrically embedded in $ (\mathcal{R}(T^1 S,\xi_{\rm geo}),{d}_{C^0}) $. This allows us to apply our techniques to $(\mathrm{Met}(S),\overline{d}_{C^0})$. For this we recall terminology: A Reeb flow on a closed $3$-manifold is said to be non-degenerate if all its periodic orbits are non-degenerate; see Section \ref{sec:Reeb_orbits} for the precise definitions. We say that a Riemannian metric $g$ on a closed surface $S$ is \textit{non-degenerate}\footnote{ Non-degenerate Riemannian metrics are sometimes called \textit{bumpy metrics} in the literature.}, if the contact form $\alpha_g$ associated to $g$ is non-degenerate; i.e. if the geodesic flow of $g$ is a non-degenerate Reeb flow on $(T^1 S, \xi_{\rm geo})$. We denote by $\mathrm{Met}_{\rm nd}(S)$ the set of non-degenerate Riemannian metrics on $S$. It is well-known that $\mathrm{Met}_{\rm nd}(S)$ is a $C^\infty$-dense subset of   $\mathrm{Met}(S)$. In the setting of geodesic flows, we prove the following variation of Theorem \ref{thm:main}. 
\begin{mainthm} \label{thm:geod}
 Let $S$ be a closed orientable surface, and denote by $\mathrm{Met}_{\rm nd}(S)$ the set of non-degenerate Riemannian metrics on $S$.
 The entropy functional 
 \begin{align*}
  h_{\rm top}:   \mathrm{Met}(S) &\to [0,+\infty), \\
  g &\mapsto h_{\rm top}(\phi_g), 
 \end{align*}
is lower semi-continuous with respect to $\overline{d}_{C^0}$ on the $C^\infty$-dense subset $ \mathrm{Met}_{\rm nd}(S)$ of $\mathrm{Met}(S)$. 
\end{mainthm}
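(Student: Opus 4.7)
The plan is to reduce Theorem~\ref{thm:geod} to Theorem~\ref{thm:main} by exploiting the isometric embedding $\upphi:(\mathrm{Met}(S),\overline{d}_{C^0})\hookrightarrow(\mathcal{R}(T^1S,\xi_{\rm geo}),d_{C^0})$ discussed in Section~\ref{subsec:c0distRiemann}. First I would note that $\upphi$ intertwines entropy: by construction the geodesic flow $\phi_g$ and the Reeb flow $\phi_{\alpha_g}$ are conjugate via the natural identification $(T_1)_gS\simeq T^1S$, hence $h_{\rm top}(\phi_g)=h_{\rm top}(\phi_{\alpha_g})$. Moreover, by the paper's definition $g\in \mathrm{Met}_{\rm nd}(S)$ is \emph{exactly} the condition that $\alpha_g$ is a non-degenerate contact form on $(T^1S,\xi_{\rm geo})$.

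Next I would argue that the $C^\infty$-open and dense subset of $\mathcal{R}(T^1S,\xi_{\rm geo})$ furnished by Theorem~\ref{thm:main} can be taken to be (or to contain) the set of all non-degenerate contact forms. This is the standard genericity set used in the proof of Theorem~\ref{thm:main}: the arguments producing a $C^0$-stable witness of positive topological entropy (exponential growth of orbits in free homotopy classes, detected through a contact-topological invariant that persists under conformal changes of $\alpha$) only use that the reference contact form is non-degenerate, and the non-degenerate forms form a $C^\infty$-open dense subset of $\mathcal{R}(T^1S,\xi_{\rm geo})$. Assuming this, applying Theorem~\ref{thm:main} at $\alpha_g$ yields: for every sequence $\alpha_n\in\mathcal{R}(T^1S,\xi_{\rm geo})$ with $d_{C^0}(\alpha_n,\alpha_g)\to 0$, one has $\liminf_n h_{\rm top}(\phi_{\alpha_n})\ge h_{\rm top}(\phi_{\alpha_g})$.

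To conclude, I would invoke the isometric embedding property $\upphi^*(d_{C^0})=\overline{d}_{C^0}$. Any sequence $g_n\in\mathrm{Met}(S)$ with $\overline{d}_{C^0}(g_n,g)\to 0$ produces a sequence of contact forms $\alpha_{g_n}$ with $d_{C^0}(\alpha_{g_n},\alpha_g)\to 0$, so the lower semi-continuity inequality above specializes, through the entropy identity $h_{\rm top}(\phi_{g_n})=h_{\rm top}(\phi_{\alpha_{g_n}})$, to $\liminf_n h_{\rm top}(\phi_{g_n})\ge h_{\rm top}(\phi_g)$. This is precisely lower semi-continuity of $h_{\rm top}$ on $\mathrm{Met}(S)$ at the point $g$. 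Since $g\in\mathrm{Met}_{\rm nd}(S)$ was arbitrary and $\mathrm{Met}_{\rm nd}(S)$ is $C^\infty$-dense in $\mathrm{Met}(S)$ by a classical bumpy-metric theorem, Theorem~\ref{thm:geod} follows.

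The main obstacle is the verification underlying the second paragraph: extracting from the proof of Theorem~\ref{thm:main} that the lower semi-continuity holds at \emph{every} non-degenerate contact form, and not merely on some smaller, unspecified open dense set. Once this is pinned down, the rest of the argument is essentially formal — the isometric embedding $\upphi$ and the entropy identity do all the translation work, and no additional surface-specific dynamics are needed beyond what is encoded in Theorem~\ref{thm:main} applied at $\alpha_g$.
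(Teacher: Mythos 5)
Your reduction has a genuine gap at exactly the point you flag as ``the main obstacle'', and it is not a matter of bookkeeping: the $C^\infty$-open and dense set produced in the proof of Theorem~\ref{thm:main} is \emph{not} the set of non-degenerate contact forms, and nothing in the paper shows that every non-degenerate contact form is a point of $d_{C^0}$-lower semi-continuity. The set $\mathcal{V}_0(Y,\xi)$ used there consists of contact forms whose Reeb flow admits a global surface of section with non-degenerate binding (the generic existence results of \cite{Hryniewicz_generic,Contreras_generic}); this is what supplies a link $\mathcal{L}_0$ in whose complement $\alpha_0$ is hypertight, which is the actual hypothesis of the technical stability result, Theorem~\ref{thm:stability}. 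Non-degeneracy alone gives no such link, so your second paragraph asserts something stronger than Theorem~\ref{thm:main} provides (lower semi-continuity at every non-degenerate form is, at this level of generality, essentially Conjecture~\ref{conj:Reeb} restricted to non-degenerate forms). Moreover, even if one had lower semi-continuity on some open dense set of $\mathcal{R}(T^1S,\xi_{\rm geo})$, that would not transfer to $\mathrm{Met}_{\rm nd}(S)$: the image of $\upphi$ is meager, so there is no reason a given $\alpha_g$ lies in that set --- the paper states explicitly that Theorem~\ref{thm:geod} is not an automatic corollary of Theorem~\ref{thm:main} for precisely this reason.

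The correct route, and the one the paper takes, keeps your first and third paragraphs (conjugacy of $\phi_g$ with $\phi_{\alpha_g}$, the equivalence $g\in\mathrm{Met}_{\rm nd}(S)\Leftrightarrow\alpha_g$ non-degenerate, and the isometry $\upphi^*(d_{C^0})=\overline{d}_{C^0}$) but replaces the appeal to Theorem~\ref{thm:main} by an independent verification of the hypotheses of Theorem~\ref{thm:stability} at $\alpha_g$: for a non-degenerate metric, \cite{CKMS} provides a surface of section $\Sigma$ whose boundary consists of lifts of simple contractible and of non-contractible closed geodesics; one takes $\mathcal{L}$ to be the union of the \emph{transverse lifts} (both orientations) of the underlying geodesics, and proves that $\alpha_g$ is hypertight in the complement of $\mathcal{L}$ --- orbits not covering $\partial\Sigma$ link $\partial\Sigma$ by positivity of intersections with $\Sigma$, lifts of non-contractible geodesics are non-contractible in $T^1S$, and for lifts of simple contractible geodesics one needs the separate topological argument of Lemma~\ref{lem:transverseliftsimple} showing that iterates of $\upgamma^{+}$ are non-contractible in $T^1S\setminus\upgamma^{-}$. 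With this link in hand one applies Theorem~\ref{thm:CH_recover} and Theorem~\ref{thm:stability} directly at $\alpha_g$ (handling degenerate perturbations by $C^\infty$-upper semi-continuity as in the proof of Theorem~\ref{thm:main}), and only then does your formal transfer via $\upphi$ finish the proof. So the surface-specific dynamical input (the Birkhoff-type section of \cite{CKMS} and Lemma~\ref{lem:transverseliftsimple}) is genuinely needed and cannot be bypassed by quoting Theorem~\ref{thm:main}.
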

}

\subsubsection*{Significance of the results} We proceed to explain the significance of our results. Theorem \ref{thm:main} and Theorem \ref{thm:geod} show that the topological entropy cannot be destroyed by small perturbations with respect to $d_{C^0}$ and $\overline{d}_{C^0}$. 

It is a priori not clear at all whether any dynamical property of geodesic flows should be stable with respect to the distance $\overline{d}_{C^0}$ on $\mathrm{Met}(S)$. The direction of the geodesic vector field of a Riemannian metric $g$ depends on the first derivatives of $g$, so very small perturbations with respect to $\overline{d}_{C^0}$ can lead to drastic changes of the geodesic vector field. For example,  Figure~\ref{fig:ElevatedDisk} shows a $\overline{d}_{C^0}$-small perturbation of the Euclidean metric that makes the unit circle a geodesic. Similarly, for any surface $(\Sigma, g)$ and any simple closed regular curve $\gamma$ one can find a $\overline{d}_{C^0}$-small perturbation of $g$ for which $\gamma$ is a geodesic. For more details on the perturbation method, see Example 43 in~\cite{ADMM}. We also remark that one can arbitrarily increase the topological entropy of Reeb flows and geodesic flows with arbitrarily small perturbations in $d_{C^0}$ and $\overline{d}_{C^0}$, respectively. See Theorem 12 in \cite{ADMM}. 

\begin{figure}
    \centering
    \includegraphics[width=0.5\linewidth]{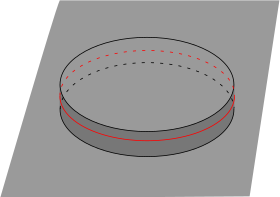}
    \caption{The unit disk in Euclidean space can be elevated (and the corners smoothed) by a $C^0$-small perturbation of the metric. This makes the unit circle a geodesic.}
    \label{fig:ElevatedDisk}
\end{figure}

Many dynamical quantities, such as the growth rate of periodic orbits, the Liouville entropy and others, are \textbf{not} lower semi-continuous with respect to $\overline{d}_{C^0}$. Our results show that $h_{\rm top}$ of geodesic flows of Riemannian surfaces and Reeb flows on $3$-manifolds is surprisingly robust.

One interesting application of Theorem \ref{thm:geod} is the following. In \cite{Mane}, Ma\~n\'e established the following remarkable formula for the topological entropy of the geodesic flow of a $C^\infty$-smooth Riemannian metric $g$ on a manifold $Q$:
\begin{equation} \label{eq:Mane}
h_{\rm top}(\phi_g) = \lim_{T \to +\infty } \frac{1}{T} \log \Big( \int_{Q \times Q} (\mathcal{N}^g_T(p,q)) d\omega_g(p) d\omega_g(q)\Big) .  
\end{equation}
Here, $\mathcal{N}^g_T(p,q)$ denotes the number of geodesic chords of $g$ from $p$ to $q$ with \linebreak length $<T$, $d\omega_g(p)$ means integration in the variable $p$ with respect to the Riemannian volume form $\omega_g$ on $Q$ associated to $g$, and $d\omega_g(q)$ means integration in the variable $q$ with respect to $\omega_g$. This formula gives a characterisation of $h_{\rm top}(\phi_g)$ in terms of the purely geometric quantity which appears on the right side of \eqref{eq:Mane}.

The right hand side of \eqref{eq:Mane} is the exponential growth of the average number of geodesics connecting two points of $Q$. Using Ma\~n\'e's formula, our results provide surprising robustness of this exponential growth in case $Q$ is a closed surface. Theorem \ref{thm:geod} implies that for a non-degenerate Riemannian metric $g$ on a closed surface this exponential growth is positive and stable under $C^0$-small perturbations of $g$. This is far from obvious, given that the counting function $\mathcal{N}^g_T(p,q)$ can undergo drastic changes under arbitrarily $C^0$-small perturbations of $g$.

\subsubsection{Further situations where $h_{\topo}$ is lower semi-continuous with respect to $d_{C^0}$}
Another situation in which our methods apply is that of right-handed Reeb flows.
%
Right-handed vector fields, introduced by Ghys in \cite{Ghys}, are a special class of
non-singular vector fields on homology 3-spheres, and their flows are called right-handed. Their defining property is that all their trajectories link positively (see~\cite{Ghys,FH-righthanded}). Ghys showed in \cite{Ghys} that every periodic orbit of a right-handed flow bounds a global surface of section, and that the space of right-handed vector fields is $C^1$-open. In \cite{FH-righthanded}, Florio and Hryniewicz established the existence of many interesting examples of right-handed Reeb flows and obtained quantitative conditions for certain Reeb flows on $(S^3,\xi_{\rm tight})$ to be right-handed. For example, they showed that if $\updelta \geq 0.7225 $, then the geodesic flow of any $\updelta$-pinched Riemannian metric on $S^2$ lifts to a right-handed Reeb vector field on $(S^3,\xi_{\rm tight})$. We say that a contact form on $(S^3,\xi_{\rm tight})$ is right-handed if its Reeb flow is right-handed. Applying our techniques we obtain 
\begin{maincor} \label{cor:righthanded}
 Let $(S^3,\xi_{\rm tight})$ be the three-sphere endowed with its unique tight contact structure. 
 The entropy functional 
 \begin{align*}
 h_{\rm top}:   \mathcal{R}(S^3,\xi_{\rm tight}) &\to [0,+\infty), \\
 \alpha &\mapsto h_{\rm top}(\phi_\alpha), 
 \end{align*}
 is lower semi-continuous with respect to $d_{C^0}$ on the set of right-handed contact forms $\alpha\in\mathcal{R}(S^3,\xi_{\rm tight})$.
\end{maincor}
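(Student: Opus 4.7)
The plan is to extend the strategy of Theorem~\ref{thm:main} from its $C^\infty$-open and dense set of applicability to the full $C^\infty$-open set of right-handed contact forms on $(S^3,\xi_{\rm tight})$, using as the key additional input the rigidity provided by Ghys' theorem on global surfaces of section. Recall that for any right-handed $\alpha$, every periodic Reeb orbit $\gamma$ bounds a global surface of section, which is an embedded disk $D_\gamma$ transverse to $R_\alpha$ on its interior (since $S^3$ is simply connected). In particular, $\alpha$ supports an open book decomposition of $(S^3,\xi_{\rm tight})$ with binding $\gamma$ and disk pages, and the first return map on $\mathrm{int}(D_\gamma)$ is an area-preserving disk diffeomorphism conjugate to the Reeb flow on $S^3\setminus\gamma$.

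The next step is the non-degenerate case. Since the right-handed property is $C^1$-open for vector fields and $R_\alpha$ depends on $\alpha$ with one derivative loss, the set of right-handed contact forms is $C^\infty$-open in $\mathcal{R}(S^3,\xi_{\rm tight})$. Combined with the standard $C^\infty$-density of non-degenerate contact forms, non-degenerate right-handed contact forms form a $C^\infty$-dense subset of the right-handed ones. For such a non-degenerate $\alpha$, I expect the proof of Theorem~\ref{thm:main} to apply essentially verbatim: the global surface of section from the previous paragraph is precisely the rigid structural input needed for the Floer-theoretic growth argument underlying that proof, yielding lower semi-continuity at $\alpha$.

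For a possibly degenerate right-handed $\alpha$, I would proceed by approximation. Pick non-degenerate right-handed $\alpha_k\to\alpha$ in $C^\infty$. The proof of Theorem~\ref{thm:main} produces for each $\alpha_k$ a quantitative lower bound $\chi_k$ such that $h_{\rm top}(\beta)\geq \chi_k$ whenever $d_{C^0}(\beta,\alpha_k)$ is sufficiently small. Combining Yomdin's $C^\infty$-upper semi-continuity $\limsup_k h_{\rm top}(\alpha_k)\leq h_{\rm top}(\alpha)$ with the triangle inequality $d_{C^0}(\beta,\alpha_k)\leq d_{C^0}(\beta,\alpha)+d_{C^0}(\alpha,\alpha_k)$ and the fact that $C^\infty$-convergence implies $d_{C^0}$-convergence, a diagonal argument yields $\liminf_{d_{C^0}(\beta,\alpha)\to 0}h_{\rm top}(\beta)\geq h_{\rm top}(\alpha)$.

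The main obstacle is ensuring that the lower bounds $\chi_k$ from the previous paragraph satisfy $\liminf_k \chi_k\geq h_{\rm top}(\alpha)$. Since $\chi_k\leq h_{\rm top}(\alpha_k)$ and Yomdin gives $\limsup_k h_{\rm top}(\alpha_k)\leq h_{\rm top}(\alpha)$, the desired inequality reduces to a lower semi-continuity statement for the relevant growth invariant along $C^\infty$-convergent sequences of non-degenerate right-handed contact forms. Establishing this continuity is the crux of the argument and depends on the behavior of the Floer data and continuation maps in the $C^\infty$-limit; openness of the right-handed condition ensures that all sufficiently close approximants are themselves right-handed and inherit Ghys' global surface of section, which one hopes makes this continuity tractable.
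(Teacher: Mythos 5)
Your key idea --- that Ghys' theorem on global surfaces of section for right-handed flows can replace the generic surface-of-section hypothesis used in the proof of Theorem~\ref{thm:main} --- is exactly the paper's mechanism. But your execution leaves a genuine gap, which you acknowledge yourself: in the degenerate case you never establish that the lower bounds $\chi_k$ produced at the non-degenerate approximants $\alpha_k$ satisfy $\liminf_k \chi_k \geq h_{\rm top}(\phi_\alpha)$, deferring this to a hoped-for ``continuity of the Floer data and continuation maps in the $C^\infty$-limit'', which is not available as a black box and is not how the paper argues. There is also a second, unflagged, defect in your diagonal argument: the radius $\delta_k$ on which the estimate $h_{\rm top}\geq\chi_k$ holds is produced separately for each $\alpha_k$, and nothing you say prevents $\delta_k\to 0$ faster than $d_{C^0}(\alpha,\alpha_k)\to 0$; in that case no single $k$ yields a $d_{C^0}$-ball around $\alpha$ on which the bound holds, and the conclusion does not follow.

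Both issues are resolved in the paper without any new continuity statement, because the proof of Theorem~\ref{thm:main} already works at a possibly degenerate $\alpha_0$; the only role of right-handedness is to supply the hypertightness input. Concretely: take $\mathcal{L}_0=\mathcal{L}^{\alpha_0}_{\epsilon}$, the link of hyperbolic orbits from Theorem~\ref{thm:CH_recover} (these are non-degenerate even if $\alpha_0$ is not); by Ghys every component bounds a global surface of section, which gives that $\alpha_0$ is hypertight in the complement of $\mathcal{L}_0$; then choose non-degenerate $\alpha_k\to\alpha_0$ in $C^\infty$ which coincide with $\alpha_0$ on a neighbourhood of a locally maximal hyperbolic set containing $\mathcal{L}_0$, with $\mathrm{sys}(\alpha_k)=\mathrm{sys}(\alpha_0)$ and with almost complex structures agreeing near the link so that the energy quantity $\mathfrak{h}$ is uniformly bounded below. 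This forces the growth in the complement of $\mathcal{L}_0$ to stay $\geq h_{\rm top}(\phi_{\alpha_0})-\epsilon$ for every $k$ (your ``crux'' is handled by coincidence of the dynamics near the hyperbolic set, not by continuity of holomorphic-curve data), and since the $\delta$ of Theorem~\ref{thm:stability} depends only on $\epsilon$, $\mathrm{sys}$ and $\mathfrak{h}$, it is uniform in $k$; $C^\infty$-upper semi-continuity of $h_{\rm top}$ is used only at the very end, to pass from non-degenerate to arbitrary contact forms in the $C^0$-ball, not to control $\chi_k$. In particular your approximants need not be right-handed, and your restriction to non-degenerate right-handed forms is an unnecessary detour. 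A minor further point: Ghys' surfaces of section need not be disks, and no open book with disk pages is needed; all the argument uses is that each component of $\mathcal{L}^{\alpha_0}_{\epsilon}$ bounds a global surface of section.
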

Note that in this statement, no assumption on the non-degenericity of any Reeb orbit is made.

\color{black}
\subsubsection{Robustness of the topological entropy for  geodesic flows of the $2$-torus.}
Theorem \ref{thm:geod} generalizes some of the results in~\cite{ADMM}  on robustness of topological entropy: topological entropy is \emph{robust} at a metric $g$ if a $C^0$-small perturbation of the metric cannot collapse $h_\topo(\phi_g)$ to 0. More precisely, entropy is robust at $g$ if there exist $\varepsilon, c>0$ such that for all $\tilde g$ with $\overline{d}_{C^0}(g,\tilde g)<\varepsilon$ we have $h_\topo(\phi_{\tilde g})\geq c$. It is well-known that for surfaces of genus at least two, $h_{\topo}$ is uniformly bounded from below for metrics with the same total area. Hence the notion of robustness is only interesting for surfaces that admit metrics of zero entropy, which is the case for the sphere and the torus. 
In \cite{ADMM}, Merlin and the first three authors studied some robustness phenomena of the topological entropy of geodesic flows on Riemannian manifolds.  
In particular, in Theorem 10 of~\cite{ADMM} it was established that a $C^\infty$-generic metric on the torus has robust topological entropy. 

\emph{Lower semi-continuity and robustness.} A point of lower semi-continuity of $h_\topo$ (with $h_\topo>0$) is a point of robustness. More precisely, if $h_\topo$ is robust at $g$ and if one lowers $\varepsilon$ in the definition, it may be possible to raise $c$. A robust point is lower semi-continuous if $c(\varepsilon)\to h_\topo(\phi_g)$ for $\varepsilon \to 0$. The lower bounds produced in~\cite{ADMM} are far from sharp, thus lower semi-continuity could not be established, which means that  also in the case of the torus, Theorem \ref{thm:geod} amounts to a significant improvement of the results in \cite{ADMM}.    

\emph{Robustness for all metrics on the torus.}
On the other hand, the results in the present paper combined with the methods in \cite{ADMM} allow us to extend the generic robustness result for the torus to all Riemannian metrics. More precisely, we obtain the following corollary (for a proof see Section \ref{sec:geodesicflows}).  
\begin{maincor}\label{cor:robust_T2}
The topological entropy $h_{\topo}$ is robust at all $g\in \mathrm{Met}(T^2)$ for which $h_{\topo}(\phi_g)>0$.
\end{maincor}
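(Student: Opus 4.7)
The plan is to reduce to the non-degenerate case and then invoke Theorem \ref{thm:geod}. Fix $g\in\mathrm{Met}(T^2)$ with $h_{\topo}(\phi_g)>0$. The first step is to observe that positive topological entropy is a $C^\infty$-open property on $\mathrm{Met}(T^2)$: by Katok's horseshoe theorem applied to a Poincar\'e return map of $\phi_g$ on $T^1 T^2$, the geodesic flow $\phi_g$ admits a uniformly hyperbolic invariant set carrying some entropy $c_0>0$. Since hyperbolic basic sets are $C^1$-structurally stable, and since $C^2$-small perturbations of the metric produce $C^1$-small perturbations of the geodesic vector field (and hence of the Poincar\'e return map), there exists a $C^\infty$-neighborhood $V$ of $g$ in $\mathrm{Met}(T^2)$ on which $h_{\topo}(\phi_{\tilde g})\geq c_0$ for all $\tilde g\in V$. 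This is precisely the mechanism already exploited in the robustness arguments of \cite{ADMM}.

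Next, using the $C^\infty$-density of $\mathrm{Met}_{\rm nd}(T^2)$ inside $\mathrm{Met}(T^2)$, I pick a non-degenerate metric $g_0\in V$ with $\overline{d}_{C^0}(g,g_0)$ arbitrarily small. Since $g_0\in V$, we have $h_{\topo}(\phi_{g_0})\geq c_0>0$, and Theorem \ref{thm:geod} applies at $g_0$: there exists $\delta>0$ such that every $\tilde g$ with $\overline{d}_{C^0}(g_0,\tilde g)<\delta$ satisfies $h_{\topo}(\phi_{\tilde g})>c_0/2$. Choosing $g_0$ with $\overline{d}_{C^0}(g,g_0)<\delta/2$ and setting $\varepsilon:=\delta/2$, $c:=c_0/2$, the triangle inequality yields $h_{\topo}(\phi_{\tilde g})\geq c$ for every $\tilde g$ with $\overline{d}_{C^0}(g,\tilde g)<\varepsilon$, which is exactly the required robustness at $g$.

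The main obstacle is the first step, namely establishing the $C^\infty$-openness of $\{g\in\mathrm{Met}(T^2):h_{\topo}(\phi_g)>0\}$. The ingredients, Katok's theorem for $C^{1+\alpha}$ surface diffeomorphisms and $C^1$-persistence of hyperbolic basic sets, are standard, but passing rigorously from the Poincar\'e section back to the full geodesic flow on $T^1 T^2$, and then from $C^1$-perturbations of the flow to $C^2$-perturbations of the metric, still requires some bookkeeping. As noted above, this is already carried out in the torus setting of \cite{ADMM}, so the corollary really amounts to feeding that $C^\infty$-open lower bound into Theorem \ref{thm:geod} at a nearby non-degenerate metric.
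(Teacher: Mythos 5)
There is a genuine gap, and it sits exactly where your argument does all the work: the choice of $g_0$. Theorem \ref{thm:geod} at a non-degenerate metric $g_0$ produces a $\delta$ that depends on $g_0$ — through Theorem \ref{thm:stability} it depends on the link furnished by Theorem \ref{thm:CH_recover} for $g_0$, on $\mathrm{sys}(\alpha_{g_0})$ and on $\mathfrak{h}(\alpha_{g_0},\mathcal{L}_0)$ — and nothing gives a lower bound for $\delta(g_0)$ as $g_0$ varies. Your sentence ``Choosing $g_0$ with $\overline{d}_{C^0}(g,g_0)<\delta/2$'' is circular: $\delta$ is only defined after $g_0$ has been fixed, and if you re-choose $g_0$ closer to $g$ you get a new, possibly much smaller, $\delta$. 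Since $\delta(g_0)$ could shrink to $0$ faster than $\overline{d}_{C^0}(g,g_0)$, the triangle-inequality step never closes, and lower semi-continuity on a dense set does not upgrade to robustness at points outside that set. A sanity check that something must be missing: your argument never uses the torus, so if it were correct it would prove robustness at every positive-entropy metric on any closed orientable surface, in particular on $S^2$ — a statement the paper does not prove and which is part of Conjecture \ref{conj:geod}. (Your first step, $C^\infty$-openness of positive entropy via Katok plus persistence of hyperbolic sets, is fine, but it only controls a $C^\infty$-neighborhood, not a $\overline{d}_{C^0}$-ball, so by itself it cannot substitute for the missing uniformity.)

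The paper's proof avoids this by working at the possibly degenerate metric $g$ itself and exploiting torus-specific structure. It runs a dichotomy: either there are two neighbouring minimal geodesics bounding an annulus not foliated by heteroclinics, in which case robustness is already Theorem 34 of \cite{ADMM}; or every such annulus is foliated, in which case no contact form $C^\infty$-close to $\alpha_g$ has contractible periodic orbits. In that second case one takes the link $\mathcal{L}_0=\mathcal{L}^{\alpha_g}_{\epsilon}$ of hyperbolic orbits from Theorem \ref{thm:CH_recover}, approximates $\alpha_g$ in $C^\infty$ by non-degenerate contact forms $\alpha_k$ that coincide with $\alpha_g$ near $\mathcal{L}_0$ (so that $\mathrm{sys}$ and $\mathfrak{h}$ are controlled uniformly in $k$), and applies Theorem \ref{thm:stability}, whose $\delta$ depends \emph{only} on $\epsilon$, $\mathrm{sys}$ and $\mathfrak{h}$ — this uniformity in $k$ is precisely the ingredient your reduction lacks. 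If you want to salvage your approach, you would have to prove a quantitative version of Theorem \ref{thm:geod} in which $\delta$ is uniform over a $C^\infty$-neighbourhood of $g$ in $\mathrm{Met}_{\rm nd}(T^2)$; that is essentially equivalent to redoing the paper's argument rather than quoting Theorem \ref{thm:geod} as a black box.
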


\subsubsection{Barcode entropy.} The recent work of \c{C}ineli, Ginzburg, and G\"urel, \cite{CGG}, introduced a new way of relating the topological entropy of Hamiltonian diffeomorphisms to Floer theory. This approach was adapted to study the topological entropy of geodesic flows in \cite{GGM} and of Reeb flows in \cite{FLS1}.  In \cite{GGM} Ginzburg-G\"{u}rel-Mazzucchelli showed that for the geodesic flow $\phi_{g}$ of a Riemannian metric $g$ on a closed surface, $h_{\topo}(\phi_g)$ coincides with the barcode entropy $\hbar(\phi_g)$ of $\phi_g$ which they introduce. The barcode entropy is a measure of the complexity of the Morse complex of the energy functional of $g$ on the free loop space of $S$. Combining Theorem~\ref{thm:geod} with Corollary C of \cite{GGM} we obtain the following
\begin{maincor}
Let $S$ be a closed orientable surface, and denote by $\mathrm{Met}_{\rm nd}(S)$ the set of non-degenerate Riemannian metrics on $S$. The barcode entropy functional
\begin{align*}
  \hbar:   \mathrm{Met}(S) &\to [0,+\infty), \\
  g &\mapsto \hbar(\phi_g), 
 \end{align*}
is lower semi-continuous with respect to $\overline{d}_{C^0}$ on the $C^\infty$-dense subset $ \mathrm{Met}_{\rm nd}(S)$ of $\mathrm{Met}(S)$.
\end{maincor}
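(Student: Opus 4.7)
The plan is to combine the main result Theorem~\ref{thm:geod} with the equality between barcode entropy and topological entropy for geodesic flows on closed surfaces established in Corollary~C of \cite{GGM}. Since Theorem~\ref{thm:geod} already provides the nontrivial dynamical input (lower semi-continuity of $h_{\rm top}$ at non-degenerate metrics with respect to $\overline{d}_{C^0}$), and since the identity $\hbar(\phi_g)=h_{\rm top}(\phi_g)$ from \cite{GGM} holds for all Riemannian metrics on closed surfaces, the corollary reduces to a purely formal transfer of the semi-continuity property through this identification.

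Concretely, I would proceed as follows. Fix $g\in\mathrm{Met}_{\rm nd}(S)$ and let $(g_n)_{n\in\N}$ be any sequence in $\mathrm{Met}(S)$ with $\overline{d}_{C^0}(g_n,g)\to 0$. First, I would invoke Corollary~C of \cite{GGM} at the base point to obtain
\begin{equation*}
\hbar(\phi_g)=h_{\rm top}(\phi_g),
\end{equation*}
and also at every perturbed metric to obtain $\hbar(\phi_{g_n})=h_{\rm top}(\phi_{g_n})$ for each $n$. Next, I would apply Theorem~\ref{thm:geod}, which says exactly that $g\in\mathrm{Met}_{\rm nd}(S)$ is a lower semi-continuity point of $h_{\rm top}$ with respect to $\overline{d}_{C^0}$, yielding
\begin{equation*}
\liminf_{n\to\infty} h_{\rm top}(\phi_{g_n}) \;\geq\; h_{\rm top}(\phi_g).
\end{equation*}
Substituting the two identities gives $\liminf_{n\to\infty}\hbar(\phi_{g_n})\geq \hbar(\phi_g)$, which is precisely the lower semi-continuity of $\hbar$ at $g$.

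Since all the work has already been done in Theorem~\ref{thm:geod} and in \cite{GGM}, there is no real obstacle here: the only subtlety is to verify that the equality $\hbar=h_{\rm top}$ from Corollary~C of \cite{GGM} is available for \emph{every} Riemannian metric on $S$ (not only non-degenerate ones), so that it can be applied to the perturbations $g_n$ as well as to $g$. Once that is confirmed, the density of $\mathrm{Met}_{\rm nd}(S)$ in $\mathrm{Met}(S)$ (a classical bumpy metrics theorem) is only used to ensure the statement is genuinely a statement about a $C^\infty$-dense subset, and the proof is complete.
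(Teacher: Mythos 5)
Your proposal is correct and follows exactly the paper's route: the corollary is obtained by combining Theorem~\ref{thm:geod} with the identity $\hbar(\phi_g)=h_{\rm top}(\phi_g)$ from Corollary~C of \cite{GGM}, which indeed holds for all Riemannian metrics on a closed surface, so applying it to both $g$ and the perturbations $g_n$ is legitimate. Nothing further is needed.
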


\subsection{Strategy of the proof} \label{sec:strategy}

Our strategy for proving Theorem \ref{thm:main} is inspired by the work \cite{AlvesMeiwesBraids}, where the first and third authors showed that the topological entropy of Hamiltonian diffeomorphisms of surfaces is lower semi-continuous with respect to the Hofer metric $d_{\rm Hofer}$ on the space of  Hamiltonian diffeomorphisms. The distances $d_{C^0}$ and $\overline{d}_{C^0}$ can be thought of as analogous of the Hofer metric on the spaces $\mathcal{R}(Y,\xi)$  and ${\rm Met}(S)$, respectively. 

Crucial to our results is the work \cite{Meiwes_horseshoes}, in which the third author shows that the $h_{\rm top}$ of a Reeb flow  on a contact $3$-manifold $(Y,\xi)$ can be recovered from the contact topology of its periodic orbits. More precisely, given a contact form $\alpha_0$ and $\epsilon>0$, he shows that there exists a link $\mathcal{L}^{\alpha_0}_{\epsilon}$ of periodic orbits of  $\phi^t_{\alpha_0}$ such that the number $\# \Omega^T_{\alpha_0}(\mathcal{L}^{\alpha_0}_{\epsilon})$ of free homotopy classes of loops in $Y\setminus \mathcal{L}^{\alpha_0}_{\epsilon}$ that contain a unique periodic orbit of $\phi^t_{\alpha_0}$ with action $\leq T$ satisfies $$\limsup_{T \to +\infty} \frac{\log(\# \Omega^T_{\alpha_0}(\mathcal{L}^{\alpha_0}_{\epsilon})) }{T}> h_{\rm top}(\phi_{\alpha_0})-\epsilon. $$
See Theorem \ref{thm:CH_recover} for the precise statement.

Let now $\alpha$ be a contact form on $(Y,\xi)$ which is $\delta$-close to $\alpha_0$ with respect to $d_{C^0}$. By the work \cite{AlvesPirnapasov} of the first and fourth authors, we have that for any link $\mathcal{L}$ of Reeb orbits of $\alpha$, 
$$h_{\rm top}(\phi_{\alpha})\geq \limsup_{T \to +\infty} \frac{\log(N^T_{\alpha}(\mathcal{L}(\alpha))) }{T},$$ where $N^T_{\alpha}(\mathcal{L}(\alpha))$ is the number of free homotopy classes of loops in $Y\setminus \mathcal{L}$ that contain periodic orbits of $\phi^t_{\alpha}$ with action $\leq T$. 
Our strategy now consists of finding, for sufficiently small $\delta >0$ and any non-degenerate $\alpha$ with $d_{C^0}(\alpha,\alpha_0)<\delta$, a link $\mathcal{L}_\epsilon(\alpha)$ of Reeb orbits of $\alpha$ such that 
$$\limsup_{T \to +\infty} \frac{\log(N^T_{\alpha}({\mathcal{L}_\epsilon(\alpha)))}}{T} > \limsup_{T \to +\infty} \frac{\log(\# \Omega^T_{\alpha_0}(\mathcal{L}^{\alpha_0}_{\epsilon})) }{T} - \epsilon > h_{\rm top}(\phi_{\alpha_0}) - 2\epsilon.$$
The link $\mathcal{L}_\epsilon(\alpha)$ for $\alpha$ is obtained from the link $\mathcal{L}_\epsilon^{\alpha_0}$ given by Theorem \ref{thm:CH_recover} with the help of holomorphic  curves; this is Theorem \ref{thm:stability}, our main technical result. We prove it in Section \ref{sec:proofmainresult}. Altogether this gives the inequality $$h_{\topo}(\phi_{\alpha}) \geq h_{\topo}(\phi_{\alpha_0})-2\epsilon,$$ which, considered as a property of $\alpha$, is closed in the $C^{\infty}$-topology (see~\cite{N89}) and hence extends to degenerate $\alpha$.

Let us explain how to obtain the link $\mathcal{L}_\epsilon(\alpha)$ when $\alpha_0$ is non-degenerate and the Reeb flow $\phi_{\alpha_0}$ admits a global surface of section with binding link $\mathcal{L}_b$, which is a $C^{\infty}$-generic condition (see 
 \cite{Hryniewicz_generic, Contreras_generic}); the argument will extend to a $C^{\infty}$-open and dense set of $\alpha_0$ without much difficulty.  To obtain the link $\mathcal{L}_\epsilon(\alpha)$ in that case, we let $\mathcal{L}_0 = \mathcal{L}_b \cup \mathcal{L}^{\alpha_0}_{\epsilon}$, and consider the trivial cylinders $\R \times \mathcal{L}_0$ over the link $\mathcal{L}_0$ in the symplectization of $\alpha_0$ endowed with a cylindrical almost complex structure. We consider a one-dimensional homotopy of exact symplectic cobordisms starting at the symplectization of $\alpha_0$ and stretching the neck along the contact form $\alpha$. The cylinders $\R \times \mathcal{L}_0$ are holomorphic for the symplectization of $\alpha_0$. We then consider the $1$-dimensional moduli space generated by the holomorphic cylinders $\R \times \mathcal{L}_0$.

The $C^0$-proximity of $\alpha$ to $\alpha_0$ guarantees that the moduli spaces cannot "stop" along the homotopy, and must continue for the whole stretching the neck procedure. The end result is that we can associate a holomorphic building to each component of $\mathcal{L}_0$. These homolorphic buildings must detect Reeb orbits of $\alpha$, and we form the link $\mathcal{L}_\epsilon(\alpha)$ by carefully selecting orbits detected by these buildings. A delicate analysis using Siefring's asymptotic formulas for ends of holomorphic curves shows that the link $\mathcal{L}_\epsilon(\alpha)$ has the desired properties; see Section \ref{sec:non-collaps}.

\subsection{Context}

The relationship between contact and symplectic topology and $h_{\rm top}$ of Reeb flows and symplectomorphisms has been studied extensively and fruitfully in recent years by various methods. For example, there
is an abundance of contact manifolds for which the topological entropy or the exponential orbit growth rate is positive for all Reeb flows. Examples and dynamical properties of those manifolds are investigated in \cite{AASS,Alves-Cylindrical,Alves-Anosov,A2,AlvesColinHonda2017,AlvesMeiwes2018,FrauenfelderSchlenk2006,MacariniSchlenk2011,Meiwes-thesis}. Some of these results generalize to positive contactomorphisms,  \cite{Dahinden2,Dahinden}.
Aspects of the relationship between the topological entropy of Hamiltonian diffeomorphisms and Floer homology are also studied in \cite{CM,CGG,FrauenfelderSchlenk2006}, and other aspects of the relationship between topological entropy of Hamiltonian diffeomorphisms and symplectic topology are studied in~\cite{BM-quasi,Khanevsky}.


Our investigation of stability properties of $h_{\topo}$ with respect to distances important in the symplectic context started in discussions with Leonid Polterovich. In particular, a question of Polterovich is whether $h_{\topo}$ is lower semi-continuous on the group of  Hamiltonian diffeomorphisms equipped with the Hofer metric. \color{black}  
As remarked above, the paper \cite{AlvesMeiwesBraids},  in which that question was answered positively in the case of surfaces, was a crucial  inspiration for the present paper; our approach also has points in common with the one of \cite{Hutchings-braids}, which generalises the results in \cite{AlvesMeiwesBraids}. Moreover, the work \cite{DahindenC0}, by the second author, was the first to use symplectic methods to study the stability of $h_{\rm top}$ with respect to $d_{C^0}$. In \cite{ADMM}, Merlin and the first three authors used variational methods to study the stability of $h_{\rm top}$ with respect to $\overline{d}_{C^0}$ for geodesic flows of closed Riemannian manifolds. Besides \cite{AlvesMeiwesBraids}, the articles \cite{CM} and \cite{Hutchings-braids} also study stability of $h_{\rm top}$ with respect to the Hofer metric.

\subsection{Future directions}

Motivated by the results of the present paper we propose the following conjectures:
\begin{conj} \label{conj:Reeb}
Given a closed co-oriented contact $3$-manifold $(Y,\xi)$, the entropy functional $$h_{\rm top}:\mathcal{R}(Y,\xi) \to [0,\infty)$$
is lower semi-continuous with respect to $d_{C^0}$.
\end{conj}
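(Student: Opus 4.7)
Since Conjecture~\ref{conj:Reeb} extends Theorem~\ref{thm:main} from a $C^{\infty}$-open and dense subset of $\mathcal{R}(Y,\xi)$ to the whole space, the natural first attempt is to bootstrap from Theorem~\ref{thm:main} via a $C^{\infty}$-approximation argument. Fix an arbitrary $\alpha_0 \in \mathcal{R}(Y,\xi)$ and $\varepsilon > 0$; we may assume $h_{\rm top}(\phi_{\alpha_0}) > 0$, else the claim is trivial. Newhouse's theorem only gives \emph{upper} semi-continuity of $h_{\rm top}$ in $C^{\infty}$, so a generic smooth perturbation could lower the entropy by an unbounded amount. The tool to prevent this is Katok's horseshoe theorem applied to the $C^{\infty}$ flow $\phi_{\alpha_0}$ in dimension three: it produces a compact uniformly hyperbolic $\phi_{\alpha_0}$-invariant set $\Lambda_0 \subset Y$ with $h_{\rm top}(\phi_{\alpha_0}|_{\Lambda_0}) \geq h_{\rm top}(\phi_{\alpha_0}) - \varepsilon/3$. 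Since $R_\alpha$ depends on the first derivatives of $\alpha$, a $C^{2}$-small perturbation of $\alpha_0$ induces a $C^{1}$-small perturbation of $R_{\alpha_0}$, and by the structural stability of hyperbolic sets the perturbed flow retains a conjugate hyperbolic set whose entropy stays above $h_{\rm top}(\phi_{\alpha_0}) - \varepsilon/2$.

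Since the set on which Theorem~\ref{thm:main} is established is $C^{\infty}$-open and dense in $\mathcal{R}(Y,\xi)$, one can choose, for any prescribed $\eta > 0$, a contact form $\alpha'$ lying simultaneously in this set and in the $C^{2}$-neighborhood of $\alpha_0$ above, with $d_{C^0}(\alpha', \alpha_0) < \eta$ and $h_{\rm top}(\phi_{\alpha'}) \geq h_{\rm top}(\phi_{\alpha_0}) - \varepsilon/2$. Applying Theorem~\ref{thm:main} at $\alpha'$ yields a constant $\delta' = \delta'(\alpha', \varepsilon/2) > 0$ such that every $\alpha$ with $d_{C^0}(\alpha, \alpha') < \delta'$ satisfies $h_{\rm top}(\phi_\alpha) \geq h_{\rm top}(\phi_{\alpha'}) - \varepsilon/2 \geq h_{\rm top}(\phi_{\alpha_0}) - \varepsilon$. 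If the preliminary choice of $\alpha'$ can be arranged with $\eta < \delta'/2$, the triangle inequality transports this lower bound to every $\alpha$ with $d_{C^0}(\alpha, \alpha_0) < \delta'/2$, giving lower semi-continuity of $h_{\rm top}$ at $\alpha_0$.

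The principal obstacle, and in my view the crux of the conjecture, is precisely this last quantitative matching of scales: the constant $\delta'(\alpha', \varepsilon/2)$ is produced implicitly by the proof of the technical Theorem~\ref{thm:stability}, and could a priori collapse to zero as $\alpha'$ converges in $C^{\infty}$ to a degenerate $\alpha_0$. To circumvent this, one would need a quantitative refinement of Theorem~\ref{thm:stability} bounding $\delta'$ from below by intrinsic invariants of the entropy-detecting link $\mathcal{L}_\varepsilon^{\alpha'}$---for instance the minimum action among its orbits and a uniform modulus controlling the asymptotic convergence rates in Siefring's formulas---together with a perturbation scheme tailored to $\Lambda_0$, for example supported away from the orbits detecting the horseshoe entropy, so that these invariants remain bounded away from their degenerate limits along the approximation $\alpha' \to \alpha_0$.

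An alternative, more direct route is to bypass the approximation altogether and extend the core results to degenerate $\alpha_0$. This would entail generalizing Theorem~\ref{thm:CH_recover} of \cite{Meiwes_horseshoes} to Morse-Bott, or even fully degenerate, contact forms in the spirit of Bourgeois's cylindrical contact homology, and upgrading Siefring's asymptotic analysis so that the holomorphic cylinders arising in the neck-stretching of Section~\ref{sec:proofmainresult} retain sufficient rigidity at degenerate limiting orbits to detect the requisite free homotopy classes in $Y \setminus \mathcal{L}_0$. Provided the count $\# \Omega^T_{\alpha_0}(\mathcal{L}^{\alpha_0}_\varepsilon)$ can still be defined and holomorphically transported to nearby $\alpha$, the remainder of the argument for Theorem~\ref{thm:main} would go through unchanged, and Conjecture~\ref{conj:Reeb} would follow.
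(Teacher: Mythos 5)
The statement you have set out to prove is Conjecture~\ref{conj:Reeb}, which the paper itself leaves open: it appears under ``Future directions'' and no proof of it is given anywhere in the text, so there is no argument of the authors to compare yours against. Your proposal does not close it either, and you essentially say so yourself. The Katok-horseshoe-plus-structural-stability step is sound, but it only reproves the known lower semi-continuity of $h_{\rm top}$ for three-dimensional flows in the $C^1$ (hence $C^\infty$) topology; the whole difficulty of the conjecture is the passage from $C^\infty$-closeness of the auxiliary form $\alpha'$ to $d_{C^0}$-closeness of the forms one actually wants to control, and there your scheme is circular: you must choose $\alpha'$ with $d_{C^0}(\alpha_0,\alpha')<\delta'(\alpha')/2$, but $\delta'$ is produced by Theorem~\ref{thm:stability} only \emph{after} $\alpha'$ is fixed. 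By that theorem, $\delta'$ depends on $\mathrm{sys}(\alpha')$ and on $\mathfrak{h}(\alpha',\mathcal{L})$ for a link $\mathcal{L}$ of $\phi_{\alpha'}$-orbits (built from a global surface of section and the entropy-carrying link of Theorem~\ref{thm:CH_recover}, both of which exist only by genericity of $\alpha'$ and change with $\alpha'$). Nothing currently bounds $\mathfrak{h}(\alpha',\mathcal{L})$ away from $0$, nor keeps the hypertightness-in-the-complement structure, uniformly along a sequence $\alpha'\to\alpha_0$ when $\alpha_0$ is an arbitrary (possibly degenerate, GSS-less) contact form; this is exactly the obstruction you name in your third paragraph, and naming it is not overcoming it.

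It is worth noting that the paper's proof of Theorem~\ref{thm:main} faces this same uniformity problem and resolves it only because the reference form lies in the special set $\mathcal{V}_0(Y,\xi)$: there the approximating non-degenerate forms $\alpha_k$ are arranged to coincide with $\alpha_0$ on a locally maximal hyperbolic set containing $\mathcal{L}^{\alpha_0}_{\epsilon}$ and near the binding of the surface of section, so that $\mathrm{sys}(\alpha_k)=\mathrm{sys}(\alpha_0)$ and the relevant $\mathfrak{h}$ is bounded below uniformly in $k$, yielding a single $\delta$ for the whole sequence. For a general $\alpha_0$ no such arrangement is available, because the link and the surface of section exist only for the approximating forms, not for $\alpha_0$ itself. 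Your second and fourth paragraphs correctly identify what new input would be needed (a quantitative, degeneration-stable version of Theorem~\ref{thm:stability}, or Morse--Bott/degenerate extensions of Theorem~\ref{thm:CH_recover} and of the asymptotic analysis), but these are a research program rather than a proof; as written, the conjecture remains open.
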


\begin{conj} \label{conj:geod}
Given a closed orientable surface $S$, the entropy functional $$h_{\rm top}:{\rm Met}(S) \to [0,\infty)$$
is lower semi-continuous with respect to $\overline{d}_{C^0}$.
\end{conj}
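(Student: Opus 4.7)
The plan is to upgrade Theorem~\ref{thm:geod} by removing the non-degeneracy hypothesis. First, I would use the isometric embedding $\upphi\colon (\mathrm{Met}(S),\overline{d}_{C^0}) \hookrightarrow (\mathcal{R}(T^1S,\xi_{\rm geo}),d_{C^0})$ from Section~\ref{subsec:c0distRiemann} to reduce Conjecture~\ref{conj:geod} to a lower semi-continuity statement for $h_{\rm top}\colon \mathcal{R}(T^1S,\xi_{\rm geo}) \to [0,+\infty)$ at each point of the form $\alpha_{g_0}$, including degenerate $g_0$. Then I would revisit the strategy outlined in Section~\ref{sec:strategy} and try to extend its two main inputs, namely Meiwes's recovery theorem (Theorem~\ref{thm:CH_recover}) and the existence of a binding link of a global surface of section, to degenerate $\alpha_0 := \alpha_{g_0}$.

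The essential new ingredient is a degenerate version of Theorem~\ref{thm:CH_recover}: for any such $\alpha_0$ and any $\epsilon > 0$, a link $\mathcal{L}^{\alpha_0}_\epsilon$ of Reeb orbits of $\alpha_0$ whose complement in $T^1S$ carries free homotopy classes represented by Reeb orbits of action $\leq T$ growing exponentially with rate exceeding $h_{\rm top}(\phi_{\alpha_0}) - \epsilon$. I would attempt to obtain this by approximating $\alpha_0$ in $C^\infty$ by non-degenerate contact forms $\tilde\alpha_k \to \alpha_0$, applying Meiwes to each $\tilde\alpha_k$ to produce links $\mathcal{L}^{(k)}_\epsilon$, and extracting a limiting link among the (possibly Morse--Bott) periodic orbits of $\alpha_0$. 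The combinatorial control in the limit would likely require Morse--Bott contact-homological techniques in the spirit of Bourgeois, combined with a replacement for the binding link $\mathcal{L}_b$ in Section~\ref{sec:strategy}, for instance the binding of a broken book decomposition, which adapts more readily to the degenerate setting than a Birkhoff-type global surface of section.

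Once the degenerate recovery theorem is in place, the remainder of the argument should go through with little change. The stretching-the-neck construction and the holomorphic-building analysis behind Theorem~\ref{thm:stability} in Section~\ref{sec:proofmainresult} rely on $d_{C^0}$-proximity to prevent the moduli spaces from breaking along the homotopy, and on Siefring's asymptotic formulas to identify genuine Reeb orbits of the nearby $\alpha_n := \alpha_{g_n}$; neither of these steps needs non-degeneracy of $\alpha_0$ itself, only of $\alpha_n$, which can be arranged by a small $C^\infty$-perturbation of each $g_n$ and a density argument as at the end of Section~\ref{sec:strategy}. The main obstacle is therefore the degenerate recovery theorem: tracking how Meiwes's combinatorics behave as $\tilde\alpha_k \to \alpha_0$ in $C^\infty$, when orbits may accumulate, bifurcate, or coalesce into Morse--Bott families in the limit, while ensuring that enough distinct free homotopy classes survive in the complement of the limiting link to witness the full value $h_{\rm top}(\phi_{\alpha_0})$ rather than the a priori smaller quantity $\limsup_k h_{\rm top}(\phi_{\tilde\alpha_k})$ produced by Newhouse's upper semi-continuity \cite{N89}.
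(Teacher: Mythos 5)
You should note first that the statement you are trying to prove is stated in the paper as Conjecture~\ref{conj:geod}, in the ``Future directions'' section: the authors do not prove it, and they only establish the weaker Theorem~\ref{thm:geod}, which requires $g$ to be non-degenerate. So your proposal is not being measured against a proof in the paper but against an open problem, and as written it is a research program rather than a proof: the step you yourself flag as ``the main obstacle'' --- a degenerate version of Theorem~\ref{thm:CH_recover} producing a link of Reeb orbits of the \emph{degenerate} form $\alpha_0=\alpha_{g_0}$ whose complement carries exponentially many proper link classes, each represented by a unique (and, per Definition~\ref{def:homotopicalgrowth}, non-degenerate) orbit --- is precisely the missing ingredient, and nothing in your sketch supplies it. Passing to non-degenerate approximants $\tilde\alpha_k\to\alpha_0$ and ``extracting a limiting link'' is where the argument breaks: the links $\mathcal{L}^{(k)}_\epsilon$ may have periods and numbers of components tending to infinity, orbits may coalesce or disappear in the limit, hypertightness in the complement of a limiting link is not guaranteed, and there is no control ensuring that the classes in $\Omega^T_{\tilde\alpha_k}(\mathcal{L}^{(k)}_\epsilon)$ survive as classes for $\alpha_0$.

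A second, more structural issue is your claim that the stability result does not need non-degeneracy of $\alpha_0$. Theorem~\ref{thm:stability} assumes $\alpha_0$ non-degenerate, and this is used throughout Section~\ref{sec:proofmainresult}: the SFT compactness statements (Theorems~\ref{thm:SFT1}, \ref{thm:SFT2}) require non-degenerate limiting forms $C^\pm\alpha_0$, the choice of $\mu_{\alpha_0,\mathcal{L}_0}$ in \eqref{eq:mu_sys}--\eqref{eq:defdelta} uses a spectral gap around the actions of the link components, and the asymptotic analysis of cylinders ending on $\mathcal{L}_0$ uses non-degeneracy of those orbits. Even if you only apply Theorem~\ref{thm:stability} to non-degenerate forms $\alpha_k$ near $\alpha_0$ (as in the paper's proof of Theorem~\ref{thm:main}), the constant $\delta$ it provides depends on $\mathrm{sys}(\alpha_k)$ and $\mathfrak{h}(\alpha_k,\mathcal{L}^{(k)}_\epsilon)$; in the paper these are kept uniform in $k$ only because the approximants are arranged to coincide with $\alpha_0$ on a neighbourhood of a fixed locally maximal hyperbolic set containing a fixed link of orbits of $\alpha_0$ itself. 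Without the degenerate recovery theorem you cannot fix such a link, so the admissible $\delta_k$ may shrink to $0$ and no neighbourhood of $g_0$ in $\overline{d}_{C^0}$ is obtained. Likewise, the replacement of the Birkhoff-type surface of section by a broken book decomposition would need an existence and hypertightness statement for degenerate geodesic flows that is not available here. In short, the proposal correctly identifies the difficulties but does not overcome them; the conjecture remains open.
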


Clearly, Conjecture \ref{conj:Reeb} implies Conjecture \ref{conj:geod}.

\subsection*{Plan of the paper}
In Section~\ref{sec:preliminaries} we recall basic notions relevant to the paper, such as the definition of finite energy holomorphic curves in exact symplectic cobordisms. In Section~\ref{sec:robustproof}, we formulate the stability result,  Theorem~\ref{thm:stability},  and show how Theorem~\ref{thm:main} follows from it. Section~\ref{sec:proofmainresult} is the core section of the paper, where a proof of Theorem~\ref{thm:stability} is given. The last section, Section~\ref{sec:geodesicflows}, deals with our results on the lower semi-continuity and robustness of $h_{\topo}$ for geodesic flows. 
\subsection*{Acknowledgements}
The first author thanks Umberto Hryniewicz for helpful discussions. An important part of this work was completed when the first author visited the Tel Aviv University during March and April of 2023: he thanks TAU and Leonid Polterovich for the hospitality. The third author thanks Umberto Hryniewicz, Lev Buhovsky, Yaron Ostrover, and Leonid Polterovich for their support. 
\color{black}
\section{Preliminaries}\label{sec:preliminaries}

We want to study Reeb dynamics in contact manifolds, but our main tools -- holomorphic curves and holomorphic buildings -- require symplectic and complex structures. We start by recalling the definition of the symplectization of a contact manifold  (Section~\ref{subsec:symplectization}) and the definition of symplectic cobordisms (Section~\ref{sec:cobordisms}) between contact manifolds. These sympletic cobordisms can be thought of as an interpolation between the contact manifolds at the ends. These symplectic manifolds admit a class of almost complex structures which are suitable to study the underlying contact structures. 

In Section~\ref{subsec:HolomorphicCurves} we recall the definition of finite energy holomorphic curves in these symplectizations and symplectic cobordisms and describe how to interpret them as connections between periodic Reeb orbits. Multiple of these curves may fit together to holomorphic buildings, which we describe in Section~\ref{sec:buildings}. These are the main tools to connect closed Reeb orbits. 

In the following we fix a closed co-oriented contact $3$-manifold $(Y,\xi)$. As above we denote the space of contact forms on $(Y, \xi)$ by $\mathcal{R}(Y,\xi)$. 
\subsection{Periodic Reeb orbits}\label{sec:Reeb_orbits}
Let $\alpha$ be a contact form on $(Y,\xi)$. We denote by $\P(\alpha)$ the set of periodic orbits of the Reeb vector field $R_{\alpha}$, where we do not distinguish geometrically identical periodic orbits\footnote{I.e., orbits with the same image.} with identical periods, but we do distinguish geometrically identical periodic orbits with distinct periods. We may think of elements of $\P(\alpha)$ as equivalence classes of pairs $(\gamma,T)$ where $\gamma:\R\to Y$ is a $T$-periodic trajectory of $R_{\alpha}$, and two pairs $(\gamma_0,T_0)$, $(\gamma_1,T_1)$ are equivalent if, and only if, $\gamma_0(\R)=\gamma_1(\R)$ and $T_0=T_1$.  The period $T$ of $\gamma$ coincides with its action $\mathcal{A}_{\alpha}(\gamma) = \int_\gamma \alpha$.  If $(\gamma,T)$ is such that $T$ is minimal among the periods of the periodic orbits with the same geometric image as $\gamma$, the orbit  $\gamma=(\gamma,T)$ is said to be \textit{simple}.  A periodic orbit $\gamma = (\gamma,T) \in \P(\alpha)$ is called \textit{non-degenerate} if the transverse Floquet multipliers of $\gamma$ in period $T$ are not equal to $1$. The contact form $\alpha$ is called \textit{non-degenerate} if all elements in $\mathcal{P}(\alpha)$ are non-degenerate.

\subsection{Complex structures} Given a contact form $\alpha$ on $(Y,\xi)$, the 2-form $d\alpha$ restricts to a symplectic form on the vector bundle $\xi\to Y$. A complex structure $j$ on $\xi$ is said to be compatible with $d\alpha$, if $d\alpha (\cdot, j \cdot)$ is a positive-definite inner product on $\xi$ and hence 
\begin{equation}\label{metric} 
\left<u,v\right>_j:=\alpha(u)  \alpha(v) + d\alpha(\pi_\xi u, j \pi_\xi v), \qquad u,v \in TY,
\end{equation} 
is a Riemannian metric on $Y$. Here $\pi_\xi:TY \to \xi$ stands for the projection  along the Reeb vector field. We denote the set of $d\alpha$-compatible complex structures on $\xi$ by  $\mathfrak{j}(\alpha)$. The set $\mathfrak{j}(\alpha)$ is well known to be non-empty and contractible in the $C^\infty$-topology. Note that $\mathfrak{j}(\alpha)$ depends only on the co-oriented contact structure $\xi$, whose orientation is induced by $d\alpha$.

\subsection{A class of exact symplectic cobordisms}
\subsubsection{Symplectizations}\label{subsec:symplectization}
Let $\alpha$ be a contact form on $(Y,\xi)$. The symplectization of $\alpha$ is the product $\mathbb{R} \times Y$ equipped with the symplectic form $\omega = d\lambda$, where $\lambda = e^s \alpha$ and where $s$ denotes the $\R$-coordinate on $\mathbb{R} \times Y$. We think of the symplectization as a trivial exact symplectic cobordism, where the ``boundary'' consists of two copies of $(Y,\xi)$ at $s=\pm \infty$. We adopt the convention that this cobordism goes from $+\infty$ to $-\infty$.

Any $j \in \mathfrak{j}(\alpha)$ can be extended to an $\mathbb{R}$-invariant almost complex structure $J$ on $\mathbb{R} \times Y$ by demanding that
\begin{equation} \label{eq16}
 J \cdot \partial_s = R_\alpha \mbox{ and }  J|_\xi = j.
\end{equation} 
One checks that $J$ is $d(e^s \alpha)$-compatible. We denote by $\mathcal{J}(\alpha)$ the space of almost complex structures $J$ on $\mathbb{R} \times Y$ which satisfy \eqref{eq16} for some $j \in \mathfrak{j}(\alpha)$.

Almost complex structures $J\in \mathcal{J}(\alpha)$ are invariant under shifts in the real direction, that is, it holds that for all $\mathbf{b} \in \R$ we have that
$\mathbf{T}_{\mathbf{b}}^* J = J$, 
where the \textit{translation map} $\mathbf{T_b}: \R \times Y \to \R \times Y $, $\mathbf{b} \in \R$,  is defined by  
\begin{equation}\label{eq:translation}
    \mathbf{T_b}(s,p) := (s+\mathbf{b},p).
\end{equation}

\subsubsection{Exact symplectic cobordism between two contact forms}\label{sec:cobordisms}
Let $\alpha^+$ and $\alpha^-$ be two contact forms on the contact manifold $(Y,\xi)$. There exists a smooth function $f:Y \to (0,+\infty)$ satisfying $\alpha^+ = f \alpha^-$. Assume that $f>1$ pointwise (indicating a ``monotone'' increase). In order to interpolate between these contact forms, we deform the symplectization as follows.

Choose a smooth interpolation function $\chi:\R \times Y \to \R$ satisfying
\begin{equation}\label{chi_symp_form}
\begin{aligned}
 &\chi  = e^sf \mbox{ on } [1,+\infty)\times Y,\\
&\chi  = e^s  \mbox{ on } (-\infty,0]\times Y,\\
 &\partial_s\chi  > 0 \mbox{ on } \R \times Y,
\end{aligned}
\end{equation}
where $s$ is the $\R$-coordinate.  Let $\lambda = \chi \alpha_-$.  It then follows that 
\begin{equation}\label{exact_symp_form_on_cob}
\varpi:= d\lambda
\end{equation}
is a symplectic form on $\R \times Y$.

We call $W= (\R \times Y, \lambda, \varpi)$ an \textit{exact symplectic cobordism from $\alpha^+$ to $\alpha^-$}. On such a cobordism we consider almost complex structures $\bar J$ defined as follows: fix any two choices $J^+ \in \mathcal{J}(\alpha^+)$, $J^- \in \mathcal{J}(\alpha^-)$ and choose $\bar J$ satisfying
$$
\begin{aligned}
\bar J & =  J^+   \mbox{ on }  [1,+\infty)\times Y,  \\
\bar J & =  J^-   \mbox{ on }  (-\infty,0]\times Y,  \\
\bar J & \mbox{ is compatible with } \varpi \mbox{ on } [0,1]\times Y.
\end{aligned}
$$
We say that $\bar J$ is compatible with the cobordism $W$ and denote the space of such almost complex structures by $\mathcal{J}_\varpi(J^+,J^-)$. Finally we note that standard arguments will show that $\mathcal{J}_\varpi(J^+,J^-)$ is non-empty and contractible in the $C^\infty$-topology.

\subsubsection{Splitting families} \label{splitting}

Let $\alpha^+$, $\alpha$ and $\alpha^-$ be contact  forms on $(Y,\xi)$. 
Let $(\R\times Y,\lambda^+, \varpi^+)$ be an exact symplectic cobordism from $\alpha^+$ to $\alpha$ and let $(\R\times Y,\lambda^-,\varpi^-)$ be an exact symplectic cobordism from $\alpha$ to $\alpha^-$, as defined in the previous paragraph.

Fix $J^{\pm} \in \mathcal{J}(\alpha^{\pm}), J_{\alpha} \in \mathcal{J}(\alpha)$.  Next we choose $\bar J^+ \in \mathcal{J}_{\varpi^+}(J^+,J_{\alpha})$ and $\bar J^- \in \mathcal{J}_{\varpi^-}(J_{\alpha},J^-)$ and then define for any $R>0$ the almost complex structure $\widehat J^R$ by
\begin{equation}\label{split}
\begin{aligned}
\widehat J^R &= (\mathbf{T}_{-R})^*\bar J^+\mbox{ on } [0,+\infty)\times Y,\\
\widehat J^R &= (\mathbf{T}_{R+1})^*\bar J^-\mbox{ on }  (-\infty,0]\times Y,\\
\end{aligned}
\end{equation} where $\mathbf{T}_R: \R \times Y \to \R \times Y$ are the translation maps defined in \eqref{eq:translation}.
    By definition, $\widehat J^R$ is smooth and agrees with $J_{\alpha}$ on the neck region $[-R,R]\times Y$. 
Any such almost complex structure $\widehat{J}^R$  is compatible with the symplectic form $\widehat{\varpi}^R = d\widehat{\lambda}^R$, where 
\begin{equation}
\begin{aligned}
\widehat{\lambda}^R &= (\mathbf{T}_{-R})^*\lambda^+\mbox{ on } [0,+\infty)\times Y,\\
\widehat{\lambda}^R &= (\mathbf{T}_{R+1})^* \lambda^-\mbox{ on }  (-\infty,0]\times Y.\\
\end{aligned}
\end{equation}
We refer to $(\R \times Y, \widehat\lambda^R, \widehat\varpi^R)$ as {splitting symplectic cobordisms}.

\subsection{Finite energy holomorphic curves in exact symplectic cobordisms}\label{subsec:HolomorphicCurves} 
The goal is to connect periodic orbits of (possibly equal) Reeb vector fields. The standard way this is done is to consider pseudoholomorphic curves with singularities (called punctures) in the symplectic cobordism. If the punctures are not removable but have finite energy, then it turns out that their divergence behaviour is controlled: the $\R$-coordinate of the symplectization will tend to $\pm \infty$ and the curve approximates in the limit a trivial cylinder over a Reeb orbit. We may think of the curve as a cobordism from the link of Reeb orbits covered by punctures that tend to $ +\infty$ to the link of Reeb orbits covered by punctures that tend to $ -\infty$.

\subsubsection*{In trivial cobordisms} Let $(S,j)$ be a compact boundaryless Riemann surface and let $J \in \mathcal{J}(\alpha)$. A \textit{finite energy (pseudo-)holomorphic curve in $(\R \times Y,J)$} is a smooth map $$ \wtil=(a,w): S \setminus \Gamma \to \mathbb{R} \times Y, $$ where $\Gamma\subset S$ is a finite set of punctures, that satisfies
$$\begin{aligned}
& \bar \partial_J \wtil := \frac{1}{2}\left(d\wtil + J(\wtil) \circ d\wtil \circ j\right)=0,
\end{aligned}$$
and has finite Hofer energy
\begin{equation}
0 < E(\widetilde{w}):= \sup_{q \in \mathcal{E}} \int_{S \setminus \Gamma} \widetilde{w}^*d(q\alpha)<+ \infty,
\end{equation}
where
\begin{equation}\label{energia} 
\mathcal{E}= \{ q: \mathbb{R} \to [0,1] \mbox{ smooth }\, |\,  q' \geq 0\}.
\end{equation}

If $\widetilde{w}$ is a cylinder, that is, if $S$ is the Riemann sphere and $\Gamma$ consists of two points, then in suitable cylindrical coordinates $\varphi:\R \times S^1 \to S\setminus \Gamma$, the holomorphic equation for $\widetilde{u} = \widetilde{w} \circ \varphi:\R \times S^1 \to \R \times Y$ becomes 
$\partial_s \widetilde{u} + J(\widetilde{u}) \circ \partial_t \widetilde{u}= 0$. Special cylinders are the  \textit{trivial cylinders over a Reeb orbit} $(\gamma,T)$ of $\alpha$, cylinders $\widetilde{u} = \widetilde{w} \circ \varphi$ of the form  
$$\widetilde{u}(s,t) = (Ts,\gamma(Tt)).$$

\textit{Regular almost complex structures in trivial cobordisms.} Assume that $\alpha$ is non-degenerate. An almost complex structure $J\in \mathcal{J}(\alpha)$ will be called \textit{regular} if the linearization of $\overline{\partial}_J$ on any simply covered finite energy holomorphic curve $\widetilde{w}$ on $(\mathbb{R}\times Y, J)$ (which is in fact a Fredholm operator between suitable function spaces (see~\cite{Dragnev})) is surjective. The regular almost complex structures in $\mathcal{J}(\alpha)$ form a $C^{\infty}$-generic subset. 
\color{black}
\subsubsection*{In exact symplectic cobordisms} Let $\alpha_+,\alpha_-$ be contact forms on $(Y,\xi)$ and consider an exact symplectic cobordism $W=(\mathbb{R} \times Y, \lambda, \varpi)$ from $\alpha^+$ to $\alpha^-$ of the kind defined in Section~\ref{sec:cobordisms}. Consider $\bar J \in \mathcal{J}_\varpi(J^+,J^-)$, where $J^+ \in \mathcal{J}(\alpha^+)$ and $J^- \in \mathcal{J}(\alpha^-)$. A \textit{finite energy (pseudo-)holomorphic curve  in $(W,\bar{J})=(\R\times Y, \lambda, \varpi, \bar{J})$} is a smooth map $$ \wtil=(a,w): S \setminus \Gamma \to \R \times Y, $$ where $\Gamma \subset S$ is a finite set, that satisfies
$$
\begin{aligned}
& \bar \partial_{\bar J} \wtil := \frac{1}{2}\left(d\wtil + \bar J(\wtil) \circ d\wtil \circ j\right)=0,
\end{aligned}
$$
and has finite energy
\begin{equation}\label{encob}
0<E_{\alpha^+}(\wtil) + E_c(\wtil) + E_{\alpha^-} (\wtil) < +\infty,
\end{equation}
where
$$
\begin{aligned}
&E_{\alpha^+}(\wtil) = \sup_{q \in \mathcal{E}} \int_{\wtil^{-1}([1,+\infty)\times Y)} \wtil^*d(q\alpha^+), \\
&E_c(\wtil) = \int_{\wtil^{-1}([0,1]\times Y)} \wtil^*\varpi, \\
&E_{\alpha^-}(\wtil) = \sup_{q \in \mathcal{E}} \int_{\wtil^{-1}((-\infty,0]\times Y)} \wtil^*d(q\alpha^-).
\end{aligned}
$$

Let $\widehat J^R$ be a splitting almost complex structure as in Section~\ref{splitting}. These are defined by~\eqref{split}, where $J^\pm\in\J(\alpha^\pm)$, $J_{\alpha}\in\J(\alpha)$, $\bar J^+ \in \mathcal{J}_{\varpi^+}(J^+,J_{\alpha})$ and $\bar J^- \in \mathcal{J}_{\varpi^-}(J_{\alpha},J^-)$. Finite energy holomorphic curves $\wtil$ in $(W, \widehat{J}^R)$ are defined analogously, but we need a modified version of the finite energy condition:
\begin{equation}
0<E_{\alpha^+} (\wtil)+ E_{\alpha,\alpha^+}(\wtil) + E_{\alpha}(\wtil)+ E_{\alpha^-,\alpha}(\wtil)+ E_{\alpha^-}(\wtil)  < +\infty,
\end{equation}
where
$$
\begin{aligned}
&E_{\alpha^+}(\wtil) = \sup_{q \in \mathcal{E}} \int_{\wtil^{-1}([R+1,+\infty)\times Y)} \wtil^*d(q\alpha^+), \\
&E_{\alpha,\alpha^+}(\wtil) =\int_{\wtil^{-1}([R,R+1]\times Y)} \wtil^*(T_{-R})^*\varpi^+ ,\\
&E_{\alpha}(\wtil) = \sup_{q \in \mathcal{E}} \int_{\wtil^{-1}([-R,R]\times Y)} \wtil^*d(q\alpha), \\
&E_{\alpha^-,\alpha}(\wtil) = \int_{\wtil^{-1}([-R-1,-R]\times Y)} \wtil^*(T_{R+1})^*\varpi^- ,\\
&E_{\alpha^-}(\wtil) = \sup_{q \in \mathcal{E}} \int_{\wtil^{-1}((-\infty,-R-1]\times Y)} \wtil^*d(q\alpha^-) .\\
\end{aligned}
$$ 
See~\eqref{energia} and Section~\ref{splitting} for definitions.

The elements of $\Gamma \subset S$ are called punctures of $\wtil$. Assume that all punctures are non-removable. We only describe the behaviour of finite energy pseudoholomorphic curves near non-removable punctures on exact symplectic cobordisms, since the behaviour in the cases of symplectizations and splitting symplectic cobordisms is the same.

According to \cite{Hofer93,HWZ} the punctures are classified in two different types. Before presenting this classification we introduce some notation:
\begin{itemize}
\item If $z\in \Gamma $ then a neighbourhood of $z$ in $(S,j)$ is biholomorphic to the unit disk $\mathbb{D}\subset\C$ in such a way that $z\equiv 0$. Then $\mathbb{D}\setminus \{0\}$ is biholomorphic to $[0,+\infty) \times \R / \Z$ via the map $s+it \mapsto e^{-2\pi(s+it)}$. We always write $\wtil=\wtil(s,t)$ near $z\in \Gamma$ using these exponential holomorphic coordinates $(s,t)$ in $[0,+\infty)\times \R/ \Z$.

\end{itemize}
The non-removable punctures of a finite energy pseudoholomorphic curve $\wtil=(a,w)$ are classified as follows:
\begin{itemize}
\item $z \in \Gamma $ is a positive puncture, i.e., $a(z')\to + \infty$ as $z' \to z$ and given a sequence $s_n \to +\infty$ there exists a subsequence, still denoted $s_n$, and a periodic Reeb orbit $\gamma^+$ of $\alpha^+$ with period $T^+$ such that $w(s_n,\cdot)$ converges in $C^\infty$ to $\gamma^+(T^+\cdot)$ as $n\to +\infty$.
\item $z \in \Gamma $ is a negative puncture, i.e., $a(z')\to - \infty$ as $z' \to z$ and given a sequence $s_n \to +\infty$ there exists a subsequence, still denoted $s_n$, and a periodic Reeb orbit $\gamma^-$ of $\alpha^-$ with period $T^-$ such that $w(s_n,\cdot)$ converges in $C^\infty$ to $\gamma^-(-T^-\cdot)$ as $n\to +\infty$.

\end{itemize}

\begin{defn}
For a  puncture $z$ we say that a periodic Reeb orbit~$\gamma \in \P(\alpha)$ is an \textbf{asymptotic limit} of $\wtil$ at $z$ if there is a sequence $s_n$ such that  $w(s_n,\cdot)$ converges to~$\gamma$. If all asymptotic limits at $z$ coincide with $\gamma$, then we say that $\wtil$ is asymptotic to $\gamma$.
\end{defn}

\begin{rem}
If a non-degenerate periodic Reeb orbit $\gamma$ is an asymptotic limit of $\wtil$ at a puncture $z$ then $\wtil$ is asymptotic at $z$ to $\gamma$. This follows from results of~\cite{Ab,HWZ} where a much more detailed description of the convergence is given. 
\end{rem}

A punctured disk map $u:\D\setminus \{z\}\to \R \times Y$ to $(\mathbb{R} \times Y, \varpi)$  with  $\overline{\partial}_{\bar {J}} u = 0$ that has exactly one positive or negative puncture $z$ in its interior and such that  $u$ is asymptotic at $z$ to a periodic orbit $\gamma$ in the sense above, is called a \textit{holomorphic half-cylinder} positively resp.\ negatively  asymptotic to $\gamma$.

\subsection{Holomorphic buildings} \label{sec:buildings}

Holomorphic buildings appear in the compactifications of moduli space of holomorphic curves in symplectic cobordisms. They consist of multiple holomorphic curves that are "glued together" in coinciding pairs of positive and negative asymptotic limits of punctures. For the Riemann surfaces, these pairs are called nodes, for the pseudoholomorphic curves they are called breaking points. We recall here the definition of holomorphic buildings in the two settings that we need.

\subsubsection{Nodal Riemann surfaces}

\begin{defn}
    A {\bf nodal Riemann surface} with $D$ nodes and $m$ marked points is a tuple $(S,j,\Gamma, \Delta)$ consisting of:
\begin{itemize}
    \item a closed Riemann surface $(S,j)$,
    \item a set $\Gamma$ of $m$ punctures in $S$,
    \item an unordered set $\Delta$ of $2D$ points in $S\setminus \Gamma$ equipped with a fixed-point free involution $\upsigma: \Delta \to \Delta$. Each pair $\{z,\upsigma(z)\}$ is called a node.  
\end{itemize}
\end{defn}

Let $\widehat{\mathbf{S}}$ be the surface obtained by performing connected sum at each node. We then say that $(S,j,\Gamma,\Delta)$ is {\bf connected} if $\widehat{\mathbf{S}}$ is connected. The genus of $\widehat{\mathbf{S}}$ is called the arithmetic genus of $(S,j,\Delta)$.
Two nodal surfaces $(S,j,\Gamma,\Delta)$ and $(S',j',\Gamma',\Delta')$ are equivalent if there is a biholomorphism from $(S,j)$ to $(S',j')$ sending $\Delta$ to $\Delta'$ and $\Gamma$ to $\Gamma'$.

If a punctured Riemann surface $\dot{S}$ is obtained from a closed Riemann surface $S$ by removing the points in a finite set $Z \subset S$, we define the circle compactification 
$$\overline{S}:= \dot{S} \cup (\cup_{z\in Z}\updelta_z),$$
where for each $z\in Z$ the circle $\updelta_z$ is defined by 
$$\updelta_z := (T_z S \setminus \{0\})  /_{\R^+},$$
where $\R^+$ is the set of positive real numbers and its action on $T_z S$ is given by multiplication. There is a natural topology on $\overline{S}$ which makes it compact, and we refer the reader to Chapter 9 of \cite{Wendl-lectures} for its definition.

\subsubsection{Holomorphic buildings in exact symplectic cobordisms.}\label{sec:holomorphic_buildings_exact_cobordisms}

\

Let $\alpha^{\pm} \in \mathcal{R}(Y,\xi)$, and let  $W=(\R \times Y,\lambda, \varpi)$ be an exact symplectic cobordism from $\alpha^+$ to $\alpha^-$ as consider in Section \ref{sec:cobordisms}. 
Choose almost complex structures $J^+ \in \mathcal{J}(\alpha^+)$, $J^- \in \mathcal{J}(\alpha^-)$, and  ${\bar J} \in \mathcal{J}_\varpi(J^+,J^-)$.

\begin{defn}\label{def:building} (See Figure~\ref{fig:building}). Given numbers $\kappa^+\geq 0$ and $\kappa^-\geq 0$, a holomorphic building of type $\kappa^+|1|\kappa^-$ and of arithmetic genus $0$ in $(W,\bar{J}) =(\R \times Y, \lambda, \varpi,\bar{J})$ is a tuple
$$\mathbf{u}=(S,j,\Gamma, \Delta,\upphi,L,(\overline{u}^i)_{-\kappa^- \leq i\leq \kappa^+}),$$
with the various data defined as follows:
\begin{itemize}
    \item $(S,j,\Gamma, \Delta)$ is a connected nodal Riemann surface of arithmetic genus $0$. The nodes in $\Delta$ are called the breaking pairs of $\mathbf{u}$, and the points of $\Gamma$ are called the punctures of $\mathbf{u}$.
    \item The set $\Gamma$ is partitioned in $\Gamma^+$ and $\Gamma^-$. The set $\Gamma^+$ is called the set of positive punctures of $\mathbf{u}$ and the set $\Gamma^-$ is called the set of negative punctures of $\mathbf{u}$.
    \item The function $L$ on $S$ is called the \textbf{level structure}. It satisfies
        \begin{itemize}
        \item[-] $L: S \to \{-\kappa^-,\ldots,-1,0,1,\ldots,\kappa^+\}$ is locally constant and attains every value in $\{-\kappa^-,\ldots,-1,0,1,\ldots,\kappa^+\}$.
        \item[-] Each breaking pair in $\Delta$ can be labelled as $\{z^+,z^-\}$ such that $L(z^+)-L(z^-)=1$.
        \item[-] $L(\Gamma^+)= \kappa^+$ and $L(\Gamma^-)= -\kappa^-$.
        \end{itemize}
    We denote by $\dot{S}_i:= (S \setminus (\Gamma\cup \Delta)) \cap L^{-1}(i)$ the levels of the building.
    \item The decoration $\upphi$ is a choice of orientation reversing orthogonal map $$\upphi: \updelta_{z^+} \to \updelta_{z^-},$$
    for each breaking pair $\{z^+,z^-\}$  in $\Delta$.
    \item For each $i \in \{-\kappa^-,\ldots,-1,0,1,\ldots,\kappa^+\}$ there is a finite energy holomorphic curve $$\overline{u}^i: (\dot{S}_i,j) \to (W_i,J_i)$$ called the $i$-th level of $\mathbf{u}$, where 
    \begin{itemize}
        \item[]  $W_i$ is the symplectization of $\alpha^+$ and $J_i = J^+$ if $i>0$,
        \item[] $(W_0,J_0) = (W,\bar{J})$,
        \item[]  $W_i$ is the symplectization of $\alpha^-$ and $J_i=J^-$ if $i<0$.
    \end{itemize}
    \item  It holds that $\Gamma^+$ are the positive punctures of $\overline{u}^{\kappa^+}$ and $\Gamma^-$ are negative punctures of $\overline{u}^{\kappa^-}$.  For each breaking pair $\{z^+,z^-\}$  in $\Delta$ labelled such that $L(z^+)-L(z^-)=1$, $z^+$ is a negative puncture of $\overline{u}^{L(z^+)}$ and $z^-$ is a positive puncture of $\overline{u}^{L(z^-)}$. Moreover,  $\overline{u}^{L(z^+)}$ at $z^+$ and $\overline{u}^{L(z^-)}$ at $z^-$ detect the same Reeb orbit $\gamma$ (of $\alpha^+$, $\alpha$ or $\alpha^-$), and if $\overline{u}_+: \delta_{z^+} \to \gamma$ and $\overline{u}_-: \delta_{z^-} \to \gamma$ are the parametrizations of $\gamma$ induced by $\overline{u}^{L(z^+)}$ at $z^+$ and by $\overline{u}^{L(z^-)}$ at $z^-$, respectively, we have $\overline{u}_{-}\circ \upphi ( \delta_{z^+}) = \overline{u}_{+}$.
\end{itemize}
\end{defn}

\begin{figure}
    \centering
    \includegraphics[width=1\linewidth]{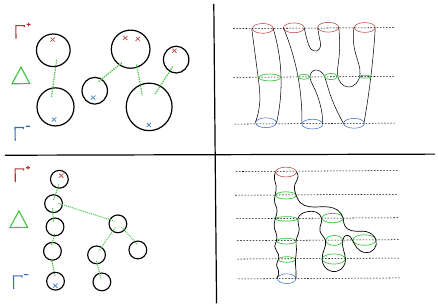}
    \caption{On the top, there is a building that fits Definition~\ref{def:building}. Left is the representation as surfaces with marked points and pairs, the images may look like the curve on the right. On the bottom, there is a cylinder with a downwards directed bubble tree branching off (Section~\ref{sec:defbreakingmain}), which is the kind of curve that arises as SFT-limit of holomorphic cylinders in our setting (Lemma~\ref{lem:v_no_topbot_level}). }
    \label{fig:building}
\end{figure}
Given a holomorphic building $\mathbf{u}=(S,j,\Gamma, \Delta, \upphi,L,(\overline{u}^i)_{-\kappa^- \leq i\leq \kappa^+})$, we let $\dot{S}:= S\setminus (\Gamma \cup \Delta)$ and consider $\overline{S}= \dot{S} \cup (\cup_{z\in \Gamma}\updelta_z) (\cup_{\{z^+,z^-\} \in \Delta} \delta_{z^+} \sim \delta_{z^-} ))$. As our notation suggests, in $\overline{S}$ for each pair $\{z^+,z^-\} \in \Delta$, the circles $ \delta_{z^+}$ and  $ \delta_{z^-}$ are identified, and the identification is done via the map $\upphi: \delta_{z^+} \to \delta_{z^-}$ appearing in the definition of $\mathbf{u}$. We endow $\overline{S}$ with the natural topology that makes it compact; see Chapter 9 of \cite{Wendl-lectures} for the precise definition. We call $\dot{S}$ the domain of  $\mathbf{u}$ and $\overline{S}$ the compactified domain of $\mathbf{u}$.

We will use the following terminology:
\begin{itemize}
    \item We say that $\mathbf{u}$ is a \textit{broken cylinder} if $\dot{S}_i$ is a cylinder for each \linebreak $i \in \{-\kappa^-,\ldots,-1,0,1,\ldots,\kappa^+\}$.
    \item We say that $\mathbf{u}$ is a \textit{cylinder with branches} if the compactified domain $\overline{S}$ is homeomorphic to $[0,1]\times S^1$.
\end{itemize}

\begin{rem}
    If $\mathbf{u}$ is a holomorphic building as in this section and \linebreak$(\overline{u}^i)_{i \in \{-\kappa^-,\ldots,-1,0,1,\ldots,\kappa^+\}}$ denote the levels of $\mathbf{u}$, we will sometimes write $$\mathbf{u}=(\overline{u}^{\kappa^+},\ldots,\overline{u}^1,\overline{u}^0,\overline{u}^{-1},\ldots,\overline{u}^{-\kappa^{-}}),$$ keeping in mind that this notation does not contain all the data needed to characterize the building. We call $\overline{u}^{\kappa^+}$ the top level and $\overline{u}^{-\kappa^{-}}$ the bottom level of the building. 
\end{rem}

\subsubsection{Holomorphic buildings in pairs of exact symplectic cobordisms.}\label{sec:holom_buildings_pairs}

\

Let $\alpha^{\pm}, \alpha \in \mathcal{R}(Y,\xi)$. Let  $W_{\rm top}= (\R \times Y,\lambda_{\rm top}, \varpi_{\rm top})$ be an exact symplectic cobordism from $\alpha^+$ to $\alpha$, and $W_{\rm bot}= (\R \times Y,\lambda_{\rm bot}, \varpi_{\rm bot})$ an exact symplectic cobordism from $\alpha$ to $\alpha^-$. Choose almost complex structures $J^+ \in \mathcal{J}(\alpha^+)$, $J^- \in \mathcal{J}(\alpha^-)$, $J_{\alpha} \in \mathcal{J}(\alpha)$,  $J_{\rm top}\in \mathcal{J}_{\varpi_{\rm top}}(J^+,J_{\alpha})$, and $J_{\rm bot}\in \mathcal{J}_{\varpi_{\rm bot}}(J_{\alpha},J^- )$.

Given numbers $\kappa^+\geq 0$, $\kappa^-\geq 0$, and $\kappa\geq 0$, a holomorphic building of type $\kappa^+|1|\kappa|1|\kappa^-$ and  arithmetic genus $0$ in the pair $[(W_{\rm top},J_{\rm top}),(W_{\rm bot},J_{\rm bot})]$ is a tuple
$$\mathbf{u}=(S,j,\Gamma, \Delta,\upphi, L,(\overline{u}_i)^{-\kappa^- \leq i\leq \kappa +  \kappa^+ +1}),$$
with the various data defined as follows:
\begin{itemize}
    \item $(S,j,\Gamma, \Delta)$ is a connected nodal Riemann surface of arithmetic genus $0$. The pairs in $\Delta$ are called the breaking pairs of $\mathbf{u}$, and the points of $\Gamma$ are called the punctures of $\mathbf{u}$.
    \item The set $\Gamma$ is partitioned in $\Gamma^+$ and $\Gamma^-$. The set $\Gamma^+$ is called the set of positive punctures of $\mathbf{u}$ and the set $\Gamma^-$ is called the set of negative punctures of $\mathbf{u}$.
    \item The level structure is a locally constant function \linebreak $L: S \to \{-\kappa^- ,\ldots,-1,0,1,\ldots,\kappa^+ + \kappa +1 \}$ that attains every value in $\{-\kappa^- ,\ldots,-1,0,1,\ldots,\kappa^+ + \kappa +1\}$ and satisfies:
    \begin{itemize}
        \item[-] Each breaking pair $\{z^+,z^-\}$  in $\Delta$ can be labelled such that $L(z^+)-L(z^-)=1$.
        \item[-] $L(\Gamma^+)= \kappa^+ + \kappa +1$ and $L(\Gamma^-)= -\kappa^- $.
    \end{itemize}
    We let $\dot{S}_i:= (S \setminus (\Gamma\cup \Delta)) \cap L^{-1}(i)$.
    \item The decoration is a choice of orientation reversing  orthogonal map $$\upphi: \updelta_{z^+} \to \updelta_{z^-},$$
    for each breaking pair $\{z^+,z^-\}$  in $\Delta$.
    \item For each $i \in \{-\kappa^- ,\ldots,-1,0,1,\ldots,\kappa^+ + \kappa +1\}$ we have a finite energy holomorphic curve $$\overline{u}^i: (\dot{S}_i,j) \to (W_i,J_i)$$ called the $i$-th level of $\mathbf{u}$, where 
    \begin{itemize}
        \item[]  $W_i$ is the symplectization of $\alpha^+$ and $J_i = J^+$ if \linebreak $\kappa +1 < i \leq \kappa^+ + \kappa + 1$,
        \item[] $(W_{\kappa+1},J_{\kappa+1}) = (W_{\rm top},J_{\rm top})$,
        \item[]  $W_i$ is the symplectization of $\alpha$ and $J_i=J_{\alpha}$ if $0 < i \leq \kappa$,
        \item[] $(W_{0},J_{0}) = (W_{\rm bot},J_{\rm bot})$,
        \item[]  $W_i$ is the symplectization of $\alpha^-$ and $J_i = J^-$ if $-\kappa^- \leq i < 0$. 
    \end{itemize}
    \item It holds that $\Gamma^+$ are positive punctures of $\overline{u}^{\kappa^+}$ and $\Gamma^-$ are positive punctures of $\overline{u}^{\kappa^-}$. For each breaking pair $\{z^+,z^-\}$  in $\Delta$ labelled such that $L(z^+)-L(z^-)=1$, $z^+$ is a negative puncture of $\overline{u}^{L(z^+)}$ and $z^-$ is a positive puncture of $\overline{u}^{L(z^-)}$. Moreover,  $\overline{u}^{L(z^+)}$ at $z^+$ and $\overline{u}^{L(z^-)}$ at $z^-$ detect the same Reeb orbit $\gamma$ (of $\alpha^+$, $\alpha$ or $\alpha^-$), and if $\overline{u}_+: \delta_{z^+} \to \gamma$ and $\overline{u}_-: \delta_{z^-} \to \gamma$ are the parametrizations of $\gamma$ induced by $\overline{u}^{L(z^+)}$ at $z^+$ and by $\overline{u}^{L(z^-)}$ at $z^-$, respectively, we have $\overline{u}_{+}\circ \upphi ( \delta_{z^+}) = \overline{u}_{-}$.
\end{itemize}
We will use the following terminology:
\begin{itemize}
    \item We say that $\mathbf{u}$ is a \textit{broken cylinder} if $\dot{S}_i$ is a cylinder for each \linebreak $i \in \{-\kappa^-,\ldots,-1,0,1,\ldots,\kappa^+ + \kappa + 1 \}$.
    \item We say that $\mathbf{u}$ is a \textit{cylinder with branches} if the compactified domain $\overline{S}$ is homeomorphic to $[0,1]\times S^1$.
\end{itemize}

\begin{rem}
    If $\mathbf{u}$ is a holomorphic building as in this section and \linebreak $(\overline{u}^i)_{i \in \{-\kappa^-,\ldots,-1,0,1,\ldots,\kappa^+ + \kappa+1\}}$ denote the levels of $\mathbf{u}$, we will sometimes write $$\mathbf{u}=(\overline{u}^{\kappa^+ + \kappa + 1},\ldots,\overline{u}^{\kappa+2},\overline{u}^{\kappa+1},\overline{u}^{\kappa},\ldots,\overline{u}^1,\overline{u}^0,\overline{u}^{-1},\ldots,\overline{u}^{-\kappa^- }),$$ keeping in mind that this notation does not contain all the data needed to characterize the building.
\end{rem}

\subsubsection{Breaking orbits in cylinder with branches.} \label{sec:defbreakingmain}
If a holomorphic building $\mathbf{u}=(S,j,\Gamma, \Delta, \upphi,L,\overline{u})$ is a cylinder with branches then the set $\Gamma$ always contains exactly two points. 

The building separates into a main cylinder and branches. We make the distinction as follows: let $\{z_-,z_+\} \in \Delta_{\rm main}$ if the circle in the compactified domain $\overline{S}$ of $\mathbf{u}$ associated to the pair $\{z_-,z_+\}$ is non-contractible. The elements of $\Gamma$ together with the punctures belonging to pairs in $\Delta_{\rm main}$ will be called the main punctures of $\mathbf{u}$. The orbits detected by the main punctures of $\mathbf{u}$ will be called the main orbits of $\mathbf{u}$. It follows directly from the definitions that each level of $\mathbf{u}$ contains exactly two main punctures.

\subsection{Convergence to a holomorphic building}
\subsubsection{In exact symplectic cobordisms between two contact forms.}\label{sec:SFT1}

Let $\alpha^{\pm}$ be two contact forms on $(Y,\xi)$, and let $W =(\R \times Y, \lambda, \varpi)$ be a symplectic cobordism from $\alpha^+$ to $\alpha^-$. Let $J^{\pm}\in \mathcal{J}(\alpha^{\pm})$, and choose  $\bar{J}\in \mathcal{J}_{\varpi}(J^+, J^-)$ as defined in Section~\ref{sec:cobordisms}.  
Let $\mathbf{u} = (S,j, \Gamma, \Delta, \upphi, L, (\overline{u}^i)_{-\kappa^-\leq i\leq \kappa^+})$ be a holomorphic building of type $\kappa^-|1|\kappa_+$ in $(W,\bar{J})$ and of arithmetic genus $0$, and denote by $\widehat{\mathbf{S}}$ the surface obtained from $S$ by performing connected sum at each node, and let $\dot{S}$ be defined as in Section~ \ref{sec:holomorphic_buildings_exact_cobordisms}.

Let $(\Sigma,j_{\Sigma})$ be the Riemann sphere.  We say that a sequence of holomorphic curves $\widetilde{w}_k: \Sigma\setminus \Gamma_k \to \R \times Y$ in $(W, \bar{J})$  \textit{converges in the SFT-sense} to $\mathbf{u}$ if the following holds. 

There are homeomorphisms $\varphi_k: \widehat{\mathbf{S}} \to \Sigma$ that are smooth in $S\setminus \Delta$ viewed as a subset in $\widehat{\mathbf{S}}$, that 
map $\Gamma^{\pm}$ to $\Gamma^{\pm}_k$, and satisfy 
$\varphi^*_k j_{\Sigma} \to j$ in $C^{\infty}_{\loc}(\widehat{\mathbf{S}}\setminus \Delta)$.
Moreover, for any $-\kappa^- \leq i \leq  \kappa^+ $,
let  $\widetilde{w}^i_k :=\widetilde{w}_k \circ \varphi_k|_{\dot{S}_i}$. Then, 
\begin{itemize}
\item $\widetilde{w}^0_k \to \overline{u}^0$ in  $C^{\infty}_{\loc}(\dot{S}_0, W)$, 
\item for $\pm i >0$, there is a sequence ${\textbf b}^i_k \to  \pm \infty$ such that 
$T_{-{\textbf b}_k^i} \circ \widetilde{w}^i_k \to \overline{u}^i$ in $C^{\infty}_{\loc}(\dot{S}_i, \R \times Y)$,
\item it holds that ${\textbf b}_k^{i+1} - {\textbf b}_k^i \to + \infty$ as $k\to \infty$ for all $i< \kappa^{+}$.
\end{itemize}
Finally $\widetilde{w}_k$ at $\Gamma_k^+$ and $\overline{u}^{\kappa^+}$ at $\Gamma^+$ detect the same Reeb orbit, and $\widetilde{w}_k$ at $\Gamma^-_k$ and $\overline{u}^{-\kappa^-}$ at $\Gamma^-$ detect the same Reeb orbit. 

We have the following compactness  theorem, \cite{CPT}, see also \cite{Wendl-lectures}.

\begin{thm}\label{thm:SFT1}
Assume $\alpha^{\pm}$ are non-degenerate. Let $(\Sigma,j_{\Sigma})$ be the Riemann sphere.  Then, for any sequence of holomorphic curves $\widetilde{w}_k: S\setminus \Gamma_k \to \R \times Y$ in $(W, \bar{J})$ with uniformly bounded energy, there is a  holomorphic building $\mathbf{u}$ of type $\kappa^-|1|\kappa^+$ of arithmetic genus $0$ in $(W,\bar{J})$ such that after restricting to a subsequence, $\widetilde{w}_k$ converges to $\mathbf{u}$ in the SFT-sense. Moreover, if $\# \Gamma_k^+=\# \Gamma_k^-=1$ for all $k\in \N$, then $\mathbf{u}$ is a cylinder with branches.  
\end{thm}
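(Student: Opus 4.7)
\emph{Strategy.} The assertion is the SFT compactness theorem of Bourgeois--Eliashberg--Hofer--Wysocki--Zehnder \cite{CPT} specialized to a punctured sphere mapping into an exact cobordism between non-degenerate contact forms. I would organize the argument in three stages: (a) $C^{\infty}_{\loc}$-compactness on the complement of a finite bubbling set, producing the middle level; (b) rescaling at concentration points and in neck regions, producing the remaining levels; (c) assembly into a decorated connected nodal surface of arithmetic genus $0$. The assertion that the limit is a cylinder with branches when $\#\Gamma^{\pm}_k=1$ then follows from a topological count on the compactified domain.

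\emph{Local limits and exponential asymptotics.} The uniform Hofer energy bound together with the standard monotonicity lemma for $\bar{J}$-holomorphic maps implies that $|d\widetilde{w}_k|$ can blow up on at most a finite set $B\subset\Sigma$; the cardinalities $\#\Gamma_k$ are also uniformly bounded, because each puncture carries an asymptotic orbit of $\alpha^{\pm}$ whose action is bounded below by the minimal action of a periodic Reeb orbit within the relevant action window (positive by non-degeneracy). Passing to a subsequence one can assume $\Gamma_k\to\Gamma_\infty$ in the Hausdorff sense. A standard diagonal extraction produces a finite-energy holomorphic map $\overline{u}^{0}:\dot{S}_0\to(W,\bar{J})$ on $\Sigma\setminus(B\cup\Gamma_\infty)$. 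At each puncture of $\overline{u}^0$ the asymptotic orbit is non-degenerate, so convergence upgrades to the exponential asymptotic form of \cite{HWZ}, which is precisely what the definition of SFT-convergence requires.

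\emph{Rescaling and level assembly.} At each point of $B\cup\Gamma_\infty$ where energy concentrates I would distinguish two regimes: interior bubbling (definite energy concentrates in a shrinking disk) and neck stretching (the $\R$-coordinate of the maps diverges). In both cases one rescales the domain coordinates and translates by a shift $\mathbf{b}_k^i\in\R$ so that the rescaled sequence has nontrivial gradient bounded from below. Since $\bar J$ is $\R$-invariant outside the compact interval $[0,1]\times Y$, the rescaled limits are finite-energy holomorphic curves in the symplectizations of $\alpha^{+}$ (for $i>0$) or $\alpha^{-}$ (for $i<0$), and the translations can be arranged so that $\mathbf{b}^{i+1}_k-\mathbf{b}^i_k\to+\infty$. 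The procedure terminates after finitely many iterations because each new level consumes at least a definite quantum of action. Matching positive and negative asymptotic orbits across adjacent levels, with the rotation of the asymptotic circles recorded by the orthogonal decoration $\upphi$, assembles the data into a nodal surface $(S,j,\Gamma,\Delta,\upphi,L)$. Each attachment is of a sphere along a single node, so arithmetic genus remains $0$.

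\emph{Cylindrical case and main obstacle.} When $\#\Gamma^{\pm}_k=1$ for all $k$, the compactified domain $\overline{S}$ has exactly two marked points at infinity and arithmetic genus $0$. The subgraph of nodes separating these two marked points is then necessarily a single chain, because otherwise either the arithmetic genus would be positive or $\overline{S}$ would disconnect; every node not in this chain attaches an entire bubble tree in the sense of Section~\ref{sec:defbreakingmain}. Hence $\mathbf{u}$ is a cylinder with branches. The most delicate point in the whole argument is the quantitative coordination of the translations $\mathbf{b}^i_k$: they must be chosen so that every level actually appears, with consecutive levels separated by necks of divergent length and no residual energy remaining trapped in intermediate neck regions where $\bar J$ is not $\R$-invariant. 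This is controlled by the bound on $E_c$ in \eqref{encob} together with the standard energy identities that force each neck region to converge in the limit to a trivial cylinder over the breaking orbit.
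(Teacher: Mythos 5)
The paper does not actually prove Theorem~\ref{thm:SFT1}: it is quoted verbatim from the SFT compactness theorem of \cite{CPT} (see also \cite{Wendl-lectures}), so there is no in-paper argument to compare against beyond that citation. Your outline is the standard proof of the cited result---$C^\infty_{\loc}$-convergence away from a finite concentration set with a uniform bound on $\#\Gamma_k$ from the action quantum, rescaling/translation into the $\R$-invariant ends to extract symplectization levels, energy quantization to terminate the process, and the genus-$0$, two-puncture count of the compactified domain giving the cylinder-with-branches statement---and is consistent with the argument the paper delegates to the literature.
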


\begin{rem}
A similar result holds if we consider holomorphic curves as above in a sequence $W_k$ of exact symplectic cobordisms from $\alpha^+$ to $\alpha^-$ that converge to such a cobordim $W$ in the $C^{\infty}$-topology. 
\end{rem}

\subsubsection{In splitting families}\label{sec:SFT2}

\ 

Let $\alpha^{\pm}, \alpha \in \mathcal{R}(Y,\xi)$. Let  $W_{\rm top}= (\R \times Y,\lambda_{\rm top}, \varpi_{\rm top})$ be an exact symplectic cobordism from $\alpha^+$ to $\alpha$, and $W_{\rm bot}= (\R \times Y,\lambda_{\rm bot}, \varpi_{\rm bot})$ an exact symplectic cobordism from $\alpha$ to $\alpha^-$. Choose almost complex structures $J^{\pm} \in \mathcal{J}(\alpha^{\pm})$, $J_{\alpha} \in \mathcal{J}(\alpha)$, $J_{\rm top}\in \mathcal{J}_{\varpi_{\rm top}}(J^+,J_{\alpha})$, and $J_{\varpi_{\rm bot}}\in \mathcal{J}_{\varpi_{\rm bot}}(J_{\alpha},J^- )$.
As constructed in Section \ref{splitting} with $\varpi^+ = \varpi_{\rm top}$, $\varpi^- = \varpi_{\rm bot}$, $\bar{J}^+ =J_{\rm top}$, $\bar{J}^- = J_{\rm bot}$, let $W_R= (\R \times Y, \widehat\lambda^R, \widehat\varpi^R)$ be a family of splitting symplectic cobordisms and $\widehat{J}^R$ a family of splitting almost complex structures  on $W_R$.  
Moreover, let $\mathbf{u} = (S,j, \gamma, \Delta, \upphi, L, (\overline{u}^i)_{-\kappa^-\leq i\leq \kappa + \kappa^+})$ be a holomorphic building of type $\kappa^+|1|\kappa|1|\kappa^-$ and arithmetic genus $0$ in the pair $[(W_{\rm top}, J_{\rm top}), (W_{\rm bot}, J_{\rm bot})]$. 

Let $(\Sigma, j_{\Sigma})$ be the Riemann sphere. We say that a sequence 
of holomorphic curves $\widetilde{w}_k: \Sigma \setminus \Gamma_k \to \R \times Y$ in $(W^{R_k}, \widehat{J}^{R_k})$ with $R_k \to + \infty$ \textit{converges in the SFT-sense} to the holomorphic building $\mathbf{u}$ if the following holds. 

There are homeomorphisms $\varphi_k: \widehat{\mathbf{S}} \to \Sigma$ that are smooth in $S\setminus \Delta$, 
map $\Gamma^{\pm}$ to $\Gamma^{\pm}_k$, and satisfy 
$\varphi^*_k j_{\Sigma} \to j$ in $C^{\infty}_{\loc}(\widehat{\mathbf{S}}\setminus \Delta)$.
Moreover, for any $-\kappa^- \leq i \leq \kappa + \kappa^+ $ let $\widetilde{w}^i_k :=\widetilde{w}_k \circ \varphi_k|_{\dot{S}_i}$. Then, 
\begin{itemize}
\item there exists a sequence ${\textbf b}_k^i$ of real numbers  such that
$\mathbf{T}_{-{\textbf b}_k^i} \circ \widetilde{v}_k^i \to \overline{u}^i$ in $C^{\infty}_{\loc}(\dot{S}_i, \R \times Y)$, 
\item it holds that
\begin{itemize}
\item[] ${\textbf b}_k^i \to -\infty$ if $-\kappa^-\leq i\leq 0$,
\item[]  ${\textbf b}_k^i \to +\infty$ if $\kappa\leq i\leq \kappa^+$,
\item[]  ${\textbf b}_k^i = -R_k$ if $i=0$,  and  ${\textbf b}_k^i = R_k$ if $i=\kappa$, 
\end{itemize}
\item it holds that ${\textbf b}_k^{i+1} - {\textbf b}_k^i \to + \infty$ for all $i< \kappa^{+}$.
\end{itemize}
Finally $\widetilde{w}_k$ at $\Gamma_k^+$ and $\overline{u}^{\kappa^+}$ at $\Gamma^+$ detect the same Reeb orbit, and $\widetilde{w}_k$ at $\Gamma^-_k$ and $\overline{u}^{-\kappa^-}$ at $\Gamma^-$ detect the same Reeb orbit. 

Analogous to Theorem \ref{thm:SFT1} the following holds,  \cite{CPT}. 

\begin{thm}\label{thm:SFT2}
Assume $\alpha^{\pm}$, and $\alpha$ are non-degenerate. Let $(\Sigma,j_{\Sigma})$ be the Riemann sphere.  Then, for any sequence of holomorphic curves $\widetilde{w}_k: S\setminus \Gamma_k \to \R \times Y$ in $(W^{R_k}, \widehat{J}^{R_k})$ with $R_k \to \infty$ and with uniformly bounded energy, there is a  holomorphic building $\mathbf{u}$ of type $\kappa^+|1|\kappa|1|\kappa^-$ and arithmetic genus $0$ in the pair $[(W_{\rm top}, J_{\rm top}), (W_{\rm bot}, J_{\rm bot})]$ such that after restricting to a subsequence, $\widetilde{w}_k$ converges to $\mathbf{u}$ in the SFT-sense. Moreover, if $\# \Gamma_k^+=\# \Gamma_k^-=1$ for all $k\in \N$, then $\mathbf{u}$ is a cylinder with branches.  
\end{thm}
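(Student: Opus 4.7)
The plan is to deduce Theorem \ref{thm:SFT2} from the general SFT compactness framework of Bourgeois–Eliashberg–Hofer–Wysocki–Zehnder, adapting the argument already used for Theorem \ref{thm:SFT1} to the setting of a splitting cobordism. Non-degeneracy of $\alpha^\pm$ and $\alpha$ will be used to guarantee that each non-removable puncture converges to a unique periodic orbit, so all matching conditions between levels are well defined.

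First I would establish local compactness. Using the uniform energy bound together with the mean value inequality for $\widehat{J}^{R_k}$-holomorphic curves, I would find a finite set of bubbling points on the domain $\Sigma$, outside of which (after possibly passing to a subsequence and reparametrizing) the sequence $\widetilde{w}_k$ is locally $C^\infty$-bounded; elliptic regularity then yields $C^\infty_{\loc}$ convergence on the complement of these points. The bubbling at each such point produces, through rescaling, holomorphic planes or spheres which, together with the standard removability of singularities, organize into the branch components of the building.

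Second I would carry out the neck-stretching analysis. On the neck $[-R_k,R_k] \times Y$ the almost complex structure is the $\R$-invariant $J_\alpha$, so I can apply a sequence of vertical translations $\mathbf{T}_{-\mathbf{b}_k^i}$ to $\widetilde{w}_k$ and extract a limit in the symplectization of $\alpha$. I would iterate this procedure: start with $\mathbf{b}_k^0=-R_k$ and $\mathbf{b}_k^\kappa=R_k$, which give the two fixed-window cobordism levels $\overline{u}^0$ (in $(W_{\rm bot},J_{\rm bot})$) and $\overline{u}^{\kappa+1}$ (in $(W_{\rm top},J_{\rm top})$); then, between consecutive levels, use the monotonicity lemma and the fact that the action spectrum of the non-degenerate $\alpha$ is discrete to detect (up to a subsequence) a further sequence $\mathbf{b}_k^i$ such that $\mathbf{b}_k^{i+1}-\mathbf{b}_k^i \to \infty$ and $\mathbf{T}_{-\mathbf{b}_k^i}\circ\widetilde{w}_k^i$ converges in $C^\infty_{\loc}$ to a non-trivial finite-energy curve $\overline{u}^i$ in $(\R\times Y, J_\alpha)$. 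This produces the $\kappa$ middle levels; the analogous procedure above $R_k$ and below $-R_k$ (now in symplectizations of $\alpha^+$ and $\alpha^-$ respectively) yields the $\kappa^+$ and $\kappa^-$ end levels. Since the total energy is bounded and each non-trivial level carries at least one quantum of energy, the process terminates after finitely many steps. Matching asymptotic orbits at breaking pairs together with the decorations $\upphi$ arising from the relative phases of the translations complete the data of the building $\mathbf{u}$.

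Finally, for the cylinder-with-branches statement, assume $\#\Gamma_k^+ = \#\Gamma_k^- = 1$ for all $k$. Then the compactified domain of the limit has arithmetic genus $0$ and exactly two punctures $\Gamma^+, \Gamma^-$, hence a single non-contractible closed path through the nodes connects $\Gamma^+$ to $\Gamma^-$; this is precisely the main cylinder in the sense of Section~\ref{sec:defbreakingmain}, with every remaining component of the nodal surface appearing as a contractible branch. The main obstacle in the proof is the intermediate neck-stretching step: one has to show that along the growing neck one can actually select the scales $\mathbf{b}_k^i$ so that the shifted curves neither escape to infinity in a trivial way nor collapse onto trivial cylinders, and that the resulting finite list of non-trivial levels is compatible with matching at breaking pairs. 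This is precisely the content of the SFT compactness theorem of \cite{CPT} in the splitting-family setting, on which the argument ultimately relies.
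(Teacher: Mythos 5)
Your proposal is essentially the paper's own approach: the paper gives no independent proof of this statement but simply invokes the SFT compactness theorem of \cite{CPT} in the splitting-family setting (in analogy with Theorem \ref{thm:SFT1}), and your sketch reproduces the standard ingredients of that theorem (energy bounds, bubbling, neck-stretching with translations, matching of asymptotics, and the genus-zero/two-puncture Euler characteristic count giving a cylinder with branches) while explicitly deferring the delicate selection of the shift scales to \cite{CPT}. So there is no substantive difference between your route and the paper's.
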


\color{black}

\section{The stability result and the proof of Theorem \ref{thm:main}} \label{sec:robustproof}
In this section, we will recall some relevant definitions concerning links of closed Reeb orbits and define (two notions) of exponential homotopical growth rate in the complement of such a link.  
We will then formulate our main stability result. It states that under some conditions on such a link $\mathcal{L}_0$, for any sufficiently  $C^0$-small pertubation of the contact form, a new link of Reeb orbits can be found (which might have more, or less components than $\mathcal{L}_0$) with certain lower bounds on the exponential homotopical growth rate in its complement. 
That stability result then yields, together with recent results 
 from \cite{Hryniewicz_generic,Contreras_generic} and \cite{Meiwes_horseshoes}, our main theorem,  Theorem~\ref{thm:main}, as a consequence.

\subsection{Contact forms that are hypertight in the complement of a link and the stability result}
 
We will start with some definitions. In the following, let $(Y,\xi)$ be a closed co-oriented contact $3$-manifold. 
\begin{defn}
A link $\mathcal{L}\subset Y$ is a closed embedded oriented $1$-dimensional manifold in $Y$. 
A link $\mathcal{L}$ is called a \textit{transverse link} in $(Y,\xi)$ if every component is transverse to $\xi$. 
\end{defn}

If $\Upsilon\subset \mathcal{P}(\alpha_0)$ is a finite collection of periodic orbits of a Reeb flow $\phi_{\alpha_0}$ and $\mathcal{L}$ is the link whose components are the closed curves defined by $\Upsilon$, we write $\mathcal{L} = \mathcal{L}(\Upsilon)$. 
In this case,  every component of $\mathcal{L}$ corresponds to a simple periodic Reeb orbit $\gamma \in \mathcal{P}(\alpha)$  (taken into account the natural identifications between distinct orbits explained in Section~\ref{subsec:HolomorphicCurves}); we say that $\mathcal{L}$ is a collection of (closed) Reeb orbits of~$\alpha_0$.

\begin{defn}
Let $\mathcal{L}$ be a link in $Y$. Let $\Pi$ be a surface with boundary. We say that a continuous map $h:\Pi\to Y$ \textit{has an interior intersection with $\mathcal{L}$} if there is an interior point $x\in\Pi$ with $h(x)\in \mathcal{L}$. We call  a continuous disk map $\mathfrak{h}:\overline{\D} \to Y$ with $\gamma(t) = \mathfrak{h}(e^{2\pi i t})$ a \textit{disk filling} of $\gamma$. We say that a loop $\beta:S^1 \to Y$ is \textit{contractible in the complement of $\mathcal{L}$} 
if there is a disk filling of $\beta$ with no interior intersection with $\mathcal{L}$.   Moreover, we say that two loops  $\beta_1:S^1 \to Y$ and $\beta_2:S^1 \to Y$ 
are \textit{homotopic in the complement of $\mathcal{L}$}
if there is a homotopy between $\beta_1$ and $\beta_2$ that does not have an interior intersection with $\mathcal{L}$. 
\end{defn}
\begin{defn}\label{defn:properclass}
We say that a free homotopy class $\rho$ of closed loops in $Y\setminus \mathcal{L}$ is a \textit{proper link class} if no loop $\gamma:S^1 \to Y$ that is a multiple of a component of $\mathcal{L}$ is homotopic in the complement of $\mathcal{L}$ to a loop $\beta:S^1 \to Y$ with $[\beta] = \rho$.
\end{defn}
\begin{defn}\label{defn:hypertight}
Let $\alpha_0 \in \mathcal{R}(Y,\xi)$ be a contact form, and $\mathcal{L}_0$ a transverse link. We say that $\alpha_0$ is \textit{hypertight in the complement of $\mathcal{L}_0$} if $\mathcal{L}_0$ is a collection of Reeb orbits of $\alpha_0$   and if there exists no periodic Reeb orbit of $\alpha_0$ (possibly a multiple of a component of $\mathcal{L}_0$) that is (as a loop) contractible in the complement of  $\mathcal{L}_0$. 
\end{defn}
\begin{defn} \label{def:homotopicalgrowth}
Let $\alpha_0\in \mathcal{R}(Y,\xi)$ be a contact form on $(Y,\xi)$, and let $\mathcal{L}_0$ be a link of Reeb orbits of $\alpha_0$.
Let $\Lambda^T_{\alpha_0}(\mathcal{L}_0)$ be the set whose elements are free homotopy classes $\rho$ of loops in $Y \setminus \mathcal{L}_0$ with the following properties
\begin{itemize}
    \item $\rho$ is a proper link class, 
    \item all Reeb orbits of $\alpha_0$ representing $\rho$ are non-degenerate, 
        \item $\min_{\gamma} \mathcal{A}_{\alpha_0}(\gamma)<T$, where the minimum is taken over all Reeb orbits $\gamma$ that represent $\rho$. 
\end{itemize}
Moreover, let $\Omega^T_{\alpha_0}(\mathcal{L}_0)$ be the set of those $\rho \in \Lambda^T_{\alpha_0}(\mathcal{L}_0)$ such that 
\begin{itemize}
\item there is only one Reeb orbit $\gamma_{\rho}$ of $\alpha_0$ that represents $\rho$.
\end{itemize}
 We let  $\Omega_{\alpha_0}(\mathcal{L}_0) := \bigcup_{T>0}\Omega^T_{\alpha_0}(\mathcal{L}_0)$.
\end{defn}
If $\alpha_0$ is hypertight in the complement of $\mathcal{L}_0$ and $\rho$ is a proper link class, then the cylindrical contact homology $\CH_{\rho,\mathcal{L}_0}(\alpha_0)$ in the complement of $\mathcal{L}_0$, restricted to $\rho$, which is due to Momin \cite{Momin} (see also \cite{AlvesPirnapasov,HMS}), is well defined. Moreover it is 
 clear from its definition that for $\rho \in \Omega^T_{\alpha_0}$, $\CH^T_{\rho,\mathcal{L}_0}(\alpha_0) \neq 0$.

\begin{defn}\label{defn:sys_h}
Let $\alpha_0$ be a contact form, and let 
 $\mathcal{L}_0$ be a link of Reeb orbits of $\alpha_0.$  
 Given an  almost complex structure $\widetilde{J}_0 \in \mathcal{J}(\alpha_0)$, we denote by $\mathfrak{h}(\alpha_0, \mathcal{L}_0, \widetilde{J}_0)$ the minimal $d\alpha_0$-energy, $\int_u d\alpha_0$, of a finite energy holomorphic cylinder $u$ in  $(\R \times Y, \widetilde{J}_0)$ that is not a trivial cylinder 
and that is positively or negatively asymptotic to a component of $\mathcal{L}_0$. We set $$\mathfrak{h}(\alpha_0,\mathcal{L}_0) = \sup\{\mathfrak{h}(\alpha_0, \mathcal{L}_0, \widetilde{J}_0)\, |\, \widetilde{J}_0\in \mathcal{J}(\alpha_0) \text{ regular}\}.$$ \color{black} 
We denote by $\mathrm{sys}(\alpha_0)$ the period of the shortest Reeb orbit of $\alpha_0$.
\end{defn}
The following is the main technical result of this paper.   
\begin{thm} \label{thm:stability}
Let $\alpha_0$ be a non-degenerate contact form on $(Y,\xi)$ and $\mathcal{L}_0$ a link of  Reeb orbits of $\alpha_0$ such that $\alpha_0$ is hypertight in the complement of $\mathcal{L}_0$. For $T >0$, let $\Omega^T_{\alpha_0}(\mathcal{L}_0)$ be the set defined in Definition~\ref{def:homotopicalgrowth}, and assume that
\begin{equation*}
    \Gamma:=\limsup_{T \to +\infty } \frac{\log\left(\# \Omega^T_{\alpha_0}(\mathcal{L}_0)\right)}{T} >0.
\end{equation*}

Then for every $\epsilon>0$ there exists $\delta>0$,  depending only on $\epsilon$, $\mathrm{sys}(\alpha_0)$, and $\mathfrak{h}(\alpha_0, \mathcal{L}_0)$,
such that for every non-degenerate contact form $\alpha$ with $d_{C^0}(\alpha, \alpha_0) < \delta$,  there is a link $\mathcal{L}(\alpha)$ of  Reeb orbits of $\alpha$ satisfying: 
\begin{equation*}
    \limsup_{T \to +\infty } \frac{\log\left(\# \Lambda^T_{\alpha}(\mathcal{L}({\alpha}))\right)}{T} > \Gamma -\epsilon.
\end{equation*}

Moreover, 
\begin{equation*}
    h_{\rm top}(\phi_\alpha) > \Gamma -\epsilon.
\end{equation*}
\end{thm}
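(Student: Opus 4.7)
The plan follows the strategy of Section~\ref{sec:strategy}. First, I fix a regular $\widetilde{J}_0 \in \mathcal{J}(\alpha_0)$ with $\mathfrak{h}(\alpha_0,\mathcal{L}_0,\widetilde{J}_0)$ close to $\mathfrak{h}(\alpha_0,\mathcal{L}_0)$; each trivial cylinder $\R\times\gamma$ over a component $\gamma$ of $\mathcal{L}_0$ is then a regular $\widetilde{J}_0$-holomorphic cylinder. Given $\epsilon>0$, I choose $\delta>0$ depending only on $\epsilon$, $\mathrm{sys}(\alpha_0)$, and $\mathfrak{h}(\alpha_0,\mathcal{L}_0)$, and for a non-degenerate $\alpha$ with $d_{C^0}(\alpha,\alpha_0)<\delta$ I sandwich $\alpha$ between contact forms $\alpha^+>\alpha>\alpha^-$ with $\alpha^\pm=f^\pm\alpha_0$ and $|\log f^\pm|=O(\delta)$. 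This produces exact symplectic cobordisms $W_{\rm top}$ from $\alpha^+$ to $\alpha$ and $W_{\rm bot}$ from $\alpha$ to $\alpha^-$, together with the splitting family $(W_R,\widehat{J}^R)$ of Section~\ref{splitting}.

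Second, for each component $\gamma$ of $\mathcal{L}_0$ I follow the trivial cylinder $\R\times\gamma$ through the splitting family. Regularity of $\widetilde{J}_0$ and the implicit function theorem produce, for every $R$, a holomorphic cylinder $\widetilde{u}_R^{\gamma}$ in $(W_R,\widehat{J}^R)$ positively and negatively asymptotic to $\gamma$, with Hofer energy of order $\delta\cdot\mathcal{A}_{\alpha_0}(\gamma)$ coming from the $C^0$-closeness of $\alpha^\pm$ to $\alpha_0$. Theorem~\ref{thm:SFT2} yields a subsequential limit as $R\to\infty$, a genus-zero holomorphic building $\mathbf{u}^{\gamma}$ of type $\kappa^+|1|\kappa|1|\kappa^-$ in the pair $[(W_{\rm top},J_{\rm top}),(W_{\rm bot},J_{\rm bot})]$ which is a cylinder with branches in the sense of Section~\ref{sec:defbreakingmain}. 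Hypertightness of $\alpha_0$ in the complement of $\mathcal{L}_0$ rules out branches carrying contractible orbits, while the energy gap $\mathfrak{h}(\alpha_0,\mathcal{L}_0)$ combined with the $O(\delta)$ energy bound forces the branches to be of controlled complexity, so that main orbits at the $\alpha$-level survive for $\delta$ small. I define $\mathcal{L}(\alpha)$ to be the union of all main orbits in the $\alpha$-levels of the buildings $\mathbf{u}^{\gamma}$ as $\gamma$ ranges over the components of $\mathcal{L}_0$.

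Third, for the count: fix $\rho\in\Omega^T_{\alpha_0}(\mathcal{L}_0)$ with unique representative $\gamma_\rho$, which is non-degenerate. After perturbing $\widetilde{J}_0$ away from $\R\times\mathcal{L}_0$ so that $\R\times\gamma_\rho$ is also regular, I run the same neck-stretching procedure starting from $\R\times\gamma_\rho$, obtaining a building $\mathbf{u}^{\gamma_\rho}$. The total action of its main orbits at the $\alpha$-level is bounded by $(1+O(\delta))\mathcal{A}_{\alpha_0}(\gamma_\rho)<(1+O(\delta))T$. Siefring's asymptotic formulas, applied at each breaking pair meeting the $\alpha$-level, match asymptotic limits in a way that ensures the linking numbers of the main $\alpha$-orbits with the components of $\mathcal{L}(\alpha)$ reproduce the linking numbers of $\gamma_\rho$ with the components of $\mathcal{L}_0$. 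Consequently, the concatenation of the main $\alpha$-orbits represents the class $\rho$ viewed in $\pi_1(Y\setminus\mathcal{L}(\alpha))$, giving an injection $\Omega^T_{\alpha_0}(\mathcal{L}_0)\hookrightarrow \Lambda^{(1+O(\delta))T}_\alpha(\mathcal{L}(\alpha))$. Comparing exponential growth rates and shrinking $\delta$ yields the first inequality, and the bound $h_{\rm top}(\phi_\alpha)>\Gamma-\epsilon$ follows from the general lower bound of \cite{AlvesPirnapasov}, since $\#\Lambda^T_\alpha(\mathcal{L}(\alpha))\leq N^T_\alpha(\mathcal{L}(\alpha))$.

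The main obstacle is the Siefring analysis underpinning the injectivity of $\rho\mapsto[\mathbf{u}^{\gamma_\rho}]$: free homotopy classes in $Y\setminus\mathcal{L}_0$ must be transported faithfully to classes in $Y\setminus\mathcal{L}(\alpha)$, even though $\mathcal{L}(\alpha)$ may have very different topology from $\mathcal{L}_0$ and branches of the buildings may introduce spurious identifications. The role of the energy gap $\mathfrak{h}(\alpha_0,\mathcal{L}_0)$ is precisely to control the complexity of these branches, and hence to determine how small $\delta$ must be chosen to preserve the correspondence and prevent distinct classes from collapsing.
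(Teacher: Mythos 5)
Your outline reproduces the broad strategy (stretch the neck starting from the trivial cylinders over $\mathcal{L}_0$, use hypertightness and the thresholds $\mathrm{sys}(\alpha_0)$, $\mathfrak{h}(\alpha_0,\mathcal{L}_0)$, read off a link of $\alpha$-orbits from the limit buildings, and conclude via \cite{AlvesPirnapasov}), but two of its load-bearing steps do not work as stated. First, the existence step: regularity of $\widetilde{J}_0$ plus the implicit function theorem does not produce, ``for every $R$'', a cylinder in $(W_R,\widehat{J}^R)$ asymptotic to $\gamma$ at both ends; the IFT only perturbs solutions for nearby data, and trivial cylinders are not even holomorphic for the split structures. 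The paper instead interpolates, for each fixed $R$, through an admissible homotopy $(\lambda^R_s,\omega^R_s,J^R_s)_{s\in[0,1]}$ from a dilation cobordism (a reparametrized symplectization, in which the trivial cylinder is the unique element of the moduli space) to the stretching cobordism, and proves compactness of the parametrized moduli space (Proposition~\ref{prop:compactness}, using Conditions A) and B) that tie $\delta$ to $\mathrm{sys}(\alpha_0)$ and $\mathfrak{h}(\alpha_0,\mathcal{L}_0)$) so that the component through the trivial cylinder must reach $s=1$ (Proposition~\ref{prop:important}). Two further points you elide are essential there: the ends must be \emph{constant} rescalings $C^{\pm}\alpha_0$ (a general sandwich $f^{\pm}\alpha_0$ does not have $\gamma$ as a Reeb orbit at all), and the continued cylinders over the components of $\mathcal{L}_0$ must be kept pairwise disjoint (via Siefring's intersection number) and realized by an ambient isotopy, since all later linking arguments rest on this. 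Likewise, for the cylinders attached to $\gamma_\rho$ the energy gap $\mathfrak{h}(\alpha_0,\mathcal{L}_0)$ is irrelevant ($\gamma_\rho\notin\mathcal{L}_0$); breaking and bubbling are excluded only because one works in the constrained moduli space of cylinders disjoint from the continued link cylinders $\mathcal{Z}_s$, using positivity of intersections, properness of the class $\rho$, and uniqueness of $\gamma_\rho$ in $\rho$ (Lemma~\ref{modulispace_rho_compact}).

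Second, the counting step: the injection $\Omega^T_{\alpha_0}(\mathcal{L}_0)\hookrightarrow\Lambda^{(1+O(\delta))T}_{\alpha}(\mathcal{L}(\alpha))$ you claim is too strong, and Siefring's asymptotic formulas do not ``reproduce linking numbers'' in the way you assert. After stretching, the bottom-level cylinders $\hat{u}_{\rho}$ for distinct classes $\rho$ may be positively asymptotic to multiples of the \emph{same} component of $\mathcal{L}(\alpha)$ (the ``collapsing'' phenomenon), and for such classes the induced free homotopy classes in $Y\setminus\mathcal{L}(\alpha)$ need not be distinct, nor proper link classes. The paper's resolution is quantitative, not an injection: Siefring's relative asymptotic formula together with the monotonicity of winding numbers yields Proposition~\ref{prop:Xi_equi}, which bounds the number of equivalence classes of ends collapsing onto the $k$-th iterate of a given orbit linearly in $k$, hence the collapsing classes contribute only polynomial growth in $T$; the non-collapsing classes are shown to be pairwise distinct and proper via the linking arguments of Lemma~\ref{lem:homot1}, Lemma~\ref{lem:Lambdaa} and Corollary~\ref{cor:Lambdaa}. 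Without this dichotomy the growth inequality $\limsup_T \frac{1}{T}\log\#\Lambda^T_{\alpha}(\mathcal{L}(\alpha)) > \Gamma-\epsilon$ does not follow from your argument, so this part of the proposal has a genuine gap rather than merely a different route.
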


\subsection{Approximation results and consequences}\label{sec:approximation_results}
Let us explain how our main theorem follows from Theorem~\ref{thm:stability}. 
First, we recall the following approximation result by the third author.  
\begin{thm}\label{thm:CH_recover}\cite[Thm.~3 and Rmk.~1.2]{Meiwes_horseshoes}
Let $(Y,\xi)$ be a closed contact $3$-manifold, $\alpha_0\in \mathcal{R}(Y,\xi)$. Assume that $h_{\topo}(\phi_{\alpha_0})>0$. For any $0< \epsilon< h_{\topo}(\phi_{\alpha_0})$,  there exists a link $\mathcal{L}^{\alpha_0}_\epsilon$ defined by hyperbolic periodic orbits of $\phi_{\alpha_0}$  such that 
\begin{equation*}
\limsup_{T \to +\infty } \frac{\log\left(\# \Omega^T_{\alpha_0}(\mathcal{L}_{\epsilon}^{\alpha_0})\right)}{T} > h_{\topo}(\phi_{\alpha_0})-\epsilon.
\end{equation*}
\end{thm}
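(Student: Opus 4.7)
The plan is to invoke a Katok-style horseshoe theorem for three-dimensional Reeb flows and then upgrade the resulting orbit-counting estimate into a count of free homotopy classes in the complement of a suitably chosen binding link. Since $\phi_{\alpha_0}$ is a $C^\infty$ flow preserving the volume form $\alpha_0\wedge d\alpha_0$ on the closed $3$-manifold $Y$, and since $h_{\topo}(\phi_{\alpha_0})>0$, Katok's theorem (applied to a Poincar\'e section of a regular orbit, or via its flow version) produces, for any $\eta>0$, a compact, locally maximal, topologically transitive hyperbolic invariant set $\Lambda\subset Y$ whose topological entropy satisfies $h_{\topo}(\phi_{\alpha_0}|_\Lambda) > h_{\topo}(\phi_{\alpha_0}) - \eta$. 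All periodic orbits contained in $\Lambda$ are hyperbolic (in particular non-degenerate), and, by the symbolic dynamics coming from a Markov partition, the number of periodic orbits in $\Lambda$ of period $\leq T$ grows exponentially at rate exactly $h_{\topo}(\phi_{\alpha_0}|_\Lambda)$.

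Next I would build the binding link $\mathcal{L}^{\alpha_0}_\epsilon$ out of finitely many hyperbolic periodic orbits of $\phi_{\alpha_0}$ adapted to $\Lambda$. The natural candidates are short periodic orbits coming from the boundary of a Markov rectangle together with, if necessary, one or two auxiliary periodic orbits chosen transversely to a disk-like section of the horseshoe dynamics. The key property I want is that, after restricting attention to the countable set of periodic orbits in $\Lambda\setminus \mathcal{L}^{\alpha_0}_\epsilon$, the free homotopy classes in $Y\setminus\mathcal{L}^{\alpha_0}_\epsilon$ that they define (a) are proper link classes in the sense of Definition \ref{defn:properclass}, and (b) separate distinct orbits. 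Property (a) follows because orbits inside a transitive hyperbolic set cannot be homotoped to multiples of a disjoint periodic orbit without crossing the stable/unstable foliation in an obstructed way. Property (b) is the braid-theoretic heart of the construction: once enough orbits are put into the binding, the remaining horseshoe orbits are completely classified by their braid type (equivalently, by their conjugacy class as elements of the mapping class group of the punctured section), and distinct symbolic codes in the Markov shift yield distinct braid types up to cyclic permutation.

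From this, the counting goes as follows. Passing from the orbit count in $\Lambda$ to the count of free homotopy classes costs only a subexponential factor: each class can contain at most finitely many $\Lambda$-orbits of bounded period, and a careful accounting (using that symbolic sequences which give the same braid type differ only by a bounded combinatorial modification) shows that the logarithmic growth rate is preserved. To arrange that each class is represented by a \emph{unique} Reeb orbit of $\phi_{\alpha_0}$ (and not merely a unique $\Lambda$-orbit), one applies the standard dichotomy for non-degenerate Reeb orbits: if two distinct periodic orbits realize the same free homotopy class, then one of them can be perturbed away within the horseshoe entropy bookkeeping, at the cost of an arbitrarily small further loss in the growth rate. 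Taking $\eta$ small enough compared with $\epsilon$ then delivers
\[
\limsup_{T\to+\infty}\frac{\log(\#\Omega^T_{\alpha_0}(\mathcal{L}^{\alpha_0}_\epsilon))}{T} > h_{\topo}(\phi_{\alpha_0})-\epsilon.
\]

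The main obstacle I anticipate is step two: arranging simultaneously that the binding link consists of only finitely many hyperbolic orbits, that all the classes produced are proper link classes, and that distinct horseshoe orbits yield distinct classes. In dimensions higher than three one cannot in general separate orbits by free homotopy in a link complement, but in dimension three one has the braid-group presentation of mapping class groups of punctured disks/surfaces at one's disposal; this is precisely where the low-dimensional assumption is decisive, and where the bulk of the technical work in Meiwes' paper is presumably spent.
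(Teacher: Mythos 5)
A preliminary remark: the paper does not prove Theorem~\ref{thm:CH_recover} at all; it is imported verbatim from \cite{Meiwes_horseshoes} (Thm.~3 and Rmk.~1.2), so there is no internal argument to measure your sketch against, and the real content lives in that reference. Judged on its own terms, your outline does follow the expected first moves (Katok's theorem to produce a locally maximal, transitive hyperbolic set $\Lambda$ with entropy at least $h_{\topo}(\phi_{\alpha_0})-\eta$, all of whose periodic orbits are hyperbolic, followed by a link chosen inside the horseshoe and a separation-by-free-homotopy argument in its complement), but it has a genuine gap at exactly the point where the theorem is strong.

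The gap is the uniqueness and non-degeneracy requirements built into $\Omega^T_{\alpha_0}(\mathcal{L})$: by Definition~\ref{def:homotopicalgrowth}, a class $\rho$ is counted only if \emph{every} Reeb orbit of the full flow $\phi_{\alpha_0}$ representing $\rho$ is non-degenerate and there is exactly \emph{one} such orbit. Your construction only controls the orbits lying in $\Lambda$. Periodic orbits of $\phi_{\alpha_0}$ outside the horseshoe can represent the same class in $Y\setminus\mathcal{L}^{\alpha_0}_\epsilon$, and they may well be degenerate; nothing in Katok's theorem or in the Markov coding excludes this. Your proposed remedy --- that a second orbit in the same class ``can be perturbed away within the horseshoe entropy bookkeeping'' --- is not available: the theorem concerns the fixed contact form $\alpha_0$, with no non-degeneracy or genericity hypothesis, and perturbing the flow changes the object whose classes are being counted. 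Producing exponentially many classes for which uniqueness and non-degeneracy of \emph{all} representing orbits of $\phi_{\alpha_0}$ can be certified is precisely the hard part of the cited result, and it is not supplied by your argument. A secondary weakness is the separation step: the claims that distinct symbolic codes give distinct free homotopy classes (up to a subexponential collapse) and that the resulting classes are proper link classes in the sense of Definition~\ref{defn:properclass} are asserted rather than argued, and quantifying this collapse is where most of the technical work would have to go.
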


Our main theorem can now be obtained from Theorem 
 \ref{thm:stability}, Theorem \ref{thm:CH_recover}, and some recent results on the existence of global surfaces of section for generic contact forms in dimension $3$, \cite{Hryniewicz_generic, Contreras_generic}.
\begin{proof}[Proof of Theorem \ref{thm:main}]  
By the work of \cite{Hryniewicz_generic} and \cite{Contreras_generic}, there is a $C^{\infty}$-open and dense set\footnote{In fact there is a $C^{2}$-open and $C^{\infty}$-dense set with these properties.} $\mathcal{V}_0(Y,\xi)$ in $\mathcal{R}(Y,\xi)$ such that the Reeb flow $\phi_{\alpha_0}$ of any $\alpha_0 \in \mathcal{V}_0(Y,\xi)$ admits a global surface of section with a finite collection of  non-degenerate periodic orbits~$\Upsilon_b$ forming its binding; we refer the reader to the above references for the precise definitions of these terms. 
We will show that $h_{\topo}$ is lower semi-continuous on $\mathcal{V}_0(Y,\xi)$  with respect to $d_{C^0}$.
To that end, let 
 $\alpha_0$ be any contact form in $\mathcal{V}_0(Y,\xi)$ with $h_{\topo}(\alpha_0)>0$, and fix any $0<\epsilon<h_{\topo}(\alpha_0)/2$. 
By Theorem \ref{thm:CH_recover}, there is a link 
$\mathcal{L}^{\alpha_0}_{\epsilon}=\mathcal{L}^{\alpha_0}_{\epsilon}(\Upsilon_{\epsilon})$ formed by a collection of periodic  orbits $\Upsilon_{\epsilon}$ of $\phi_{\alpha_0}$ such that 
$$\limsup_{T\to \infty} \frac{\log\left(\# \Omega^T_{\alpha_0}(\mathcal{L}^{\alpha_0}_{\epsilon})\right)}{T} \geq h_{\topo}(\phi_{\alpha_0})-\epsilon.$$
Choose a locally maximal hyperbolic set $K_{\epsilon}$ that contains $\mathcal{L}^{\alpha_0}_{\epsilon}$, which means that  there exists a compact neighbourhood $U_{\epsilon}$ of $K_{\epsilon}$ such that any orbit of $\phi_{\alpha}$ that is contained in $U_{\epsilon}$ is already contained in $K_{\epsilon}$; let us fix such a neighbourhood~$U_{\epsilon}$.  
We put $\Upsilon_0 := \Upsilon_{b} \cup \Upsilon_{\epsilon}$  and $\mathcal{L}_0:= \mathcal{L}_b(\Upsilon_{b}) \cup \mathcal{L}^{\alpha_0}_{\epsilon}$. It holds that $\alpha_0$ is hypertight in the complement of $\mathcal{L}_0$: in the case that $\mathcal{L}_b= \mathcal{L}_b(\{\gamma\})$ bounds a disk like global surface of section, all periodic orbits except the single binding orbit $\gamma$ is non-contractible in the complement of $\mathcal{L}_{b}\subset \mathcal{L}_0$, and also 
 $\gamma$ is non-contractible in the complement of $\mathcal{L}^{\alpha_0}_\epsilon\subset \mathcal{L}_0$; in all other cases, no periodic Reeb orbit is contractible in the complement of $\mathcal{L}_b \subset \mathcal{L}_0$, cf.\ \cite[Lemma 6.6]{Momin}.
By well-known arguments we can find a sequence $\alpha_k$ of non-degenerate contact forms such that
\begin{itemize}
\item $(\alpha_k)_{k\in \N}$ converges to $\alpha$ in the $C^{\infty}$-topology,
\item $\alpha_k|_{U_{\epsilon}} = \alpha_0|_{U_\epsilon}$,
\item $\alpha_k$ coincides with $\alpha$ in a sufficiently small neighbourhood $U_b$ of $\mathcal{L}_b$, 
\item $\alpha_k$ is hypertight in the complement of $\mathcal{L}_0$, 
\item $\mathrm{sys}(\alpha_k) = \mathrm{sys}(\alpha_0)$.
\end{itemize}
We then still have, for all $k\in \N$, 
$$\limsup_{T\to \infty} \frac{\log\left(\# \Omega^T_{\alpha_k}(\mathcal{L}_{0})\right)}{T} \geq h_{\topo}(\phi_{\alpha_0})-\epsilon.$$

Choose  almost complex structures on symplectizations  $\widetilde{J}_0$ in $\mathcal{J}(\alpha_0)$ and  $\widetilde{J}_k$ in
$\mathcal{J}(\alpha_k)$ such that for all $k\in \N$, $\widetilde{J}_k$ coincides with $\widetilde{J}_0$ on $\R \times  (U_\epsilon \cup U_b)$. By well-known  arguments, see e.g.\ the proof of Theorem~3.3 in \cite{HoferSalamon}, one can show that the minimal $d\alpha_k$-energy of finite energy holomorphic cylinders in  $(\R \times Y, J_k)$ that are not trivial cylinders and that are positively or negatively asymptotic to a component in $\mathcal{L}_0$, is uniformly (in $k$) bounded from below.
It follows then from Theorem~\ref{thm:stability} that we can choose $\delta>0$ such that 
for every $k\in \N$ and for all non-degenerate contact forms  $\alpha$ with $d_{C^0}(\alpha, \alpha_k) < \delta$, 
\begin{align}\label{ineq:alpha_alpha0}
h_{\topo}(\phi_{\alpha}) > h_{\topo}(\phi_{\alpha_k}) - \epsilon \geq  h_{\topo}(\phi_{\alpha_0}) - 2\epsilon.
\end{align}
Since $h_{\topo}$ is $C^{\infty}$-upper semi-continuous, \cite{N89}, and  $\mathcal{V}_0(Y,\xi)$ is $C^{\infty}$-dense in $\mathcal{R}(Y,\xi)$, we have that \eqref{ineq:alpha_alpha0} holds for all $\alpha\in \mathcal{R}(Y,\xi)$ with $d_{C^0}(\alpha,\alpha_0)<\delta$. 
\end{proof}
\begin{rem}\label{rem:corollaries}
Given the proof of Theorem \ref{thm:main} above,  it is straightforward to deduce Corollary~\ref{cor:righthanded} from the introduction. In the situation of a right-handed Reeb flow,  the link $\mathcal{L}_0$ of closed Reeb orbits that is considered in the proof can be chosen to be identical to the link $\mathcal{L}_{\epsilon}^{\alpha_0}$ of hyperbolic periodic orbits obtained from the approximation theorem, Theorem~\ref{thm:CH_recover}: this follows from Ghys' result in \cite{Ghys} that any closed orbit is the binding of a global surfaces of section, which was mentioned in the introduction.
\end{rem}
\color{black}
\section{Proof of the stability result (Theorem \ref{thm:stability})} \label{sec:proofmainresult}

Throughout this section, we fix a closed contact manifold $(Y,\xi)$ of dimension $3$, a non-degenerate contact form $\alpha_0$ for $(Y,\xi)$, and a transverse link $\mathcal{L}_0$ such that $\alpha_0$ is hypertight in the complement of $\mathcal{L}_0$.

\subsection{Symplectic cobordisms and homotopies of symplectic cobordisms} \label{sec:sympcobordisms}
Let $\alpha$ be a contact form on $(Y,\xi)$. 
We write the contact form $\alpha$ of $(Y,\xi)$ as $\alpha = f_\alpha \alpha_0$, and choose any real numbers $C^+
> \max(f_\alpha)$, $C^- < \min(f_{\alpha})$.  We will construct special homotopies of exact symplectic cobordisms from $\mathrm{C^+} \alpha_0$ to $\mathrm{C^-}\alpha_0$.

We start by fixing once and for all a regular cylindrical almost complex structure $\widetilde{J}_0$ in $\mathcal{J}(\alpha_0)$. Recall that it is regular in the sense that the linearized Cauchy-Riemann operator associated to it is surjective on any simply covered finite energy holomorphic curve in $(\mathbb{R}\times Y, \widetilde{J}_0)$. We also fix a cylindrical almost complex structure $J_\alpha$ in $\mathcal{J}(\alpha)$.

\subsubsection*{A dilation cobordism} Let $0<\mu<<<1$. Given any $\mathrm{C^+}$ and $\mathrm{C^-}$ as above and for any $R>0$, we first construct a diffeomorphism
\begin{equation*}
    F_R: \R \to \R
\end{equation*}
satisfying 
\begin{itemize}
    \item $F'_R(a) =  \frac{1}{C^+}$ for $a \geq R+1 - \mu$,
    \item $F'_R(a) =  \frac{1}{C^-}$ for $a \leq -R-1 + \mu$.
\end{itemize}

If we define $\Upsilon_R: \R \times Y \to \R \times Y$ by 
$$\Upsilon_R(a,q) :=  (F_R(a),q),  $$
and we define 
\begin{equation}
    J^R_0:= (\Upsilon_R)^* \widetilde{J}_0,
\end{equation}
then the almost complex structure $J^R_0$ satisfies: 
\begin{itemize}
    \item $J^R_0$ coincides with an almost complex structure $J^+_0 \in \mathcal{J}(C^+ \alpha_0)$ on \linebreak $[R+1- \mu, +\infty) \times Y$,
    \item $J^R_0$ coincides with an almost complex structure $J^-_0 \in \mathcal{J}(C^- \alpha_0)$ on \linebreak $(-\infty, -R-1 + \mu] \times Y$.
\end{itemize}
Note that $J^{\pm}_0 = {(\Upsilon^{\pm})}^*\widetilde{J}_0$, 
where $\Upsilon^{\pm}:\R \times Y \to \R \times Y$ is defined by 
\begin{equation}\label{eq:Upsilon}
\Upsilon^{\pm}(a,q) := (\frac{1}{C^{\pm}}a,q).
\end{equation}
Let now $h^R_0: \R \times Y$ be a function such that
\begin{itemize}
    \item $h^R_0 \equiv C^+$ on $[R+1,+\infty) \times Y$,
    \item $h^R_0 \equiv C^-$ on $(-\infty,-R-1] \times Y$,
    \item $\partial_a h^R_0\geq 0$,
    \item $\partial_a h^R_0 > 0$ on $(-R-1,R+1) \times Y$.
\end{itemize}
We define 
\begin{equation}
    \lambda^R_0:= h^R_0\alpha_0,
\end{equation}
and 
\begin{equation}
    \omega^R_0:= d\lambda^R_0.
\end{equation}

Then $(\R \times Y, \lambda^R_0, \omega^R_0)$ is an exact symplectic cobordism from $C^+\alpha_0$ to $C^-\alpha_0$, and the almost complex structure $J^R_0$ is compatible with $\omega^R_0$ on the region \linebreak $(-R-1,R+1) \times Y$, where this form is symplectic and preserves $\omega^R_0$ everywhere.

\subsubsection*{Stretching exact symplectic cobordisms} We now consider exact symplectic cobordisms $W_R= (\R \times Y, \lambda^R_1,\omega^R_1)$ of the form considered in Section \ref{sec:cobordisms} and compatible almost complex structures $J^R_1$ on these cobordisms.
For this, let
$$h^R_1: \R \times Y \to \R $$
be a function satisfying
\begin{itemize}
    \item $h^R_1 \equiv C^+ $ on $[R+1, +\infty) \times Y$,
    \item $h^R_1 \equiv C^- $ on $(-\infty, -R-1] \times Y$,
    \item $h^R_1 \geq 0$,
    \item $\partial_a h^R_1 >0$ on $(-R-1,R+1) \times Y$,
    \item $h^R_1$ depends only on the real coordinate and coincides with $h^R_0$ on $[R+1-\mu,R+1] \times Y$ and on $[-R-1,-R-1+\mu]\times Y$,
    \item $h^R_1 = e^{ \frac{a}{\mathrm{K}}}f_\alpha$ on $[-R+2\mu,R-2\mu] \times Y$, where $\mathrm{K}= \max \Big\{ \frac{R}{\frac{\log(C^+)}{\max (f_\alpha)}},\frac{R}{\frac{-\log(C^-)}{\max (f_\alpha)}} \Big\},$
    \item $ h^R_1 \equiv \mathrm{h}^+_R f_\alpha$ on $[R,R+\mu] \times Y$, where $\mathrm{h}^+_R:[R,R+\mu] \to R$ is a strictly increasing smooth function,
    \item $ h^R_1 \equiv \mathrm{h}^-_R(a+R) f_\alpha$ on $[-R-\mu,-R] \times Y$, where $\mathrm{h}^-_R:[-R-\mu,-R] \to \R$ is a strictly increasing smooth function.
\end{itemize}

We assume that for any $R >0 $ and any $c\geq 0$ we have 
\begin{equation} \label{eq:hconvergence1}
    h^{R+c}_1(a+c) = h^R_1(a) \mbox{ for all } a \in [R-\mu,+\infty) \times Y,
\end{equation}
and 
\begin{equation} \label{eq:hconvergence2}
h^{R+c}_1(a-c) = h^R_1(a) \mbox{ for all } a \in (-\infty, -R +\mu ] \times Y.  
\end{equation}
We then set 
\begin{equation}
    \lambda^R_1 :=  h^R_1 \alpha_0
\end{equation}
and 
\begin{equation}
    \omega^R_1 := d(h^R_1 \alpha_0).
\end{equation}

Recall that for any real number $\mathbf{b}$ we have  the translation map $\mathbf{T_b}: \R \times Y \to \R \times Y $, 
\begin{equation}
    \mathbf{T_b}(a,p) := (a+\mathbf{b},p).
\end{equation}

By equations \eqref{eq:hconvergence1} and \eqref{eq:hconvergence2} it follows that $    (\mathbf{T}_R)^*\lambda^R_1  $ converges as $R \to +\infty$ in $C^\infty_{\rm loc}$ to a one-form $\lambda^+_1$ on $\R \times Y$. Similarly,  $    (\mathbf{T}_R)^*\omega^R_1  $ converges as $R \to +\infty$ in $C^\infty_{\rm loc}$ to a two-form $\omega^+_1$ on $\R \times Y$, and $d(\lambda^+_1)=\omega^+_1$. The triple $(\R \times Y,\lambda^+_1,\omega^+_1)$ is an exact symplectic cobordism from $C^+ \alpha_0$ to $\alpha$. 

Again by equations \eqref{eq:hconvergence1} and \eqref{eq:hconvergence2} it follows that $    (\mathbf{T}_{-R})^*\lambda^R_1  $ converges as $R \to +\infty$ in $C^\infty_{\rm loc}$ to a one-form $\lambda^-_1$ on $\R \times Y$. Similarly,  $    (\mathbf{T}_{-R})^*\omega^R_1  $ converges as $R \to +\infty$ in $C^\infty_{\rm loc}$ to a two-form $\omega^-_1$ on $\R \times Y$, and $d(\lambda^-_1)=\omega^-_1$. The triple $(\R \times Y,\lambda^-_1,\omega^-_1)$ is an exact symplectic cobordism from $\alpha$ to $C^- \alpha_0$. 

We will now consider an almost complex structure $J^R_1$ on $\R \times Y$ that is compatible with $\omega^R_1$ on $(-R-1, R+1) \times Y$, that leaves it invariant everywhere, and satisfies:
\begin{itemize}
    \item $J^R_1 \equiv J^-_0$ on $(-\infty,-R-1+\mu] \times Y$,
    \item $J^R_1 \equiv J^+_0$ on $[R+1-\mu,+\infty) \times Y$,
    \item $J^R_1 \equiv J_\alpha$ on $[-R,R] \times Y$ (recall that $J_\alpha$ was fixed earlier and is independent of $R$),
    \item $J^R_1$ is compatible with $\omega^R_1$ in $(-R-1,R+1)\times Y$ and preserves it everywhere, 
    \item $(\mathbf{T}_{R})^*J^R_1$ converges in $C^\infty_{\rm loc}$ as $R \to +\infty$ to an almost complex structure $J^{+\infty}_1$ on $\R \times Y$ compatible with the cobordism $(\R \times Y,\lambda^+_1,\omega^+_1)$,
    \item $(\mathbf{T}_{-R})^*J^R_1$ converges in $C^\infty_{\rm loc}$ as $R \to +\infty$ to an almost complex structure $J^{-\infty}_1$ on $\R \times Y$ compatible with the cobordism $(\R \times Y,\lambda^-_1,\omega^-_1)$,
    \item $J^R_1$ is regular in the sense that the linearized Cauchy-Riemann operator of $J^R_1$ over each simply covered finite energy holomorphic curve in $(W_R,J^R_1)$ is surjective.    
\end{itemize}
The last condition is $C^\infty$-generic (see~\cite{Wendl-lectures}). It is immediate that the almost complex structure $J^{+\infty}_1$ is asymptotically cylindrical to $J^+_0$ at $+\infty$ and to $J_\alpha$ at $-\infty$. Similarly, $J^{-\infty}_1$ is asymptotically cylindrical to $J^-_0$ at $-\infty$ and to $J_\alpha$ at $+\infty$.

The energy of a holomorphic curve in a splitting exact symplectic cobordism \linebreak $(\R \times Y,\lambda^R_1,\omega^R_1,J^R_1)$ is defined in Section \ref{subsec:HolomorphicCurves}. We can choose $J^R_1$ to be regular in the sense that moduli spaces of simply covered holomorphic curves in \linebreak $(\R \times Y,\lambda^R_1,\omega^R_1,J^R_1)$ are transversely cut out.

Let $(h^R_s)_{s \in [0,1]}$ be a smooth homotopy of functions 
\begin{equation}
    h^R_s: \R \times Y \to \R,
\end{equation}
starting at the function $h^R_0$ and ending at the function $h^R_1$ defined above. We require that, for every $s \in [0,1]$, $h^R_s$ satisfies:
\begin{itemize}
    \item $\partial_a h^R_s \geq 0$,
    \item $\partial_a h^R_s >0$ on $(-R-1,R+1) \times Y$,
    \item $h^R_s$ coincides with $h^R_0$ and $h^R_1$ on $[R+1-\mu,+\infty) \times Y$ and $(-\infty, -R-1+\mu] \times Y$.
    \end{itemize}
This implies in particular that $h^R_s \equiv C^+$ on $[R+1,+\infty) \times Y$ and $h^R_s \equiv C^-$ on $(-\infty,-R-1] \times Y$. It is easy to construct homotopies $(h^R_s)_{s \in [0,1]}$  satisfying these assumptions.

We define
\begin{equation}
    \lambda^R_s:= h^R_s \alpha_0
\end{equation}
and 
\begin{equation}
    \omega^R_s:= d(\lambda^R_s).
\end{equation}
Then, $(\R \times Y, \lambda^R_s,\omega^R_s )_{s \in [0,1]}$ is a smooth homotopy of exact symplectic cobordisms from $C^+ \alpha_0$ to $C^- \alpha_0$ starting at $(\R \times Y, \lambda^R_0,\omega^R_0 )$ and ending at $(\R \times Y, \lambda^R_1,\omega^R_1 )$.

We consider then smooth homotopies $(J^R_s)_{s \in [0,1]}$, where each $J^R_s$ will be an asymptotically cylindrical almost complex structure on $(\R \times Y, \lambda^R_s,\omega^R_s )$.
We assume that $J^R_s$ satisfies for each $s \in [0,1]$:
\begin{itemize}
    \item $J^R_s \equiv J^+_0$ on $[R+1 -\mu, +\infty) \times Y$,
    \item $J^R_s \equiv J^+_0$ on $(-\infty,-R-1+\mu] \times Y$,
    \item $J^R_s$ is compatible with $\omega^R_s$ on $(-R-1,R+1) \times Y$ and preserves $\omega^R_s$ everywhere, 
    \item $J^R_s$ is a smooth homotopy of almost complex structures on $\R \times Y$ starting at $J^R_0$ and ending at $J^R_1$.
\end{itemize}

A smooth homotopy $(\R \times Y, \lambda^R_s,\omega^R_s,J^R_s )_{s \in [0,1]}$ of exact symplectic cobordisms endowed with almost complex structures as constructed in this section will be referred to from now on as an \textit{admissible homotopy of exact symplectic cobordisms endowed with almost complex structures}.

\subsection{Moduli spaces associated to components of the link $\mathcal{L}_0$}\label{sec:moduli}

\
\subsubsection{The moduli space} 
We start by introducing some notation. Given a simple  closed Reeb orbit $\gamma_0$ of $\alpha_0$ which is a component of $\mathcal{L}_0$ we denote by 
$$\mathcal{M}(\gamma_0,\gamma_0,\omega^R_s,J^R_s;s\in [0,1])=\{(s,\widetilde{u}_s)\}/\sim$$
the moduli space of pairs $(s,\widetilde{u}_s)$, where $s\in [0,1]$ and $\widetilde{u}_s$ is a finite energy holomorphic cylinder in $(\R \times Y,\lambda_s^R, \omega_s^R,J^R_s)$ with one positive and one negative puncture both asymptotic to $\gamma_0$. Two such cylinders $\widetilde{u}_s$ and $\widetilde{u}'_s$ are considered equivalent if they can be obtained one from another by holomorphic reparametrizations of their common domain $(\R \times S^1, i)$. Recall that holomorphic reparametrizations of a holomorphic cylinder are precisely combinations of translations  in the $\R$-factor  and rotations in the $S^1$-factor.

\subsubsection{Some recollections}
\begin{rec}[Bubbling of holomorphic planes] \label{rec:bubbling}
We recall some parts of the bubbling analysis of \cite{Hofer93,HoferWysockiZehnder} that we need. Let $(s_n,\widetilde{u}_{s_n})$ be a sequence of elements of $\mathcal{M}(\gamma_0,\gamma_0,\omega^R_s,J^R_s;s \in [0,1])$, where we assume that $s_n$ converges to an element $s\in [0,1]$ as $n \to +\infty$. If the gradient of the sequence $\widetilde{u}_{s_n}$ is not uniformly bounded, then we can find a subsequence $s'_n$ of $s_n$, a sequence of points $z_n \in \R\times S^1$, a sequence of real numbers $\mathbf{b}_n$, strictly monotone sequences of positive numbers $\epsilon_n \to 0$ and $K_n \to +\infty$, and a sequence of holomorphic charts\footnote{Here $B_{K_n}(0) \subset \mathbb{C}$ is the open ball of size $K_n$ around the origin for the flat metric on $\mathbb{C}$ and $B_{\epsilon_n}(z_n) \subset \R \times S^1$ is the open ball of size $\epsilon_n$ around $z_n$ for the flat metric on $\R \times S^1$. } $$\uppsi_n: B_{K_n}(0) \subset \mathbb{C} \to B_{\epsilon_n}(z_n) \subset \R \times S^1 $$ such that $\widetilde{v}_n:= \mathbf{T}_{-\mathbf{b}_n} \circ \widetilde{u}_{s'_n} \circ \uppsi_n$ converges in $C^\infty_{\rm loc}$ to $\widetilde{v}$, where $\widetilde{v}$ is either a non-constant finite energy plane in the symplectization $(\R \times Y, J_0^+)$ of $C^+ \alpha_0$, a non-constant finite energy plane in the exact symplectic cobordism  $(\R \times Y, \lambda^R_s,\omega^R_s, J^R_s)$, or a non-constant finite energy plane in the symplectization $(\R \times Y, J_0^-)$ of $C^- \alpha_0$.
\end{rec}

\begin{rec}[Breaking orbits] \label{rec:breaking}
Let $(s_n,\widetilde{u}_{s_n})$ be a sequence of elements of $\mathcal{M}(\gamma_0,\gamma_0,\omega^R_s,J^R_s;s \in [0,1])$, where we assume that $s_n$ converges to an element $s\in [0,1]$ as $n \to +\infty$. A closed Reeb orbit $\gamma^\pm$ of $C^\pm \alpha_0$ is a breaking orbit for the sequence $s_n$ if there exists a subsequence $s'_n$ of $s_n$, a family of embedded circles $C_n \subset \R \times S^1$, a family of annuli $\mathfrak{A}_n \subset \R \times S^1$ with $C_n \subset \mathfrak{A}_n$, a strictly monotone sequence of positive real numbers $L_n \to +\infty$, and a sequence of biholomorphisms
\begin{align*}
   \upphi_n: [-L_n, L_n] \times S^1 \to \mathfrak{A}_n, \\
   \upphi_n(\{0\} \times S^1) = C_n,
\end{align*}
such that $\widetilde{v}_n := \widetilde{u}_{s'_n} \circ \upphi_n$ converges in $C^\infty_{\rm loc}$ to the trivial holomorphic cylinder over $\gamma^\pm$ in the symplectization $(\R \times Y, J_0^\pm)$. 

\end{rec}

\subsubsection{Compactness of the moduli space} \label{sec:compactness}

The next proposition will be one of the crucial ingredients in the proof of Theorem \ref{thm:stability}: if $C^{\pm}$ are  sufficiently close to $1$, then the moduli space $\mathcal{M}(\gamma_0,\gamma_0,\omega^R_s,J^R_s;s\in [0,1])$ is compact.

Let us first introduce some notation. In the following let us write 
$$\mathrm{Spec}({\alpha_0}) :=\{ \mathcal{A}_{\alpha_0}(\gamma) \ | \ \gamma \text{ periodic Reeb orbit of } \phi_{\alpha_0}\}.$$
We also consider the subset $\mathrm{Spec}_{\alpha_0}(\mathcal{L}_0)\subset \mathrm{Spec}({\alpha_0})$ defined by  
$\mathrm{Spec}_{\alpha_0}(\mathcal{L}_0):= \{ \mathcal{A}_{\alpha_0}(\gamma) \ | \ \gamma \text{ component of } \mathcal{L}_0\}, \text{ and we put}$ 
$\mathcal{A}_{\alpha_0}(\mathcal{L}_0):=\max\mathrm{Spec}_{\alpha_0}(\mathcal{L}_0).$

Recall that $\mathrm{sys}(\alpha_0)$ denotes the period of the shortest Reeb orbit of $\alpha_0$, and  
 that  $\mathfrak{h}(\alpha_0, \mathcal{L}_0, \widetilde{J}_0)$ is the infimum of the numbers $\int_u d\alpha_0 = \mathcal{A}_{\alpha_0}(\gamma^+)-\mathcal{A}_{\alpha_0}(\gamma^-)$, where $u$ is any finite energy holomorphic cylinder in the symplectization $(\R \times Y, \widetilde{J}_0)$ that is not a trivial cylinder and for which its positive asymptotic orbit $\gamma^+$ or its negatively asymptotic orbit $\gamma^-$ is a component of $\mathcal{L}_0$, cf.\ Definition \ref{defn:sys_h}. 

In the following let us choose $\mu_{\alpha_0,\mathcal{L}_0}>0$ such that 
\begin{equation}\label{eq:mu_sys}
\mu_{\alpha_0,\mathcal{L}_0} < \frac{\mathrm{sys}(\alpha_0)}{3} \quad \text{and }
\end{equation}
\begin{equation}\label{eq:munew}
\mu_{\alpha_0,\mathcal{L}_0}<\mathfrak{h}(\alpha_0, \mathcal{L}_0, \widetilde{J}_0). 
\end{equation}
Since $\alpha_0$ is non-degenerate, a number $\mu_{\alpha_0,\mathcal{L}_0}$ as above exists.

This holds in particular if for every $c \in \mathrm{Spec}_{\alpha_0}(\mathcal{L}_0)$,   
\begin{equation} \label{eq:defdelta}
(c-\mu_{\alpha_0,\mathcal{L}_0},c+\mu_{\alpha_0,\mathcal{L}_0}) \cap \mathrm{Spec}(\alpha_0) = \{c\}.  \end{equation}

\begin{assump} \label{assump:delta}
Let $\delta_{\alpha_0,\mathcal{L}_0}>0$ be small enough so that the following conditions hold:

Condition A):  for every $c \in \mathrm{Spec}_{\alpha_0}(\mathcal{L}_0)$ we have that 
    \begin{equation*}
       e^{ -2\delta_{\alpha_0,\mathcal{L}_0}} c= \frac{e^{ -\delta_{\alpha_0,\mathcal{L}_0} }}{e^{ \delta_{\alpha_0,\mathcal{L}_0} }} c > c- \mu_{\alpha_0,\mathcal{L}_0},
    \end{equation*}
    and 
    \begin{equation*}
        e^{ 2\delta_{\alpha_0,\mathcal{L}_0}} c=\frac{e^{ \delta_{\alpha_0,\mathcal{L}_0} }}{e^{- \delta_{\alpha_0,\mathcal{L}_0} }} c < c + \mu_{\alpha_0,\mathcal{L}_0}.
    \end{equation*}

\ 

Condition B): for every $c \in \mathrm{Spec}_{\alpha_0}(\mathcal{L}_0)$ we have that $$(e^{ \delta_{\alpha_0,\mathcal{L}_0}}- e^{- \delta_{\alpha_0,\mathcal{L}_0} } ) c < e^{- \delta_{\alpha_0,\mathcal{L}_0} } \mathrm{sys}({\alpha_0}).$$
\end{assump}

\begin{prop} \label{prop:compactness}
Let $\alpha_0$, $(Y,\xi)$, and  $\mathcal{L}_0$ be as in Theorem \ref{thm:stability}. Let $\alpha$ be a contact form on $(Y,\xi)$ such that $d_{C^0}(\alpha,\alpha_0)<\delta_{\alpha_0,\mathcal{L}_0}$ and $C^+$ and $C^-$ be positive real numbers satisfying $\max (f_\alpha)< C^+ < e^{\delta_{\alpha_0,\mathcal{L}_0}}$ and $e^{-\delta_{\alpha_0,\mathcal{L}_0}} < C^- < \min (f_\alpha)$, respectively. Let $(\R \times Y, \lambda^R_s,\omega^R_s,J^R_s )_{s \in [0,1]}$ be an admissible homotopy of exact symplectic cobordisms endowed with almost complex structures as constructed in Section \ref{sec:sympcobordisms}, and $\gamma_0$ be a simple orbit contained in $\mathcal{L}_0$. 

Then, any sequence $(s_n,\widetilde{u}_{s_n})$ of elements of  $\mathcal{M}(\gamma_0,\gamma_0,\omega^R_s,J^R_s;s\in [0,1])$ has a subsequence which converges in the SFT-sense to an  element of the moduli space  $\mathcal{M}(\gamma_0,\gamma_0,\omega^R_s,J^R_s;s\in [0,1])$. Denoting by $\overline{\mathcal{M}}(\gamma_0,\gamma_0,\omega^R_s,J^R_s;s\in [0,1])$ the SFT-compactification of ${\mathcal{M}}(\gamma_0,\gamma_0,\omega^R_s,J^R_s;s\in [0,1])$, it follows that $$\overline{\mathcal{M}}(\gamma_0,\gamma_0,\omega^R_s,J^R_s;s\in [0,1])=\mathcal{M}(\gamma_0,\gamma_0,\omega^R_s,J^R_s;s\in [0,1]),$$
 i.e. $\mathcal{M}(\gamma_0,\gamma_0,\omega^R_s,J^R_s;s\in [0,1])$ is compact.
\end{prop}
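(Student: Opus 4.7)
My plan is to exploit the severely small energy budget forced by $C^0$-closeness. For any $(s,\widetilde u_s) \in \mathcal{M}(\gamma_0,\gamma_0,\omega^R_s,J^R_s;s\in[0,1])$, Stokes' theorem applied to the primitive $\lambda^R_s = h^R_s\alpha_0$ gives
$$\int_{\widetilde u_s} d\lambda^R_s = C^+\mathcal{A}_{\alpha_0}(\gamma_0) - C^-\mathcal{A}_{\alpha_0}(\gamma_0) = (C^+-C^-)\mathcal{A}_{\alpha_0}(\gamma_0),$$
and Condition B of Assumption~\ref{assump:delta}, combined with $C^+<e^{\delta_{\alpha_0,\mathcal{L}_0}}$ and $C^->e^{-\delta_{\alpha_0,\mathcal{L}_0}}$, yields the uniform bound
$$(C^+-C^-)\mathcal{A}_{\alpha_0}(\gamma_0) < e^{-\delta_{\alpha_0,\mathcal{L}_0}}\mathrm{sys}(\alpha_0) < C^-\mathrm{sys}(\alpha_0).$$
This also controls the Hofer energy uniformly in $s$, so given a sequence $(s_n,\widetilde u_{s_n})$ with $s_n\to s\in[0,1]$, SFT compactness (Theorem~\ref{thm:SFT1}, applied to the family of cobordisms) produces a subsequential SFT-limit $\mathbf{u}$ of arithmetic genus zero with symplectization top levels of $C^+\alpha_0$, a single cobordism level at parameter $s$, and symplectization bottom levels of $C^-\alpha_0$; since each $\widetilde u_{s_n}$ has exactly one positive and one negative puncture, $\mathbf{u}$ is a cylinder with branches.

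Next I would rule out branches. A branch necessarily contains at least one finite energy plane $\widetilde v$ in one of the three symplectic manifolds comprising $\mathbf{u}$ (cf.\ Recollection~\ref{rec:bubbling}), whose single asymptotic orbit $\gamma_b$ is a Reeb orbit of $C^+\alpha_0$ or $C^-\alpha_0$, hence satisfies $\mathcal{A}(\gamma_b)\geq C^-\mathrm{sys}(\alpha_0)$. On the other hand, by Stokes, $\mathcal{A}(\gamma_b)$ equals the $d\lambda$-energy of $\widetilde v$, which is bounded by the total $d\lambda$-energy $(C^+-C^-)\mathcal{A}_{\alpha_0}(\gamma_0) < C^-\mathrm{sys}(\alpha_0)$, a contradiction. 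Hence $\mathbf{u}$ is a broken cylinder with no branches.

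It remains to rule out nontrivial breaking along the main tower. A breaking orbit $\gamma'$ between a top-side level and the one below it is a $C^+\alpha_0$-orbit; let $\gamma'_0$ denote the same geometric orbit viewed under $\alpha_0$. The nonnegativity of the $d\lambda$-energies of the portions of $\mathbf{u}$ above and below $\gamma'$ yields
$$\tfrac{C^-}{C^+}\mathcal{A}_{\alpha_0}(\gamma_0) \leq \mathcal{A}_{\alpha_0}(\gamma'_0) \leq \mathcal{A}_{\alpha_0}(\gamma_0),$$
and in particular $\mathcal{A}_{\alpha_0}(\gamma'_0) > e^{-2\delta_{\alpha_0,\mathcal{L}_0}}\mathcal{A}_{\alpha_0}(\gamma_0)$. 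By Condition A of Assumption~\ref{assump:delta}, this forces $\mathcal{A}_{\alpha_0}(\gamma'_0)\in (\mathcal{A}_{\alpha_0}(\gamma_0)-\mu_{\alpha_0,\mathcal{L}_0},\mathcal{A}_{\alpha_0}(\gamma_0)]$, and \eqref{eq:defdelta} then forces $\mathcal{A}_{\alpha_0}(\gamma'_0)=\mathcal{A}_{\alpha_0}(\gamma_0)$, because this is the only element of $\mathrm{Spec}(\alpha_0)$ in that interval. The symplectization cylinder between $\gamma_0$ and $\gamma'$ thus has vanishing $d\alpha_0$-energy; since $\alpha_0$ is non-degenerate, it must be a trivial cylinder over a single Reeb orbit of $\alpha_0$, whence $\gamma'=\gamma_0$ geometrically. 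The symmetric argument handles breakings on the bottom side and any iterated breakings within the symplectization towers.

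After stripping off the resulting trivial top and bottom levels, $\mathbf{u}$ reduces to a single holomorphic cylinder in $(\R\times Y,\lambda^R_s,\omega^R_s,J^R_s)$ with both asymptotes at $\gamma_0$, i.e., an element of $\mathcal{M}(\gamma_0,\gamma_0,\omega^R_s,J^R_s;s\in[0,1])$. The most delicate step, I expect, is the bookkeeping of the minuscule but positive global energy $(C^+-C^-)\mathcal{A}_{\alpha_0}(\gamma_0)$ across the potentially many levels and branches of the SFT-limit, ensuring that at every candidate breaking the lower bound $(C^-/C^+)\mathcal{A}_{\alpha_0}(\gamma_0)$ on the $\alpha_0$-action remains valid; this is where the isolation of $\mathrm{Spec}_{\alpha_0}(\mathcal{L}_0)$ within $\mathrm{Spec}(\alpha_0)$ encoded by Assumption~\ref{assump:delta} is repeatedly invoked.
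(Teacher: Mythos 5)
Your overall skeleton coincides with the paper's proof: the uniform energy budget $(C^+-C^-)\mathcal{A}_{\alpha_0}(\gamma_0)<C^-\mathrm{sys}(\alpha_0)$ coming from Condition B of Assumption~\ref{assump:delta} kills every finite energy plane (hence all bubbling/branches), and the telescoping action estimates place any breaking orbit's $\alpha_0$-action in the window $\bigl(\tfrac{C^-}{C^+}\mathcal{A}_{\alpha_0}(\gamma_0),\mathcal{A}_{\alpha_0}(\gamma_0)\bigr)$, which Condition A converts into an interval of length less than $\mu_{\alpha_0,\mathcal{L}_0}$ around $\mathcal{A}_{\alpha_0}(\gamma_0)$. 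Up to that point your argument matches the paper (the paper runs the bubbling analysis on the sequence rather than in the limit building, but that is cosmetic).

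The gap is in how you finish the breaking step. You conclude via the spectral-gap condition \eqref{eq:defdelta}, forcing $\mathcal{A}_{\alpha_0}(\gamma'_0)=\mathcal{A}_{\alpha_0}(\gamma_0)$ and hence a trivial cylinder. But \eqref{eq:defdelta} is \emph{not} among the hypotheses: the standing choice of $\mu_{\alpha_0,\mathcal{L}_0}$ is \eqref{eq:mu_sys} together with \eqref{eq:munew}, i.e.\ $\mu_{\alpha_0,\mathcal{L}_0}<\mathfrak{h}(\alpha_0,\mathcal{L}_0,\widetilde{J}_0)$, and \eqref{eq:defdelta} is mentioned only as one sufficient way to realize this; under the actual hypotheses there may well be a Reeb orbit of $\alpha_0$, not in $\mathcal{L}_0$, whose action lies in $(\mathcal{A}_{\alpha_0}(\gamma_0)-\mu_{\alpha_0,\mathcal{L}_0},\mathcal{A}_{\alpha_0}(\gamma_0))$, and then your step stalls. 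The correct mechanism is holomorphic energy quantization, not the action spectrum: if $\kappa^+>0$, the top level pulled back by $\Upsilon^+$ is a \emph{non-trivial} $\widetilde{J}_0$-holomorphic cylinder in the symplectization of $\alpha_0$, positively asymptotic to the component $\gamma_0\subset\mathcal{L}_0$, with $d\alpha_0$-energy less than $\mu_{\alpha_0,\mathcal{L}_0}<\mathfrak{h}(\alpha_0,\mathcal{L}_0,\widetilde{J}_0)$, contradicting the definition of $\mathfrak{h}$ in Definition~\ref{defn:sys_h} (and symmetrically at the bottom via $\Upsilon^-$). This distinction is not pedantic: it is exactly why the symplectization levels are pulled back to the fixed regular $\widetilde{J}_0$, and it is what makes $\delta$ in Theorem~\ref{thm:stability} depend only on $\epsilon$, $\mathrm{sys}(\alpha_0)$ and $\mathfrak{h}(\alpha_0,\mathcal{L}_0)$ — a dependence used in the proof of Theorem~\ref{thm:main} to get a $\delta$ uniform along the approximating sequence $\alpha_k$, which a spectral-gap-based argument would not deliver.
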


\textit{Proof.} 
The proof is divided into steps. Clearly, there is no loss of generality in assuming that $s_n$ converges to $s \in [0,1]$, so we make this assumption from now on.

\textit{Step 1: Ruling out bubbling.}

Let $(s_n,u_{s_n})$ be a sequence of elements of $\mathcal{M}(\gamma_0,\gamma_0,\omega^R_s,J^R_s;s\in [0,1])$ such that $s_n \to s \in [0,1]$ as $n\to +\infty$. We assume that the gradients of $u_{s_n}$ are not uniformly bounded and search a contradiction. This will then rule out bubbling for the sequence $u_{s_n}$.  

By Recollection \ref{rec:bubbling}, we can find a subsequence $s'_n$ of $s_n$, a sequence of real numbers $\mathbf{b}_n$, a sequence of points $z_n \in \R\times S^1$, strictly monotone sequences of positive numbers $\epsilon_n \to 0$ and $K_n \to +\infty$, and a sequence of holomorphic charts  $$\uppsi: B_{K_n}(0) \subset \mathbb{C} \to B_{\epsilon_n}(z_n) \subset \R \times S^1 $$ such that $\widetilde{v}_n:= \mathbf{T}_{-\mathbf{b}_n} \circ \widetilde{u}_{s'_n} \circ \uppsi_n$ converges in $C^\infty_{\rm loc}$ to a finite energy plane $\widetilde{v}$. 
There are three possibilities for the finite energy plane $\widetilde{v}$:
\begin{itemize}
    \item[1)] $\widetilde{v}$ is a finite energy plane in the symplectization $(\R \times Y, J_0^+)$ of $C^+ \alpha_0$,
    \item[2)] $\widetilde{v}$ is a finite energy plane in the exact symplectic cobordism \linebreak $(\R \times Y, \lambda^R_s,\omega^R_s, J^R_s)$,
    \item[3)] $\widetilde{v}$ is a finite energy plane in the symplectization $(\R \times Y, J_0^-)$ of $C^- \alpha_0$.
\end{itemize}
We show that each of these three possibilities leads to a contradiction. For this, we notice that by Condition B) on $\delta_{\alpha_0,\mathcal{L}_0}$ and the choices of $C^{\pm}$ we have
\begin{equation} \label{eq:actionbub0}
    (C^+ - C^-)\mathcal{A}_{\alpha_0}(\mathcal{L}_0) < C^-\mathrm{sys}(\alpha_0).
\end{equation}

\textit{Ruling out 1).} Assume that 1) holds. Then $\widetilde{v}: \mathbb{C} \to (\R \times Y, J_0^+)$ is asymptotic to a  non-degenerate closed Reeb orbit $\gamma$ of $\alpha_0$. We then have:
\begin{equation} \label{eq:actionbub1}
\int_{\R \times S^1}\widetilde{v}^*d(C^+\alpha_0) = \mathcal{A}_{C^+\alpha_0}(\gamma) \geq C^+\mathrm{sys}(\alpha_0).
\end{equation}

At the same time 
\begin{align} \label{eq:actionbub2}
& \int_{\R \times S^1}\widetilde{v}^*d(C^+\alpha_0) = \lim_{n \to +\infty} \int_{B_{K_n}(0)}\widetilde{v}_n^*(\omega_{s'_n}^R) \leq \\ \nonumber & \leq \lim_{n\to +\infty} \int_{\R \times S^1}\widetilde{v}_{n}^*(\omega_{s'_n}^R)  = (C^+ - C^-)\mathcal{A}_{\alpha_0}(\gamma_0) \leq (C^+ - C^-)\mathcal{A}_{\alpha_0}(\mathcal{L}_0).
\end{align}

Combining \eqref{eq:actionbub1} and \eqref{eq:actionbub2} we obtain $(C^+ - C^-)\mathcal{A}_{\alpha_0}(\mathcal{L}_0) \geq C^+ \mathrm{sys}(\alpha_0)$. But this contradicts \eqref{eq:actionbub0}, since $C^-\leq C^+$.

\textit{Ruling out 2).} Assume that 2) holds. Then $\widetilde{v}: \mathbb{C} \to (\R \times Y, J^R_s)$ is a finite energy plane in $(\R \times Y, \lambda^R_s,\omega^R_s, J^R_s)$, which is positively asymptotic to a Reeb orbit $\gamma$ of $C^+ \alpha_0$. 
We then have
\begin{equation} \label{eq:actionbub3}
\int_{\R \times S^1}\widetilde{v}^*\omega^R_s = \mathcal{A}_{C^+\alpha_0}(\gamma) \geq C^+\mathrm{sys}(\alpha_0).
\end{equation}  
and 
\begin{align} \label{eq:actionbub4}
& \int_{\R \times S^1}\widetilde{v}^*\omega^R_s = \lim_{n \to +\infty} \int_{B_{K_n}(0)}\widetilde{v}_n^*(\omega_{s'_n}^R) \leq \\ \nonumber & \leq \lim_{n\to + \infty}\int_{\R \times S^1}\widetilde{v}_n^*(\omega^R_{s'_n})  = (C^+ - C^-)\mathcal{A}_{\alpha_0}(\gamma_0) \leq (C^+ - C^-)\mathcal{A}_{\alpha_0}(\mathcal{L}_0).
\end{align}
Combining \eqref{eq:actionbub3} and \eqref{eq:actionbub4} we again obtain $(C^+ - C^-)\mathcal{A}_{\alpha_0}(\mathcal{L}_0) \geq C^+ \mathrm{sys}(\alpha_0)$, which contradicts \eqref{eq:actionbub0}.

\textit{Ruling out 3).} Assuming 3) holds, $\widetilde{v}: \mathbb{C} \to (\R \times Y, J_0^-)$ is a finite energy plane in the symplectization of $C^- \alpha_0$, positively asymptotic to a Reeb orbit $\gamma$ of $C^- \alpha_0$. 

We then have
\begin{equation} \label{eq:actionbub5}
\int_{\R \times S^1}\widetilde{v}^*d(C^-\alpha_0) = \mathcal{A}_{C^-\alpha_0}(\gamma) \geq C^-\mathrm{sys}(\alpha_0), 
\end{equation}  
and 
\begin{align} \label{eq:actionbub6}
& \int_{\R \times S^1}\widetilde{v}^*d(C^-\alpha_0) = \lim_{n \to +\infty} \int_{B_{K_n}(0)}\widetilde{v}_n^*(\omega_{s'_n}^R) \leq \\ \nonumber & \leq\lim_{n\to + \infty} \int_{\R \times S^1}\widetilde{v}_n^*(\omega_{s'_n}^R)  = (C^+ - C^-)\mathcal{A}_{\alpha_0}(\gamma_0) \leq (C^+ - C^-)\mathcal{A}_{\alpha_0}(\mathcal{L}_0).
\end{align}

Combining \eqref{eq:actionbub5} and \eqref{eq:actionbub6}, we again obtain $(C^+ - C^-)\mathcal{A}_{\alpha_0}(\mathcal{L}_0) \geq C^- \mathrm{sys}(\alpha_0)$, which contradicts \eqref{eq:actionbub0}.

\

\textit{Step 2: Ruling out breaking.}

Let $(s_n,\widetilde{u}_{s_n})$ be a sequence of elements of $\mathcal{M}(\gamma_0,\gamma_0,\omega^R_s,J^R_s;s\in [0,1])$. 
The SFT-compactness Theorem of \cite{CPT}, see Section \ref{sec:SFT1}, then implies that there exists a subsequence $(s'_n,\widetilde{u}_{s'_n})$ of $(s_n,\widetilde{u}_{s_n})$ such that $s'_n \to s \in [0,1] $ and $\widetilde{u}_{s'_n}$ converges to a finite energy holomorphic building $\overline{u}$ in  $(\R \times Y,\lambda^R_s, \omega^R_s,J^R_s)$. 
By Step 1, we know that the gradients of the holomorphic curves $\widetilde{u}_{s'_n}$ are uniformly bounded, and the results of \cite{CPT,Hofer93} imply that every level of the building $\overline{u}$ is formed by exactly one non-trivial finite energy holomorphic cylinder. We thus have that $\overline{u}$ is a broken holomorphic cylinder and has the following structure:
\begin{itemize}
    \item there are non-negative integers $\kappa^+$ and $\kappa^-$ such that \linebreak $\overline{u}= (\overline{u}^{\kappa^+},\overline{u}^{\kappa^+-1},\ldots,\overline{u}^1,\overline{u}^0,\overline{u}^{-1},\ldots,\overline{u}^{-\kappa^-} )$,
    \item for each $0<j \leq \kappa^+$, $\overline{u}^j$ is a finite energy holomorphic cylinder in the symplectization $(\R \times Y,J^+_0)$ of $C^+\alpha_0$ that is not a trivial cylinder and that has one positive and one negative puncture,
    \item $\overline{u}^0$ is finite energy holomorphic cylinder in the exact symplectic cobordism $(\R \times Y, \lambda^R_s, \omega^R_s,J^R_s)$ with one positive and one negative puncture,
    \item for each $0>j \geq -\kappa^-$, $\overline{u}^j$ is a finite energy holomorphic cylinder in the symplectization $(\R \times Y,J^-_0)$ of $C^-\alpha_0$ that is not a trivial cylinder and that has one positive and one negative puncture,
    \item for each $ -\kappa^- < j \leq \kappa^+ $, the Reeb orbit $\gamma^{j-1}_+$ detected by the positive puncture of $\widetilde{u}^{j-1}$ coincides with the Reeb orbit $\gamma^j_-$ detected  by the negative puncture of $\overline{u}^j$,
    \item the positive puncture of $\overline{u}^{\kappa^+}$ and the negative puncture of $\overline{u}^{-\kappa^-}$ detect the Reeb orbit $\gamma_0$.
\end{itemize}

We first remark that since every level of $\overline{u}$ is a finite energy holomorphic cylinder in an exact symplectic manifold and since all the levels in symplectizations are not trivial cylinders, the following inequalities are satisfied by the actions of the breaking orbits of the building $\overline{u}$:
\begin{align*}
& C^+ \mathcal{A}_{\alpha_0}(\gamma^j_+)= \mathcal{A}_{C^+\alpha_0}(\gamma^j_+) >  \mathcal{A}_{C^+\alpha_0}(\gamma^j_-)= C^+ \mathcal{A}_{\alpha_0}(\gamma^j_-) \mbox{ for all } 1 \leq j \leq \kappa^+, \\ 
& C^+ \mathcal{A}_{\alpha_0}(\gamma^0_+) =\mathcal{A}_{C^+\alpha_0}(\gamma^0_+)  \geq \mathcal{A}_{C^-\alpha_0}(\gamma^0_-) = C^-\mathcal{A}_{\alpha_0}(\gamma^0_-), \\ 
& C^- \mathcal{A}_{\alpha_0}(\gamma^j_+)= \mathcal{A}_{C^-\alpha_0}(\gamma^j_+) >  \mathcal{A}_{C^-\alpha_0}(\gamma^j_-)= C^- \mathcal{A}_{\alpha_0}(\gamma^j_-) \mbox{ for all } -1 \geq j \leq -\kappa^-. 
\end{align*}
Combining this with the fact that for each $  -\kappa^- < j \leq \kappa^+ $, we have $\gamma^{j-1}_+= \gamma^{j}_-$ and that $\gamma^{\kappa^+}_+ = \gamma^{\kappa^-}_- = \gamma_0$ we conclude that 
\begin{align} \label{eq:nobreakingaction}
  &  C^+\mathcal{A}_{\alpha_0}(\gamma_0) > C^+\mathcal{A}_{\alpha_0}(\gamma_{-}^j)  > C^-\mathcal{A}_{\alpha_0}(\gamma_0) \mbox{ for all }  1 \leq j \leq \kappa^+, \\
  & \nonumber   C^+\mathcal{A}_{\alpha_0}(\gamma_0) \geq C^+\mathcal{A}_{\alpha_0}(\gamma_{+}^0) \geq C^-\mathcal{A}_{\alpha_0}(\gamma_{-}^0) \geq C^-\mathcal{A}_{\alpha_0}(\gamma_0), \\
  & \nonumber C^+\mathcal{A}_{\alpha_0}(\gamma_0) > C^-\mathcal{A}_{\alpha_0}(\gamma_{+}^j)  > C^-\mathcal{A}_{\alpha_0}(\gamma_0) \mbox{ for all } -1 \geq j \geq -\kappa^-.
\end{align}

We show that $\kappa^+=\kappa^- = 0$.
We argue by contradiction and assume that this is not the case. 
If $\kappa^+>0$, we consider the top level $\overline{u}^{\kappa^+}$, which is a finite energy non-trivial cylinder in the symplectization $(\R \times Y,J^+_0)$ of $C^+\alpha_0$ positively asymptotic to $\gamma_0$ and negatively asymptotic to the Reeb orbit $\gamma^{\kappa^+}_-$ of $C^+ \alpha_0$.  
By \eqref{eq:nobreakingaction}, by our choices of $\max (f_\alpha)< C^+ < e^{\delta_{\alpha_0,\mathcal{L}_0}}$ and $e^{-\delta_{\alpha_0,\mathcal{L}_0}} < C^- < \min (f_\alpha)$,    we obtain that  $\mathcal{A}_{C^+\alpha_0}(\gamma^{\kappa^+}_-) = C^+\mathcal{A}_{\alpha_0}(\gamma^{\kappa^+}_-)$ belongs to the interval $$ (C^-\mathcal{A}_{\alpha_0}(\gamma_0), C^+\mathcal{A}_{\alpha_0}(\gamma_0)). $$ This implies that $$\mathcal{A}_{\alpha_0}(\gamma^{\kappa^+}_-) \in (\frac{C^-}{C^+}\mathcal{A}_{\alpha_0}(\gamma_0), \mathcal{A}_{\alpha_0}(\gamma_0)) .$$ Since $\frac{C^-}{C^+} > \frac{e^{-\delta_{\alpha_0,\mathcal{L}_0}}}{e^{\delta_{\alpha_0,\mathcal{L}_0}}}$ and by Condition A) of $\delta_{\alpha_0,\mathcal{L}_0} >0$ chosen in Assumption \ref{assump:delta}, we conclude that  
$$ \mathcal{A}_{\alpha_0}(\gamma^{\kappa^+}_-) \in  (\mathcal{A}_{\alpha_0}(\gamma_0) - \mu_{\alpha_0,\mathcal{L}_0}, \mathcal{A}_{\alpha_0}(\gamma_0)),$$
for $\mu_{\alpha_0,\mathcal{L}_0}$ satisfying 
\eqref{eq:munew}.
This is in  contradiction to the fact that $(\Upsilon^+)^{-1} \circ \overline{u}^{\kappa^+}$ is  a holomorphic cylinder in $(\R\times Y,\widetilde{J}_0)$ positively asymptotic to  $\gamma_0$ and negatively asymptotic to $\gamma_-^{\kappa^+}$, where $\Upsilon^+$ is defined in \eqref{eq:Upsilon}.

If $\kappa^- > 0$, we consider the bottom level $\overline{u}^{-\kappa^-}$ of $\overline{u}$. The holomorphic cylinder $\overline{u}^{-\kappa^-}$ is negatively asymptotic to $\gamma_0$ and positively asymptotic to a Reeb orbit $\gamma^{-\kappa^-}_+$ of $C^-\alpha_0$. Seeing $\gamma^{-\kappa^-}_+$ as a Reeb orbit of $\alpha_0$, using \eqref{eq:nobreakingaction} and that $\max (f_\alpha)< C^+ < e^{\delta_{\alpha_0,\mathcal{L}_0}}$ and $e^{-\delta_{\alpha_0,\mathcal{L}_0}} < C^- < \min (f_\alpha)$, and by Condition A) of $\delta_{\alpha_0,\mathcal{L}_0} >0$ chosen in Assumption \ref{assump:delta}, we obtain that  
 $\mathcal{A}_{\alpha_0}(\gamma^{\kappa^-}_+) $ belongs to the interval $$ (\mathcal{A}_{\alpha_0}(\gamma_0), \frac{C^+}{C^-}\mathcal{A}_{\alpha_0}(\gamma_0))\subset(\mathcal{A}_{\alpha_0}(\gamma_0), \mathcal{A}_{\alpha_0}(\gamma_0)+\mu_{\alpha_0,\mathcal{L}_0}),$$
for $\mu_{\alpha_0,\mathcal{L}_0}$ satisfying \eqref{eq:munew}. This is in  contradiction to the fact that $(\Upsilon^-)^{-1} \circ \overline{u}^{-\kappa^{-}}$ is a holomorphic cylinder in $(\R\times Y,\widetilde{J}_0)$ positively asymptotic to  $\gamma_-^{-\kappa^-}$ and negatively asymptotic to $\gamma_0$, where $\Upsilon^-$ is defined in \eqref{eq:Upsilon}.
This finishes the proof of Step 2.

\

\textit{Step 3: End of the proof.} 

By Step 1 and Step 2, we conclude that $\overline{u}$ has only one level. We have thus proved the first assertion of the proposition, namely that any sequence of elements $\mathcal{M}(\gamma_0,\gamma_0,\omega^R_s,J^R_s;s\in [0,1])$ has a subsequence that converges to an element of $\mathcal{M}(\gamma_0,\gamma_0,\omega^R_s,J^R_s;s\in [0,1])$. This combined with the SFT-compactness theorem of \cite{CPT} implies that $\mathcal{M}(\gamma_0,\gamma_0,\omega^R_s,J^R_s;s\in [0,1])$ coincides with its SFT-compactification and is therefore compact.
\qed

\textbf{Standing assumption for the rest of section~\ref{sec:proofmainresult}:}
\\
\textbf{From now on, we assume that  $\alpha$, $\delta_{\alpha_0,\mathcal{L}_0}>0$, $C^+$ and $C^-$ satisfy the hypothesis of Proposition~\ref{prop:compactness}. }

\subsection{Continuation of a link}\label{sec:isotopy}

In Proposition~\ref{prop:important} below we explain how to transport links of Reeb orbits along an admissible homotopy of exact symplectic cobordisms. We will start with the trivial cylinders over the Reeb orbits in $\mathcal{L}_0$ in the trivial cobordism and deform the cobordisms to the desired one. Proposition~\ref{prop:compactness} is the key ingredient that allows us to guarantee that the relevant moduli space of cylinders cannot ``stop'' during the homotopy, and must continue during the whole homotopy. We thus obtain components of these moduli spaces which are homeomorphic to closed intervals with one boundary point in the trivial cobordism, and one boundary point in the other end of the homotopy. Moreover, in Section \ref{sec:isotopies} we show that an appropriate parametrization of our moduli spaces can be realized by an ambient isotopy of $\R \times Y$. This ambient isotopy will be important in Section \ref{sec:moduli_linkcompl}, as it will allow us to preserve crucial linking properties of Reeb orbits with $\mathcal{L}_0$. We advise the reader to skip the proof of Proposition \ref{prop:important} on a first reading.

\subsubsection{A reparametrization of the moduli space}

Before stating and proving Proposition~\ref{prop:important} we introduce some notation. Let $\mathrm{n}_0$ be the number of connected components of $\mathcal{L}_0$. There is an obvious bijection between the connected components of $\mathcal{L}_0$ and simple Reeb orbits of $\alpha_0$ contained in $\mathcal{L}_0$. We choose an ordering $\{\gamma^1_0,\gamma_0^2,\ldots,\gamma_0^{\mathrm{n}_0}\}$ of the simple Reeb orbits in $\mathcal{L}_0$.

\begin{prop} \label{prop:important}
Assume that $\alpha$, $\delta_{\alpha_0,\mathcal{L}_0}>0$, $C^+$ and $C^-$ satisfy the hypothesis of Proposition \ref{prop:compactness}. 
Let $(\R \times Y, \overline{\lambda}^R_s,\overline{\omega}^R_s,\overline{J}^R_s )_{s \in [0,1]}$ be an admissible homotopy of exact symplectic cobordisms endowed with almost complex structures as constructed in Section \ref{sec:sympcobordisms}.
    
Then there exists an admissible homotopy of exact symplectic cobordisms endowed with almost complex structures $(\R \times Y, \lambda^R_s,\omega^R_s,J^R_s )_{s \in [0,1]}$  as constructed in Section \ref{sec:sympcobordisms}, with the following properties:
\begin{itemize}
    \item the endpoints of the new homotopy coincide with the endpoints of the given homotopy: 
    $$(\overline{\lambda}^R_0,\overline{\omega}^R_0,  \overline{J}^R_0) =({\lambda}^R_0,{\omega}^R_0, {J}^R_0) \mbox{  and  }(\overline{\lambda}^R_1,\overline{\omega}^R_1,  \overline{J}^R_1) =({\lambda}^R_1,{\omega}^R_1, {J}^R_1),$$   
    \item the almost complex structure $J^R_s$ is regular for simple Reeb orbits not contained in $\mathcal{L}_0$, in the sense that if $\gamma$ is a simple Reeb orbit not contained in $\mathcal{L}_0$, then the linearized Cauchy-Riemann operator over any simply covered holomorphic cylinder with one positive and one negative puncture both asymptotic to $\gamma$ is surjective,
    \item for each $1 \leq j \leq \mathrm{n}_0$ there exists a smooth path $\mathcal{I}^R_j:= \{ (s,\widetilde{v}^R_j(s)) \ | \ s \in [0,1]\} $ in $[0,1] \times C^\infty(\R \times S^1 , \R \times Y)$ such that $\widetilde{v}^R_j(s)$ is a finite energy holomorphic cylinder in $(\R \times Y,\lambda^R_s, \omega^R_s, J^R_s)$ with one positive and one negative puncture both asymptotic to $\gamma^j_0$, and $\widetilde{v}^R_j(0)$ is the trivial cylinder $\widetilde{u}^0_{\gamma^j_0}$ over the Reeb orbit $\gamma^j_0$,
    \item for fixed $s \in [0,1]$ and $i \neq j$ the images of  $\widetilde{v}^R_i(s)$ and $\widetilde{v}^R_j(s)$ are disjoint.
\end{itemize}

\end{prop}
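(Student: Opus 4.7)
The strategy is to deform the homotopy $\overline{J}^R_s$ on the open interval $s\in (0,1)$, keeping the endpoints $\overline{J}^R_0$ and $\overline{J}^R_1$ fixed, so as to achieve the required regularity, and then, for each $1\leq j\leq \mathrm{n}_0$, to construct the path $\mathcal{I}^R_j$ by tracking the connected component of the parametric moduli space containing the trivial cylinder $\widetilde{u}^0_{\gamma_0^j}$ at $s=0$. The regularity for simple Reeb orbits not contained in $\mathcal{L}_0$ is a standard Sard--Smale argument applied to the universal moduli space of simply covered cylinders with a fixed pair of simple asymptotic orbits: for a generic smooth path of $\omega^R_s$-compatible $J^R_s$ with prescribed endpoints, the parametric linearised Cauchy--Riemann operator is surjective (cf.\ Chapter 9 of \cite{Wendl-lectures}). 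This secures the second bullet point.

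Fix $1\leq j\leq \mathrm{n}_0$ and consider the parametric moduli space $\mathcal{M}(\gamma_0^j,\gamma_0^j,\omega^R_s,J^R_s;s\in [0,1])$. By transversality this is a smooth $1$-manifold, with possible boundary lying over $\{0,1\}$; by Proposition~\ref{prop:compactness} it is compact. Hence it is a finite disjoint union of circles and closed intervals. The crucial observation at $s=0$ is that the dilation cobordism $(\R\times Y,\lambda^R_0,\omega^R_0,J^R_0)$ is identified, via $\Upsilon_R$, with a genuine symplectization of $\alpha_0$; by Stokes' theorem any element of the $s=0$ slice has vanishing $d\alpha_0$-energy and is therefore a trivial cylinder over $\gamma_0^j$. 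Thus $\widetilde{u}^0_{\gamma_0^j}$ is the unique element over $s=0$, and regularity of $\widetilde{J}_0$ makes it a transverse boundary point of the parametric moduli space. The component containing $(0,\widetilde{u}^0_{\gamma_0^j})$ is therefore an interval with one boundary point at $s=0$; its other boundary point must lie over $s=1$, since no other element exists over $s=0$. Reparametrising this interval yields the desired smooth path $\widetilde{v}^R_j(s)$.

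For the disjointness property, we invoke positivity of intersections in dimension four, together with Siefring's asymptotic intersection theory for punctured holomorphic cylinders (cf.\ Chapter 9 of \cite{Wendl-lectures}). For $i\neq j$, the asymptotic orbits $\gamma_0^i$ and $\gamma_0^j$ of $\widetilde{v}^R_i(s)$ and $\widetilde{v}^R_j(s)$ are fixed, simple and distinct throughout the homotopy, so no asymptotic intersections can form; by positivity no interior intersections can appear or disappear along the $1$-parameter family without contradicting the compactness statement of Proposition~\ref{prop:compactness}. At $s=0$ the trivial cylinders $\R\times \gamma_0^i$ and $\R\times \gamma_0^j$ are disjoint since the components of the link $\mathcal{L}_0$ are disjoint, so the algebraic intersection number is zero; it remains zero for every $s\in [0,1]$, forcing the images of $\widetilde{v}^R_i(s)$ and $\widetilde{v}^R_j(s)$ to remain disjoint.

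The main technical obstacle is ensuring that the distinguished interval component is globally parametrised by the $s$-coordinate: a priori, the projection to $s\in [0,1]$ could have critical points. Nevertheless, the compactness of the component together with its boundary structure forces the projection to be surjective onto $[0,1]$, so a continuous selection $s\mapsto \widetilde{v}^R_j(s)$ exists; it can then be upgraded to a $C^\infty$-smooth path by applying the implicit function theorem on the regular part of the moduli space and a standard smoothing near the finitely many critical values of $s$. A secondary point is that the generic perturbation producing regularity must be compatible with the constraints imposed at the endpoints and along $\R\times \mathcal{L}_0$; this is a routine but careful bookkeeping exercise in the universal-moduli setup.
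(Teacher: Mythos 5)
Your overall blueprint matches the paper's: use Proposition~\ref{prop:compactness} to see that the parametric moduli space $\mathcal{M}(\gamma_0^j,\gamma_0^j,\omega^R_s,J^R_s;s\in[0,1])$ is a compact $1$-manifold with boundary, observe that the only element over $s=0$ is the trivial cylinder (the $s=0$ cobordism being a reparametrized symplectization of $\alpha_0$ with the regular $\widetilde{J}_0$), so the component through $(0,\widetilde{u}^0_{\gamma_0^j})$ is an interval whose other endpoint lies over $s=1$, and use Siefring's intersection number plus positivity of intersections for the disjointness. But there is a genuine gap at the step you flag as the "main technical obstacle." The interval component need not be a graph over $s$, and when the projection $\sigma$ to $[0,1]$ is not monotone there is \emph{no} continuous selection at all: a continuous map $\tau$ with $\sigma\circ\tau=\mathrm{id}$ is automatically injective, hence monotone, so $\sigma$ restricted to its image would be a homeomorphism onto $[0,1]$ — impossible if the component folds back in the $s$-direction (e.g.\ $\sigma$ running $0\to 0.6\to 0.4\to 1$). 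Surjectivity of the projection does not give a section, and no "smoothing near critical values" can repair this, because the obstruction is topological, not a matter of smoothness.

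The paper's way out is exactly the freedom built into the statement that you did not use: the output homotopy only has to agree with the input at $s=0,1$. One parametrizes the interval component as $\Psi(\tau)=(\sigma(\tau),\psi(\tau))$ and then \emph{reparametrizes the homotopy of cobordisms itself}, setting $\lambda^R_s:=\overline{\lambda}^R_{\sigma(s)}$, $\omega^R_s:=\overline{\omega}^R_{\sigma(s)}$, $J^R_s:=\overline{J}^R_{\sigma(s)}$, so that $\widetilde{v}^R_j(s):=\psi(s)$ is tautologically holomorphic for the new homotopy at parameter $s$. Since each link component may require a different reparametrization, this is organized as an induction over $j$: at each step one reparametrizes by the new $\sigma$, composes the previously built paths with it, and then restores transversality for orbits outside $\mathcal{L}_0$ by a Bourgeois--Dragnev perturbation whose support avoids the images of the already constructed cylinders, so that they stay holomorphic; disjointness is preserved along the induction by showing (again via Siefring's intersection number) that the subset of the next moduli space consisting of cylinders disjoint from the existing family is open and closed, hence contains the whole distinguished component. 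Your a posteriori disjointness argument would be fine if the two smooth families existed for a common homotopy, but without the reparametrization/induction mechanism the families themselves are not available, so the proof as written does not go through.
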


\textit{Proof.} 
We will prove the proposition by induction in the following way. Let $1 \leq i < \mathrm{n}_0$, and assume that we have constructed  an admissible homotopy of exact symplectic cobordisms endowed with almost complex structures $(\R \times Y, \lambda^R_s(i),\omega^R_s(i),J^R_s(i) )_{s \in [0,1]}$ such that:
\begin{itemize}
    \item for each $1 \leq l \leq i$ there exists a smooth path $\mathcal{I}^i_l:= \{ (s,\widetilde{v}^{R,i}_l(s)) \ | \ s \in [0,1]\} $ in $[0,1] \times C^\infty(\R \times S^1 , \R \times Y)$ such that $\widetilde{v}^{R,i}_l(s)$ is a finite energy holomorphic cylinder in $(\R \times Y,  \lambda^R_s(i),\omega^R_s(i), J^R_s(i))$ with one positive and one negative puncture both asymptotic to $\gamma^l_0$, and $\widetilde{v}^R_l(0)$ is the trivial cylinder $\widetilde{u}^0_{\gamma^l_0}$ over the Reeb orbit $\gamma^l_0$,
    \item for fixed $s \in [0,1]$ and $1 \leq  l' \neq l \leq i$ the images of  $\widetilde{v}^{R,i}_{l'}(s)$ and $\widetilde{v}^{R,i}_l(s)$ are disjoint,
     \item the homotopies satisfy $(\overline{\lambda}^R_0,\overline{\omega}^R_0,  \overline{J}^R_0) =({\lambda}^R_0(i),{\omega}^R_0(i), {J}^R_0(i))$ and  \linebreak $(\overline{\lambda}^R_1,\overline{\omega}^R_1, \overline{J}^R_1) =({\lambda}^R_1(i),{\omega}^R_1(i), {J}^R_1(i))$, 
     \item the almost complex structure $J^R_s$ is regular for simple Reeb orbits not contained in $\mathcal{L}_0$, in the sense that if $\gamma$ is a simple Reeb orbit not contained in $\mathcal{L}_0$, then the linearized Cauchy-Riemann operator over any simply covered holomorphic curved which is asymptotic to $\gamma$ at one of its punctures is surjective.

\end{itemize}

The inductive step will then consist in showing that assuming this, it is possible to construct an admissible homotopy of exact symplectic cobordisms endowed with almost complex structures  $$(\R \times Y, \lambda^R_s(i+1),\omega^R_s(i+1),J^R_s(i+1) )_{s \in [0,1]}$$ satisfying the analogous conditions with $i$ replaced by $i+1$.

\

\textit{Step 1: We establish the induction hypothesis for $i=1$.} 

Since  $(\R \times Y, \overline{\lambda}^R_s,\overline{\omega}^R_s,\overline{J}^R_s )_{s \in [0,1]}$ is  an admissible homotopy of exact symplectic cobordisms endowed with almost complex structures, we know that $\overline{J}^R_0$ and $\overline{J}^R_1$ are regular almost complex structures in the sense that the linearized Cauchy-Riemann operators over simply covered finite energy holomorphic curves are surjective. By standard transversality techniques it is then possible, for a sufficiently small $\updelta>0$, to perturb the homotopy $(\overline{J}^R_s)_{s \in [0,1]}$ of almost complex structures inside the open set $(\delta,1 -\delta) \times (-R-1,R+1) \times  Y $, to obtain a homotopy $(\overline{J}^R_s(1))_{s \in [0,1]}$ of almost complex structure such that:
\begin{itemize}
    \item $(\R \times Y, \overline{\lambda}^R_s,\overline{\omega}^R_s,\overline{J}^R_s(1) )_{s \in [0,1]}$ is still an admissible homotopy of exact symplectic cobordisms endowed with almost complex structures,
    \item the moduli space $\mathcal{M}(\gamma^1_0,\gamma^1_0,\omega^R_s,\overline{J}^R_s(1);s\in [0,1])$ is a one-dimensional manifold with boundary,
    \item the almost complex structure $\overline{J}^R_s(1)$ is regular for simple Reeb orbits not contained in $\mathcal{L}_0$ in the sense explained above.
\end{itemize}
It is clear that $(\overline{\lambda}^R_0,\overline{\omega}^R_0,  \overline{J}^R_0) =(\overline{\lambda}^R_0,\overline{\omega}^R_0,  \overline{J}^R_0(1))$ and  \linebreak $(\overline{\lambda}^R_1,\overline{\omega}^R_1,  \overline{J}^R_1) =(\overline{\lambda}^R_1,\overline{\omega}^R_1,  \overline{J}^R_1(1))$.

By Proposition \ref{prop:compactness} the moduli space $\mathcal{M}(\gamma^1_0,\gamma^1_0,\omega^R_s,\overline{J}^R_s(1);s\in [0,1])$ is compact, and we conclude that it is a compact one-dimensional manifold with boundary. We let $\mathcal{C}^1_1$ be the connected component of this moduli space that contains $(0,\widetilde{u}^0_{\gamma^1_0})$, where $\widetilde{u}^0_{\gamma^1_0}$ is the trivial cylinder over $\gamma^1_0$. It is immediate that $\mathcal{C}^1_1$ is  diffeomorphic to an interval. Since there are no broken holomorphic buildings in $\mathcal{C}^1_1$, we know that the two boundary points of $\mathcal{C}^1_1$ must have first coordinate equal to $0$ or $1$. Since $J^R_0$ is diffeomorphic to an $\R$-invariant almost complex structure on a symplectization, we know that the trivial cylinder $(0,\widetilde{u}^0_{\gamma^1_0})$ is the only element in $\mathcal{M}(\gamma^1_0,\gamma^1_0,\omega^R_s,\overline{J}^R_s(1);s\in [0,1])$ with first coordinate $0$. It follows that the boundary of $\mathcal{C}^1_1$ is formed by two elements $(0,\widetilde{u}^0_{\gamma^1_0})$ and $(1,\widetilde{u}^{R}_{\gamma^1_0})$, where $\widetilde{u}^1_{\gamma^1_0}$ is a finite energy cylinder of $(\lambda^R_1,J^R_1)$ with one positive puncture and one negative puncture both asymptotic to $\gamma^1_0$.

Let $\Psi^0_1:[0,1] \to [0,1] \times C^\infty(\R \times S^1 , \R \times Y)$
be a smooth parametrization of  $\mathcal{C}^1_1$ satisfying $\Psi^0_1(0) = (0,\widetilde{u}^0_{\gamma^1_0})$ and $\Psi^0_1(1)=(1,\widetilde{u}^{R}_{\gamma^1_0})$.  We write $$\Psi^0_1(\tau)= (\sigma^0_1(\tau),\psi^0_1(\tau)).$$
The map $\sigma^0_1$ is a smooth map from $[0,1]$ to itself such that $(\sigma^0_1)^{-1}(0)=0$ and $(\sigma^0_1)^{-1}(1)=1$. Moreover, it is clear that by a suitable choice of the parametrization  $\Psi^0_1$ we can also assume that $\sigma^0_1$ coincides with the identity on small neighbourhoods of the points $0$ and $1$.

We define the admissible homotopy of exact symplectic cobordisms endowed with almost complex structures $(\R \times Y, \lambda^R_s(1),\omega^R_s(1),\check{J}^R_s(1) )_{s \in [0,1]}$ by 
\begin{align}
    \lambda^R_s(1) &:= \overline{\lambda}^R_{\sigma^0_1(s)}, \\
    \omega^R_s(1) &:= \overline{\omega}^R_{\sigma^0_1(s)}, \\
    \check{J}^R_s(1) &:= \overline{J}^R_{\sigma^0_1(s)}(1).
\end{align}
We define $\widetilde{v}^{R,1}_1(s) := \psi^0_1(s)$. It is immediate that $\widetilde{v}^{R,1}_1(s)$ is a finite energy holomorphic cylinder in $(\R \times Y, \lambda^R_s(1),\omega^R_s(1),\check{J}^R_s(1) )$ with one positive and one negative puncture both asymptotic to $\gamma^1_0$.
We define $$\mathcal{I}^{1}_1= \{ (s,\widetilde{v}^{R,1}_1(s)) \ | \ s \in [0,1] \}.$$

It is immediate to check that $(\R \times Y, \lambda^R_s(1),\omega^R_s(1),\check{J}^R_s(1) )_{s \in [0,1]}$ and $\mathcal{I}^1_1$ satisfy the first three assumptions of the inductive hypothesis for $i=1$. To satisfy the fourth assumption, i.e., to obtain $(\R \times Y, \lambda^R_s(1),\omega^R_s(1), {J}^R_s(1) )_{s \in [0,1]}$ such that the linearized Cauchy-Riemann operator of $J^R_s(1)$ over any simply covered finite energy holomorphic cylinder which has a positive and a negative puncture both asymptotic to a Reeb orbit not contained in $ \{\gamma_0^1\}$ is surjective, we have to perturb $(\check{J}^R_s(1) )_{s\in [0,1]}$. The techniques developed by Bourgeois and Dragnev \cite{Bourgeois,Dragnev} (see also \cite{Momin} and \cite{Wendl-lectures}) show that this transversality condition can be achieved by an arbitrarily small perturbation ${J}^R_s(1)$ of $\check{J}^R_s(1)$, with the support of this perturbation contained in  
$$\mathcal{O}^{R,1}=\Big((0,1) \times  \big( (-R-1,R+1) \times  Y \big)\Big) \setminus \left\{(s,\Im(\widetilde{v}^{R,1}_1(s)))\, |\, s\in [0,1]\right\}  $$
 with the property that  $(\R \times Y, \lambda^R_s(1),\omega^R_s(1),J^R_s(1) )_{s \in [0,1]}$ is still an admissible homotopy of exact symplectic cobordisms endowed with almost complex structures. The reason why we can obtain regularity with a perturbation whose support is in a set as above is that 
\begin{itemize}
    \item any simply covered finite energy holomorphic cylinder in \linebreak $(\R \times Y, \lambda^R_s(1),\omega^R_s(1),\check{J}_s^R(1))$, for some $s\in [0,1]$,  that has a positive and a negative puncture both asymptotic to a Reeb orbit not contained in $ \{\gamma_0^1\}$ has a somewhere injective point with image in $\mathcal{O}^{R,1} \cap (\{s\} \times (-R-1,R+1) \times Y)$, 
    \item the almost complex structures $\check{J}^R_0(1)$ and $\check{J}^R_1(1)$ are regular.
\end{itemize}
Since ${J}^R_s(1)$ and $\check{J}^R_s(1)$ coincide in a set containing the image of $\widetilde{v}^{R,1}_1(s)$,
we obtain that $\widetilde{v}^{R,1}_1(s)$ is still holomorphic for ${J}^R_s(1)$ and conclude that \linebreak $(\R \times Y, \lambda^R_s(1),\omega^R_s(1),J^R_s(1) )_{s \in [0,1]}$ and $\mathcal{I}^{R,1}_1$ satisfy the four assumptions of the inductive hypothesis for $i=1$.

\ 

\textit{Step 2: The inductive step.}

We assume that for $1 \leq i< \mathrm{n}_0$ we have $(\R \times Y, \lambda^R_s(i),\omega^R_s(i),J^R_s(i) )_{s \in [0,1]}$ and  a smooth path $\mathcal{I}^{i}_l=  \{ (s,\widetilde{v}^{R,i}_l(s)) \ | \ s \in [0,1]\}$ for $1 \leq l \leq i$ satisfying the four conditions of the inductive hypothesis.

We then consider the moduli space $\mathcal{M}(\gamma^{i+1}_0,\gamma^{i+1}_0,\omega^R_s(i),J^R_s(i);s\in [0,1])$. It follows from Proposition \ref{prop:compactness} that this moduli space is compact. By the regularity property of $J^R_s(i)$, we also know that $\mathcal{M}(\gamma^{i+1}_0,\gamma^{i+1}_0,\omega^R_s(i),J^R_s(i);s\in [0,1])$ is a one-dimensional manifold, and conclude that it is a compact one-dimensional manifold with boundary. We denote by $\mathcal{W}^{i}_{i+1}$ the subset of \linebreak $\mathcal{M}(\gamma^{i+1}_0,\gamma^{i+1}_0,\omega^R_s(i),J^R_s(i);s\in [0,1])$, formed by elements disjoint from $\mathcal{I}^{R,i}_l$ for $0 \leq l \leq i$. More precisely, $(s,\widetilde{w}_s) \in \mathcal{W}^{i}_{i+1}$ if it belongs to \linebreak $\mathcal{M}(\gamma^{i+1}_0,\gamma^{i+1}_0,\omega^R_s(i),J^R_s(i);s\in [0,1])$ and if there is no intersection between the image of $\widetilde{w}_s$ and the images of $\widetilde{v}^{R,i}_l(s)$ for all $1 \leq l \leq i$.

We claim that $\mathcal{W}^{i}_{i+1}$ is a one-dimensional manifold with boundary. To prove this, it suffices to show that $\mathcal{W}^{i}_{i+1}$ is open and closed in \linebreak $\mathcal{M}(\gamma^{i+1}_0,\gamma^{i+1}_0,\omega^R_s(i),J^R_s(i);s\in [0,1])$.

To see that $\mathcal{W}^{i}_{i+1}$ is closed, we take a sequence of elements $(s_n,\widetilde{w}_{s_n})$  in $\mathcal{W}^{i}_{i+1}$ that converges to an element $(s,\widetilde{w}_s)$ of  $\mathcal{M}(\gamma^{i+1}_0,\gamma^{i+1}_0,\omega^R_s(i),J^R_s(i);s\in [0,1])$ that does not belong to $\mathcal{W}^{i}_{i+1}$. Since $\mathcal{M}(\gamma^{i+1}_0,\gamma^{i+1}_0,\omega^R_s(i),J^R_s(i);s\in [0,1])$  is a compact one-dimensional manifold with boundary, it follows that for sufficiently large $n$ there exists a homotopy completely contained in \linebreak $\mathcal{M}(\gamma^{i+1}_0,\gamma^{i+1}_0,\omega^R_s(i),J^R_s(i);s\in [0,1])$ between $(s_n,\widetilde{w}_{s_n})$ and $(s,\widetilde{w}_s)$. By the homotopy invariance of Siefring's intersection number $I(\cdot,\cdot)$ for asymptotically cylindrical surfaces in symplectic cobordisms\footnote{Here we are using that  $\widetilde{v}^i_l(s_n)$ and $\widetilde{v}^i_l(s)$ are homotopic as asymptotically cylindrical surfaces, which is clear from the fact that both are contained in $\mathcal{I}^i_l$.} (see~\cite{Siefring}), we conclude that for every $1 \leq l \leq i$ we have $I(\widetilde{w}_{s_n},\widetilde{v}^i_l(s_n)) = I(\widetilde{w}_{s},\widetilde{v}^i_l(s))$. Since the images of $\widetilde{w}_{s_n}$ and $\widetilde{v}^i_l(s_n)$ are disjoint and those cylinders have no common asymptotic orbits,  it follows 
 that $I(\widetilde{w}_{s_n},\widetilde{v}^i_l(s_n))=0$. We conclude that $I(\widetilde{w}_{s},\widetilde{v}^i_l(s))=0$, and since $\widetilde{w}_{s}$ and $\widetilde{v}^i_l(s)$ have no common asymptotic orbits,  the results of \cite{Siefring} imply that the images of $\widetilde{w}_{s}$ and $\widetilde{v}^i_l(s)$ are disjoint. We thus conclude that  $(s,\widetilde{w}_s) \in \mathcal{W}^{i}_{i+1}$, and therefore $\mathcal{W}^{i}_{i+1}$ is closed.

To prove that $\mathcal{W}^{i}_{i+1}$ is open in $\mathcal{M}(\gamma^{i+1}_0,\gamma^{i+1}_0,\omega^R_s(i),J^R_s(i);s\in [0,1])$, we will show that $\mathcal{M}(\gamma^{i+1}_0,\gamma^{i+1}_0,\omega^R_s(i),J^R_s(i);s\in [0,1]) \setminus \mathcal{W}^{i}_{i+1}$ is closed. To show this, let $(s_n,\widetilde{u}_{s_n})$ be a sequence of elements of  $$\mathcal{M}(\gamma^{i+1}_0,\gamma^{i+1}_0,\omega^R_s(i),J^R_s(i);s\in [0,1]) \setminus \mathcal{W}^{i}_{i+1}$$ that converges to an element $(s,\widetilde{w}_s)$ of $\mathcal{M}(\gamma^{i+1}_0,\gamma^{i+1}_0,\omega^R_s(i),J^R_s(i);s\in [0,1])$. By passing to a subsequence, we can assume that there exists $1 \leq l_0\leq i$ such that the image of every $\widetilde{u}_{s_n}$ has non-empty intersection with the image of $\widetilde{v}^{R,i}_{l_0}(s_n)$. Since $\widetilde{u}_{s_n}$ and $\widetilde{v}^{R,i}_{l_0}(s_n)$ have no common asymptotic orbits, we conclude that the Siefring's intersection number of $\widetilde{u}_{s_n}$ and $\widetilde{v}^{R,i}_{l_0}(s_n)$ satisfies $I(\widetilde{v}^{R,i}_{l_0}(s_n),\widetilde{u}_{s_n})>0$. Since $\mathcal{M}(\gamma^{i+1}_0,\gamma^{i+1}_0,\omega^R_s(i),J^R_s(i);s\in [0,1])$ is a compact one-dimensional manifold with boundary, we know that for sufficiently large $n$ there exists a homotopy completely contained in   $\mathcal{M}(\gamma^{i+1}_0,\gamma^{i+1}_0,\omega^R_s(i),J^R_s(i);s\in [0,1])$ between $(s_n,\widetilde{u}_{s_n})$ and $(s,\widetilde{w}_s)$. By the homotopy invariance of Siefring's intersection number, we conclude that  $I(\widetilde{w}_s, \widetilde{v}^{R,i}_{l_0}(s))>0$. Since $\widetilde{w}_s$ and $\widetilde{v}^{R,i}_{l_0}(s)$ have no common asymptotic orbits, the fact that $I(\widetilde{w}_s, ,\widetilde{v}^{R,i}_{l_0}(s))>0$ implies that the images of $\widetilde{w}_s$ and $\widetilde{v}^{R,i}_{l_0}(s)$ intersect. This implies that $(s,\widetilde{w}_s)$ does not belong to $\mathcal{W}^{i}_{i+1}$ and we conclude that $\mathcal{M}(\gamma^{i+1}_0,\gamma^{i+1}_0,\omega^R_s(i),J^R_s(i);s\in [0,1]) \setminus \mathcal{W}^{i}_{i+1}$ is closed, and therefore $\mathcal{W}^{i}_{i+1}$ is open.

Hence, any connected component of  $\mathcal{M}(\gamma^{i+1}_0,\gamma^{i+1}_0,\omega^R_s(i),J^R_s(i);s\in [0,1])$ that intersects $\mathcal{W}^{i}_{i+1}$ is completely contained in $\mathcal{W}^{i}_{i+1}$. Let $\mathcal{C}^{R,i}_{i+1}$ be the connected component of  $\mathcal{M}(\gamma^{i+1}_0,\gamma^{i+1}_0,\omega^R_s(i),J^R_s(i);{s\in [0,1]})$ that contains $(0,\widetilde{u}^0_{\gamma_0^{i+1}})$,  where $\widetilde{u}^0_{\gamma_0^{i+1}}$ is the trivial cylinder over $\gamma_0^{i+1}$. To see that $(0,\widetilde{u}^0_{\gamma_0^{i+1}})$ is in $\mathcal{W}^{i}_{i+1}$, we notice that it  does not intersect $\widetilde{v}^{R,i}_l(0)$ for every $1 \leq l \leq i$,  since $\widetilde{v}^{R,i}_l(0)= \widetilde{u}^{0}_{\gamma^l_0}$. It is then easy to see that  $\mathcal{C}^{R,i}_{i+1}$ is an interval one of whose boundary components is $(0,\widetilde{u}^0_{\gamma_0^{i+1}})$. The other boundary component must be an element $(1,\widetilde{u}^R_{\gamma_0^{i+1}})$ with first coordinate $1$ since there are no broken cylinders in the compact moduli space  $\mathcal{M}(\gamma^{i+1}_0,\gamma^{i+1}_0,\omega^R_s(i),J^R_s(i);{s\in [0,1]})$ and the only element of that moduli space 
with first coordinate $0$ is $(0,\widetilde{u}^0_{\gamma_0^{i+1}})$.

Let $\Psi^i_{i+1}:[0,1] \to [0,1] \times C^\infty(\R \times S^1 , \R \times Y)$
be a smooth parametrization of  $\mathcal{C}^{R,i}_{i+1}$ satisfying $\Psi^i_{i+1}(0) = (0,\widetilde{u}^0_{\gamma^{i+1}_0})$ and $\Psi^i_{i+1}(1)=(1,\widetilde{u}^{R}_{\gamma^{i+1}_0})$.  We write $$\Psi^i_{i+1}(\tau)= (\sigma^i_{i+1}(\tau),\psi^i_{i+1}(\tau)).$$
The map $\sigma^i_{i+1}$ is a smooth map from $[0,1]$ to itself such that $(\sigma^i_{i+1})^{-1}(0)=0$ and $(\sigma^i_{i+1})^{-1}(1)=1$. Moreover, it is clear that by a suitable choice of the parametrization  $\Psi^i_{i+1}$ we can also assume that $\sigma^i_{i+1}$ coincides with the identity on small neighbourhoods of the points $0$ and $1$. 

We define the admissible homotopy of exact symplectic cobordisms endowed with almost complex structures $(\R \times Y, \lambda^R_s(i+1),\omega^R_s(i+1),\check{J}^R_s(i+1) )_{s \in [0,1]}$ by 
\begin{align}
    \lambda^R_s(i+1) := {\lambda}^R_{\sigma^i_{i+1}(s)}(i), \\
    \omega^R_s(i+1) := {\omega}^R_{\sigma^i_{i+1}(s)}(i), \\
    \check{J}^R_s(i+1) := {J}^R_{\sigma^i_{i+1}(s)}(i).
\end{align}

For $1 \leq l \leq i$, we define $$\widetilde{v}^{R,i+1}_l(s) := \widetilde{v}^{R,i}_{l}\circ \sigma^i_{i+1} (s ).$$ It is immediate that $\widetilde{v}^{R,i+1}_l(s)$ is a finite energy holomorphic cylinder in $(\R \times Y, \lambda^R_s(i+1),\omega^R_s(i+1),\check{J}^R_s(i+1) )$ with one positive  and one negative puncture both asymptotic to $\gamma^l_0$. 
We then let $\mathcal{I}^{i+1}_l= \{ (s,\widetilde{v}^{R,i+1}_l(s)) \ | \ s \in [0,1] \}$. Lastly, we define $\widetilde{v}^{i+1}_{i+1}(s):=\psi^i_{i+1}(s)$ and let $\mathcal{I}^{i+1}_{i+1}= \{(s,\widetilde{v}^{R,i+1}_{i+1}(s)) \ | \ s\in [0,1] \}$. Finally, it is straightforward to see that for each $1 \leq l \leq i+1$, the cylinder $\widetilde{v}^{R,i+1}_{l}(s) $ is holomorphic for $\check{J}^R_s(i+1)$.
We obtain that $(\R \times Y, \lambda^R_s(i+1),\omega^R_s(i+1),\check{J}^R_s(i+1) )$ and the smooth paths $\mathcal{I}^{i+1}_l$ for $1 \leq l \leq i+1$ satisfy the first three conditions of the inductive hypothesis.

To satisfy the fourth assumption, i.e.,  to obtain $(\R \times Y, \lambda^R_s(i+1),\omega^R_s(i+1), {J}^R_s(i+1) )_{s \in [0,1]}$ such that the linearized Cauchy-Riemann operator of $J^R_s(i+1)$ over any simply covered finite energy holomorphic cylinder that has a positive and a negative puncture both asymptotic to a Reeb orbit not contained in $ \{\gamma_0^1,\ldots,\gamma_0^{i+1}\}$ is surjective, we have to perturb $(\check{J}^R_s(1) )_{s\in [0,1]}$.
Again the methods of Bourgeois and Dragnev \cite{Bourgeois,Dragnev} 
show that this transversality  can be achieved by an arbitrarily small perturbation ${J}^R_s(i+1)$ of $\check{J}^R_s(i+1)$, with the support of the perturbation contained in  
$$\mathcal{O}^{R,i+1}=\Big((0,1 ) \times  \big( (-R-1,R+1) \times  Y \big)\Big) \setminus \bigcup_{l=1}^{i+1}\left\{(s,\Im(\widetilde{v}^{R,i}_l(s)))\, |\, s\in [0,1]\right\}  $$
with the property that $(\R \times Y, \lambda^R_s(i+1),\omega^R_s(i+1),J^R_s(i+1) )_{s \in [0,1]}$ is still an admissible homotopy of exact symplectic cobordisms endowed with almost complex structures. The reason why we can obtain regularity with a perturbation whose support is in a set as above is that 
\begin{itemize}
    \item any simply covered finite energy holomorphic cylinder in \linebreak $(\R \times Y, \lambda^R_s(i+1),\omega^R_s(i+1),\check{J}^R_s(i+1) )$, for some $s\in [0,1]$, that has a positive and a negative puncture both asymptotic to a Reeb orbit not contained in $\mathcal{L}_0$ has a somewhere injective point with image in $\mathcal{O}^{R,i+1} \cap (\{s\}\times (-R-1,R+1)\times Y)$, 
    \item the almost complex structures $\overline{J}^R_0(i+1)$ and $\overline{J}^R_1(i+1)$ are regular.
\end{itemize}

Since ${J}^R_s(i+1)$ and $\check{J}^R_s(i+1)$ coincide in a set containing the images of $\widetilde{v}^{R,i+1}_l(s)$ for $1 \leq l \leq i+1$,
we obtain that $\widetilde{v}^{R,i+1}_l(s)$ is still holomorphic for ${J}^R_s(1)$. It is then immediate to see that $(\R \times Y, \lambda^R_s(i+1),\omega^R_s(i+1),{J}^R_s(i+1) )$ and the smooth paths $\mathcal{I}^{i+1}_l$ for $1 \leq l \leq i+1$ satisfy all the four conditions of the inductive hypothesis. This completes the proof of the inductive step, and of the proposition. 
\qed


\subsubsection{From the moduli spaces to ambient isotopies of cylinders}\label{sec:isotopies}
We assume that we have for each positive real number $R$ an admissible homotopy of exact symplectic cobordisms endowed with almost complex structures $(\R \times Y, \lambda^R_s,\omega^R_s,J^R_s )_{s \in [0,1]}$ and  $\mathcal{I}^R_j$ for $1 \leq j \leq \mathrm{n}_0$ satisfying the assumptions of Proposition \ref{prop:important}, and obtained from applying Proposition \ref{prop:important} to a homotopy of exact symplectic cobordisms endowed with almost complex structures  $(\R \times Y, \overline{\lambda}^R_s,\overline{\omega}^R_s,\overline{J}^R_s )_{s \in [0,1]}$ as constructed in  Section \ref{sec:sympcobordisms}.

Recall that, in this case, 
\begin{itemize}
    \item $(\R \times Y, (\mathbf{T}_{R})^*\lambda^R_1,(\mathbf{T}_{R})^*\omega^R_1,(\mathbf{T}_{R})^*J^R_1)$ converges in $C^\infty_{\rm loc}$ as \linebreak $R \to +\infty$ to $(\R \times Y, \lambda^+_1,\omega^+_1,J^{+\infty}_1 )$,
    \item $(\R \times Y, (\mathbf{T}_{-R})^*\lambda^R_1,(\mathbf{T}_{-R})^*\omega^R_1,(\mathbf{T}_{-R})^*J^R_1)$ converges in $C^\infty_{\rm loc}$ as \linebreak $R \to +\infty$ to $(\R \times Y, \lambda^-_1,\omega^-_1,J^{-\infty}_1 )$.
\end{itemize}

For every $1 \leq j\leq \mathrm{n}_0 $, we now recall some compactness properties of the homotopies $\mathcal{I}^R_j= \{ (s,\widetilde{v}^R_j(s)) \ | \ s \in [0,1]\} $.
It follows by the compactness results of \cite{CPT} that for any sequence $s_n \to s \in [0,1] $ we have that $\widetilde{v}^R_j(s_n)$ converges in $C^\infty$ to\footnote{The reason why we can improve the usual $C^\infty_{\rm loc}$-convergence to proper $C^\infty$-convergence is that the compact set of holomorphic curves $\mathcal{I}^R_j$ does not contain holomorphic buildings with multiple levels.} $\widetilde{v}^R_j(s)$.
For each $1 \leq j \leq \mathrm{n}_0$, let $\mathcal{U}_j$ be a small tubular neighbourhood of $\gamma^j_0$, and we assume moreover that the $\mathcal{U}_j$ are all disjoint. We fix diffeomorphisms $\varPhi_j: \mathcal{U}_j \to S^1 \times \D $ that take $\gamma^j_0$ to $S^1 \times \{0\}$. Let $\widetilde{\varPhi}_j := \mathrm{Id}_{\R} \times \varPhi_j : \R \times \mathcal{U}_j \to \R \times S^1 \times \D $.
Combining the $C^\infty$-convergence with the asymptotic behaviour of finite holomorphic curves near punctures we can  find a positive real number $\mathrm{N}_R$ such that for all $s \in [0,1]$ the intersections $\mathrm{U}^{\mathrm{N}_R, +}_{j,s}:= \mathrm{Im}(\widetilde{v}^R_j(s)) \cap \big( [\mathrm{N}_R,+\infty) \times Y \big)$ and $\mathrm{U}^{\mathrm{N}_R, -}_{j,s}:= \mathrm{Im}(\widetilde{v}^R_j(s)) \cap \big( (-\infty, -\mathrm{N}_R] \times Y \big)$ are cylinders, and moreover 
\begin{itemize}
    \item $\mathrm{U}^{\mathrm{N}_R, +}_{j,s} \subset \big( [\mathrm{N}_R, +\infty) \times \mathcal{U}_j \big)$,  $\mathrm{U}^{\mathrm{N}_R, -}_{j,s} \subset \big( (-\infty, -\mathrm{N}^R] \times \mathcal{U}_j \big)$,
    \item $\widetilde{\varPhi}_j(\mathrm{U}^{\mathrm{N}_R, +}_{j,s})= \{(\os,\ot,\eta^{\mathrm{N}_R, +}_{j,s}(\os,\ot) ) \ | \ (\os,\ot) \in [\mathrm{N}_R,+\infty) \times S^1 \}$, where $\eta^{\mathrm{N}_R, +}_{j,s}: [\mathrm{N}_R,+\infty) \times S^1 \to \D$ is a smooth function,
    \item $\widetilde{\varPhi}_j(\mathrm{U}^{\mathrm{N}_R, -}_{j,s})= \{(\os,\ot,\eta^{\mathrm{N}_R, -}_{j,s}(\os,\ot) ) \ | \ (\os,\ot) \in (-\infty,\mathrm{N}_R] \times S^1 \}$, where $\eta^{\mathrm{N}_R, -}_{j,s}: (-\infty,\mathrm{N}_R] \times S^1 \to \D$ is a smooth function,
    \item the one-parameter families of functions $\eta^{\mathrm{N}_R, +}_{j,s}$ and $\eta^{\mathrm{N}_R, -}_{j,s}$ depend smoothly on $s$.
\end{itemize}
Geometrically speaking, the conditions above mean that in the regions \linebreak $(-\infty, -\mathrm{N}_R] \times Y$ and $ [\mathrm{N}_R,+\infty) \times Y$ the images of the finite energy cylinders $\widetilde{v}^R_j(s)$ are graphs over $(-\infty, -\mathrm{N}_R] \times \gamma^j_0$ and $ [\mathrm{N}_R,+\infty) \times \gamma^j_0 $. 

Fix  an auxiliary Riemannian metric $g_0$ on $Y $. We fix the Riemannian metric $\overline{g}$ on $\R \times Y$ obtained as the Riemannian product of the usual metric on $\R$ with $g_0$. From now on, when we talk of the normal bundle of a surface embedded in $\R \times Y$, we mean the normal bundle with respect to $\overline{g}$. Similarly, when we talk about the exponential map of a vector field over a surface embedded in $\R \times Y$, we mean the exponential map with respect to $\overline{g}$.

For any $1 \leq j \leq \mathrm{n}_0$ and $s \in [0,1]$, we let $\mathrm{V}^R_{j,s}$ be the image of the finite energy holomorphic cylinder $\widetilde{v}^R_j(s)$. By our previous discussion, $\mathrm{V}^R_{j,s}$ is an embedded cylinder in $\R \times Y$, which contains the half cylinders $\mathrm{U}^{\mathrm{N}_R, +}_{j,s}$ and $\mathrm{U}^{\mathrm{N}_R, -}_{j,s}$. We consider the restriction $T(\R \times Y) |_{\mathrm{V}^R_{j,s}}$ of the tangent bundle of $\R \times Y$ to $\mathrm{V}^R_{j,s}$. The  tangent bundle $T \mathrm{V}^R_{j,s}$ of $\mathrm{V}^R_{j,s}$ is a rank $2$ sub-bundle of $T(\R \times Y) |_{\mathrm{V}^R_{j,s}}$. We say that a rank $2$ sub-bundle $\mathfrak{V}^R_{s,j}$ of $T(\R \times Y) |_{\mathrm{V}^R_{j,s}}$ is a \textit{complement of} $T \mathrm{V}^R_{j,s}$, if for every $(a,p) \in \mathrm{V}^R_{j,s}$ we have $$T_{(a,p)}(\R \times Y) = T_{(a,p)} (\mathrm{V}^R_{j,s}) \oplus  (\mathfrak{V}^R_{s,j})_{(a,p)}. $$
The normal bundle $\mathfrak{N}^R_{j,s}$ of $\mathrm{V}^R_{j,s}$ is a complement of 
$T \mathrm{V}^R_{j,s}$. Moreover, $\mathfrak{N}^R_{j,s}$ varies smoothly with $s \in [0,1]$, in the sense that it is a smooth  family of submanifolds of $T(\R \times Y)$. 

We introduce some notation. The tangent planes to the disks $\{t\} \times \D \subset S^1 \times \D$ give a smooth 2-dimensional distribution $H$ in $S^1 \times \D$. For each $1\leq j \leq \mathrm{n}_0$ we let $H_j:= \Phi_j^*H$ the induced 2-dimensional distribution in $\mathcal{U}_j$.

Let $\mathrm{U}^{\mathrm{2N}_R, +}_{j,s} \subset \mathrm{U}^{\mathrm{N}_R, +}_{j,s}$ be the closed half-cylinder $\mathrm{U}^{\mathrm{N}_R, +}_{j,s} \cap [2 \mathrm{N}_R,+\infty) \times Y$. Similarly, $\mathrm{U}^{2\mathrm{N}_R, -}_{j,s}$ is the closed half-cylinder $\mathrm{U}^{\mathrm{N}_R, -}_{j,s} \cap (-\infty,-2 \mathrm{N}_R] \times Y$. Let $\uppi_2: \R \times Y \to Y$ be the projection on the second coordinate. 
Let $\mathfrak{W}^{R,+}_{j,s}$ be the pullback $ (\uppi_2)^*(H_j)|_{\mathrm{U}^{\mathrm{2N}_R, +}_{j,s}}$. Notice that $\mathfrak{W}^{R,+}_{j,s}$ is a complement of $T (\mathrm{U}^{\mathrm{2N_R}, +}_{j,s})$, and that $\mathfrak{W}^{R,+}_{j,s}$ varies smoothly with $s$. We define the complement $\mathfrak{W}^{R,-}_{j,s}$ over $\mathrm{U}^{\mathrm{2N}_R, -}_{j,s}$ in an identical way.

It is not hard to construct a smooth (in the parameter $s$) family of complements $\mathcal{N}^R_{j,s}$ of $T\mathrm{V}^R_{j,s}$ which satisfy:
\begin{itemize}
    \item $(\mathcal{N}^R_{j,s})_{(a,p)} =(\mathfrak{W}^{R,-}_{j,s})_{(a,p)} $ if $(a,p) \in \mathrm{U}^{\mathrm{2N}_R, -}_{j,s}$,
    \item $(\mathcal{N}^R_{j,s})_{(a,p)} =(\mathfrak{W}^{R,+}_{j,s})_{(a,p)} $ if $(a,p) \in \mathrm{U}^{\mathrm{2N}_R, +}_{j,s}$,
    \item $(\mathcal{N}^R_{j,s})_{(a,p)} =(\mathfrak{N}^R_{j,s})_{(a,p)} $ if $(a,p) \in \big( \mathrm{V}_{j,s} \setminus (\mathrm{U}^{\mathrm{N}_R, +}_{j,s} \cup \mathrm{U}^{\mathrm{N}_R, -}_{j,s}))$.
\end{itemize}

We can identify cylinders which are sufficiently close to $\mathrm{V}^R_{j,s}$ in the $C^\infty$-sense with sections of  $\mathcal{N}^R_{j,s}$. To do this, we need to introduce some auxiliary structures.

We will also denote by $g$ the restriction of the Riemannian metric $g$ to the complements $\mathcal{N}^R_{j,s}$. From the fact that all $\mathrm{V}^R_{j,s}$ are asymptotically cylindrical, it follows that there exists $\upepsilon>0$ such that for every $j\in \{1,\ldots,\mathrm{n}_0\}$ and every $s\in [0,1]$ the map $\Upsilon^R_{j,s}: \mathcal{N}^R_{j,s}(2\upepsilon) \to \R \times Y$ defined by
\begin{equation}
    \Upsilon^R_{j,s}(p,\mathrm{v}) = \mathrm{exp}_p(\mathrm{v})
\end{equation}
is a diffeomorphism onto its image, where $\mathcal{N}^R_{j,s}(\upepsilon)$ is the disk bundle in $\mathcal{N}^R_{j,s}$ composed of vectors with $g$-norm  $<\upepsilon$, and $\mathrm{exp}$ is the exponential map with respect to the Riemannian metric $g$. 

Recall that by Proposition \ref{prop:important}, given $s \in [0,1]$, the cylinders $\mathrm{V}^R_{j,s}$ and $\mathrm{V}^R_{j',s}$ are disjoint for different $j\neq j'$. Because of the asymptotic behaviour of these cylinders and of the Riemannian metric $g$, and because of the  $C^\infty$-compactness of the spaces $(\mathrm{V}^R_{j,s})_{s \in [0,1]}$, we have that if $\upepsilon>0$ is small enough,  then:
\begin{itemize}
    \item for any $s\in [0,1]$ and any $j\neq j'$ we have that $d_g(\mathrm{V}^R_{j,s},\mathrm{V}^R_{j',s}) > 8 \upepsilon$. Here $d_g$ is the distance function induced by $g$. 
\end{itemize}
We are using crucially that for $j\neq j'$ the asymptotic limits of $\mathrm{V}^R_{j,s}$ and $\mathrm{V}^R_{j',s}$ are disjoint.
It follows that
\begin{itemize}
    \item for every $s \in [0,1]$ and  $j\neq j'$ the images $\Upsilon^R_{j,s}(\mathcal{N}^R_{j,s}(\upepsilon))$ and \linebreak $\Upsilon^R_{j',s}(\mathcal{N}^R_{j',s}(\upepsilon))$ are all disjoint. 
\end{itemize}

It follows by the choice of  $g$, the compactness of the spaces $(\mathrm{V}^R_{j,s})_{s \in[0,1]}$ and the fact that $\mathrm{V}^R_{j,s_n} \to \mathrm{V}^R_{j,s}$ in $C^\infty$ as $s_n \to s$, that there exists $\updelta>0$ such that for all $j\in \{1,\ldots,\mathrm{n}_0\}$ and $s \in [0,1]$, 
\begin{equation}
    \mathrm{V}^R_{j,s'} \subset \Upsilon^R_{j,s}(\mathcal{N}^R_{j,s}(\upepsilon))
\end{equation}
if $s' \in (s-\updelta,s+\updelta)$. 
Choose a set of numbers 
\begin{equation} \label{eq:sequence}
    0=s_0<s_1<s_2<\cdots<s_{\mathrm{m}}=1
\end{equation}
such that $|s_l - s_{l-1}| < \frac{\updelta}{4}$ for all $1\leq l \leq \mathrm{m} $.

Fix $l \in \{0,\ldots,\mathrm{m}-1\}$ and $j\in \{1,\ldots,\mathrm{n}_0\}$.
Under the conditions above, we know that for $s\in [s_l,s_{l+1}]$ there exists a unique section $\upsigma^R_{j,l,s}$ of $\mathcal{N}^R_{j,s_l}$ such that $\Upsilon^R_{j,s_l}(\upsigma^R_{j,l,s})= V_{j,s}$. The sections $\upsigma^R_{j,l,s}$ depend smoothly on $s$, and because of our choice of $g$ and the compactness of $(\mathrm{V}^R_{j,s})_{s \in[0,1]}$ in $C^\infty$, it follows that $\upsigma^R_{j,l,\overline{s}^k} \to \upsigma^R_{j,l,s}$ in $C^\infty$ if $\overline{s}^k $ is a sequence of elements in $ [s_l,s_{l+1}]$ converging to $s$. 

Moreover, given $\delta>0$,  we can find $\widetilde{N}^R_\delta$, independent of $j$ and $l$, such that for every $s\in [s_l,s_{l+1}]$, the section $\upsigma^R_{j,l,s}$ restricted to the sets $\mathrm{V}^R_{j,s_l} \cap ([\widetilde{N}^R_\delta,+\infty) \times Y)$ and $\mathrm{V}^R_{j,s_l} \cap ((-\infty,\widetilde{N}^R_\delta] \times Y)$ is $\delta$-close to the zero-section in the $C^\infty$-topology. Given $\widetilde{N}^R_\delta$,  we call the union of the sets $\mathrm{V}^R_{j,s_l} \cap ([\widetilde{N}^R_\delta,+\infty) \times Y)$ and $\mathrm{V}^R_{j,s_l} \cap ((-\infty,\widetilde{N}^R_\delta] \times Y)$ the $\widetilde{N}^R_\delta$-end of $\mathrm{V}_{j,s_l}$. We call the restriction of $\mathcal{N}^R_{j,s_l}(\upepsilon)$ to the $\widetilde{N}^R_\delta$-end of $\mathrm{V}^R_{j,s_l}$ the $\widetilde{N}^R_\delta$-end of $\mathcal{N}^R_{j,s_l}(\upepsilon)$.

Consider for each $p \in \mathrm{V}^R_{j,s_l}$ the path $\upgamma_{j,l}^{R,p}(s)=\upsigma^R_{j,l,s}(p)$,  $s\in [s_{l},s_{l+1}] $. We consider the partially defined vector field $\mathrm{X}^R_{j,s_l}$ on $\mathcal{N}^R_{j,s_l}(\upepsilon)$ defined by the vectors $\dot{\upgamma}^{R,p}_{j,l}(s)$ tangent to the paths $\upgamma_{j,l}^{R,p}$. Because the sections $\upsigma^R_{j,l,s}$ converge uniformly in $s$ to the zero section on the ends of the cylinder, the partially defined time-dependent\footnote{Here the time coordinate is $s$.} vector field $\mathrm{X}^R_{j,l}(s,p,v)$ converges to zero in $C^\infty$ on the ends
of $\mathcal{N}^R_{j,s_l}(\upepsilon)$. By a well-known procedure, we can extend $\mathrm{X}^R_{j,l}$  to a vector field defined on the whole of $\mathcal{N}^R_{j,s_l}(\upepsilon)$ which satisfies:
\begin{itemize}
    \item[a)] $\mathrm{X}^R_{j,l}(s,p,v)$ is tangent to the hypersurfaces $(\Upsilon_{j,s_l}^{R})^{-1}(\{a\} \times Y )$ for $a \geq \mathrm{N}$,
    \item[b)] $\mathrm{X}^R_{j,l}$ converges uniformly to $0$ in $C^\infty$ as $p$ converges to the ends of the cylinder,
    \item[c)] there exists a  compact set $\mathbf{K}^R_{s,l} \subset \mathcal{N}^R_{j,s_l}(\upepsilon)$ such that the support of $\mathrm{X}^R_{j,l}(s,\cdot,\cdot)$ is contained in $\mathbf{K}^R_{s,l}$ for every $s \in [s_{l},s_{l+1}]$.
\end{itemize}
It is clear that  $|\mathrm{X}^R_{j,l}|< \upepsilon$ for the metric $(\Upsilon_{j,s_l}^R)^*g$, and that $\mathrm{X}^R_{j,l}$ converges to zero in $C^\infty$ in the ends of $\mathcal{N}^R_{j,s_l}(\upepsilon)$. This means that for every $\delta'>0$ we can find $\widetilde{N}^R_{\delta'}$ such that $\mathrm{X}^R_{j,s_l} $ is $ \delta'$-close to the zero-section in $C^\infty$ when restricted to the $\widetilde{N}^R_{\delta'}$-end of $\mathcal{N}^R_{j,s_l}(\upepsilon)$. 


We define the time-dependent vector field $\widetilde{\mathrm{X}}^R_{j,l}(s,\cdot,\cdot)$ on $\R \times Y$ by $$\widetilde{\mathrm{X}}^R_{j,l}:=(D\Upsilon^R_{j,s_l})_*\widetilde{\mathrm{X}}^R_{j,l}.$$
In words, $\widetilde{\mathrm{X}}^R_{j,l}$ is obtained by transporting $\mathrm{X}^R_{j,l}$ to $\R \times Y$ using the diffeomorphism $\Upsilon^R_{j,s_l}$. A priori, $\widetilde{\mathrm{X}}^R_{j,l}$ is only defined on the image $\Upsilon^R_{j,s_l}(\mathcal{N}^R_{j,s_l}(\upepsilon))$ of the diffeomorphism $\Upsilon_{j,s_l}$, but since the support of $\mathrm{X}^R_{j,l}$ is contained in the compact set $\mathbf{K}^R_{s,l} \subset \mathcal{N}^R_{j,s_l}(\upepsilon)$, we can define $\widetilde{\mathrm{X}}^R_{j,l}$ to be $0$ in $(\R \times Y) \setminus (\Upsilon^R_{j,s_l}(\mathcal{N}^R_{j,s_l}(\upepsilon)))$, and with this,  $\widetilde{\mathrm{X}}^R_{j,l}$ is a well-defined $C^\infty$-smooth vector field on the whole of $\R \times Y$.
By the choice of the Riemannian metric $g$ and the asymptotic behaviour of the cylinders $\mathrm{V}^R_{j,s_l}$, the maps $\Upsilon^R_{j,s_l}$ are bounded in $C^\infty$. This together with the properties of $\mathrm{X}^R_{s,l}$ implies that the time-dependent vector field $\widetilde{\mathrm{X}}^R_{j,l}$ has the following properties:
\begin{itemize}
    \item[d)] the norm of $\widetilde{\mathrm{X}}^R_{j,l}$ with respect to $g$ is globally bounded by a constant $\mathbf{C}(R)>0$ independent of $s$,
    \item[e)] for every $\updelta'>0$ there is $N^R_{\updelta'}$ such that $\widetilde{\mathrm{X}}^R_{j,l}$ is $\delta'$-close to $0$ in $C^\infty$ when restricted to the $N^R_{\updelta'}$-ends of $\R \times Y$, where the $N^R_{\updelta'}$-ends of $\R \times Y$ are defined to be $\big( (-\infty,-N^R_{\updelta'}] \times Y\big) \cup \big( [N^R_{\updelta'},+\infty) \times Y\big) $, 
    \item[f)] there is $\widehat{\mathbf{N}}^R_{j,l}$ such that on the $\widehat{\mathbf{N}}^R_{j,l}$-ends of $\R \times Y$ the vector field $\widetilde{\mathrm{X}}^R_{s,l}$ is tangent to the hypersurfaces $\{a\} \times Y$.
\end{itemize}

We now define for each $1 \leq l \leq \mathrm{m}$ the vector field 
\begin{equation} \label{def:vf}
\mathbf{X}^R_l(s,\cdot,\cdot) := \sum_{j=1}^{\mathrm{n}_0} \widetilde{\mathrm{X}}^R_{j,l}(s,\cdot,\cdot). 
\end{equation}
Notice that $\mathbf{X}^R_l(s,\cdot,\cdot)$ is defined for $s\in [s_l,s_{l+1}]$, and that for fixed $l$, the images $\Upsilon^R_{j,s_l}(\mathcal{N}^R_{j,s_l}(\upepsilon))$ and $\Upsilon^R_{j',s_l}(\mathcal{N}^R_{j,s_l}(\upepsilon))$ are disjoint for $j\neq j'$. Since the support of $\widetilde{\mathrm{X}}^R_{j,l}$ is in the interior of  $\Upsilon^R_{j,s_l}(\mathcal{N}^R_{j,s_l}(\upepsilon))$, it follows that $\mathbf{X}^R_l$ coincides with each of the vector fields $\widetilde{\mathrm{X}}^R_{j,l} $
on the regions where these vector fields are defined. The flow of $\mathbf{X}^R_l$ is then the composition of the flows of $\widetilde{\mathrm{X}}^R_{j,l} $.

Let $\mathbf{F}^R_{l}:[s_{l},s_{l+1}] \times \R \times Y \to \R \times Y $ be the flow of the vector field $\mathbf{X}^R_l$. Because the norm of the vector field  $\mathbf{X}_l$ is globally bounded, the flow $\mathbf{F}^R_{l}$ is indeed defined for every $s \in [s_l,s_{l+1}]$ and $(a,q) \in \R \times Y $. If we denote by $\mathbf{F}^R_{j,l}$ the flow of $\widetilde{\mathrm{X}}^R_{j,l} $, we know that $\mathbf{F}^R_{l}$ coincides\footnote{Here and below we let $(\mathbf{F}^R_{l})^s(\cdot)=\mathbf{F}^R_{l}(s,\cdot)$ and $(\mathbf{F}^R_{j,l})^s(\cdot)=\mathbf{F}^R_{j,l}(s,\cdot)$ for each $s \in [s_l,s_{l+1}]$. } with $\mathbf{F}^R_{j,l}$ in $\Upsilon^R_{j',s_l}(\mathcal{N}^R_{j,s_l}(\upepsilon))$.

Now, it follows directly from the construction of the vector field $\widetilde{\mathrm{X}}^R_{j,l} $ that for every $s \in [s_l,s_{l+1}]$ we have
\begin{equation}
(\mathbf{F}_{j,l}^R)^s(\mathrm{V}_{j,s_l}) = \mathrm{V}_{j,s}.
\end{equation}
It then follows that for every $1 \leq j \leq \mathrm{n}_0$
\begin{equation} \label{eq:ambiisotop1}
(\mathbf{F}_{j}^R)^s(\mathrm{V}_{j,s_l}) = \mathrm{V}_{j,s}.
\end{equation}
In other words, $((\mathbf{F}^R_{l})^s)_{s \in [s_l,s_{l+1}]}$ is an ambient isotopy of $\R \times Y$ which deforms the cylinder $\mathrm{V}^R_{j,s_l}$ to the cylinder $\mathrm{V}^R_{j,s_{l+1}}$ via the family of cylinders $(\mathrm{V}^R_{j,s})_{s \in [s_l,s_{l+1}]}$.

From the properties d), e) and f) satisfied by the vector fields $\widetilde{\mathrm{X}}^R_{j,l} $ which we glued to form $\mathbf{X}^R_l$ we obtain that 
the maps $(\mathbf{F}_{l}^R)^s$ have the following properties:
\begin{itemize}
\item[g)] there exists $\mathbf{N}''(R)$ independent of $s$ such that $(\mathbf{F}_{l}^R)^s$ preserves the $\mathbf{N}''(R)$-ends of $\R \times Y$, and for $a \geq \mathbf{N}''(R)$ and $a\leq -\mathbf{N}''(R)$ the maps $(\mathbf{F}_{l}^R)^s$ preserve the hypersurfaces $\{a\} \times Y$,
\item[h)] given $\updelta'>0$, there exists $\widetilde{N}^R_{\updelta'}>\mathbf{N}''(R)$, again independent of $s$, such that $(\mathbf{F}^R_{l})^s$ preserves the $\widetilde{N}^R_{\updelta'}$-ends of $\R \times Y$, and $(\mathbf{F}^R_{l})^s$ restricted to these $\widetilde{N}^R_{\updelta'}$-ends of $\R \times Y$ is $\updelta'$-close to the identity in the $C^\infty$-topology.
\end{itemize}

It follows from h) that for each $s\in [s_{l},s_{l+1}]$, if $\mathrm{V}$ is an asymptotically cylindrical half-cylinder in $\R \times Y$ asymptotic positively/negatively to a Reeb orbit $\gamma$ of $\alpha_0$, then its image $(\mathbf{F}_{l}^R)^s(\mathrm{V})$ is an asymptotically cylindrical half-cylinder in $\R \times Y$ asymptotic positively/negatively to the same Reeb orbit $\gamma$ of $\alpha_0$.

Define $\mathbf{F}_R: [0,1] \times \R \times Y \to \R \times Y $ by 
\begin{equation}
    \mathbf{F}_R^s(\cdot) = (\mathbf{F}^R_l)^s(\cdot) \circ (\mathbf{F}^{R}_{l-1})^{s_l}(\cdot) \circ (\mathbf{F}^{R}_{l-2})^{s_{l-1}}(\cdot) \circ \cdots \circ (\mathbf{F}^{R}_{0})^{s_1}(\cdot) \mbox{ for } s \in [s_l,s_{l+1}], 
\end{equation}
where $\mathbf{F}_R^s(\cdot)=\mathbf{F}_R(s,\cdot)$. We think of $(\mathbf{F}_R^s)_{s \in [0,1]}$ as a family of diffeomorphisms of $\R \times Y$. It follows directly from equation \eqref{eq:ambiisotop1} that 
\begin{equation*} 
    \mathbf{F}_R^s(\mathrm{V}^R_{j,0})= \mathrm{V}^R_{j,s}.
\end{equation*}
Since $\mathrm{V}^R_{j,0}= \R \times \gamma^j_0$, what we obtain is 
\begin{equation}
\mathbf{F}_R^s(\R \times \gamma^j_0)= \mathrm{V}^R_{j,s}.
\end{equation}
Moreover, $\mathbf{F}_R^s$ has properties analogous to the properties g) and h) satisfied by $(\mathbf{F}^R_l)^s$. From this it follows that 
\begin{itemize}
    \item[i)]  if $\mathrm{V}$ is an asymptotically cylindrical half-cylinder in $\R \times Y$ positively (negatively) asymptotic to a Reeb orbit $\gamma$ of $\alpha_0$, then its image $\mathbf{F}_R^s(\mathrm{V})$ is an asymptotically cylindrical half-cylinder in $\R \times Y$  positively (negatively) asymptotic to the same Reeb orbit $\gamma$ of $\alpha_0$.
\end{itemize}

As a first application of the construction of the isotopy $(\mathbf{F}_R^s)_{s \in [0,1]}$ we establish the following Lemma. 
\begin{lem} \label{lemma:planesintersect}
 Fix $R>0$ and  let $\widetilde{w}$ be either
\begin{itemize}
    \item[$\heartsuit$] a finite energy plane in $(\R \times Y, \lambda^R_s,\omega^R_s,J^R_s)$ for some $s\in [0,1]$,
    \item[$\spadesuit$] or a finite energy cylinder in $(\R \times Y, \lambda^R_s,\omega^R_s,J^R_s)$ for some $s\in [0,1]$, with one positive puncture asymptotic to a Reeb orbit $\gamma^+ \not\subset \mathcal{L}_0$ and one negative puncture asymptotic to a Reeb orbit $\gamma^-\not\subset \mathcal{L}_0$ such that $\gamma^+ $ and $\gamma^-$ are not freely homotopic in $Y \setminus \mathcal{L}_0$. 
\end{itemize} 
 Then, there is an interior intersection point between $\widetilde{w}$ and $ \cup_{j=1}^{\mathrm{n}_m} \mathrm{V}^R_{j,s} $.
\end{lem}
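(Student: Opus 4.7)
The plan is to use the ambient isotopy $(\mathbf{F}_R^s)_{s \in [0,1]}$ constructed above to convert intersections with $\bigcup_j \mathrm{V}^R_{j,s}$ into intersections with the union of trivial cylinders $\R \times \mathcal{L}_0 = \bigcup_{j=1}^{\mathrm{n}_0} (\R \times \gamma^j_0)$, where the hypertightness hypothesis on $\alpha_0$ can be applied directly. Set $\widetilde{w}' := (\mathbf{F}_R^s)^{-1} \circ \widetilde{w}$. Since $\mathbf{F}_R^s$ is a diffeomorphism of $\R \times Y$ satisfying $\mathbf{F}_R^s(\R \times \gamma^j_0) = \mathrm{V}^R_{j,s}$ for each $j$, interior intersections of $\widetilde{w}$ with $\bigcup_j \mathrm{V}^R_{j,s}$ correspond bijectively to interior intersections of $\widetilde{w}'$ with $\R \times \mathcal{L}_0$, or equivalently to interior points of the domain of $\widetilde{w}'$ at which the projection $\uppi_2 \circ \widetilde{w}'$ lies in $\mathcal{L}_0$. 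Crucially, property (i) of Section~\ref{sec:isotopies} ensures that $\widetilde{w}'$ is asymptotic at each puncture to the same Reeb orbit of $\alpha_0$ (as a closed curve in $Y$) as $\widetilde{w}$.

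In case $\heartsuit$, the projection $\uppi_2 \circ \widetilde{w}': \C \to Y$ extends continuously to a disk filling $\mathfrak{h}: \overline{\D} \to Y$ of the asymptotic orbit $\gamma$ of $\widetilde{w}'$, viewed as a Reeb orbit of $\alpha_0$ (noting that rescaling $\alpha_0$ by a positive constant does not alter orbit images in $Y$). The hypertightness of $\alpha_0$ in the complement of $\mathcal{L}_0$ (Definition~\ref{defn:hypertight}) implies that $\gamma$ is not contractible in $Y \setminus \mathcal{L}_0$, so $\mathfrak{h}$ must have an interior intersection with $\mathcal{L}_0$, producing the desired interior intersection of $\widetilde{w}$ with $\bigcup_j \mathrm{V}^R_{j,s}$.

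In case $\spadesuit$, $\uppi_2 \circ \widetilde{w}': \R \times S^1 \to Y$ is a smooth cylinder whose two ends approach $\gamma^+$ and $\gamma^-$, both of which are disjoint from $\mathcal{L}_0$. If it had no interior intersection with $\mathcal{L}_0$, then for $T$ sufficiently large the boundary circles of $\uppi_2 \circ \widetilde{w}'|_{[-T,T] \times S^1}$ would lie in $Y \setminus \mathcal{L}_0$ and be freely isotopic there to $\gamma^+$ and $\gamma^-$ respectively; concatenating with those isotopies would then yield a free homotopy between $\gamma^+$ and $\gamma^-$ inside $Y \setminus \mathcal{L}_0$, contradicting the hypothesis. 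The only non-elementary ingredient in either case is property (i), which has already been established in Section~\ref{sec:isotopies}; once that is granted, both cases reduce to topological arguments via hypertightness and the non-homotopy assumption.
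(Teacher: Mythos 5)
Your proof is correct and follows essentially the same route as the paper: pull $\widetilde{w}$ back by $(\mathbf{F}_R^s)^{-1}$, use that this diffeomorphism carries $\R\times\mathcal{L}_0$ to $\cup_j \mathrm{V}^R_{j,s}$ and preserves the asymptotic Reeb orbits, project to $Y$, and invoke hypertightness (case $\heartsuit$) respectively the non-homotopy hypothesis (case $\spadesuit$). The only difference is cosmetic: the paper writes out only $\heartsuit$ and refers the cylinder case to Lemma~\ref{lem:homot1}, whereas you spell out the $\spadesuit$ argument explicitly, which is exactly the argument intended there.
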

\proof

We first treat $\heartsuit$.
Let then $\gamma$ be the Reeb orbit that is detected by the unique positive puncture of $\widetilde{w}$. 
The image $(\mathbf{F}_R^s)^{-1}(\widetilde{w})$ is an asymptotically cylindrical disk in $\R \times Y$ which is also asymptotic to $\gamma$. This implies that the projection of $(\mathbf{F}_R^s)^{-1}(\widetilde{w})$ to $Y$ is a disk $\mathcal{D}$ filling $\gamma$. 

Since $\alpha_0$ is hypertight in the complement of $\mathcal{L}_0$, we know that $\mathcal{D}$ must have an interior intersection point with $\mathcal{L}_0$, and it follows that $(\mathbf{F}_R^s)^{-1}(\widetilde{w})$ has an interior intersection point with $\R \times \mathcal{L}_0$. But since $$ \R \times \mathcal{L}_0 = (\mathbf{F}_R^s)^{-1} \big( \cup_{j=1}^{\mathrm{n}_0} \mathrm{V}^R_{j,s} \big),  $$
there is a bijection between the interior intersection points of $(\mathbf{F}_R^s)^{-1}(\widetilde{w})$ with $\R \times \mathcal{L}_0$ and the interior intersection points of $\widetilde{w}$ with $ \cup_{j=1}^{\mathrm{n}_0} \mathrm{V}^R_{j,s} $. We thus conclude that there must exist interior intersection points between $\widetilde{w}$ and $(\cup_{j=1}^{\mathrm{n}_m} \mathrm{V}^R_{j,s} )$.

The case $\spadesuit$ is treated in the same way, and we refer also to Lemma \ref{lem:homot1} where  a more general situation is treated. 
\qed

\subsubsection{Moduli spaces of holomorphic curves asymptotic to orbits in the link complement}\label{sec:moduli_linkcompl}
We define 
\begin{equation} \label{eq:fam-cylinders}
    \mathcal{Z}_s:= \cup_{j=1}^{\mathrm{n}_0} \mathrm{V}^R_{j,s}. 
\end{equation}
 Let $T \in (0,+\infty)$ and $\rho \in \Omega^T_{\alpha_0}(\mathcal{L})$, and let $\gamma_\rho$ be the unique Reeb orbit of $\alpha_0$ in $\rho $. We will now study the moduli space of holomorphic cylinders in the homotopy $(\R \times Y, \lambda^R_s,\omega^R_s,J^R_s)_{s \in [0,1]}$. Namely we let 
\begin{equation} \label{eq:moduli-rho}
  \mathcal{M}(\gamma_\rho, \gamma_\rho,\omega^R_s,J^R_s; \mathcal{Z}_s,s\in [0,1] )  
\end{equation}
be the moduli space whose elements are equivalence classes of pairs $(s,\widetilde{u})$ such that $s\in [0,1]$ and $\widetilde{u}$ is a finite energy holomorphic cylinder in $(\R \times Y, \lambda^R_s,\omega^R_s,J^R_s)$ with one positive puncture and one negative puncture, both asymptotic to $\gamma_\rho$, and that  does not intersect $\mathcal{Z}_s$. Two pairs $(s,\widetilde{u})$ and $(s',\widetilde{u}')$ represent the same element of this moduli space if $s=s'$ and $\widetilde{u}'=\widetilde{u}\circ \uppsi$ for some biholomorphism $\uppsi:\R \times S^1 \to \R \times S^1$.

By the regularity properties of the homotopy  $(J^R_s)_{s \in [0,1]}$,  we know that \linebreak $\mathcal{M}(\gamma_\rho, \gamma_\rho,\omega^R_s,J^R_s; \mathcal{Z}_s,s\in [0,1] ) $ is a one-dimensional manifold with boundary, the only boundary points being the elements in this space with first coordinate equal to $0$ or $1$. We proceed to show the following lemma. 
\begin{lem}\label{modulispace_rho_compact}
Let $\rho \in \Omega_{\alpha_0}(\mathcal{L})$, $\gamma_\rho$ be the unique Reeb orbit of $\alpha_0$ in  $\rho$, and \linebreak $\mathcal{M}=\mathcal{M}(\gamma_\rho, \gamma_\rho,\omega^R_s,J^R_s; \mathcal{Z}_s,s\in [0,1] )$  the moduli space defined above. Then $\mathcal{M}$ is compact, i.e., it coincides with its SFT-compactification.
\end{lem}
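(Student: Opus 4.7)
My plan is to follow the same structure as the proof of Proposition \ref{prop:compactness}, but replace the action-based bubbling argument by an intersection-theoretic argument that uses Lemma \ref{lemma:planesintersect} and the fact that the curves in $\mathcal{M}$ avoid the family of holomorphic cylinders $\mathcal{Z}_s = \cup_j \mathrm{V}^R_{j,s}$. So let $(s_n,\widetilde{u}_{s_n})$ be a sequence in $\mathcal{M}$, and assume $s_n \to s \in [0,1]$. Since both punctures of $\widetilde{u}_{s_n}$ are asymptotic to $\gamma_\rho$, the Hofer energies $E(\widetilde{u}_{s_n})$ are uniformly bounded in terms of $C^{\pm}$ and $\mathcal{A}_{\alpha_0}(\gamma_\rho)$. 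The SFT compactness theorem (Theorem \ref{thm:SFT1}, applied to a sequence of cobordisms converging in $C^{\infty}_{\rm loc}$ to the one at parameter $s$) then produces, after extracting a subsequence, a holomorphic building $\overline{u}$ in $(\R\times Y,\lambda^R_s,\omega^R_s,J^R_s)$ of cylindrical type with branches, to which $\widetilde{u}_{s_n}$ converges in the SFT sense.

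The next step is to propagate the disjointness condition to the limit. Because $\rho$ is a proper link class, $\gamma_\rho$ is not a multiple of any component of $\mathcal{L}_0$, so $\widetilde{u}_{s_n}$ and $\widetilde{v}^R_j(s_n)$ share no asymptotic orbit for any $j$. Since $\widetilde{u}_{s_n}$ is disjoint from $\mathrm{V}^R_{j,s_n}$, Siefring's intersection number satisfies $I(\widetilde{u}_{s_n},\widetilde{v}^R_j(s_n))=0$; by homotopy invariance of $I$ along the smooth family $\widetilde{v}^R_j(\cdot)$ (\cite{Siefring}), the total intersection of $\overline{u}$ with $\widetilde{v}^R_j(s)$ is also zero. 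Since no component of $\overline{u}$ shares an asymptotic orbit with $\widetilde{v}^R_j(s)$ (this is the crucial use of properness of $\rho$, combined with the tracking of asymptotic orbits below), positivity of intersection forces every component of $\overline{u}$ to be geometrically disjoint from $\mathrm{V}^R_{j,s}$, hence from $\mathcal{Z}_s$.

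With this in hand, bubbling is immediately ruled out: a plane bubble in any level of $\overline{u}$ (top symplectization, cobordism, or bottom symplectization) would, by case $\heartsuit$ of Lemma \ref{lemma:planesintersect} applied in the appropriate cobordism (using that $\alpha_0$ is hypertight in the complement of $\mathcal{L}_0$ and that the family $\mathcal{Z}_s$ reduces to $\R\times \mathcal{L}_0$ at the symplectization ends), have an interior intersection point with the corresponding family of cylinders, contradicting disjointness. Consequently $\overline{u}$ is a broken cylinder whose levels are non-trivial finite energy holomorphic cylinders, each with one positive and one negative puncture, and I write it as $(\overline{u}^{\kappa^+},\ldots,\overline{u}^0,\ldots,\overline{u}^{-\kappa^-})$ as in Step 2 of Proposition \ref{prop:compactness}.

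Finally, I rule out breaking. Working from the top down, the level $\overline{u}^{\kappa^+}$ is a non-trivial cylinder in the symplectization of $C^+\alpha_0$ connecting $\gamma_\rho$ to some orbit $\gamma^{\kappa^+}_-$, and it is disjoint from $\R\times \mathcal{L}_0$; projecting to $Y$ yields a cylindrical free homotopy in $Y\setminus \mathcal{L}_0$ between $\gamma_\rho$ and $\gamma^{\kappa^+}_-$, so the Reeb orbit $\gamma^{\kappa^+}_-$ of $\alpha_0$ represents $\rho$. Since $\rho\in\Omega_{\alpha_0}(\mathcal{L}_0)$ has a unique Reeb representative, $\gamma^{\kappa^+}_- = \gamma_\rho$. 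But then $\overline{u}^{\kappa^+}$ has equal actions at both punctures, forcing $\int (\overline{u}^{\kappa^+})^* d\alpha_0 = 0$ and hence (by compatibility of $J_0^+$ with $d\alpha_0$) reducing $\overline{u}^{\kappa^+}$ to a cover of the trivial cylinder over $\gamma_\rho$, which contradicts non-triviality. The same argument rules out any bottom symplectization level, and also forces the top/bottom asymptotic orbits of the middle cobordism level to be $\gamma_\rho$. Thus $\kappa^+=\kappa^-=0$ and $\overline{u}$ consists of the single cobordism level $\overline{u}^0$, which is disjoint from $\mathcal{Z}_s$ by Step 2, and therefore defines an element of $\mathcal{M}$. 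The main obstacle is the disjointness bookkeeping in the second step, since one must make sure the limit components have no shared asymptotic orbits with $\widetilde{v}^R_j(s)$ before invoking positivity of intersection; this is resolved by the properness of $\rho$ together with the iterative identification of all breaking orbits with $\gamma_\rho$ carried out in the final step.
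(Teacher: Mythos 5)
Your overall skeleton (SFT limit, ruling out bubbling via hypertightness in the complement of $\mathcal{L}_0$ and Lemma \ref{lemma:planesintersect}, ruling out breaking via uniqueness of $\gamma_\rho$ in the proper link class $\rho$, then checking the limit avoids $\mathcal{Z}_s$) matches the paper's proof, but the step you use to transfer disjointness to the limit is circular as written, and this is a genuine gap. You want to conclude from $I(\widetilde{u}_{s_n},\widetilde{v}^R_j(s_n))=0$ and homotopy invariance that \emph{every component} of the limit building $\overline{u}$ is geometrically disjoint from $\mathrm{V}^R_{j,s}$; as you yourself note, positivity of intersections only yields this once one knows that no component of $\overline{u}$ has an asymptotic (breaking) orbit which is a cover of some $\gamma^j_0$. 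But that information is exactly what your final breaking analysis is supposed to deliver, and that analysis in turn presupposes the disjointness of the top and bottom levels from $\R\times\mathcal{L}_0$ established in this very step. A priori the breaking orbits of $\overline{u}$ could be covers of components of $\mathcal{L}_0$, and then the vanishing of a generalized (Siefring-type) intersection number of the building with $\widetilde{v}^R_j(s)$ does not by itself exclude interior intersections without a careful accounting of asymptotic contributions, which you do not supply. So the pivot of your argument is not justified at the point where it is used.

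The paper's proof avoids this by never arguing globally on the limit side: whenever an interior intersection of a limit object with the relevant cylinders is exhibited --- a bubble plane in the cobordism meeting $\cup_{j}\mathrm{V}^R_{j,s}$ by Lemma \ref{lemma:planesintersect}, a bubble or a broken symplectization level meeting $\R\times\mathcal{L}_0$ by hypertightness, uniqueness of $\gamma_\rho$, or properness of $\rho$, or the limit cylinder itself meeting $\mathcal{Z}_s$ --- positivity and stability of intersections (together with the $C^\infty$-convergence of the translated cylinders $\mathbf{T}_{-\mathbf{b}_n}\big(\mathrm{V}^R_{j,s_n}\big)$ to $\R\times\mathcal{L}_0$ on compact regions in the symplectization cases) produce an interior intersection of $\widetilde{u}_{s_n}$ with $\mathcal{Z}_{s_n}$ for large $n$, contradicting membership in $\mathcal{M}$. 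This local transfer back to the approximating sequence requires no control of the building's asymptotics and no intersection theory for buildings. Your argument becomes correct if you replace your disjointness step by this mechanism; note also that Lemma \ref{lemma:planesintersect} as stated only covers planes in the cobordism level, so the symplectization-level bubbles must be handled by the translation-and-stability argument just described rather than by that lemma. Your breaking analysis via uniqueness of $\gamma_\rho$ and the zero-energy/trivial-cylinder contradiction is then fine and is essentially equivalent to the paper's.
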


\textit{Proof.} We will establish the lemma by first showing that neither bubbling nor breaking can occur for a sequence $(s_n,\widetilde{u}_n)$ of elements of $\mathcal{M}(\gamma_\rho, \gamma_\rho,\omega^R_s,J^R_s; \mathcal{Z}_s,s\in [0,1] )$ that  converges to a holomorphic building. This implies that $(s_n,\widetilde{u}_n)$ converges to a pair $(s,\widetilde{u})$,  where $s=\lim_{n \to +\infty} s_n$ and $\widetilde{u}$ is a finite energy holomorphic cylinder in $(\R \times Y, \lambda^R_s, \omega^R_s, J^R_s)$ with one positive and one negative puncture, both asymptotic to $\gamma_\rho$. We will conclude the proof by showing that $\widetilde{u}$ does not intersect $\mathcal{Z}_s$, which implies that $(s,\widetilde{u}) \in \mathcal{M}(\gamma_\rho, \gamma_\rho,\omega^R_s,J^R_s; \mathcal{Z}_s,s\in [0,1] )$.

\textit{Step 1: Ruling out bubbling.}

We argue by contradiction assuming that bubbling occurs for a sequence $(s_n,\widetilde{u}_n)$ of elements of $\mathcal{M}(\gamma_\rho, \gamma_\rho,\omega^R_s,J^R_s; \mathcal{Z}_s,s\in [0,1] )$ with $s_n$ converging to $s \in [0,1]$, giving rise to a finite energy holomorphic plane $\widetilde{w}$. Then there are three possibilities:
\begin{itemize}
    \item[Case 1)]: $\widetilde{w}$ is a finite energy plane in the symplectization of $C^+ \alpha_0$ endowed with the almost complex structure
    $J^+_0$,
    \item[Case 2)]: $\widetilde{w}$ is a finite energy plane in  $(\R \times Y,\omega^R_s,J^R_s)$,
    \item[Case 3)]: $\widetilde{w}$ is a finite energy plane in the symplectization of $C^- \alpha_0$ endowed with the almost complex structure
    $J^-_0$.
\end{itemize}

\textit{Ruling out Case 1).} In this case, by Recollection \ref{rec:bubbling} we can find a subsequence $s'_n$ of $s_n$, a sequence of real numbers $\mathbf{b}_n$, a sequence of points $z_n \in \R\times S^1$, strictly monotone sequences of positive numbers $\epsilon_n \to 0$ and $K_n \to +\infty$, and a sequence of holomorphic charts  $$\uppsi: B_{K_n}(0) \subset \mathbb{C} \to B_{\epsilon_n}(z_n) \subset \R \times S^1 $$ such that $\widetilde{w}_n:= \mathbf{T}_{-\mathbf{b}_n} \circ \widetilde{u}_{s'_n} \circ \uppsi_n$ converges in $C^\infty_{\rm loc}$ to a finite energy plane $\widetilde{w}$ in the symplectization of $C^+ \alpha_0$. Because  $\widetilde{w}$ is in the symplectization of $C^+ \alpha_0$, we must have that $\mathbf{b}_n$ converges to $+\infty$. 

Since $\alpha_0$ is hypertight in the complement of $\mathcal{L}_0$, there must be an interior intersection point of  $\widetilde{w}$ with $\R \times \mathcal{L}_0$. We let $p \in \mathbb{C}$ be a point such that $\widetilde{w}(p) \in \R \times \mathcal{L}_0$. Since $J^+_0$ is invariant with respect to the $\R$-action on $\R \times Y$, we can assume that $\widetilde{w}(p) \in \{0\} \times \mathcal{L}_0$. For this, we might have to change the sequence $\mathbf{b}_n$, but the new sequence $\mathbf{b}_n$ will still satisfy $$\mathbf{b}_n \to +\infty.$$

Take $\upepsilon>0$ small enough so that $$\widetilde{w}(B_p(\upepsilon)) \subset [1,1] \times Y,$$ where  $B_p(\upepsilon)$ is the closed ball of radius $\upepsilon$ centered at $p$ in $\mathbb{C}$. Since the images of the finite energy holomorphic curves $\widetilde{w}$ and $\R \times \mathcal{L}_0$ do not coincide, we know that they have finitely many intersection points. Therefore, by choosing $\upepsilon>0$ even smaller we can guarantee that $$\widetilde{w}(\partial B_p(\upepsilon)) \cap [-1-\upepsilon,1+\upepsilon]\times \mathcal{L}_0 = \emptyset $$ and  $$\widetilde{w}( B_p(\upepsilon)) \cap \partial ([-1-\upepsilon,1+\upepsilon]\times \mathcal{L}_0) = \emptyset.$$ 
By positivity of intersections, the algebraic intersection number of the surfaces $\widetilde{w}|_{B_p(\upepsilon)}$ and $[-1-\upepsilon,1+\upepsilon]\times \mathcal{L}_0$ is positive. Since the boundaries of each of these two surfaces are isolated from the other surface, it follows that sufficiently small $C^\infty$-perturbations of  $\widetilde{w}|_{B_p(\upepsilon)}$ and $[-1-\upepsilon,1+\upepsilon]\times \mathcal{L}_0$ must intersect in a point that is in their interior.

Note that the asymptotic convergence of the elements of $\mathcal{Z}_s$ near the punctures is uniform. It follows that $$\mathbf{T}_{-\mathbf{b}_n}\big( (\cup_{j=1}^{\mathrm{n}_0} \mathrm{V}^R_{j,s_n} ) \cap ([-1-\upepsilon,1+\upepsilon]\times Y) \big)$$
converges in $C^\infty$ to $[-1-\upepsilon,1+\upepsilon]\times \mathcal{L}_0$. Similarly, we know that $\widetilde{w}_n|_{B_p(\upepsilon)}$ converges in $C^\infty$ to $\widetilde{w}|_{B_p(\upepsilon)}$. From this and our previous discussion, we conclude that there exists $n_0$ such that for all $n \geq n_0$ the surfaces $\mathbf{T}_{-\mathbf{b}_n}\big( (\cup_{j=1}^{\mathrm{n}_0} \mathrm{V}^R_{j,s_n} ) \cap ([-1-\upepsilon,1+\upepsilon]\times Y) \big)$ and $\widetilde{w}_n(B_p(\upepsilon))$ must have an  intersection point located at the interior of both surfaces. This immediately implies that $(\cup_{j=1}^{\mathrm{n}_0} \mathrm{V}^R_{j,s_n} ) \cap ([-1-\upepsilon,1+\upepsilon]\times Y)$ and $\mathbf{T}_{\mathbf{b}_n}\circ\widetilde{w}_n(B_p(\upepsilon))$ have an intersection point. 

Since $\mathbf{T}_{\mathbf{b}_n}\circ\widetilde{w}_n(B_p(\upepsilon))$ is contained in the image of $\widetilde{u}_n$, we conclude that the image of $\widetilde{u}_n$ intersects $(\cup_{j=1}^{\mathrm{n}_0} \mathrm{V}^R_{j,s_n} ) $. But, by assumption, the $\widetilde{u}_n$ are disjoint from $(\cup_{j=1}^{\mathrm{n}_0} \mathrm{V}^R_{j,s_n} ) $, since $(s_n,\widetilde{u}_n)$ is a sequence of elements of $\mathcal{M}(\gamma_\rho, \gamma_\rho,\omega^R_s,J^R_s; \mathcal{Z}_s,s\in [0,1] )$. This gives the promised contradiction.

\textit{Ruling out Case 2).} In this case, by Recollection \ref{rec:bubbling} we can find a subsequence $s'_n$ of $s_n$, a sequence of points $z_n \in \R\times S^1$, strictly monotone sequences of positive numbers $\epsilon_n \to 0$ and $K_n \to +\infty$, and a sequence of holomorphic charts  $$\uppsi: B_{K_n}(0) \subset \mathbb{C} \to B_{\epsilon_n}(z_n) \subset \R \times S^1 $$ such that $\widetilde{w}_n:=  \widetilde{u}_{s'_n} \circ \uppsi_n$ converges in $C^\infty_{\rm loc}$ to a finite energy plane $\widetilde{w}$ in  $(\R \times Y,\omega^R_s,J^R_s)$. Applying Lemma \ref{lemma:planesintersect}, it follows that $\widetilde{w}$ must intersect $\cup_{j=1}^{\mathrm{n}_0} \mathrm{V}^R_{j,s}$. Using positivity and stability of intersections for holomorphic curves, we obtain from this that for sufficiently large $n$ the images of $\widetilde{w}_n$ must intersect  $\cup_{j=1}^{\mathrm{n}_0} \mathrm{V}^R_{j,s_n}$. But this contradicts the fact that $(s_n,\widetilde{u}_n)$ is a sequence of elements of \linebreak $\mathcal{M}(\gamma_\rho, \gamma_\rho,\omega^R_s,J^R_s; \mathcal{Z}_s,s\in [0,1] )$. This rules out Case 2).

\textit{Ruling out Case 3).} The argument to rule out Case 3) is identical to the argument to rule out Case 1), except that in this case the sequence $\mathbf{b}_n$ converges to $-\infty$. We leave it as an exercise to the reader to do the necessary straightforward adaptations.

\medskip

\textit{Step 2: Ruling out breaking.} 

The SFT-compactness Theorem of \cite{CPT}, see Section \ref{sec:SFT1}, then implies that there exists a subsequence $(s'_n,\widetilde{u}_{s'_n})$ of $(s_n,\widetilde{u}_{s_n})$ such that $s'_n \to s \in [0,1] $ and $\widetilde{u}_{s'_n}$ converges to a finite energy holomorphic building $\overline{u}$ in  $(\R \times Y, \lambda^R_s, \omega^R_s,J^R_s)$. 
By Step 1, we know that the gradients of the holomorphic curves $\widetilde{u}_{s'_n}$ are uniformly bounded, and the results of \cite{CPT,Hofer93} imply that every level of the building $\overline{u}$ is formed by exactly one non-trivial finite energy holomorphic cylinder. We thus have that $\overline{u}$ is a broken holomorphic cylinder and has the following structure:
\begin{itemize}
    \item there are non-negative integers $\kappa^+$ and $\kappa^-$ such that \linebreak $\overline{u}= (\overline{u}^{\kappa^+},\overline{u}^{\kappa^+-1},\ldots,\overline{u}^1,\overline{u}^0,\overline{u}^{-1},\ldots,\overline{u}^{-\kappa^-} )$,
    \item for each $0<j \leq \kappa^+$, $\overline{u}^j$ is a finite energy holomorphic cylinder in the symplectization $(\R \times Y,J^+_0)$ of $C^+\alpha_0$ that is not a trivial cylinder and that has one positive and one negative puncture,
    \item $\overline{u}^0$ is finite energy holomorphic cylinder in the exact symplectic cobordism $(\R \times Y,\lambda^R_s, \omega^R_s,J^R_s)$ with one positive and one negative puncture,
    \item for each $0>j \geq -\kappa^-$, $\overline{u}^j$ is a finite energy holomorphic cylinder in the symplectization $(\R \times Y,J^-_0)$ of $C^-\alpha_0$ that  is not a trivial cylinder and that  has one positive and one negative puncture,
    \item for each $ -\kappa^- < j \leq \kappa^+ $ we have that the Reeb orbit $\gamma^{j-1}_+$ detected by the positive puncture of $\widetilde{u}^{j-1}$ coincides with the Reeb orbit $\gamma^j_-$ detected  by the negative puncture of $\overline{u}^j$,
    \item the positive puncture of $\overline{u}^{\kappa^+}$ and the negative puncture of $\overline{u}^{-\kappa^-}$ detect the Reeb orbit $\gamma_\rho$.
\end{itemize}
To rule out breaking we must show that $\kappa^+ = \kappa^-=0$.

We first assume that $\kappa^+>0$ and show that this leads to a contradiction. In this case,  the top level $\overline{u}^{\kappa^+}$ is a finite energy cylinder in the symplectization of  $(\R \times Y,J^+_0)$ of $C^+\alpha_0$ that is positively asymptotic to $\gamma_\rho$. Since $\overline{u}^{\kappa^+}$ is not a trivial cylinder, the negative puncture of $\overline{u}^{\kappa^+}$ must be asymptotic to a Reeb orbit $\gamma$ of $\alpha_0$ that is different from $\gamma_\rho$. We thus have two possibilities: either $\gamma$ is contained in $\mathcal{L}_0$ or it is not.

We claim that  $\overline{u}^{\kappa^+}$ does not intersect $\R \times \mathcal{L}_0$. Otherwise, positivity and stability of  isolated intersections of holomorphic curves would imply that for sufficiently large $n$ there is an intersection between the image of $\widetilde{u}_n$ and  $\cup_{j=1}^{\mathrm{n}_0} \mathrm{V}^R_{j,s_n}$. But this is prohibited by assumption.

Assume that $\gamma$ is not contained in $\mathcal{L}_0$. Then, since $\gamma_\rho$ is the unique orbit in the free homotopy class of loops $\rho$ in $Y \setminus \mathcal{L}_0$, the projection of the cylinder $\overline{u}^{\kappa^+}$ to $Y$ must intersect $\mathcal{L}_0$. But this would imply that $\overline{u}^{\kappa^+}$ intersects $\R \times \mathcal{L}_0$, contradicting our previous discussion.

We must then have that $\gamma$ is contained in $\mathcal{L}_0$. But the fact that $\rho$ is a proper link class implies that any cylinder whose boundary is the union of a loop in $\rho$ and a cover of a component
of $\mathcal{L}_0$ must have an interior intersection point with $\mathcal{L}_0$. The compactification of the  projection of $\overline{u}^{\kappa^+}$ to $Y$ is a cylinder with these properties and must therefore have an interior intersection point with $\mathcal{L}_0$. But this would imply that $\overline{u}^{\kappa^+}$ intersects $\R \times \mathcal{L}_0$, contradicting our previous discussion.
We thus conclude that $\kappa^+=0$.

The proof that $\kappa^-=0$ is essentially identical. One obtains a contradiction by analysing the bottom level $\overline{u}^{-\kappa^-}$ following the same strategy as above. We leave it to the reader to make the necessary straightforward adaptations.

\medskip 

\textit{Step 3: End of the proof.}

From steps 1 and 2 we conclude that any sequence of elements of \linebreak $\mathcal{M}(\gamma_\rho, \gamma_\rho,\omega^R_s,J^R_s; \mathcal{Z}_s,s\in [0,1] )$ has a subsequence $(s_n,\widetilde{u}_n)$ converging in the SFT-sense to a pair $(s,\widetilde{u})$, where $s=\lim_{n\to +\infty}s_n$ and $\widetilde{u}_n$ converges in $C^\infty$ to a finite energy cylinder  $\widetilde{u}$ in $(\R\times Y, \omega^R_s,J^R_s)$ with one positive and one negative puncture both asymptotic to $\gamma_\rho$. To prove that $\mathcal{M}(\gamma_\rho, \gamma_\rho,\omega^R_s,J^R_s; \mathcal{Z}_s,s\in [0,1] )$  is compact,  it suffices to prove that $(s,\widetilde{u})$ belongs to this moduli space, and this will follow if we show that the image of $\widetilde{u}$ does not intersect $\cup_{j=1}^{\mathrm{n}_0} \mathrm{V}^R_{j,s}$. We argue by contradiction to show that this is indeed the case.

If there are intersection points between $\cup_{j=1}^{\mathrm{n}_0} \mathrm{V}^R_{j,s}$ and the image of $\widetilde{u}$, then the number of intersection points is finite since the image of $\widetilde{u}$ and $\cup_{j=1}^{\mathrm{n}_0} \mathrm{V}^R_{j,s}$ are not contained one in the other. 
It then follows from positivity and stability of intersections for holomorphic curves that the image of  sufficiently $C^\infty$-small perturbations of $\widetilde{u}$ and $\cup_{j=1}^{\mathrm{n}_0} \mathrm{V}^R_{j,s}$ must intersect. But since $\widetilde{u}_n$ and $\cup_{j=1}^{\mathrm{n}_0} \mathrm{V}^R_{j,s_n}$ converge in $C^\infty$ to $\widetilde{u}$ and $\cup_{j=1}^{\mathrm{n}_0} \mathrm{V}^R_{j,s}$, respectively, this would imply that the image of $\widetilde{u}_n$ and $\cup_{j=1}^{\mathrm{n}_0} \mathrm{V}^R_{j,s_n}$ intersect for sufficiently large $n$. This contradicts the fact that $(s_n,\widetilde{u}_n)$ is a sequence of elements of $\mathcal{M}(\gamma_\rho, \gamma_\rho,\omega^R_s,J^R_s; \mathcal{Z}_s,s\in [0,1] )$ and finishes the proof of the lemma.
\qed

\subsection{Finite energy holomorphic buildings in the limit}\label{sec:buildings_limit}
We will now derive some properties of the finite energy holomorphic buildings obtained in the limit as $R\to \infty$ of the finite energy holomorphic cylinders in the exact symplectic cobordisms $(\R \times Y,\lambda^R_1, \omega^R_1,J^R_1)$ that are given by the second endpoints of the connected components of the moduli spaces considered above.  

\subsubsection{Buildings asymptotic to orbits in the link}
Recall that for $R>0$, the finite energy holomorphic cylinder $\widetilde{v}^R_i(1)$ in  $(\R \times Y,\lambda^R_1, \omega^R_1,J^R_1)$ is such that $(1,\widetilde{v}^R_i(1))$ is a boundary element of $\mathcal{I}^R_i$.

Because of the way in which the exact symplectic cobordisms\footnote{As $R \to +\infty$,  this is a stretching family along a contact hypersurface in the sense of \cite{CPT}.} \linebreak $(\R \times Y,\lambda^R_1, \omega^R_1,J^R_1)$  constructed in Section \ref{sec:sympcobordisms} behave as $R \to +\infty$,
we can apply the SFT-compactness theorem of \cite{CPT}, see  Section \ref{sec:SFT2}, and can choose a sequence $R_n \to +\infty$ such that for each $1 \leq i \leq \mathrm{n}_0$ the sequence of holomorphic cylinders $\widetilde{v}^R_i(1)$ converges to a finite energy holomorphic building $\mathbf{v}_i$ of type $|\kappa^+_i|1|\kappa_i|1|\kappa^-_i|$ and arithmetic genus $0$ in the pair of  exact symplectic cobordisms  $[(\R \times Y, \lambda^+_1,\omega^+_1,J^{+\infty}_1),(\R \times Y, \lambda^-_1,\omega^-_1,J^{-\infty}_1)]$. Recall that $(\R \times Y, \lambda^+_1,\omega^+_1,J^{+\infty}_1)$ is an exact symplectic cobordism from $C^+ \alpha_0$ to $ \alpha$ and that  $(\R \times Y, \lambda^-_1,\omega^-_1,J^{-\infty}_1)$ is an exact symplectic cobordism from $\alpha$ to $C^- \alpha_0$.

Since the holomorphic building $\mathbf{v}_i= (S,j,\Gamma, \Delta,L,(\overline{v}^l_i)_{-\kappa^-_i \leq l\leq \kappa^+_i  +\kappa_i +1})$ is the limit of cylinders, the SFT-compactness theorem implies that $\mathbf{v}_i$ is a cylinder with branches. As observed in Section \ref{sec:defbreakingmain},  each level $\overline{v}_i^l$ of $\mathbf{v}_i$ contains exactly two main punctures.
Moreover, the maximum principle implies that one of the two main punctures in each level $\overline{v}^l_i$ of $\mathbf{v}_i$ is positive and that one of them is negative. We then denote by $z^+_{i}(l)$ the unique positive main puncture of the level $\overline{v}^l_i$ and by  $z^-_{i}(l)$ the unique negative main puncture of the level $\overline{v}^l_i$. It is clear from the definition of $\Delta_{\rm main}$ that the Reeb orbits detected by $\overline{v}^l_i$ at  $z^-_{i}(l)$ and by $\overline{v}^{l-1}_i$ at  $z^-_{i}(l-1)$ coincide. We will denote the Reeb orbit detected by $\overline{v}^l_i$  at $z^+_{i}(l)$ by $\gamma^{l,+}_i$, and the Reeb orbit detected by $\overline{v}^l_i$  at $z^-_{i}(l)$ by $\gamma^{l,-}_i$. As we just observed, $$\gamma^{l,-}_i=\gamma^{l-1,+}_i.$$
The top level $\overline{v}^{\kappa^+_i  + \kappa_i +1}_i$ of $\mathbf{v}_i$ has one positive puncture asymptotic to $\gamma_0^i$ and the bottom level $\overline{v}_i^{-\kappa^-_i}$ of $\mathbf{v}_i$ has one negative puncture asymptotic to $\gamma_0^i$. 

Following the terminology introduced in Section \ref{sec:buildings} we will sometimes write
$$\mathbf{v}_i=(\overline{v}^{\kappa^+_i  + \kappa_i +1}_i,\ldots,\overline{v}^{\kappa_i+2}_i,\overline{v}^{\kappa_i+1}_i,\overline{v}^{\kappa_i}_i,\ldots,\overline{v}^{1}_i,\overline{v}^{0}_i,\overline{v}^{-1}_i,\ldots,\overline{v}_i^{-\kappa^-_i}).$$

\begin{lem}\label{lem:v_no_topbot_level}
Let $i \in \{1,\ldots,{\rm n}_0\}$, and let  
 $\mathbf{v_i}$ be the limit of cylinders above. Then, we have that  $\kappa^+_i =0$ and $\kappa^-_i=0$. Furthermore, the top level of $\mathbf{v}_i$ is a finite energy holomorphic curve $\overline{v}_i^{\kappa_i + 1}$ in $(\R \times Y, \lambda^+_1,\omega^+_1,J^{+\infty}_1)$ with one positive puncture asymptotic to $\gamma^i_0$ and at least one negative puncture, and the bottom level is $\overline{v}_i^{0}$  with  one negative puncture asymptotic to $\gamma^i_0$ and at least one positive puncture. 
\end{lem}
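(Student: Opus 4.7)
My plan is to adapt the action-estimate argument of Step~2 in the proof of Proposition~\ref{prop:compactness} to the SFT limit $\mathbf{v}_i$, and I will derive $\kappa^+_i=0$ by contradiction; the case $\kappa^-_i=0$ is symmetric. Set $K=\kappa^+_i+\kappa_i+1$. The first step is an action estimate on the main asymptotic chain
\[
\gamma^{K,+}_i=\gamma^i_0,\ \gamma^{K,-}_i=\gamma^{K-1,+}_i,\ \ldots,\ \gamma^{-\kappa^-_i,-}_i=\gamma^i_0.
\]
At each level, the positive main asymptote has action at least that of the negative one (when measured in the contact form of that level), by Stokes and the non-negativity of the $d\lambda$-energy of each main cylinder piece. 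Combined with the equality at both extremities, this forces, for every main orbit at a level strictly above the top cobordism,
\[
\mathcal{A}_{C^+\alpha_0}(\gamma^{l,\pm}_i)\in[C^-\mathcal{A}_{\alpha_0}(\gamma^i_0),\,C^+\mathcal{A}_{\alpha_0}(\gamma^i_0)],
\]
hence $\mathcal{A}_{\alpha_0}(\gamma^{l,\pm}_i)\in[\tfrac{C^-}{C^+}\mathcal{A}_{\alpha_0}(\gamma^i_0),\,\mathcal{A}_{\alpha_0}(\gamma^i_0)]$. Using $\max(f_\alpha)<C^+<e^{\delta_{\alpha_0,\mathcal{L}_0}}$, $e^{-\delta_{\alpha_0,\mathcal{L}_0}}<C^-<\min(f_\alpha)$, together with Condition~A) of Assumption~\ref{assump:delta}, this gives $\mathcal{A}_{\alpha_0}(\gamma^{l,\pm}_i)\in(\mathcal{A}_{\alpha_0}(\gamma^i_0)-\mu_{\alpha_0,\mathcal{L}_0},\,\mathcal{A}_{\alpha_0}(\gamma^i_0)]$, with a symmetric bound inside $[\mathcal{A}_{\alpha_0}(\gamma^i_0),\,\mathcal{A}_{\alpha_0}(\gamma^i_0)+\mu_{\alpha_0,\mathcal{L}_0})$ for main orbits strictly below the bottom cobordism.

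Suppose now $\kappa^+_i>0$, and let $C$ be the component of the top level $\overline{v}^K_i$ containing both main punctures. In a cylinder with branches, all non-main punctures of $C$ are paired with punctures of the level $\overline{v}^{K-1}_i$, so they are negative on $C$ and are asymptotic to some orbits $\eta_1,\ldots,\eta_N$ of $C^+\alpha_0$. Pulling $C$ back by $(\Upsilon^+)^{-1}$, as in Step~2 of the proof of Proposition~\ref{prop:compactness}, yields a finite energy holomorphic curve $u$ in $(\R\times Y,\widetilde{J}_0)$ with positive asymptote $\gamma^i_0\subset\mathcal{L}_0$, main negative asymptote $\gamma^{K,-}_i$, and additional negative asymptotes $\eta_1,\ldots,\eta_N$ (all viewed as $\alpha_0$-orbits). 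The non-negativity of the $d\alpha_0$-energy of $u$ gives
\[
\sum_{j=1}^N\mathcal{A}_{\alpha_0}(\eta_j)\leq \mathcal{A}_{\alpha_0}(\gamma^i_0)-\mathcal{A}_{\alpha_0}(\gamma^{K,-}_i)<\mu_{\alpha_0,\mathcal{L}_0}<\mathrm{sys}(\alpha_0)/3,
\]
by the previous paragraph and~\eqref{eq:mu_sys}. Since each $\eta_j$ has action at least $\mathrm{sys}(\alpha_0)$, the only possibility is $N=0$: both $C$ and $u$ are pure cylinders with only their two main punctures.

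If $u$ is not a trivial cylinder, its $d\alpha_0$-energy equals $\mathcal{A}_{\alpha_0}(\gamma^i_0)-\mathcal{A}_{\alpha_0}(\gamma^{K,-}_i)\in(0,\mu_{\alpha_0,\mathcal{L}_0})\subset(0,\mathfrak{h}(\alpha_0,\mathcal{L}_0,\widetilde{J}_0))$ by~\eqref{eq:munew}, contradicting the definition of $\mathfrak{h}(\alpha_0,\mathcal{L}_0,\widetilde{J}_0)$. It remains to rule out that $u$ is the trivial cylinder over $\gamma^i_0$: in that case the stability built into the SFT-compactness theorem (Theorem~\ref{thm:SFT2}) forces a non-trivial component of $\overline{v}^K_i$ disjoint from $C$, i.e., a branch at the top level. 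Such a branch has only internal punctures, and at the top of the building each of them must be negative (no level sits above $\overline{v}^K_i$); but a non-trivial finite energy holomorphic curve in the symplectization of $C^+\alpha_0$ with only negative asymptotes would have non-positive $d(C^+\alpha_0)$-energy, a contradiction. Therefore $\kappa^+_i=0$, and the symmetric argument at the bottom gives $\kappa^-_i=0$.

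Once both vanish, the top level $\overline{v}^{\kappa_i+1}_i$ coincides with the cobordism level in $(\R\times Y,\lambda^+_1,\omega^+_1,J^{+\infty}_1)$; it carries the unique external positive puncture of $\mathbf{v}_i$, asymptotic to $\gamma^i_0$, and its main negative puncture provides the required negative asymptote. The bottom level is handled identically. The main obstacle I expect is exactly the trivial-main-component case in the third paragraph: the action estimate alone is not enough, and it has to be combined with the puncture-parity observation above together with the stability implicit in the SFT-compactness theorem.
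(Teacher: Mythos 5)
Your treatment of $\kappa^+_i=0$ is essentially the paper's argument (the main-chain action estimates, the comparison with $\mathfrak{h}(\alpha_0,\mathcal{L}_0,\widetilde{J}_0)$ via \eqref{eq:munew}, and the stability of the SFT limit), and in fact you spell out two points the paper leaves implicit: that extra negative punctures on the top main component are excluded by the action budget $<\mu_{\alpha_0,\mathcal{L}_0}<\mathrm{sys}(\alpha_0)/3$, and that in the trivial-cylinder case any further top-level component would have only negative punctures and hence cannot exist. Up to the minor imprecision that in the chain estimate you speak of "main cylinder pieces" before knowing the main components are cylinders (the inequality still holds because every main component has exactly one positive puncture, all branches pointing downward), this part is fine.

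The gap is the last sentence: the bottom case is \emph{not} symmetric, and your argument breaks exactly at the step you yourself flagged as the delicate one. At the bottom level the non-main punctures pair with the level \emph{above}, so by the maximum principle the bottom main component is automatically a cylinder, but any additional component forced by stability is a finite-energy \emph{plane with one positive puncture} in the symplectization of $C^-\alpha_0$. Your top-level parity contradiction ("a curve with only negative asymptotes has non-positive energy") has no analogue here: such planes are perfectly admissible curves, and hypertightness in the complement of $\mathcal{L}_0$ does not exclude them since nothing forces the buildings $\mathbf{v}_i$ to avoid $\R\times\mathcal{L}_0$. So in the case where the bottom main component is the trivial cylinder over $\gamma^i_0$ (which your $\mathfrak{h}$-argument cannot rule out), your proposal gives no contradiction. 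The paper closes this case with an additional quantitative argument you have not supplied: the level-wise sums $\upsigma^{\pm}_l$ of asymptotic actions are non-decreasing up the building and bounded above by $C^+\mathcal{A}_{\alpha_0}(\gamma^i_0)$, whereas an extra plane forces $\upsigma^+_{-\kappa^-_i}\geq C^-\mathcal{A}_{\alpha_0}(\gamma^i_0)+\mathrm{sys}(\alpha_0)$; combined with $\mathrm{sys}(\alpha_0)>3\mu_{\alpha_0,\mathcal{L}_0}$ from \eqref{eq:mu_sys} and Condition A) of Assumption~\ref{assump:delta} this exceeds the budget $C^+\mathcal{A}_{\alpha_0}(\gamma^i_0)$, giving the contradiction (this is the same systole-versus-$(C^+-C^-)$ mechanism as in Proposition~\ref{prop:compactness}). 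Without this action bookkeeping across all levels, $\kappa^-_i=0$ is not proved.
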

\textit{Proof.} The first statement of the lemma implies the second by the structure of the buildings $\mathbf{v}_i$.
We thus fix $i \in \{1,\ldots,{\rm n}_0\}$ and prove the first statement of the lemma for $i$.

We need the following observation regarding the actions of the orbits $\gamma^{l,+}_i$. We claim that for $l \in \{-\kappa^-_i,\ldots,\kappa^+_i + \kappa_i+1\} $ the action of the Reeb orbit $\gamma^{l,+}_i$ detected by the unique positive main puncture $z^+_{i}(l)$ of $\overline{v}^l_i$ satisfies:
\begin{align} \label{eq:ineqacbuilding}
& C^-\mathcal{A}_{\alpha_0}(\gamma_0^i) \leq C^+\mathcal{A}_{\alpha_0}(\gamma^{l,+}_i)   \leq C^+\mathcal{A}_{\alpha_0}(\gamma_0^i) \mbox{ for } \kappa_i^+ + \kappa_i+1 \geq l \geq \kappa_i+1, \\
\nonumber & C^-\mathcal{A}_{\alpha_0}(\gamma_0^i) \leq \mathcal{A}_{\alpha}(\gamma^{l,+}_i)   \leq C^+\mathcal{A}_{\alpha_0}(\gamma_0^i) \mbox{ for } \kappa_i \geq l\geq 0 , \\ 
\nonumber & C^-\mathcal{A}_{\alpha_0}(\gamma_0^i) \leq C^-\mathcal{A}_{\alpha_0}(\gamma^{l,+}_i)   \leq C^+\mathcal{A}_{\alpha_0}(\gamma_0^i) \mbox{ for } -1\geq l\geq -\kappa^-_i .
\end{align}
Indeed, we first observe that since the compactified domain of $\mathbf{v}_i$ is homeomorphic to a compact cylinder, we must have that $z^+_{i}(l)$ and $z^-_{i}(l)$ belong to the same connected component $\overline{v}^l_i({\rm main})$ of $\overline{v}^l_i$. Using the exactness of the symplectic manifolds on which the levels of $\mathbf{v}_i$ live, we can conclude that the action of $\gamma^{l,+}_i$ satisfies
\begin{align}\label{eq:eqactionsbuilding0}
    & C^+\mathcal{A}_{\alpha_0}(\gamma^{l,+}_i) \geq C^+\mathcal{A}_{\alpha_0}(\gamma^{l,-}_i) \mbox{ for } \kappa_i^+ + \kappa_i+1 \geq l > \kappa_i+1, \\
    \nonumber &  C^+\mathcal{A}_{\alpha_0}(\gamma^{\kappa_i + 1,+}_i) \geq \mathcal{A}_{\alpha}(\gamma^{\kappa_i + 1,-}_i), \\
    \nonumber & \mathcal{A}_{\alpha}(\gamma^{l,+}_i) \geq \mathcal{A}_{\alpha}(\gamma^{l,-}_i) \mbox{ for }  \kappa_i \geq l>0, \\
    \nonumber & \mathcal{A}_{\alpha}(\gamma^{0,+}_i) \geq C^-\mathcal{A}_{\alpha_0}(\gamma^{0,-}_i), \\
    \nonumber & C^-\mathcal{A}_{\alpha_0}(\gamma^{ l,+}_i) \geq C^-\mathcal{A}_{\alpha_0}(\gamma^{l,-}_i), \mbox{ for }  0 >l\geq -\kappa^-_i.
\end{align}
We introduce the names $(\uptheta_{l})_{l \in \{-\kappa^-_i-\kappa_i-1,\ldots,\kappa^+_i\} }$ for the modified actions 
\begin{align} \label{eq:eqactionsbuilding1}
  &  \uptheta_{l} = C^+\mathcal{A}_{\alpha_0}(\gamma^{l,+}_i) \mbox{ for } \kappa_i^+ + \kappa_i+1 \geq l \geq \kappa_i+1, \\
  & \nonumber   \uptheta_{l} = \mathcal{A}_{\alpha}(\gamma^{l,+}_i) \mbox{ for } \kappa_i \geq l\geq 0, \\
  & \nonumber \uptheta_{l} =  C^-\mathcal{A}_{\alpha_0}(\gamma^{ l,+}_i)  \mbox{ for } -1\geq l\geq -\kappa^-_i.
\end{align}
 It follows from \eqref{eq:eqactionsbuilding1} and the fact that $\gamma^{l,-}_i= \gamma^{l-1,+}_i,$ that $\uptheta_{l}$ is non-decreasing as $l$ increases.
For the top level $\overline{v}^{\kappa_i^+ + \kappa_i+1}_i$ of $\mathbf{v}_i$ the orbit detected by the main positive puncture is $\gamma_0^i$ and its action is $C^+\mathcal{A}_{\alpha_0}(\gamma_0^i)$. This implies that 
\begin{equation}\label{eq:eqactionbuilding2}
    \uptheta_{\kappa_i^+ + \kappa_i+1}  = C^+\mathcal{A}_{\alpha_0}(\gamma_0^i).
\end{equation}

For the bottom level $\overline{v}^{-\kappa^-_i }_i$ of $\mathbf{v}_i$ the orbit detected by the negative puncture is $\gamma_0^i$. Again, because of the exactness of the symplectic manifolds on which the levels of $\mathbf{v}_i$ live, it follows that the action $\uptheta_{-\kappa^-_i}$ of the main positive puncture of $\overline{v}^{-\kappa^-_i}_i$ satisfies 
\begin{equation} \label{eq:eqactionbuilding3}
 \uptheta_{-\kappa^-_i } \geq   C^-\mathcal{A}_{\alpha_0}(\gamma_0^i).
\end{equation}

Combining \eqref{eq:eqactionsbuilding0}, \eqref{eq:eqactionsbuilding1}, \eqref{eq:eqactionbuilding2}, and \eqref{eq:eqactionbuilding3}, we obtain that  $\uptheta_{l} \in [C^-\mathcal{A}_{\alpha_0}(\gamma_0^i),C^+\mathcal{A}_{\alpha_0}(\gamma_0^i)]$ for all $l \in \{-\kappa^-_i,\ldots,\kappa^+_i +\kappa_i+1\}$, which establishes \eqref{eq:ineqacbuilding}.

To prove that $\kappa_i^+=0$, we argue by contradiction and assume that $\kappa^+_i>0$. We then consider the top level $\overline{v}^{\kappa^+_i + \kappa_i +1}_i$ of $\mathbf{v}_i$ whose target is the symplectization of $C^+\alpha_0$. 
Since $\gamma^{l,-}_i= \gamma^{l-1,+}_i,$ we have that $$C^+\mathcal{A}_{\alpha_0}(\gamma^{\kappa^+_i + \kappa_i +1,-}_i)= \uptheta_{\kappa^+_i + \kappa_i } \geq C^-\mathcal{A}_{\alpha_0}(\gamma^i_0).$$
Moreover, by exactness of the symplectization, we have that $$C^+\mathcal{A}_{\alpha_0}(\gamma^{\kappa^+_i + \kappa_i +1,-}_i)\leq C^+\mathcal{A}_{\alpha_0}(\gamma^i_0).$$ 
We conclude that $\gamma^{\kappa^+_i + \kappa_i +1,-}_i$ is a Reeb orbit of $\alpha_0$ that  satisfies 
\begin{equation} \label{eq:endofproof0}
    \mathcal{A}_{\alpha_0}(\gamma^{\kappa^+_i + \kappa_i +1,-}_i) \in [\frac{C^-}{C^+}\mathcal{A}_{\alpha_0}(\gamma^i_0),\mathcal{A}_{\alpha_0}(\gamma^i_0)].
\end{equation}
Recall that by Condition A) on the numbers $\delta_{\alpha_0,\mathcal{L}_0}$ chosen in Assumption \ref{assump:delta},  we have that 
\begin{equation} \label{eq:endofproof1}
  \frac{e^{ -\delta_{\alpha_0,\mathcal{L}_0} }}{e^{ \delta_{\alpha_0,\mathcal{L}_0} }} \mathcal{A}_{\alpha_0}(\gamma^i_0) > \mathcal{A}_{\alpha_0}(\gamma^i_0)- \mu_{\alpha_0,\mathcal{L}_0},  
\end{equation}
for $\mu_{\alpha_0,\mathcal{L}_0}$ satisfying  \eqref{eq:munew}. Since $C^->e^{ -\delta_{\alpha_0,\mathcal{L}_0}} $ and  $C^+ < e^{ \delta_{\alpha_0,\mathcal{L}_0}} $ we clearly have 
$$\frac{C^-}{C^+}\mathcal{A}_{\alpha_0}(\gamma^i_0) > \frac{e^{ -\delta_{\alpha_0,\mathcal{L}_0} }}{e^{ \delta_{\alpha_0,\mathcal{L}_0} }} \mathcal{A}_{\alpha_0}(\gamma^i_0) > \mathcal{A}_{\alpha_0}(\gamma^i_0)- \mu_{\alpha_0,\mathcal{L}_0}.$$
Combining this with \eqref{eq:endofproof0},  we conclude that 
\begin{equation}
 \mathcal{A}_{\alpha_0}(\gamma^{\kappa^+_i + \kappa_i +1,-}_i) \in (\mathcal{A}_{\alpha_0}(\gamma^i_0) - \mu_{\alpha_0,\mathcal{L}_0},\mathcal{A}_{\alpha_0}(\gamma^i_0)].
 \end{equation}
 Since $(\Upsilon^+)^{-1} \circ \overline{v}_i^{\kappa^+_i + \kappa_i +1,-}$ is a holomorphic cylinder in the symplectization  $(\R \times Y,\widetilde{J}_0)$ asymptotic to $\gamma^i_0$ and $\gamma^{\kappa^+_i + \kappa_i +1,-}_i$, this implies by \eqref{eq:munew} that $\overline{v}_i^{\kappa^+_i + \kappa_i +1,-}$ has to be the  trivial cylinder, contradicting the fact the every level of $\mathbf{v}_i$ inside a symplectization is different from a trivial cylinder. This contradiction establishes that $\kappa_i^+=0$.

To prove that $\kappa_i^-=0$,  we argue by contradiction and assume that $\kappa^-_i>0$.  We consider the bottom level $\overline{v}^{-\kappa^-_i}_i$. Using again \eqref{eq:ineqacbuilding} we obtain that 
\begin{equation}
    \mathcal{A}_{\alpha_0}(\gamma^{-\kappa^-_i,+}_i) \in [\mathcal{A}_{\alpha_0}(\gamma^i_0),\frac{C^+}{C^-}\mathcal{A}_{\alpha_0}(\gamma^i_0)].
\end{equation}
Recall that by Condition A) on the numbers $\delta_{\alpha_0,\mathcal{L}_0}$ chosen in Assumption \ref{assump:delta}, we have that 
\begin{equation} \label{eq:endofproof5}
  \frac{e^{ \delta_{\alpha_0,\mathcal{L}_0} }}{e^{ -\delta_{\alpha_0,\mathcal{L}_0} }} \mathcal{A}_{\alpha_0}(\gamma^i_0) < \mathcal{A}_{\alpha_0}(\gamma^i_0)+ \mu_{\alpha_0,\mathcal{L}_0},  
\end{equation}
for $\mu_{\alpha_0,\mathcal{L}_0}$ satisfying \eqref{eq:munew}. Since $C^->e^{ -\delta_{\alpha_0,\mathcal{L}_0}} $ and  $C^+ < e^{ \delta_{\alpha_0,\mathcal{L}_0}}$, we clearly have 
$$\frac{C^+}{C^-}\mathcal{A}_{\alpha_0}(\gamma^i_0) < \frac{e^{ \delta_{\alpha_0,\mathcal{L}_0} }}{e^{- \delta_{\alpha_0,\mathcal{L}_0} }} \mathcal{A}_{\alpha_0}(\gamma^i_0) < \mathcal{A}_{\alpha_0}(\gamma^i_0)+ \mu_{\alpha_0,\mathcal{L}_0}.$$
This, together with \eqref{eq:endofproof5}, gives 
\begin{equation} \label{eq:endofproof6}
 \mathcal{A}_{\alpha_0}(\gamma^{-\kappa^-_i ,+}_i) \in [\mathcal{A}_{\alpha_0}(\gamma^i_0),\mathcal{A}_{\alpha_0}(\gamma^i_0) + \mu_{\alpha_0,\mathcal{L}_0}).
 \end{equation}

Since $\mathbf{v}_i$ is a cylinder with branches, we have that the bottom level $\overline{v}^{-\kappa^-_i}_i$ of $\mathbf{v}_i$ has a unique negative puncture asymptotic to $\gamma_0^i$, and that every connected component of it has genus $0$. Moreover,  the maximum principle implies that each connected component of $\overline{v}^{-\kappa^-_i}_i$ has exactly one positive puncture.

Consider the connected component $w^{-\kappa^-_i,i}_{\rm main}$ of $\overline{v}^{-\kappa^-_i}_i$ that  contains the two main punctures of $\overline{v}^{-\kappa^-_i}_i$. By \eqref{eq:endofproof6}, and since $(\Upsilon^{-})^{-1} \circ w^{-\kappa^-_i,i}_{\rm main}$ is negatively asymptotic to  $\gamma_0^i$ and positively asymptotic to $\gamma^{-\kappa^-_i ,+}_i$, the choice of $\mu_{\alpha_0,\mathcal{L}_0}$ implies that $w^{-\kappa^-_i,i}_{\rm main}$ is the trivial cylinder over $\gamma_0^i$.

For each $l \in \{\kappa^-_i,\ldots, \kappa^+_i +\kappa_i +1 \}$ let $\upsigma^+_l$ denote the sum of the actions\footnote{Recall that the action of a puncture of a finite energy holomorphic curve is defined as the action of the Reeb orbit detected by the curve at this puncture.} of the positive punctures of $\overline{v}^{l}_i$. Similarly, let $\upsigma^-_l$ denote the sum of the actions of the negative punctures of $\overline{v}^{l}_i$. By the exactness of the targets of the levels of $\mathbf{v}_i(1)$, we obtain that for all $l \in \{\kappa^-_i,\ldots, \kappa^+_i +\kappa_i +1 \}$ $$\upsigma^-_l\leq \upsigma^+_l.$$ For $l \in \{\kappa^-_i+1,\ldots, \kappa^+_i +\kappa_i +1 \}$ we have a one to one correspondence between the  negative punctures of $\overline{v}^{l}_i$ and the positive punctures of $\overline{v}^{l-1}_i$, and the Reeb orbits detected by the punctures identified by this correspondence coincide. This implies that $$\upsigma^-_l = \upsigma^+_{l-1}.$$ Moreover, we know that 
\begin{align}
    & \upsigma^-_{-\kappa^-_i}= C^-\mathcal{A}_{\alpha_0}(\gamma_0^i), \\
    & \upsigma^-_{\kappa^+_i=0}= C^+\mathcal{A}_{\alpha_0}(\gamma_0^i).
\end{align}
An argument similar as the one used to prove \eqref{eq:ineqacbuilding} then implies that the finite sequence $\upsigma^+_l$ is non-decreasing in $\in \{\kappa^-_i,\ldots, \kappa^+_i +\kappa_i +1 \}$, and that 
\begin{equation}\label{eq:endofproof7}
 \upsigma^+_l \in [C^- \mathcal{A}_{\alpha_0}(\gamma^i_0),C^+ \mathcal{A}_{\alpha_0}(\gamma^i_0)].   
\end{equation}

By our discussion above,  $w^{-\kappa^-_i,i}_{\rm main}$ is the trivial cylinder over the Reeb orbit $\gamma^i_0$.  
Since $\overline{v}^{-\kappa^-_i}_i$ is not a trivial cylinder, 
it must have at least one other component $w$ which must be a finite energy plane with one positive puncture. The Reeb orbit $\gamma'$ detected by $w$ must have action $\geq \mathrm{sys}(\alpha_0)$. It follows that $$\upsigma^+_{-\kappa^-_i}\geq C^- \mathcal{A}_{\alpha_0}(\gamma^i_0) + \mathrm{sys}(\alpha_0). $$ But since, by our choice of $\mu_{\alpha_0,\mathcal{L}_0}>0$, we have $\mathrm{sys}(\alpha_0) > 3\mu_{\alpha_0,\mathcal{L}_0} $, and \linebreak $C^- \mathcal{A}_{\alpha_0}(\gamma^i_0)  > \mathcal{A}_{\alpha_0}(\gamma^i_0)  - \mu_{\alpha_0,\mathcal{L}_0}$, we obtain that $$C^- \mathcal{A}_{\alpha_0}(\gamma^i_0) + \mathrm{sys}(\alpha_0) > \mathcal{A}_{\alpha_0}(\gamma^i_0) + 2\mu_{\alpha_0,\mathcal{L}_0},$$ and therefore  
$$\upsigma^+_{-\kappa^-_i} > \mathcal{A}_{\alpha_0}(\gamma^i_0)+ 2\mu_{\alpha_0,\mathcal{L}_0}> C^+\mathcal{A}_{\alpha_0}(\gamma^i_0).$$
This contradicts \eqref{eq:endofproof7}. This contradiction proves that  $\kappa^-_i=0$. This finishes the proof of the lemma.
\qed

\subsubsection{Buildings asymptotic to orbits in the link complement}
Having established Lemma \ref{lem:v_no_topbot_level}, we are now in the situation to show an analogous statement considering instead the moduli spaces $\mathcal{M}(\gamma_{\rho},\gamma_{\rho}, \omega_s^R, J_s^R;\mathcal{Z}_s,s\in [0,1])$, $\rho\in \Omega_{\alpha_0}(\mathcal{L}_0)$. 
Let $\rho$ be a class in $\Omega_{\alpha_0}(\mathcal{L}_0)$. 
By the choice of $(J^R_s)_{s\in [0,1]}$ and by Lemma \ref{modulispace_rho_compact}, we have for any $R>0$ that the moduli space $\mathcal{M}^R_{\rho}=\mathcal{M}(\gamma_{\rho},\gamma_{\rho}, \omega_s^R, J_s^R;\mathcal{Z}_s,s\in [0,1])$ is a compact one-dimensional manifold.  There exists a connected component of $\mathcal{M}^R_{\rho}$ that contains a boundary point of the form   $(0,\widetilde{u}^0_{\gamma_{\rho}})$, where $\widetilde{u}^0_{\gamma_{\rho}}$ is a trivial cylinder over $\gamma_{\rho}$. Let $(1,\widetilde{u}_{\rho}^R(1))$ be the second boundary point of that component. As in the discussion above for the holomorphic cylinders $\widetilde{v}_i^R(1)$, the holomorphic cylinders $\widetilde{u}_{\rho}^R(1)$ converge as $R\to +\infty$ in the SFT-sense to a finite energy holomorphic building $\mathbf{u}_{\rho}$ of type $|\kappa_{\rho}^+|1|\kappa_{\rho}|1|\kappa_{\rho}^-|$ and of arithmetic genus $0$ in the pair of exact symplectic cobordisms $[(\R\times Y,\lambda_1^+,\omega_1^+,J_1^{+\infty}),(\R\times Y,\lambda_1^-,\omega_1^-,J_1^{-\infty})]$. As in the discussion above, $\mathbf{u}_{\rho}$ is a cylinder with branches. Again we also  use the notation     
$$\mathbf{u}_{\rho}=(\overline{u}_{\rho}^{\kappa_{\rho}^+  + \kappa_{\rho} +1},\ldots,\overline{u}_{\rho}^{\kappa_{\rho}+2},\overline{u}_{\rho}^{\kappa_{\rho}+1},\overline{u}_{\rho}^{ \kappa_{\rho}},\ldots, \overline{u}_{\rho}^{1},\overline{u}_{\rho}^{0},\overline{u}_{\rho}^{-1},\ldots,\overline{u}_{\rho}^{-\kappa_{\rho}^-}).$$

Before we 
 proceed to study the building $\mathbf{u}_{\rho}$, we first collect some consequences of the construction of the family of  diffeomorphisms $\mathbf{F}_R^s:\R \times Y \to \R \times Y$, $s\in [0,1]$, in Section \ref{sec:isotopies}, partly generalizing Lemma \ref{lemma:planesintersect}. 
  Let $V_{1}^R$ be the union of $V_{j,1}^R$, $j=1, \ldots, {\rm n}_0$, where, as above, $V_{j,1}^R$ is the image of 
 $\widetilde{v}_j^R(1)$ in $\R \times Y$.  
Recall that we say that two loops $\beta_0, \beta_1:S^1 \to Y$ are homotopic in the complement of $\mathcal{L}_0$ if there is a homotopy between them such that no interior point maps to $\mathcal{L}_0$, and we say that a loop $\beta:S^1 \to Y$ is contractible in the complement of $\mathcal{L}_0$ if there is a disk map with boundary in $\beta_0$ with no interior intersection point with $\mathcal{L}_0$. 
\begin{lem}\label{lem:homot1} Let $R>0$. Let $\beta, \beta_0$, and $\beta_1$ be loops as above. If there exist a continuous map $H(s,t)=(a(s,t),f(s,t)):\R \times S^1 \to \R \times Y$ and  $s_-,s_+\in \R \cup\{\pm \infty\}$ with $s_-<s_+$ such that  (uniformly)
 $$\lim_{s\to -\infty} a(s,t), \lim_{s\to +\infty} a(s,t) \in \{\pm\infty\},  $$ 
 $$\lim_{s\to -\infty}f(s,t) = \beta_1(t) \, \,  \text{  and } \lim_{s\to +\infty} f(s,t) = \beta_2(t),  \text{ and}$$
 $$H(s,t) \in (\R \times Y) \setminus V^R_1 \text{ if and only if } s\in(s_-,s_+),$$ 
 then  $\beta_0$ and $\beta_1$ are homotopic in the complement of $\mathcal{L}_0$.
 
Moreover, if there exists a continuous map $H(x) = (a(x),f(x)):\D \to \R \times Y$ and $0<r_0\leq 1$ such that 
 $$\lim_{r\to 1}a(re^{2\pi it}) = \pm \infty, \, $$
 $$\lim_{r \to 1} f(re^{2\pi it}) = \beta(t), \, \text{ and }$$
 $$H(x)\in (\R\times Y) \setminus V^R_1 \, \text{ if and only if } |x| < r_0,$$
 then $\beta$ is contractible in the complement of $\mathcal{L}_0$. 
  \end{lem}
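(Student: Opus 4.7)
The plan is to exploit the ambient isotopy $\mathbf{F}_R^1\colon \R\times Y\to \R\times Y$ constructed in Section~\ref{sec:isotopies}, which by construction satisfies $\mathbf{F}_R^1(\R\times \mathcal{L}_0) = V_1^R$. Pulling back by its inverse and then projecting to $Y$ converts the hypothesis ``$H$ avoids $V_1^R$ on $(s_-,s_+)\times S^1$'' into a statement about a map into $Y$ avoiding $\mathcal{L}_0$. Concretely, I would set $\widetilde{H} := (\mathbf{F}_R^1)^{-1}\circ H$ and $G := \pi_Y\circ \widetilde{H}$, where $\pi_Y\colon \R\times Y\to Y$ is the projection. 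Since $\pi_Y^{-1}(\mathcal{L}_0)=\R\times \mathcal{L}_0$, we have $G(s,t)\in \mathcal{L}_0$ if and only if $s\notin (s_-,s_+)$. Property~(h) of Section~\ref{sec:isotopies} (that $(\mathbf{F}_R^1)^{-1}$ is $C^0$-close to the identity on the $\{|a|\geq N\}$-ends) together with $a(s,t)\to \pm\infty$ as $s\to \pm\infty$ forces the $Y$-coordinate of $\widetilde{H}(s,t)$ to differ from $f(s,t)$ by an arbitrarily small amount for $|s|$ large, hence $G(s,\cdot)$ converges uniformly to $\beta_0$ and $\beta_1$ at the two ends. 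Thus $G$ extends to a continuous map $\overline{G}\colon [-\infty,+\infty]\times S^1\to Y$ with the correct boundary loops.

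If $s_-=-\infty$ and $s_+=+\infty$, then $\overline{G}$ is itself the desired homotopy after reparametrizing the cylinder, its interior $\R\times S^1$ mapping into $Y\setminus \mathcal{L}_0$. In the general case with, say, $s_-$ finite, $H(s,t)\in V_1^R$ for all $s\leq s_-$; using the asymptotic cylindrical structure of each $V_{j,1}^R$ (its $Y$-projection at level $a$ lies in arbitrarily small neighbourhoods of $\gamma_0^j$ as $|a|\to\infty$) together with $a(s,t)\to\pm\infty$, we conclude $\beta_0\subseteq \mathcal{L}_0$. The restriction $\overline{G}|_{[-\infty,s_-]\times S^1}$ is then a free homotopy \emph{inside} $\mathcal{L}_0$ from $\beta_0$ to $G(s_-,\cdot)\subset \mathcal{L}_0$. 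Using a tubular neighbourhood $\mathcal{N}(\mathcal{L}_0)\cong \mathcal{L}_0\times \D^2$ of the transverse link, a standard push-off replaces this by a homotopy whose interior lies in $\mathcal{N}(\mathcal{L}_0)\setminus \mathcal{L}_0\subset Y\setminus \mathcal{L}_0$. Concatenating this push-off with $\overline{G}|_{[s_-,s_+]\times S^1}$, and performing the analogous push-off at $s_+$ when $s_+<+\infty$, yields the required homotopy from $\beta_0$ to $\beta_1$ whose interior avoids $\mathcal{L}_0$.

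The second part is established by the same scheme: defining $\widetilde{H}$ and $G$ as before on $\D$, one has $G(x)\in \mathcal{L}_0$ iff $|x|\geq r_0$, and $G$ extends continuously to $\overline{G}\colon \overline{\D}\to Y$ with $\overline{G}|_{\partial \D}=\beta$. If $r_0=1$ this is directly a disk filling of $\beta$ with interior in $Y\setminus \mathcal{L}_0$; if $r_0<1$, the same asymptotic argument forces $\beta\subset \mathcal{L}_0$, and $\overline{G}|_{\{|x|\leq r_0\}}$ is a disk filling of the loop $G(r_0 e^{2\pi i t})\subset \mathcal{L}_0$ with interior avoiding $\mathcal{L}_0$; gluing on a push-off annulus in $\mathcal{N}(\mathcal{L}_0)\setminus \mathcal{L}_0$ connecting $G(r_0 e^{2\pi i t})$ to $\beta$ produces the desired disk filling. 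The main obstacle is the case analysis when $s_\pm$ (or $r_0$) is finite: this forces the corresponding boundary loop onto $\mathcal{L}_0$ and requires the push-off construction in a tubular neighbourhood of the transverse link; the geometric content of the argument reduces to the single identity $\pi_Y\circ (\mathbf{F}_R^1)^{-1}(V_1^R)=\mathcal{L}_0$ built into the definition of $\mathbf{F}_R^1$.
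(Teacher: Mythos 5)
Your first paragraph is exactly the paper's (very short) proof: the paper also sets $h:=\uppi_2\circ(\mathbf{F}_R^1)^{-1}\circ H$ and concludes from the two facts $(\mathbf{F}_R^1)^{-1}(V_1^R)=\R\times\mathcal{L}_0$ and that $(\mathbf{F}_R^1)^{-1}$ is asymptotic to the identity. So your core construction matches the paper, and you correctly identify that the only delicate point is the case where $s_-$ or $s_+$ (resp.\ $r_0$) is finite, in which the corresponding boundary loop is forced onto $\mathcal{L}_0$ and some extra argument is needed; the paper leaves this implicit.

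However, your treatment of that case fails at the last step. The notion ``homotopic in the complement of $\mathcal{L}_0$'' requires a single homotopy with \emph{no interior point} mapping to $\mathcal{L}_0$. You glue the push-off annulus to $\overline{G}|_{[s_-,s_+]\times S^1}$ along the loop $G(s_-,\cdot)$, which lies \emph{in} $\mathcal{L}_0$; after concatenation this circle is an interior circle of the total homotopy, so the resulting map still has interior intersections with $\mathcal{L}_0$ (likewise for the circle $G(r_0e^{2\pi i t})$ in the disk case). This is not repaired by a generic small perturbation near the seam: to replace the map on a collar around the seam by one avoiding the relevant component $\gamma$ of $\mathcal{L}_0$, the two collar boundary loops must have the same meridian winding in a tubular neighbourhood $\mathcal{N}$ of $\gamma$, and your framed push-off has no reason to produce the right winding. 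The standard repair is to glue along a circle \emph{off} $\mathcal{L}_0$: by continuity and compactness choose $\eta>0$ so that $\overline{G}([-\infty,s_-+\eta]\times S^1)\subset\mathcal{N}$, and replace $\overline{G}$ on $[-\infty,s_-+\eta]\times S^1$ by an annulus in $\mathcal{N}$ from $\beta_0$ to the loop $\overline{G}(s_-+\eta,\cdot)$ (which avoids $\mathcal{L}_0$) whose interior avoids $\gamma$. Such an annulus exists because a push-off of a loop on the core can be given an arbitrary meridian twisting (spiral in a normal framing while pushing off), so its class in $\pi_1(\mathcal{N}\setminus\gamma)\cong\Z\times\Z$ can be made to agree with that of $\overline{G}(s_-+\eta,\cdot)$, whose longitudinal coordinate is already forced by $\overline{G}|_{[-\infty,s_-+\eta]\times S^1}$; then connect the two loops inside $\mathcal{N}\setminus\gamma$. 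With this modification the seam lies off $\mathcal{L}_0$ and the concatenated homotopy has no interior intersection with $\mathcal{L}_0$; the same adjustment handles the end at $s_+$ and the disk case.
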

\begin{proof}
Define in both cases
$h:=\uppi_2\circ (\mathbf{F}_R^1)^{-1}\circ H$, 
where $\uppi_2: \R \times Y \to Y$ is the projection to the second coordinate. 
We recall that $(\mathbf{F}_R^1)^{-1}(V_1^R) = \R \times \mathcal{L}_0$, and that $(\mathbf{F}_R^1)^{-1}$ is asymptotic to the identity  on $\R \times Y$. 
It follows that $h$ induces a homotopy of $\beta_0$ and $\beta_1$ in $Y\setminus \mathcal{L}_0$ in the first case, and a homotopy in $Y\setminus \mathcal{L}_0$ between $\beta$ and a constant loop in the second case.
\end{proof}
The following Lemma is an analogue of Lemma \ref{lem:v_no_topbot_level} for the building $\mathbf{u}_{\rho}$.
\begin{lem}\label{lem:u_firstlevels} 
For any $\rho \in \Omega_{\alpha_0}(\mathcal{L}_0)$ we  have that $\kappa_{\rho}^+ = \kappa_{\rho}^- = 0$. It follows that  the top  level of $\mathbf{u}_{\rho}$ is $\overline{u}_{\rho}^{\kappa+1}$, a finite energy holomorphic curve with a unique positive puncture  asymptotic to $\gamma_{\rho}$, and that the bottom level of $\mathbf{u}_{\rho}$ is $\overline{u}_{\rho}^{0}$, which has a cylindrical component that has a unique negative puncture asymptotic to $\gamma_{\rho}$. 
\end{lem}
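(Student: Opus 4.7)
The plan is to adapt the proof of Lemma~\ref{lem:v_no_topbot_level}. The action estimates \eqref{eq:ineqacbuilding} carry over verbatim to $\mathbf{u}_\rho$ (replacing $\gamma_0^i$ by $\gamma_\rho$), so the action $\uptheta_l$ of the main positive asymptote at each level of $\mathbf{u}_\rho$ lies in the interval $[C^-\mathcal{A}_{\alpha_0}(\gamma_\rho),C^+\mathcal{A}_{\alpha_0}(\gamma_\rho)]$. The conceptual novelty relative to Lemma~\ref{lem:v_no_topbot_level} is that the energy-gap obstruction $\mathfrak{h}(\alpha_0,\mathcal{L}_0,\widetilde{J}_0)$ is unavailable here because $\gamma_\rho$ is not a component of $\mathcal{L}_0$; it will be replaced by a combination of a non-intersection argument drawn from the fact that $\widetilde{u}_\rho^{R_k}(1)$ is disjoint from $V_1^{R_k}$ in the pre-limit, properness of the link class $\rho$, uniqueness of $\gamma_\rho$ as the Reeb orbit of $\alpha_0$ representing $\rho$, and hypertightness of $\alpha_0$ in the complement of $\mathcal{L}_0$.

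The central technical ingredient I will establish is the following non-intersection statement: any level of $\mathbf{u}_\rho$ lying in the symplectization of $C^+\alpha_0$ or of $C^-\alpha_0$ has no interior intersection with $\R\times \mathcal{L}_0$. Indeed, since $\kappa^\pm_j = 0$ for every $\mathbf{v}_j$ by Lemma~\ref{lem:v_no_topbot_level}, after applying the translation sequence that extracts such a level of $\mathbf{u}_\rho$ from the pre-limit cylinders, the matching portions of $V_{j,1}^{R_k}$ converge in $C^\infty_{\mathrm{loc}}$ to the trivial cylinders $\R\times \gamma_0^j$. The pre-limit disjointness of $\widetilde{u}_\rho^{R_k}(1)$ and $V_1^{R_k}$, together with positivity and stability of isolated intersections for $J$-holomorphic curves in four dimensions, then forbid interior intersections in the limit.

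To deduce $\kappa^+_\rho = 0$, I argue by contradiction. Assuming $\kappa^+_\rho > 0$, the maximum principle applied in the symplectization of $C^+\alpha_0$ forbids branch components at the top level $\overline{u}^{\kappa^+_\rho+\kappa_\rho+1}_\rho$, since all nodal punctures there are negative. Hence the top level is a single non-trivial holomorphic cylinder $C_{\mathrm{top}}$ with positive asymptote $\gamma_\rho$ and negative asymptote some orbit $\gamma$. By the non-intersection statement, $C_{\mathrm{top}}$ is disjoint from $\R\times \mathcal{L}_0$, so its projection to $Y$ gives a free homotopy in $Y\setminus \mathcal{L}_0$ between $\gamma_\rho$ and $\gamma$. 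Properness of $\rho$ prevents $\gamma$ from being a multiple of a component of $\mathcal{L}_0$, so $\gamma$ represents the class $\rho$ in $Y\setminus \mathcal{L}_0$; uniqueness of $\gamma_\rho$ then forces $\gamma = \gamma_\rho$, and Stokes' theorem makes the $d(C^+\alpha_0)$-energy of $C_{\mathrm{top}}$ vanish, so $C_{\mathrm{top}}$ is a cover of the trivial cylinder over $\gamma_\rho$, contradicting the non-triviality of symplectization levels in the building. The argument for $\kappa^-_\rho = 0$ is analogous, applied to the main cylindrical component $C_{\mathrm{bot}}$ of the bottom level; the new feature is that branches are permitted there, and when $C_{\mathrm{bot}}$ happens to be trivial, the non-triviality of the bottom level forces the existence of a plane leaf $P$ somewhere in a branch bubble tree. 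Hypertightness of $\alpha_0$ in the complement of $\mathcal{L}_0$ forces the projection of $P$ to meet $\mathcal{L}_0$ in its interior, producing an interior intersection of $P$ with $\R \times \mathcal{L}_0$, which contradicts the non-intersection statement applied to $P$.

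The main technical obstacle lies in the careful matching of the translation sequences extracting the various symplectization levels of $\mathbf{u}_\rho$ with the corresponding $C^\infty_{\mathrm{loc}}$-convergence of the cylinders $V_{j,1}^{R_k}$ to trivial cylinders over the components of $\mathcal{L}_0$; once this bookkeeping is in place, each of the contradictions above is a routine application of positivity of isolated intersections for $J$-holomorphic curves in four-dimensional symplectic manifolds.
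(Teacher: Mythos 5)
Your plan is close in spirit to the paper's, and several ingredients (properness of $\rho$, uniqueness of $\gamma_\rho$, hypertightness for planes, Stokes to exclude a symplectization cylinder from $\gamma_\rho$ to itself) are used there too. Your limit-level non-intersection statement, proved via pre-limit disjointness from $V_1^{R_k}$ plus positivity and stability of isolated intersections, is also fine for the curves you actually apply it to (the top level and the bottom-level components are never contained in $\R\times\mathcal{L}_0$, so intersections would be isolated). The genuine gap is the step ``hence the top level is a single non-trivial holomorphic cylinder $C_{\mathrm{top}}$''. The maximum principle only gives that the top level is \emph{connected} (it has exactly one positive puncture); it does not bound the number of its negative punctures. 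Since $\mathbf{u}_\rho$ is a cylinder with branches, downward bubble trees may attach to the top level, so the top level is in general a sphere with one positive and $m\geq 1$ negative punctures — the paper's proof explicitly works with arbitrary $m$. For $m\geq 2$ your argument breaks: the projection of the top level to $Y$ is no longer an annulus, so it does not furnish a free homotopy in $Y\setminus\mathcal{L}_0$ between $\gamma_\rho$ and the main negative asymptote, and there is no embedded annulus in the punctured domain joining the two main punctures while avoiding the extra ones. Capping the extra negative ends by the sub-buildings hanging below them does not rescue the argument inside your framework, because those sub-buildings traverse the cobordism levels and the $\alpha$-symplectization levels, where the comparison objects are the corresponding levels of the $\mathbf{v}_j$ (not trivial cylinders over $\mathcal{L}_0$), so no statement of the form ``disjoint from $\R\times\mathcal{L}_0$'' is available there.

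This is precisely why the paper argues at the pre-limit rather than level-by-level in the limit: it truncates the cylinder $\widetilde{u}^{R_k}_\rho(1)$ along a circle $\beta_k$ lying just below the top level's main negative end (keeping the positive half, branches and all, which is globally disjoint from $V_1^{R_k}$), glues in an explicit cap inside a slice $\{a\}\times Y$ where, thanks to Lemma~\ref{lem:v_no_topbot_level}, the images $V_1^{R_k}$ are $C^\infty$-close to cylinders over $\mathcal{L}_0$, and then converts disjointness from $V_1^{R_k}$ into a homotopy statement in $Y\setminus\mathcal{L}_0$ via the ambient isotopy of Lemma~\ref{lem:homot1} (with a separate case when the main asymptote is a multiple of some $\gamma_0^j$, where the glued map is allowed to meet $V^{R_k}_{j,1}$ on a controlled half). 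Your treatment of $\kappa_\rho^-$ is structurally sound (there the main bottom component is forced to be a cylinder and the remaining components are planes, handled by hypertightness), but as written the proof of $\kappa_\rho^+=0$ has a hole that cannot be closed by ``routine bookkeeping''; it needs either the paper's pre-limit gluing argument or some new idea to deal with the downward branches attached to the top level.
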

\begin{proof}
In the following we fix  $\rho \in \Omega_{\alpha_0}(\mathcal{L}_0)$, and shorten the notation and write $\widetilde{u}^R = \widetilde{u}_{\rho}^R(1)$, $\widetilde{v}^R_j = \widetilde{v}^R_j(1)$. 
We argue by contradiction and assume first that $\kappa_{\rho}^{+}>0$.  We consider the top level $\hat{u}$ of $\mathbf{u}_{\rho}$.  
Since $\hat{u}$ has only one positive puncture, it follows from the maximum principle that $\hat{u}$  is connected, and we recall that, by our convention, $\hat{u}$ is not a trivial cylinder. Let $m\geq 1$ be the number of the negative punctures of $\hat{u}$. We will now describe, for $R$ sufficiently large, how to glue a map defined on a negative half-cylinder to the map  $\widetilde{u}^{R}|_{Z}$, where $Z\subset \R \times S^1$ is a suitable positive half-cylinder. By this, we will obtain a map $H:\R \times S^1 \to Y$ to which we can apply Lemma \ref{lem:homot1} in order to get a contradiction. We write $\hat{u}: \hat{S}\setminus\{q_0,\ldots, q_{m}\} \to Y$, where $\hat{S}$ is the Riemann sphere,  $q_0\in \hat{S}$ is the unique positive puncture, and $q_1, \ldots, q_{m}\in \hat{S}$ are the negative punctures. We assume that $q_m$ is the main negative puncture and let $\gamma$ be the main Reeb orbit that corresponds to $q_m$.   
Since there is $R_k \to + \infty$ such that the sequence $\widetilde{u}^{R_k}$ converges in the SFT-sense to the building $\mathbf{u}$, there is a sequence of open disks ${D^k_0}$ and  ${D^k_m}$ around $q_0$ and $q_m$ in $\hat{S}$ with $\bigcap_{k\in \N} \overline{D^k_0} = \{q_0\}$ and $\bigcap_{k\in \N} \overline{D^k_m} = \{q_m\}$, a sequence of  biholomorphic maps $\psi_k:([-L_k,L_k] \times S^1) \setminus \{q_1, \ldots, q_{m-1}\} \to \hat{S}\setminus (D^{k}_0 \cup D^k_m)$, and a sequence $\textbf{b}_k$ of real numbers with  $\textbf{b}_k-R_k\to +\infty$  
  such that 
$\hat{u}_k:= \mathbf{T}_{-\textbf{b}_k} \circ \widetilde{u}^{R_k} \circ \psi_k$ converges in $C^{\infty}_{\loc}$ to $\hat{u}$.   It follows that there is $\epsilon>0$ and $a<0$, and, for $k$ sufficiently large, simple closed curves $\beta_k:S^1 \to  ([-L_k,L_k] \times S^1) \setminus \{q_1, \ldots, q_{m-1}\}$  homotopic to $\{-L_k\} \times S^1$ for which the image of $\hat{u}_k\circ \beta_k$ lies entirely in $\{a\} \times Q \subset \{a\} \times Y$,  where $Q$ is the tubular neigbourhood of $\gamma$ of size  $\epsilon$. Moreover, we can assume that $\{a\} \times Q$ does not contain any other image point of $\hat{u}_k$. Here and below, the \textit{tubular neighbourhood} of a simple loop $\gamma$ of \textit{size}  $\epsilon$ is the image of the exponential map restricted to the normal bundle of $\gamma$ of radius $\epsilon$ (with respect to the fixed Riemannian metric $g_0$),  and we implicitly assume that $\epsilon$ is so small that this map is injective. 

For any $j\in\{1, \ldots, {\rm n}_0\}$, consider the sequence of cylindrical holomorphic maps $\hat{v}^j_k:=\mathbf{T}_{-\textbf{b}_k} \circ \widetilde{v}^{R_k}_j$. By the choice of $\textbf{b}_k$ and by Lemma \ref{lem:v_no_topbot_level}, this sequence  converges in $C^{\infty}_{\loc}$ to a trivial cylinder over $\gamma_0^j$ in the symplectization of $C^+\alpha_0$. This means in particular that, for $k$ sufficiently large, there are essential simple closed curves $\beta^j_k: S^1 \to  \R \times S^1$ such that the images of $\hat{v}^j_k \circ \beta^j_k$ lie entirely in  $\{a\} \times Q^j \subset \{a\} \times Y$,  
 where $Q^j$ are tubular neighbourhoods of $\gamma_0^j$ of size $\epsilon$. Moreover, we can assume that $\{a\} \times Y$ does  not contain any other image points of $\hat{v}^j_k$,  that the closed curves $\hat{v}^j_k \circ \beta^j_k$ are simple,  and that $\hat{v}^j_k \circ \beta^j_k$ can be written as the composition of the  exponential map restricted to the normal bundle $\{a\} \times \gamma_0^j$ in $\{a\} \times Y$ and a section of that bundle.

\emph{First case:} Let us 
 assume that $\gamma$ is not a multiple of $\gamma_0^j$ for all $j\in\{1,\ldots, {\rm n}_0\}$. 
If $\epsilon$ was chosen sufficiently small, $Q\cap Q^j = \emptyset$  for all $j\in \{1,\ldots, {\rm n}_0\}$. There is retraction $F:[0,1] \times Q\to Q$ to $\gamma\subset Q$ with $F(t,x) \in \gamma$ if and only if $t=1$ or $x\in \gamma$. Hence, we can find a continuous map $W:[0,1] \times S^1\to \{a\} \times Q$ with $W(1,t) = \hat{u}_k \circ \beta_k(t)$ and such that 
 $t\mapsto W(0,t)$ parametrizes (a multiple cover of) $\{a \} \times \gamma$. 
Fix $k$ sufficiently large. Let $Z$ be the positive half-cylinder in $\R \times S^1$ that is bounded by the essential simple loop $t\mapsto \psi_k \circ \beta_k(t)$, and choose a diffeomorphism $\Psi: [1,+\infty)\times S^1 \to Z$ with $\Psi(1,t) = \psi_k\circ\beta_k(t)$.  Let $\hat{W}:(-\infty,0]\times S^1 \to \R \times Y$ be  defined by $(s,t) \mapsto (a-s,W(1,t))$. 
We glue the above cylinders, shift by $\textbf{b}_k$, and obtain a contiuous map $H:\R \times S^1 \to \R \times Y$ given by  $$H = \mathbf{T}_{\textbf{b}_k} \circ \left(\hat{W} \#_{s=0} W {\#}_{{s=1}}  (\mathbf{T}_{-\textbf{b}_k}\circ \widetilde{u}^{R_k}  \circ \Psi)\right).$$
The image of $H$ is disjoint from the sets  $V^{R_k}_{j,1}$, $j=1, \ldots, {\rm n}_0$, and the two asymptotic orbits are $\gamma_{\rho}$ and a reparametrization of a multiple of $\gamma$, respectively. 
By Lemma \ref{lem:homot1}, $\gamma_{\rho}$ and a multiple of $\gamma$ are homotopic in the complement of $\mathcal{L}_0$, and hence in fact $\gamma = \gamma_{\rho}$.  This is only possible if 
 $\hat{u}$ is a trivial cylinder over $\gamma$, which was excluded in the beginning. \\
\emph{Second case:} Let us assume that $\gamma$ is an  $l$-fold multiple of some $\gamma_0^j$ for some $j\in \{1, \ldots, {\rm n}_0\}$, $l\geq 1$. We can assume that $\epsilon$ was chosen so small that tubular neighbourhoods $\hat{Q}^j$ of size $2\epsilon$ of  the simple curves $\hat{\eta}_j: S^1 \to Y$, $t\mapsto  \uppi_2 \circ \hat{v}^j_k \circ \beta_k^j(t)$, $j=1, \ldots, {\rm n}_0$,  are pairwise disjoint. Using a suitable retraction of $\hat{Q}_j$ to $\Im (\hat{\eta}_j)$,  one can construct a continuous map $W:[0,1] \times S^1 \to \{a\} \times \hat{Q}_j$ such that $W(1,t) = \hat{u}_k \circ \beta_k(t)$, such that $W(0,t)$ is a reparametrization of a $l$-fold multiple cover of $\hat{v}^j_k\circ \beta^j_k$, and such that $W((0,1]\times S^1)$ does not intersect the image of $\hat{v}^j_k\circ \beta^j_k$. 
We consider now an orientation reversing smooth map  $\Psi^j: (-\infty, 0] \times S^1 \to Z^j$, where $Z^j$ is the positive half-cylinder bounded by $\psi_k^j \circ \beta_k^j$ with $\Psi_j(s,t) \to +\infty$ as $s\to -\infty$ and such that $\Psi^j(0,t) = W(0,t)$. 
We define $H:\R \times S^1 \to \R \times Y$ by 
$$H= \mathbf{T}_{\textbf{b}_k} \circ \left((\hat{v}^j_k\circ \Psi^j) \#_{s=0} W \#_{s=1}  (\mathbf{T}_{-\textbf{b}_k} \circ \widetilde{u}^{R_k} \circ \Psi)\right).$$
We have $H(s,t) \in V^{R_k}_{j,1}$ if and only if $s>0$. The  asymptotic orbits of $H$ are $\gamma_{\rho}$ and a reparametrization of a  multiple of $\gamma^j_{0}$, respectively. By Lemma \ref{lem:homot1}, $\gamma_{\rho}$ is homotopic in the complement of $\mathcal{L}_0$ to a multiple of $\gamma_0^j$, a contradiction. 

We argue by contradiction and assume that $\kappa_{\rho}^{-}>0$, that is, there is a bottom level $\hat{u}$ in the symplectization of $C_{-}\alpha_0$. The level $\hat{u}$ might have several components of which one is a finite energy cylinder,  say $\hat{u}_0$, with one negative and one positive puncture,   and where the others are finite energy holomorphic planes, say $\hat{u}_1, \ldots, \hat{u}_m$, each with one positive puncture. At the negative puncture, $\hat{u}_0$ is asymptotic to $\gamma_{\rho}$. 
We obtain a contradiction by showing that $m=0$ and that $\hat{u}_0$ is also asymptotic to $\gamma_{\rho}$ at the positive puncture. 
To show that $m=0$, we consider a finite energy plane $\hat{u}_1$ positively asymptotic to a Reeb orbit $\gamma$ of $C_-\alpha$. Taking a sequence of holomorphic disks that, after applying shifts in the target, converge in $C^{\infty}_{\loc}$ to $\hat{u}_1$, one can, by a gluing procedure analogous to the situation above, extend those maps restricted to a subdomain to a continuous disk map $H = (a,f) : B_d \to \R \times Y$ with $a(x) \to -\infty$ as $|x| \to d$. Here $B_d$ is the disk at $0$ of sufficiently large radius $d$. If  $\gamma$ is not a multiple of some $\gamma_0^j$, $j=1, \ldots, {\rm n}_0$,   one can arrange that $\Im H \cap V^R_{1,j}= \emptyset$ and that  $f(re^{2\pi it})$   is asymptotic  to  $\gamma$  as $r\to d$. If $\gamma$ is  an $l$-fold multiple of some $\gamma_0^j$, $l\geq 1$, one can arrange that $H(x) \notin V^R_{1,j}$ if and only if $|x| < d/2$, and that 
$f(re^{2\pi t})$ is asymptotic to an $l$-fold multiple of $\gamma_0^j$ as $r\to d$.  
In both cases we can apply Lemma \ref{lem:homot1} and obtain a contradiction to our assumption that $\alpha_0$ is hypertight in the complement of $\mathcal{L}_0$.  
The arguments to show that $\hat{u}_0$ is a cylinder are symmetric to those that show that $\kappa^{+} = 0$. 
\end{proof}

Hence, with Lemmas~\ref{lem:v_no_topbot_level} and~\ref{lem:u_firstlevels} we conclude that the top resp.\ the bottom level of the holomorphic buildings  $\mathbf{v}_i$, $i\in \{1, \ldots,  {\rm n}_0\}$, and $\mathbf{u}_{\rho}$, $\rho\in \Omega_{\alpha_0}(\mathcal{L}_0)$, are holomorphic curves in the exact symplectic cobordism $(\R \times Y, \lambda^+_1,\omega^+_1,J^{+\infty}_1)$ from $C^+ \alpha_0$ to $ \alpha$ resp.\ in the exact symplectic cobordism $(\R \times Y, \lambda^-_1,\omega^-_1,J^{-\infty}_1)$ from $\alpha$ to $C^{-} \alpha_0$.
 
\subsection{Linking properties near the asymptotics in the top and bottom level of the holomorphic buildings}\label{sec:cobordism levels}
Let us consider now  the top and bottom level of the buildings $\mathbf{v}_j$, $j=1, \ldots, {\rm n}_0$, and $\mathbf{u}_{\rho}$, $\rho \in \Omega_{\alpha_0}(\mathcal{L}_0)$. Fix for now two different homotopy classes $\rho_{1}$ and $\rho_2$, and denote the corresponding buildings  by $\mathbf{u}^R_1=\mathbf{u}^R_{\rho_1}$ and $\mathbf{u}^R_2 = \mathbf{u}^R_{\rho_2}$ with levels  $\overline{u}^{0}_1, \ldots,\overline{u}^{\kappa_{\rho_1}+1}_1$ and $\overline{u}^{0}_2, \ldots,\overline{u}^{\kappa_{\rho_2}+1}_2$. By Lemmas~\ref{lem:v_no_topbot_level} and~\ref{lem:u_firstlevels}, and by  our assumptions, the orbits to which $\overline{v}_j^{\kappa_j + 1}$ is positively asymptotic form the link $\mathcal{L}_0$, and the orbits to which $\overline{u}^{\kappa_{\rho_1}+1}_1$ and $\overline{u}^{\kappa_{\rho_2}+1}_2$ are positively asymptotic  are not homotopic in the complement of $\mathcal{L}_0$ and lie both in proper link classes.  Similarly, the orbits to which $\overline{v}_j^{0}$ are negatively asymptotic form the link $\mathcal{L}_0$, and the orbits to which $\overline{u}^{0}_1$ and $\overline{u}^{0}_2$ are negatively asymptotic are not homotopic in the complement of $\mathcal{L}_0$ and lie both in proper link classes.     
As we will now show,  this linking behaviour persists on the other end of these levels, that is, when the image of the top level is  intersected with  
$\{a\}\times Y$ for $a$ sufficiently small, and the image of the bottom  level is intersected with $\{a\} \times Y$ for $a$ sufficiently large, where the corresponding link might have more components  as the link $\mathcal{L}_0$.  
We discuss only the bottom level $\overline{v}_j^{0}, j=1, \ldots, {\rm n}_0$,  $\overline{u}^{0}_1$,  and $\overline{u}^{0}_2$ in the exact symplectic cobordism $(\R \times Y, \lambda^-_1,\omega^-_1,J^{-\infty}_1)$, the situation for the top level is treated similarly.

We denote the cylindrical 
 components of the levels $\overline{v}^0_j$, $j=1,\ldots, {\rm n}_0$ by  $\hat{v}_j:\R \times S^1 \to \R \times Y$, $j=1, \ldots, {\rm n}_0$, and  the cylindrical components of $\overline{u}^0_i$, $i=1,2$, by $\hat{u}_i: \R \times S^1 \to \R \times Y$. 
By the previous discussion,   $\hat{v}_j$ is   negatively asymptotic to $\gamma_0^j$, $j=1, \ldots, {\rm n}_0$,  and  $\hat{u}_i$ is negatively asymptotic to $\gamma_{\rho_i}$, $i=1,2$. Additionally, denote, in some order, all the remaining components of $\overline{v}_j^0$, $j=1, \ldots, {\rm n}_0$, by $\hat{v}_j$, $j={\rm n}_0+1,\ldots, \hat{\rm n}_0$. These are finite energy planes, positively asymptotic to some Reeb orbits of $\alpha$. 
Since the negative asymptotic orbits of the finite energy cylinders  are all pairwise  disjoint,  any two of the above curves do not have any intersection points in $[a_0,+ \infty)\times Y$ for $a_0>0$ sufficiently large;  in fact the curves are pairwise disjoint by an argument similar as in the proof of Lemma \ref{lem:v_no_topbot_level}. 

We denote by $\hat{V}_j$ the image of $\hat{v}_j$, $j=1, \ldots, \hat{\rm n}_0$, and by $\hat{U}^i$ the image of $\hat{u}_i$, $i=1,2$. Given $a>0$, we put $\hat{V}^a_j := \hat{V}_j \cap (\{a\} \times Y)$, $j=1, \ldots, \hat{\rm n}_0$, and   $\hat{U}^a_i :=\hat{U}_i \cap (\{a\} \times Y)$, $i=1,2$. 
The finite energy cylinders approach a trivial cylinder over their asymptotics in the $C^{\infty}$-sense, and in particular  there is  $a_1\geq a_0$ such that for all $a\geq a_1$ the sets $\hat{V}^a_j$, $\hat{U}^a_i$ can be parametrized by simple closed curves in $\{a\} \times Y$; we choose such curves  $\Gamma^{a}_j$, $j=1, \ldots, \hat{\rm n}_0$, and $\Lambda^a_i$, $i=1,2$, respectively (with orientation compatible with those of the asymptotics).  
We write $\mathcal{L}^a$ for the link formed by the closed curves  $\Gamma^a_j$, $j=1, \ldots, \hat{\rm n}_0$.
 
\begin{lem}\label{lem:Lambdaa}
For all $a\geq a_1$, 
\begin{enumerate}[(i)]
    \item  $\Lambda^{a}_1$ and $\Lambda^{a}_2$ are not homotopic in the complement of $\mathcal{L}^a$, 
\item neither $\Lambda^a_1$ nor $\Lambda^a_2$ is homotopic to any multiple of $\Gamma^a_j$, $j=1, \ldots, {\rm n}_0$,  in the complement of $\mathcal{L}^a$.
\end{enumerate}
\end{lem}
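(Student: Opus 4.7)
Both statements are proved by contradiction, by combining SFT-convergence with Lemma~\ref{lem:homot1} to translate a hypothetical homotopy in $\{a\}\times Y\setminus\mathcal{L}^a$ into a homotopy in $Y\setminus\mathcal{L}_0$, which contradicts either $\rho_1\neq \rho_2$ or the proper link class property of $\rho_i$ (Definition~\ref{defn:properclass}).

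For (i), suppose towards a contradiction that there exist $a\geq a_1$ and a continuous homotopy $F:[0,1]\times S^1 \to \{a\}\times Y$ from $\Lambda^a_1$ to $\Lambda^a_2$ with $F((0,1)\times S^1)\cap \mathcal{L}^a = \emptyset$. First, in the target of the bottom cobordism, we build a continuous model map $\tilde{H}:\R\times S^1\to\R\times Y$ with image in $(-\infty,a]\times Y$, by concatenating (after suitable reparametrization) three pieces: the part of $\hat{u}_1$ with image in $(-\infty,a]\times Y$, viewed as a half-cylinder from $\gamma_{\rho_1}$ at $s\to -\infty$ (where the $a$-coordinate tends to $-\infty$) to $\Lambda^a_1$ at $s=0$; a reparametrization of $F$ at level $a$ on a small neighbourhood of $\{0\}\times S^1$; and the reversed part of $\hat{u}_2$ with image in $(-\infty,a]\times Y$, from $\Lambda^a_2$ at $s=1$ to $\gamma_{\rho_2}$ at $s\to +\infty$. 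The image of $\tilde{H}$ avoids $\bigcup_{j=1}^{\hat{\rm n}_0}\hat{V}_j$: the cylindrical pieces from $\hat u_i$ are disjoint from the $\hat V_j$ by the pairwise disjointness recalled above, while $F$ avoids $\mathcal{L}^a = \bigcup_j \hat{V}_j\cap(\{a\}\times Y)$ by assumption.

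The crucial step is to transfer this model into the splitting cobordism at some sufficiently large $k$. The $C^{\infty}_{\loc}$ SFT-convergences $\mathbf{T}_{R_k}\widetilde u_i^{R_k}\to \hat u_i$ and $\mathbf{T}_{R_k}\widetilde v_j^{R_k}\to \hat v_j$ imply that for $k$ large, the parts of $\widetilde u_i^{R_k}$ with image in $(-\infty,a-R_k]\times Y$ are, after shifting by $R_k$, $C^{\infty}$-close to the corresponding parts of $\hat u_i$ on any compact subdomain, and that $V^{R_k}_1\cap(\{a-R_k\}\times Y)$, shifted by $R_k$, converges in $C^{\infty}$ to $\mathcal{L}^a$. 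Since $F$ avoids $\mathcal{L}^a$ in its interior, for $k$ large we can perturb $\mathbf{T}_{-R_k}\circ F$ slightly at level $\{a-R_k\}\times Y$ so that the perturbed homotopy $F_k$ has endpoints on the cross-sections of $\widetilde u_i^{R_k}$ at this level and has interior disjoint from $V^{R_k}_1$. Gluing $F_k$ with the parts of $\widetilde u_1^{R_k}$ and $\widetilde u_2^{R_k}$ with image in $(-\infty,a-R_k]\times Y$ yields a continuous map $H_k:\R\times S^1\to \R\times Y$ with asymptotics $\gamma_{\rho_1}$ (as $s\to -\infty$, with $a\to -\infty$) and $\gamma_{\rho_2}$ (as $s\to +\infty$, with $a\to -\infty$) whose image is globally disjoint from $V^{R_k}_1$; here we use crucially that $\widetilde u_i^{R_k}\in\mathcal{M}^{R_k}_{\rho_i}$ is itself disjoint from $V^{R_k}_1$ by Lemma~\ref{modulispace_rho_compact}. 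Applying Lemma~\ref{lem:homot1} with $s_-=-\infty$, $s_+=+\infty$ then yields a homotopy between $\gamma_{\rho_1}$ and $\gamma_{\rho_2}$ in $Y\setminus\mathcal{L}_0$, contradicting $\rho_1\neq \rho_2\in\Omega_{\alpha_0}(\mathcal{L}_0)$.

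Part (ii) proceeds by the same scheme: a hypothetical homotopy in $\{a\}\times Y\setminus\mathcal{L}^a$ between $\Lambda^a_i$ and an $l$-fold multiple of $\Gamma^a_{j_0}$ (for some $1\leq j_0\leq{\rm n}_0$ and $l\geq 1$) is combined with the parts of $\hat u_i$ and of the $l$-fold cover of $\hat v_{j_0}$ with image in $(-\infty,a]\times Y$, and then transferred to the splitting cobordism exactly as above. Lemma~\ref{lem:homot1} then provides a homotopy in $Y\setminus\mathcal{L}_0$ between $\gamma_{\rho_i}$ and an $l$-fold cover of $\gamma_0^{j_0}$, contradicting the fact that $\rho_i$ is a proper link class, since $\gamma_0^{j_0}$ is a component of $\mathcal{L}_0$. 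The principal technical obstacle is the transfer step: the $C^{\infty}_{\loc}$ nature of the SFT convergence forces one to localize the gluing to a bounded region and to exploit the $C^{\infty}$-convergence of the cross-sections $V^{R_k}_1\cap(\{a-R_k\}\times Y)$ (after shift) to $\mathcal{L}^a$ in order to produce a perturbed $F_k$ that avoids $V^{R_k}_1$.
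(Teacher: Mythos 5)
Your proposal is correct and follows essentially the same route as the paper: argue by contradiction, use the SFT-convergence of the shifted curves $\widetilde{u}_i^{R_k}$, $\widetilde{v}_j^{R_k}$ to transfer the hypothetical homotopy at level $a$ to the finite-$R_k$ cobordism, glue it to pieces of the curves $\widetilde{u}_i^{R_k}$ (which avoid $V_1^{R_k}$ by the moduli-space constraint) and, for (ii), to a multiple cover of a $\widetilde{v}_{j_0}^{R_k}$-end, then invoke Lemma~\ref{lem:homot1} to contradict $\rho_1\neq\rho_2$ or the proper link class property. The only cosmetic difference is that the paper realizes the transfer by a diffeomorphism $\Theta$ of $\{a\}\times Y$ matching the limit cross-sections with the finite-$k$ ones and uses auxiliary half-cylinders $Z_1,Z_2$, whereas you perturb the homotopy directly; the ingredients and structure are the same.
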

\begin{proof}
Fix $a\geq a_1+1$. 
Then, by the construction, for any $j=1, \ldots, {\rm n}_0$ there is  a sequence $R_k \to +\infty$, cylinders $\mathfrak{A}_k\subset S^1 \times \R$, biholomorphic maps $\psi_k:[-L_k,L_k]\times S^1 \to\mathfrak{A}_k$ such that with  $\textbf{b}_k:=-R_k$,  the sequence 
${\hat{v}}^k_j := \mathbf{T}_{-\textbf{b}_k} \circ \widetilde{v}_j^{R_k}(1) \circ \psi_k$  converges in $C^{\infty}_{\loc}$ to $\hat{v}_j$. Furthermore, there are, for any $j={\rm n}_0 +1, \ldots, \hat{\rm n}_0$, disks  $D_k \subset \R \times S^1$, biholomorphic maps $\psi_k: B_{k} \to D_k$, with disks $B_k\subset \C$ of unbounded radii such that with  $\textbf{b}_k:=-R_k$, the sequence   
$\hat{v}^k_j := \mathbf{T}_{-\textbf{b}_k} \circ \widetilde{v}_j^{R_k}(1) \circ \psi_k$  converges in $C^{\infty}_{\loc}$ to $\hat{v}_j$. 
Similarly, for $i=1,2$,  there is a sequence $
{\textbf b}_k = -R_k$  and  biholomorphic maps $\psi'_k: B_{k} \to D_k$ such that  the sequence $\hat{u}^k_i :=\mathbf{T}_{-\textbf{b}_k} \circ \widetilde{u}_i^{R_k}(1) \circ \psi'_k$  converges in $C^{\infty}_{\loc}$ to $\hat{u}_i$.
We may choose the sequences ${\textbf b}_k = -R_k$ above to be  identical for all $j=1, \ldots, \hat{\rm n}_k, i=1,2$. 

This means that for any given $\epsilon>0$ there is $k_0 = k_0(\epsilon)$ such that for $k\geq k_0$  and for any of the above sequences   there are simple essential closed curves $\beta_k:S^1 \to  [-L_k, L_k]\times S^1$ if $j=1, \ldots, {\rm n}_0$, or simple closed curves $\beta_k:S^1 \to B_k$ if $j={\rm n}_0 + 1,\ldots, \hat{\rm n}_0$  such that 
$\Gamma^k_j:= \hat{v}^k_j \circ \beta_k$ parametrizes the intersection of the image of $\hat{v}^k_j$ with $\{a\} \times Y$, and such that 
the curve $\Gamma^k_j$ lies in the tubular neighbourhood of $\Gamma_j^a$ of size $\epsilon$. The analogous fact holds for the sequences $\hat{u}^k_i$, where we  consider closed curves $\Lambda^k_i = \hat{u}^k_i \circ \beta^i_k$ that lie in  tubular neighbourhoods of $\Lambda_1^a$. 
Moreover, the loops  $\Gamma^k_j$ resp.\  $\Lambda^k_i$ can be written as the composition of the exponential map, restricted to the normal bundle of $\Gamma_j^a$ resp.\  $\Lambda^a_i$, and a section to it. This means that for $\epsilon_0>0$ sufficiently small, it is possible to  construct a diffeomorphism $\Theta:\{a\} \times Y \to \{a\} \times Y$,  with $\Theta \circ \Gamma^k_j = \Gamma^a_j$, $j=1, \ldots, \hat{\rm n}_0$, and $\Theta \circ \Lambda^k_i = \Lambda^a_i$, $i=1,2$. 
Therefore it is sufficient 
to show properties (i) and (ii) with $\Lambda_i^a$, $\Gamma_j^a$, and the link $\mathcal{L}^a$, replaced by  $\Lambda^k_i$, $\Gamma^k_j$, and the link $\mathcal{L}^{a,k}$ that is formed by the curves $\Gamma^k_j$, for some $k\geq k_0(\epsilon_0)$. Note that if $k_0$ is sufficiently large, any link $\mathcal{L}^{k}$ with  $k\geq k_0$,  as a set in $\{a\} \times Y$, coincides with $\Im (\mathbf{T}_{-\textbf{b}_k} \circ \widetilde{v}^{R_k}_j(1)) \cap (\{a\} \times Y)$.  

To show (i), we assume that there is a homotopy $h:[0,1]^2 \to \{a\} \times Y$ from $\Lambda^k_1$ to $\Lambda^k_2$ in the complement of $\mathcal{L}^{k}$. Similarly as in the proof of Lemma~\ref{lem:u_firstlevels}, we choose two half-cylinders $Z_1:(-\infty,0] \times S^1 \to \R \times Y$, $Z_2:[1,\infty) \times S^1 \to \R \times Y$ for which  
\begin{itemize}
    \item $\mathbf{T}_{\textbf{b}_k} \circ Z_1$ are disjoint from the images of $v_j^{R_k}(1)$, $j=1, \ldots, \hat{\rm n}_0$, 
    \item  $\lim_{s\to -\infty} \uppi_2 \circ Z_1(s,t) = \gamma_{\rho_1}(t)$, $\,\, \lim_{s\to +\infty} \uppi_2 \circ Z_2(s,t) = \gamma_{\rho_2}(t)$, 
    \item $Z_1$ and $Z_2$ coincide along their boundaries with $\Lambda^1_k$ and $\Lambda^2_k$, respectively.  
    \end{itemize}
This is clearly possible since 
    the images of $\widetilde{v}_j^{R_k}(1)$, $j=1, \ldots, {\rm n}_0$, do not intersect the images of $\widetilde{u}_i^{R_k}(1)$, $i=1,2$.
We can now apply Lemma \ref{lem:homot1} to the cylinder $H:= \mathbf{T}_{\textbf{b}_k}( Z_1 \# h \# Z_2)$, obtained by gluing and shifting the above maps, and we obtain that $\gamma_{\rho_1}$ and $\gamma_{\rho_2}$ are homotopic in the complement of $\mathcal{L}_0$, a contradiction. 
The argument to exclude (ii) is similar: if (ii) was false, we could  construct a cylindrical map $H:\R \times S^1\to \R \times Y$, negatively asymptotic to some $\gamma_{\rho_i}$,  positively asymptotic to a multiple of  some $\gamma_0^j$, and for which $H(s,t) \in V^{R_k}_j$ if and only if $s\geq1$. Again with Lemma \ref{lem:homot1} we get a contradiction.    
\end{proof}
Since the holomorphic curves above approach trivial cylinders over their asymptotics in the $C^{\infty}$-sense, we obtain in fact the following corollary.
\begin{cor}\label{cor:Lambdaa}
\begin{enumerate}[(i)]
\item For any $a\geq a_1$ there is $R>0$ such that for all $s, s'\geq R$,  the curves $\lambda_1, \lambda_2:S^1 \to \R \times Y$,  
$\lambda_1(t) = \hat{u}_1(s,t)$ and $\lambda_2(t) = \hat{u}_2(s',t)$,  are not homotopic in $([a,  \infty) \times Y )\setminus \bigcup_{i=1}^{\hat{\rm n}_0}\widehat{V}_j$,
 
\item  for any $a\geq a_1$ there is $R>0$ such that for all $s,s'\geq R$ neither of the loops $\lambda_i:S^1 \to \R \times Y$,  
$\lambda_i(t) = \hat{u}_i(s,t)$,  $i\in \{1,2\}$, is homotopic in $([a, \infty) \times Y) \setminus \bigcup_{i=1}^{\hat{\rm n}_0}\widehat{V}_j$ to  a multiple of any  $\gamma_0^j: S^1 \to \R \times Y$, $\gamma_0^j(t) = \hat{v}_j(s',t)$, $j=1, \ldots, {\rm n}_0$.  This means that any homotopy of such loops  in $([a, + \infty) \times Y)$ must intersect in an interior point the set $\bigcup_{i=1}^{\hat{\rm n}_0}\widehat{V}_j$. \end{enumerate}
\end{cor}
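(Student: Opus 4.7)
The plan is to reduce both parts of the corollary to Lemma~\ref{lem:Lambdaa} by constructing a deformation retraction of $([a,\infty)\times Y)\setminus\bigcup_j\widehat{V}_j$ onto $(\{a\}\times Y)\setminus\mathcal{L}^a$, under which the loops appearing in the statement retract to loops freely homotopic to $\Lambda^a_i$ and (iterates of) $\Gamma^a_j$.

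To construct the retraction, I would first use Siefring's asymptotic formulas: each finite energy curve $\hat u_i$, $\hat v_j$ converges in the $C^\infty$-sense to a trivial half-cylinder over its positive asymptotic Reeb orbit (of $\alpha$) on $[a_2,\infty)\times Y$ for $a_2$ sufficiently large. Combined with the pairwise disjointness of the $\hat u_i$ and $\hat v_j$ in $[a_0,\infty)\times Y$ recalled just before the corollary, this means that for $a_2\geq a_1$ large enough each $\widehat{V}_j\cap([a_2,\infty)\times Y)$ is a graphical perturbation of a trivial half-cylinder sitting inside a tubular neighborhood disjoint from all the others. A smooth ambient isotopy of $\R\times Y$, supported in these disjoint tubes and equal to the identity outside, straightens each $\widehat{V}_j\cap([a_2,\infty)\times Y)$ to an honest trivial half-cylinder; composing with the obvious projection $(s,y)\mapsto(a_2,y)$ yields the desired deformation retraction $\varrho:([a_2,\infty)\times Y)\setminus\bigcup_j\widehat{V}_j\to(\{a_2\}\times Y)\setminus\mathcal{L}^{a_2}$.

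For (i), fix $a\geq a_2$ and choose $R$ large enough that $\lambda_i(S^1)\subset[a,\infty)\times Y$ for all $s\geq R$. Since $\lambda_i$ is an essential slice of the embedded cylinder $\hat U_i$, and $\hat U_i\cap([a,\infty)\times Y)$ is a half-cylinder disjoint from $\bigcup\widehat{V}_j$, the loop $\lambda_i$ is freely homotopic within $\hat U_i\cap([a,\infty)\times Y)$ to $\Lambda^a_i$, hence a fortiori within $([a,\infty)\times Y)\setminus\bigcup\widehat{V}_j$. A hypothetical homotopy between $\lambda_1$ and $\lambda_2$ in the latter complement, concatenated with these two homotopies and then pushed down by $\varrho$, would produce a homotopy between $\Lambda^a_1$ and $\Lambda^a_2$ in $(\{a\}\times Y)\setminus\mathcal{L}^a$, contradicting Lemma~\ref{lem:Lambdaa}(i). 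Part (ii) follows by the same scheme: the loop $t\mapsto\hat v_j(s',t)$ for $s'$ large lies on $\widehat{V}_j$ near its positive asymptotic end and under $\varrho$ is sent to a loop freely homotopic, inside $\mathcal{L}^a$, to an iterate of $\Gamma^a_j$. Therefore, a homotopy in $[a,\infty)\times Y$ between $\lambda_i$ and such a loop whose interior avoids $\bigcup\widehat{V}_j$ would reduce via $\varrho$ to a homotopy of $\Lambda^a_i$ to a multiple of $\Gamma^a_j$ in $(\{a\}\times Y)\setminus\mathcal{L}^a$, contradicting Lemma~\ref{lem:Lambdaa}(ii).

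The hard part will be the careful construction of the straightening isotopy and the retraction $\varrho$. Specifically, one has to verify, using the quantitative exponential decay from Siefring's formulas, that the graphical description of the $\widehat{V}_j$ holds uniformly on $[a_2,\infty)\times Y$ for a single sufficiently large $a_2$, that pairwise disjoint tubular neighborhoods adapted to the limiting orbits exist (a consequence of the fact, established in the setup preceding the corollary, that the different $\widehat{V}_j$ are pairwise disjoint in $[a_0,\infty)\times Y$ and that each slice $\widehat{V}_j^a$ is a simple closed curve for $a\geq a_1$), and that the retraction obtained by first straightening and then projecting is continuous on the complement of $\bigcup\widehat{V}_j$.
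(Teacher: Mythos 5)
Your overall strategy---use the $C^\infty$-convergence of the ends to give the region $([a,\infty)\times Y)\setminus\bigcup_j\widehat{V}_j$ a product structure over the slice and then transport a hypothetical homotopy down to $(\{a\}\times Y)\setminus\mathcal{L}^a$, contradicting Lemma~\ref{lem:Lambdaa}---is the natural way to flesh out the one-line justification the paper gives. But the construction you propose has a genuine gap at its core. You straighten each end $\widehat{V}_j\cap([a_2,\infty)\times Y)$ inside a tubular neighbourhood of the trivial half-cylinder over \emph{its limiting orbit}, and you justify the existence of pairwise disjoint such tubes by the pairwise disjointness of the $\widehat{V}_j$. This fails exactly in the situation the paper is designed to handle: several of the curves $\hat{v}_j$ (and also the $\hat{u}_i$) may be positively asymptotic to covers of the \emph{same} simple Reeb orbit of $\alpha$ (this ``collapsing'' is the whole point of Section~\ref{sec:non-collaps} and occurs in the proof of Theorem~\ref{thm:stability}); then any neighbourhoods adapted to the limiting orbits coincide, and straightening one surface inside the common tube displaces the others. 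Moreover, when an asymptotic orbit is a $k$-fold cover with $k\geq 2$, the end of $\widehat{V}_j$ is not a graph over the trivial half-cylinder but a connected $k$-strand braid around it, so it cannot be straightened onto ``an honest trivial half-cylinder''; and even if one straightened onto the trivial cylinders over the orbits, the projection would retract onto the complement of the orbit images, not of $\mathcal{L}^a$, which is what Lemma~\ref{lem:Lambdaa} is about.

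What is actually needed, and available, is weaker: by the asymptotic formulas the $\R$-component of each end has eventually positive $s$-derivative, so for all levels above some $a_2$ each $\widehat{V}_j$ is transverse to the slices $\{a'\}\times Y$ and the slice links form a smooth isotopy; a \emph{level-preserving} isotopy-extension argument (in the spirit of Section~\ref{sec:isotopies}) then yields a diffeomorphism of $[a,\infty)\times Y$ carrying $\bigcup_j\widehat{V}_j\cap([a,\infty)\times Y)$ onto the product $[a,\infty)\times\mathcal{L}^a$, and the induced retraction $\varrho$ onto $(\{a\}\times Y)\setminus\mathcal{L}^a$ (the identity on that slice complement) makes your reduction to Lemma~\ref{lem:Lambdaa} go through. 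Two further points need repair. In part (ii) the loop $t\mapsto\hat{v}_j(s',t)$ lies \emph{on} $\widehat{V}_j$, so $\varrho$ is not defined on it; the relevant notion is a homotopy with no \emph{interior} intersection with $\bigcup_j\widehat{V}_j$, and after applying $\varrho$ to the interior you still need a push-off/radial-collapse step inside a small tube around $\Gamma^a_j$ to reach a multiple of $\Gamma^a_j$ by a homotopy with no interior intersection with $\mathcal{L}^a$, as required to contradict Lemma~\ref{lem:Lambdaa}(ii). Finally, your argument only gives the conclusion for $a\geq a_2$ with $a_2$ large (also your $\varrho$ retracts to level $a_2$ while you invoke the lemma at level $a$; use the analogous retraction to level $a$ for each $a\geq a_2$), whereas the statement claims every $a\geq a_1$; this restriction is harmless for the later applications, which only use large $a$, but as written your proof does not cover the stated range.
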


\subsection{Linking of the ends of holomorphic curves asymptotic to the same Reeb orbit}\label{sec:non-collaps}
In the previous 
 section we discussed some linking properties of the top and bottom level of the buildings  $\mathbf{v}_j$ and $\mathbf{u}_{\rho}$ when intersected with some horizontal spaces $\{a\} \times Y$. The important task that remains for the proof of Theorem~\ref{thm:stability} is to analyze the situation when we pass to the limit, that is, when we consider the collection of negative asymptotic Reeb orbits of $\phi_\alpha$ of the top level of $\mathbf{v}_j$, $j=1, \ldots, {\rm n}_0$, and $\mathbf{u}_{\rho}$, $\rho \in \Omega_{\alpha_0}(\mathcal{L}_0)$, or the positive asymptotic orbits of $\phi_{\alpha}$ of the bottom level of those buildings. Note that in general, although the levels above are pairwise disjoint, their asymptotics might agree. In other words,  several holomorphic curves might in the limit "collapse" to the same asymptotic orbit. We show now that at least the growth of the number of such holomorphic curves is controlled. We will only discuss the bottom level, and similar results, although not necessary for the proof of Theorem~\ref{thm:stability}, hold for the top level. 

Before we finish with the proof of Theorem~\ref{thm:stability} in Section~\ref{sec:conclusions}, let us here first restrict to a simplified situation. Consider the 
exact symplectic cobordism endowed with an almost complex structure $(W,J)=(\R \times Y, \lambda^-_1,\omega^-_1,J=J^{-\infty}_1)$ from $\alpha$ to $C^-\alpha_0$. In fact, in the following we will only consider holomorphic curves that map into $(0,+\infty) \times Y$, where $W$ coincides with the symplectization of $\alpha$. Recall that $J^{-\infty}_1$ restricted to $(0,+\infty)\times Y$ coincides with $J_{\alpha} \in \mathcal{J}(\alpha)$ and let $j_{\alpha}\in \mathfrak{j}(\alpha)$ be the restriction of $J_{\alpha}$ to $\xi$. 
Let $N \in \N$, and let $\gamma$ be a simple non-degenerate Reeb orbit of $\alpha$. 
Let $\overline{v}_1, \ldots, \overline{v}_N:[0,+\infty)\times S^1 \to (0,+\infty) \times Y$  be a finite collection of disjoint and simply covered   
holomorphic half-cylinders in $(W,J)$, each positively asymptotic to a multiple of $\gamma$. Denote by $\overline{V}_i$, $i=1,\ldots, N$, the images of the curves $\overline{v}_i$ in $\R \times Y$. 
Furthermore, let $\Xi$ be another (not necessarily finite) collection of disjoint holomorphic half-cylinders in $(W,J)$ with image in $[1,\infty) \times Y$,  each of them  positively asymptotic to a multiple of $\gamma$. Assume that the images of all elements in $\Xi$ are disjoint from  $\overline{V}_1,\ldots,\overline{V}_N$. 
 For any $k\in \N$, we denote the $k$th multiple of the Reeb orbit $\gamma$ by  $\gamma_k:S^1 \to Y$, $\gamma_k(t) := \gamma(kt)$. Denote by $\Xi_k\subset \Xi$ the subset  of those half-cylinders in $\Xi$ that are positively asymptotic to $\gamma_k$. 
In the following, we will assume that 
\begin{itemize}
\item[(*)]  
there is $a_0>1$ such that for all $a\geq a_0$ there is  $R\geq 0$ with the following property. For all $\overline{u} \in \Xi$, $s, s^*\geq R$, $j=1,\ldots, N$, the loop defined by $t \mapsto \overline{u}(s,t)$ is not homotopic to a multiple of the loop $t\mapsto  \overline{v}_j(s^*,t)$  in  $\left((a, +\infty) \times Y \right) \setminus  \bigcup_{i=1}^N\overline{V}_i$.
\end{itemize}
We say that $\overline{u}, \overline{u}' \in \Xi$ are equivalent ($\overline{u}\sim \overline{u}'$) if for $a>0$ sufficiently large, there is $R \geq 0$ such that for all $s, s'\geq R$ the loops $t\mapsto \overline{u}(s,t)$ and $t\mapsto \overline{u}'(s',t)$ are homotopic  in $\left((a ,+\infty) \times Y\right) \setminus \bigcup_{i=1}^N \overline{V}_i$. 
\begin{prop}\label{prop:Xi_equi}
Let $\overline{v}_1, \ldots, \overline{v}_N$, $\Xi$ be as above  and assume that (*) holds. Then,  the number of $\sim$-equivalence classes in $\Xi_k$ grows at most linearly in $k$.  
\end{prop}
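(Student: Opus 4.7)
The plan is to reduce $\sim$-equivalence in $\Xi_k$ to discrete winding data extracted from Siefring's asymptotic analysis \cite{Siefring} near the common end $\gamma_k$, and then to bound those data linearly in $k$ via the spectral winding theory of Hofer-Wysocki-Zehnder for the asymptotic operator on multiply covered Reeb orbits.

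First, I would fix a tubular neighborhood $\Phi: S^1 \times \D \to Y$ of $\gamma$ identifying $\gamma$ with $S^1 \times \{0\}$, together with a unitary trivialization of $\gamma^*\xi$. For any finite-energy holomorphic half-cylinder $\overline{u}$ positively asymptotic to a cover $\gamma_m$, Siefring's asymptotic formula writes $\overline{u}$, for sufficiently large $s$ and after lifting the domain to the $m$-fold cover of $S^1$, as a graph $\overline{u}(s,t) = (a_{\overline{u}}(s,t), \Phi(mt, \eta_{\overline{u}}(s,t)))$ with
$$\eta_{\overline{u}}(s,t) = e^{\lambda_{\overline{u}} s}\bigl(e_{\overline{u}}(t) + r_{\overline{u}}(s,t)\bigr),$$
where $\lambda_{\overline{u}}<0$ is a negative eigenvalue of the asymptotic operator $A_{\gamma_m}$, $e_{\overline{u}}$ is a corresponding nonzero eigenfunction, and $r_{\overline{u}}(s,\cdot) \to 0$ uniformly as $s\to +\infty$. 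I would apply this to every $\overline{u} \in \Xi_k$ and to every $\overline{v}_j$. The parallel relative asymptotic formula yields, for any pair of disjoint such half-cylinders both asymptotic to some covers of $\gamma$, an integer linking invariant $w(\overline{u}, \overline{v}_j) \in \Z$ realized as the winding number of the leading eigenfunction in the relative expansion after passing to the common cover of multiplicity $\mathrm{lcm}(k, k_j)$, where $k_j$ denotes the multiplicity of $\overline{v}_j$.

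The first main step is to prove that, for $a$ sufficiently large, the $\sim$-equivalence class of $\overline{u} \in \Xi_k$ is determined by the tuple $(w(\overline{u}, \overline{v}_j))_{j=1}^N$. For such $a$, the intersection $\overline{u} \cap (\{a\}\times Y)$ is a loop lying in a small neighborhood of $\gamma_k$; its homotopy class in $\{a\}\times (S^1 \times \D)$ minus the similar loops coming from the $\overline{v}_j$ is captured by the integer winding around each $\overline{v}_j$, which by the relative asymptotic formula coincides with $w(\overline{u}, \overline{v}_j)$. Hypothesis (*) then lifts this planar linking data to full homotopy data in $((a,+\infty) \times Y) \setminus \bigcup_i \overline{V}_i$, as it rules out ambient homotopies that would unwrap $\overline{u}$ onto a cover of some $\overline{v}_j$. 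The second step bounds the number of such tuples linearly in $k$: by the Hofer-Wysocki-Zehnder spectral winding theorem, the winding numbers (with respect to the fixed trivialization) of eigenfunctions of $A_{\gamma_k}$ with negative eigenvalue form a non-decreasing sequence of integers lying in an interval of length comparable to $|\mu_{\mathrm{CZ}}(\gamma_k)|$, which grows linearly in $k$ by the iteration formula for the Conley-Zehnder index of a non-degenerate orbit. Combined with the analogous statement on the common covers of $\gamma_k$ and $\gamma_{k_j}$, each coordinate $w(\overline{u}, \overline{v}_j)$ ranges over a set of size $O(k)$; since $N$ is fixed, there are $O(k)$ possible tuples and hence $O(k)$ equivalence classes.

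The principal obstacle lies in the first main step: rigorously matching the ambient homotopy classification with the asymptotic winding invariants. The issue is delicate when $k$ and the $k_j$ have nontrivial common factors, or are coprime, since the natural relative winding invariant then lives on a higher common cover, and one must show that these winding numbers assemble into a single integer invariant per pair $(\overline{u}, \overline{v}_j)$ that matches the topological linking of the corresponding loops in $(\{a\}\times (S^1 \times \D)) \setminus \bigcup_j(\overline{V}_j \cap \{a\}\times Y)$. Hypothesis (*) is essential here: without it, non-trivial additional homotopies obtained by pushing an end of $\overline{u}$ along a cover of some $\overline{v}_j$ could collapse distinct winding types and would have to be quotiented out, invalidating the clean reduction to winding data.
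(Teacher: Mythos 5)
Your proposal follows the paper's general strategy (Siefring asymptotics plus winding data), but it has two genuine gaps. First, your ``first main step'' --- that the $\sim$-class of $\overline{u}\in\Xi_k$ is determined by the tuple of relative windings $(w(\overline{u},\overline{v}_j))_{j=1}^N$ --- is precisely the hard point, and it does not follow from Siefring's formula together with (*): winding/linking numbers are homological invariants, whereas $\sim$ is free homotopy in a link complement, whose fundamental group is nonabelian, so two loops can have identical winding data around every component without being homotopic in the complement. The paper never proves such a determination statement. Instead it iterates all curves to a common cover $\gamma_K$, builds a rooted tree out of the prefixes of the eigenfunction expansions of the finitely many fixed curves, assigns each $\overline{u}$ to a branch vertex $\underline{b}$ of that tree, and proves (the two Claims in its proof) that for large $s$ the slice loop of $\overline{u}$ can be homotoped, inside the complement of the link of slices, onto the boundary torus of a nested tubular neighbourhood attached to $\underline{b}$; only after this localization does the class reduce to abelian data $(m_1,m_2)$ with $m_1=K$ forced. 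Hypothesis (*) enters exactly to show that the ``leaf'' vertices carry no curves of $\Xi$, i.e.\ that $\overline{u}$ cannot shadow a single $\overline{v}_j$ beyond its last branching from the other fixed curves. This tube-and-tree localization is what substitutes for the step you leave open.

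Second, even granting your reduction, the count does not give the statement: if each of the $N$ coordinates ranges over $O(k)$ values, you obtain $O(k^N)$ tuples, not $O(k)$; ``since $N$ is fixed'' is a non sequitur for linearity. The paper gets a linear bound because the number of relevant branch vertices is a constant (at most $2N'-1$ with $N'=\sum_i k_i$) and each vertex contributes a single winding integer $m_2$ satisfying $0<m_2\le -\mu K/\hat{K}\le -\mu k$, where $\mu$ is the minimum of the pairwise relative asymptotic windings of the fixed curves $\overline{v}_i$, a constant which rescales linearly under iteration. Relatedly, your bound on each winding range via $|\mu_{\mathrm{CZ}}(\gamma_k)|$ is not correct: by Hofer--Wysocki--Zehnder monotonicity the windings of negative eigenvalues are only bounded above and tend to $-\infty$ with the eigenvalue, so nothing of Conley--Zehnder type prevents $\overline{u}$ from decaying onto a cover of some $\overline{v}_j$ at an arbitrarily negative eigenvalue; the true bound on the relevant winding comes from the fixed pairwise windings of the $\overline{v}_i$'s combined with (*), as above.
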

The proof of the proposition uses in an essential way results by Siefring, \cite{Siefringrelative}, on the relative asymptotic behaviour of a collection of curves asymptotic to a multiple of the same orbit $\gamma$. Let us recall a formula from \cite{Siefringrelative} that describes,  in a convenient coordinate system around a neighbourhood of the trivial cylinder over $\gamma$,  the asymptotic behaviour of  finitely many such curves in a rather simple way.

Let $K\in \N$. The linearization of the Reeb vector field and the almost complex structure $j_{\alpha}$ along the contact distribution $\xi$ induce a self-adjoint operator
$A_K$ on the space $L^2(\gamma_K^*\xi)$ of $L^2$-sections of the bundle $\gamma_K^*\xi$ (see~\cite{Siefringrelative}). In any Hermitian trivialization of $(\gamma^*\xi, j_{\alpha}, d\alpha(\cdot, j_{\alpha}\cdot))$ this operator is  represented by a bounded symmetric perturbation of $-i\frac{d}{dt}$. 
The spectrum of $A_K$ consists of real eigenvalues of multiplicities at most $2$ that accumulate only at $\pm \infty$. 
 Fix in the following a trivialization $\Phi:S^1 \times \R^2 \to \gamma^* \xi$. This induces trivializations of $\gamma_K^*\xi$, which we still denote by $\Phi$ if it is clear from the context.
\begin{thm}(\cite[Theorem 2.4]{Siefringrelative})\label{thm:Richard}
Let $K\in \N$.  Let $w_i$, $i=1, \ldots, n$, be a finite collection of holomorphic half-cylinders in $(W, J)$, positively asymptotic to~$\gamma_K$.
Then, there exist $R>0$, an open tubular  neighbourhood $\mathfrak{U}$ of $\gamma$, a smooth embedding $\widetilde{\Phi} : \R \times \mathfrak{U} \to \R \times S^1 \times \R^2$ satisfying 
$$\widetilde{\Phi}(a,\gamma(t)) = (a,t,0) \in \R \times S^1\times \R^2,$$
proper embeddings $\psi_i:[R, \infty) \times S^1 \to \R \times S^1$ asymptotic to the identity, and $L\in \N$ so that 
\begin{align}\label{eq:siefr}
(\widetilde{\Phi} \circ w_i \circ \psi_i)(s,t) = \left(Ks,Kt, \sum_{j=1}^{L}e^{\lambda_{j}s}e_{i,j}(t)\right), 
\end{align}
where $\lambda_{j}$ are negative eigenvalues of $A_{K}$ with $\lambda_L < \cdots < \lambda_1 < 0$, and the $e_{i,j}$ are eigenvectors (possibly zero) of $\Phi^{-1} A_{K}\Phi$ with eigenvalue $\lambda_{j}$. 
\end{thm}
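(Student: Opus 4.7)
The plan is to reduce the assertion to the spectral theory of the asymptotic operator $A_K$ by means of a two–step reduction: first a linear/eigenfunction expansion of the graphs over the trivial cylinder, then an absorption of the remainder into the smooth reparametrisations $\psi_i$.

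First I would set up a convenient coordinate system around $\gamma$ so that the linearised Cauchy–Riemann operator takes standard form. Using the Reeb direction transverse to a slice and the given Hermitian trivialisation $\Phi$ of $\gamma^*\xi$ one constructs a smooth embedding $\widetilde{\Phi}:\R\times \mathfrak{U}\to \R\times S^1\times \R^2$ with $\widetilde{\Phi}(a,\gamma(t))=(a,t,0)$. After a preliminary reparametrisation obtained from the asymptotic convergence theorem of Hofer–Wysocki–Zehnder applied to each $w_i$, the composition $\widetilde{\Phi}\circ w_i$ on $[R_0,\infty)\times S^1$ takes the form $(s,t)\mapsto (Ks,Kt,\eta_i(s,t))$ where $\eta_i:[R_0,\infty)\times S^1\to \R^2$ is a section of the trivialised pullback bundle $\gamma_K^*\xi$ decaying in $C^\infty$. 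Writing out the Cauchy–Riemann equation for $w_i$ in these coordinates shows that $\eta_i$ satisfies an equation of the form $\partial_s \eta_i + A_K\eta_i = N(s,t,\eta_i,\partial_t\eta_i)$, where $N$ is smooth, vanishes to quadratic order at zero, and satisfies uniform elliptic estimates.

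Next I would run the iterative relative asymptotic analysis of Siefring. The key lemma is that any exponentially decaying non-trivial solution of such a perturbed asymptotic Cauchy–Riemann equation admits a leading-order expansion $\eta(s,t)=e^{\lambda s}(e(t)+o(1))$ for some negative eigenvalue $\lambda$ of $A_K$ and a non-zero eigenvector $e$; applied to $\eta_i$ and then inductively to the successive differences $\eta_i - \sum_{j<k}e^{\lambda_j s}e_{i,j}(t)$, this produces, for each $i$, an asymptotic series indexed by negative eigenvalues of $A_K$ whose partial sums approximate $\eta_i$ to arbitrary exponential order. Collecting the negative eigenvalues that appear across $i=1,\ldots,n$ into a common ordered list $\lambda_L<\cdots<\lambda_1<0$ and taking $L$ sufficiently large yields a joint expansion
\begin{equation*}
\eta_i(s,t)=\sum_{j=1}^{L}e^{\lambda_j s}e_{i,j}(t)+r_i(s,t),
\end{equation*}
where each $r_i$ decays in $C^\infty$ faster than $e^{(\lambda_L-\epsilon_0)s}$ for some $\epsilon_0>0$; here some $e_{i,j}$ may of course vanish, corresponding to eigenvalues that do not appear for a given $i$.

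The main obstacle is the last step: upgrading this asymptotic expansion to the exact equality claimed by the theorem by choosing the reparametrisations $\psi_i$. Writing $\psi_i(s,t)=(s+\sigma_i(s,t),t+\tau_i(s,t))$, the requirement that $(\widetilde{\Phi}\circ w_i\circ\psi_i)(s,t)=(Ks,Kt,\sum_{j=1}^{L}e^{\lambda_j s}e_{i,j}(t))$ becomes, modulo the trivial identities in the first two coordinates, a fixed point problem
\begin{equation*}
\eta_i(s+\sigma_i,t+\tau_i)=\sum_{j=1}^{L}e^{\lambda_j s}e_{i,j}(t)
\end{equation*}
for $(\sigma_i,\tau_i)$ in an exponentially weighted Banach space of sections decaying at the rate $e^{\lambda_L s}$. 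Substituting the joint expansion for $\eta_i$, the equation becomes $r_i(s,t)=-\sum_j e^{\lambda_j s}[e_{i,j}(t+\tau_i)-e_{i,j}(t)]+\text{(higher order in $\sigma_i,\tau_i$)}$; the linearisation at $(\sigma_i,\tau_i)=0$ is the operator $(\sigma,\tau)\mapsto \partial_t\bigl(\sum_j e^{\lambda_j s}e_{i,j}(t)\bigr)\tau + \lambda_1\sigma\sum_j e^{\lambda_j s}e_{i,j}(t)\,+\cdots$, which, against the decay of $r_i$, is invertible in the weighted norm provided $L$ is large enough and $R$ is big enough. A Banach contraction mapping argument then produces unique $\sigma_i,\tau_i$ decaying to zero, so $\psi_i$ is asymptotic to the identity, completing the exact representation claimed. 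The hard part is controlling the quadratic nonlinearity in the fixed point equation against the exponentially weighted norms, which forces $L$ to be chosen jointly for all the $w_i$ and bigger than a threshold determined by the Sobolev exponent and the gaps in the spectrum of $A_K$.
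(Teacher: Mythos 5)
You should first note that the paper does not prove this statement at all: Theorem \ref{thm:Richard} is quoted (in lightly adapted form) from Siefring \cite{Siefringrelative}, so there is no in-paper argument to compare with, and your sketch has to stand on its own as a proof of Siefring's theorem. As it stands it does not, and the decisive gap is your last step. The reparametrisations $\psi_i$ cannot absorb the remainder: writing $\psi_i=\mathrm{id}+(\sigma_i,\tau_i)$, composing changes the first two coordinates to (approximately) $\bigl(K(s+\sigma_i),K(t+\tau_i)\bigr)$, so the identities you dismiss as ``trivial'' are exactly the constraints that force $(\sigma_i,\tau_i)$ to vanish to leading order; in the standard construction the entire two-function freedom in $\psi_i$ is already spent making the $\R\times S^1$-components exactly $(Ks,Kt)$. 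After that the fibre component is whatever it is, and your fixed-point problem is overdetermined: per point you are matching four scalar constraints with two unknown functions, and the linearisation you write down acts only in the fibre direction while simultaneously creating errors of size comparable to $e^{\lambda_L s}$ in the base directions, so there is no reason it is surjective onto the space in which $r_i$ lives. In Siefring's result the exactness of the normal form comes from the freedom in the target embedding $\widetilde{\Phi}$, which is constructed using the curves themselves, combined with genuinely new higher-order asymptotic estimates; it is not produced by a contraction argument in the domain variables.

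The middle step is also not yet a proof. Subtracting the leading eigenfunction term from $\eta_i$ does not yield a solution of an equation of the same form, so ``apply the key lemma inductively to the successive differences'' is precisely the hard analytic content of \cite{Siefringrelative} and cannot simply be invoked when the goal is to prove that theorem — that is circular. Moreover, treating each $w_i$ separately and then merging the eigenvalue lists loses the \emph{relative} content of the statement: the way Theorem \ref{thm:Richard} is used afterwards (the relative winding number $\mathbf{w}^{\Phi}(w_{i_1},w_{i_2})$ is read off from the smallest $j$ with $e_{i_1,j}\neq e_{i_2,j}$) requires that the joint finite expansion already distinguish distinct curves, i.e., that two non-identical half-cylinders differ in one of the finitely many eigenfunction terms. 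That requires the asymptotic analysis of the difference $w_{i_1}-w_{i_2}$ (Siefring's two-curve theorems); it does not follow from individual expansions together with a common truncation level $L$, since two distinct curves may agree to any prescribed finite exponential order.
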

Let us also recall some  facts about winding numbers of eigenvalues of the asymptotic operator. Assume that the trivialization is symplectic with respect to $dx \wedge dy$ on $\R^2$ and satisfies $\Phi \circ j_0 = j_{\alpha} \circ \Phi$, where $j_0$ corresponds to $i$ in the usual identification $\C \equiv\R^2$. 
With respect to (an iterate of) the trivialization $\Phi$, we  can write $\Phi^{-1} A_K \Phi= -j_0 \frac{d}{dt} + S(t)$ on $L^2(S^1, \R^2)$, where $S(t)$ is a symmetric $2\times 2$ matrix for all $t\in S^1$.  Let $\lambda \neq 0$ be an eigenvalue. Then, any eigenfunction $e$ of $\Phi^{-1} A_K \Phi$ for  $\lambda$ is nowhere zero and hence defines a winding number, which does not depend on the choice of the eigenfunction $e$. We will denote it by $\mathbf{w}^{\Phi}(\lambda) = \mathbf{w}^{\Phi}(\lambda,K) \in \Z$.
Furthermore, by Siefring's work, one can define a relative asymptotic winding number (with respect to a given trivialization) for two non-identical holomorphic half-cylinders  $u,v$ positively asymptotic to a common multiple of $\gamma$, which we denote 
 by $\mathbf{w}^{\Phi}(u,v)$.
It follows from the construction of $\widetilde{\Phi}$ in the proof of Theorem \ref{thm:Richard} in \cite{Siefringrelative} that one can express the asymptotic relative  winding number with the formula \eqref{eq:siefr} as follows: for any $w_{i_1}$ and $w_{i_2}$, $i_1 \neq i_2$, we have that $\mathbf{w}^{\Phi}(w_{i_1},w_{i_2}) = \mathbf{w}^{\Phi}(\lambda_j,K)$, where $j$ is the minimal natural number such that $e_{i_1, j} \neq e_{i_2,j}$. 

We will also use the following fact for winding numbers $\mathbf{w}^{\Phi}(\lambda,K)$ (see~\cite{HWZII}). 
\begin{enumerate}[({w})]
  \item   $\mathbf{w}^{\Phi}(\lambda, K)$ is non-decreasing in $\lambda$ and admits the same value for at most two different eigenvalues.
  \end{enumerate}
\begin{proof}[Proof of Proposition \ref{prop:Xi_equi}]
It will be convenient to choose a trivialization $\Phi$ of $\gamma^*\xi$ such that for all $k\in \N$ it holds that $\mathbf{w}^{\Phi}(\lambda^k_*,k) < 0$, where $\lambda^k_*<0$ denotes the largest negative eigenvalue of $A_k$ (see e.g.\ \cite[Lemma 3.3]{Siefringrelative} for properties of $\mathbf{w}^{\Phi}$ under iteration of $\gamma$).
Denote by $k_i\in \N$ the natural numbers so that $\overline{v}_i$ is positively asymptotic to $\gamma_{k_i}$, the $k_i$th iterate of $\gamma$. Let $\widehat{K}$ be the least common multiple of $k_1, \ldots, k_N$.
For $i=1, \ldots, N$  define the shifted curves $\overline{v}^q_i:[0,+\infty) \times S^1 \to (0,+\infty) \times Y$, $q=0,\ldots, k_i-1$, by $\overline{v}^q_i(s,t) = \overline{v}_i(s,t+\frac{q}{k_i})$. 
The $\frac{\widehat{K}}{k_i}$ iterates of $\overline{v}^q_i$, defined by 
$\check{v}^q_i(s,t) := \overline{v}^q_i(\frac{\widehat{K}}{k_i}s,\frac{\widehat{K}}{k_i}t)$, are asymptotic to $\gamma_{\widehat{K}}$. 
Since the $k_i$ are minimal and $\overline{v}_i$ and $\overline{v}_{i'}$ do not intersect if $i \neq i'$,
the relative winding number $\mathbf{w}^{\Phi}(\check{v}^q_i,\check{v}^{q'}_{i'})$  is well defined if $(i,q) \neq (i',q')$. Note that with the choice of $\Phi$ above,  $\mathbf{w}^{\Phi}(\check{v}^q_i,\check{v}^{q'}_{i'})\leq \mathbf{w}^{\Phi}(\lambda^{\widehat{K}}_*, \widehat{K}) <0$.

Fix $k\in \N$ and denumerate the elements in $\Xi_k$  by $\overline{u}_1, \ldots, \overline{u}_M$. Let $K$ be the least common multiple of $k$ and $\widehat{K}$. 
We consider the holomorphic half-cylinders $\widetilde{v}^q_i: [0,+\infty) \times S^1 \to (0,+\infty) \times Y$, $i=1, \ldots, N$, $q=0, \ldots, k_i -1$, given by $\widetilde{v}^q_i(s,t) = \overline{v}^{q}_i(\frac{K}{k_i} s,\frac{K}{k_i} t)= \check{v}^q_i(\frac{K}{\hat{K}}s,\frac{K}{\hat{K}}t)$, and the half-cylinders  $\widetilde{u}_i: [R_1,+\infty) \times S^1 \to \R \times Y$, $i=1, \ldots, M$, given by $\widetilde{u}_i(s,t) = \overline{u}_i(\frac{K}{k} s,\frac{K}{k} t)$. By the $\R$-invariance of $J$ in $(0,+\infty) \times Y$, these are holomorphic half-cylinders, which are moreover all asymptotic to $\gamma_K$.

We rename the curves $\widetilde{v}^0_1, \ldots, \widetilde{v}^0_{N}, \widetilde{u}_1, \ldots, \widetilde{u}_M$ (in that order) by 
$w_1, \ldots, w_{n}$, $n = N+M$, and apply  Theorem \ref{thm:Richard} to
$w_1, \ldots, w_n$. 
We obtain $R>0$, $\widetilde{\Phi}$, and $\psi_{i}$, $i=1, \ldots, n$, as above, and some $L \in \N$  such that for $s\geq R$,  
\begin{align}\label{eq:asympt_form}
(\widetilde{\Phi} \circ w_i \circ \psi_i)(s,t) = \left(Ks,Kt, \sum_{j=1}^{L} e^{\lambda_{j}s}e_{i,j}(t)\right), 
\end{align}
where $e_{i,j}$ are eigenvectors (possibly $0$) of $A = A_{K}$ with eigenvalues $\lambda_j<0$. For $e(t) \in C^{\infty}(S^1,\R^2)$ we write 
$\frac{r}{K} \star e(t) := e(t + \frac{r}{K})$. 

Let $p_2: \R \times S^1 \times \R^2 \to S^1 \times \R^2$ be the projection to the last two coordinates. 
By the properties of $\widetilde{\Phi}$ and $\psi_i$ in Theorem \ref{thm:Richard}, it is sufficient to show that there is a constant $C$, not depending on $k$, and $s_0 \geq 0$ such that for all $s\geq s_0$ the number of free homotopy classes of the loops $t\mapsto p_2 \circ \widetilde{\Phi} \circ w_i \circ \psi_i(s,t)$, $i=N+1, \ldots, n$, in the complement of the images of the loops 
$t\mapsto p_2 \circ \widetilde{\Phi} \circ w_i \circ \psi_i(s,t)$, $i=1, \ldots, N$, in $S^1\times \R^2$  
are bounded from above by $Ck$. 
Indeed, the image of $\widetilde{\Phi}$ intersected with $\{a\} \times S^1 \times \R^2$ can be chosen to be in $\{a\} \times Q^a$ for a tubular neighbourhood $Q^a$ of $S^1 \times \{0\}$. Hence, if for $s\geq s_0$, $i,j\in\{N+1,\ldots,n\}$, there is a homotopy between the loops  
$t \mapsto p_2 \circ \widetilde{\Phi} \circ w_i \circ \psi_i(s,t)$  and $t\mapsto p_2 \circ \widetilde{\Phi} \circ w_j \circ \psi_j(s,t)$ in the complement of the images of 
$t\mapsto p_2 \circ \widetilde{\Phi} \circ w_i \circ \psi_i(s,t)$, $i=1, \ldots, N$, in $S^1\times \R^2$, one can choose it to be supported in $Q^{Ks}$. That homotopy corresponds to a homotopy in $\{Ks\} \times S^1 \times \R^2$, and by applying $\widetilde{\Phi}^{-1}|_{\{Ks\} \times S^1 \times \R^2}$, we obtain a homotopy of 
$t \mapsto w_i \circ \psi_i(s,t)$ and $t \mapsto w_j \circ \psi_j(s,t)$ in $(a,+\infty) \times \mathfrak{U}$   in the complement of 
$\bigcup_{i=1}^N \overline{V}_i$,   where  $a = a(s)$ 
 is suitably chosen with $a(s) \to \infty$ as $s \to +\infty$. 
The homotopy can be extended in the same set to a homotopy of the loops $t \mapsto w_i(s,t)$ and $t \mapsto w_j(s,t)$.

We write $w^0_i:= w_i$, $i=1, \ldots, n$, and also $w^q_i = \widetilde{v}^q_i$, $i=1, \ldots, N, q=1, \ldots, k_i-1$.
We write $\mathcal{V} := \{w^q_i\, |\, 1\leq i \leq N,\,  1\leq q \leq k_i-1\}$, $\mathcal{U} :=\{w^0_i \, |\, N+1 \leq i \leq n\}$, and $\mathcal{W}= \mathcal{V} \cup \mathcal{U}$. Let $N':=\# \mathcal{V} = \sum_{i=1}^N k_i$.  
For any $w^q_i$ above  we consider the  vector $\xi(w^q_i) = (e^q_{i,j})_{1\leq j \leq L} \in \left(C^{\infty}(S^1,\R^2)\right)^L$. Here $e^q_{i,j}(t) = e_{i,j}(t+\frac{q}{K})$. 
For $r=0, 1, \ldots, K-1$ and $\widetilde{v}\in \mathcal{W}$  we define $$\frac{r}{K} \star \widetilde{v}(s,t) := \widetilde{v}(s,t + \frac{r}{K}).$$ This defines a $\Z_{K}$-action on $\mathcal{V}$. Note that the orbit of $\widetilde{v}^q_i$, $i=1,\ldots, N$, $q=1,\ldots,k_i-1$, under this action is $k_i$-periodic, and that the image of any curve above is invariant under this action. 
By the formula \eqref{eq:asympt_form},  we have  
$$\widetilde{\Phi} \circ w_i^q \circ {\psi}^q_{i}(s,t) = \left(Ks, Kt, \sum_{i=1}^L e^{\lambda_{j}s}e^q_{i,j}\right), $$
where $\psi^q_i(s,t) := \psi_i(s,t+\frac{q}{K})$.

We now describe a partially ordered set $(\mathcal{A}, \preceq)$.
We describe $\mathcal{A}$ as the vertices of a rooted tree $\mathcal{T}$ of height $L+1$.  The partial order $\preceq$ will be the natural order induced by the tree structure. 
The level $0$ will consist only of the root, a vertex $\xi_0$. The  vertices of $\mathcal{T}$ in level $l$, $1\leq l \leq L$,  are identified with all the vectors $\xi \in \left(C^{\infty}(S^1,\R^2)\right)^{l}$ which are obtained from any $\xi(w^q_i)$ by considering its first $l$ entries. A vertex $\xi$ in level $l$ is connected to a vertex $\xi'$ in level $l-1$ if $\xi'$ results from $\xi$ after deleting the $l$th entry. Moreover, all vertices of level $1$ are connected to the root $\xi_0$. 
Note that the leaves of $\mathcal{T}$, i.e.\  the vertices in level $L$, are given by $\xi(w), w\in \mathcal{W}$. See Figure~\ref{fig:tree} for an example.  
The tree structure $\mathcal{T}$ defines a partial order $\preceq$ on its vertices $\mathcal{A}$ (the root is a minimal element).  We write $\xi \prec \xi'$ if $\xi \preceq \xi'$ and $\xi \neq \xi'$.

\begin{figure}
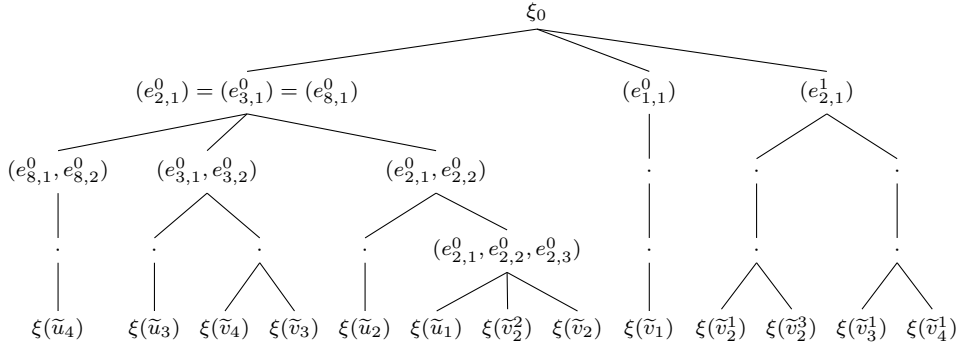

{\SMALL   
\Tree [.$\xi_0$ [.$(e^0_{2,1})=(e^0_{3,1})=(e^0_{8,1})$ [.$(e^0_{8,1},e^0_{8,2})$   [.$\cdot$ $\xi(\widetilde{u}_4)$ ] ]   [.$(e^0_{3,1},e^0_{3,2})$  [.$\cdot$ $\xi(\widetilde{u}_3)$ ] [.$\cdot$ $\xi(\widetilde{v}_4)$ $\xi(\widetilde{v}_3)$ ] ]  [.$(e^0_{2,1},e^0_{2,2})$ 
 [.$\cdot$ $\xi(\widetilde{u}_2)$ ] [.$(e^0_{2,1},e^0_{2,2},e^0_{2,3})$ $\xi(\widetilde{u}_1)$ $\xi(\widetilde{v}^2_2)$ $\xi(\widetilde{v}_2)$ ] ] ]  [.$(e^0_{1,1})$ 
  [.$\cdot$ [.$\cdot$ $\xi(\widetilde{v}_1)$ ] ] ] [.$(e^1_{2,1})$ [.$\cdot$ [.$\cdot$ $\xi(\widetilde{v}^1_2)$ $\xi(\widetilde{v}^3_2)$ ] ] [.$\cdot$ [.$\cdot$ $\xi(\widetilde{v}^1_3)$ $\xi(\widetilde{v}^1_4)$ ] ] ] ]
}
\caption{The tree $\mathcal{T}$ for an example with four half-cylinders $\widetilde{v}_1=w^0_1, \ldots, \widetilde{v}_4=w^0_4$, with $k_1=1$, $k_2 = 4, k_3=k_4=2$, and  with $\mathcal{U}$ consisting of four half-cylinders $\widetilde{u}_1= w^0_{5}, \ldots, \widetilde{u}_4=w_8^0$.}
    \label{fig:tree}
\end{figure}

For a set of distinct vertices $\xi_1, \ldots, \xi_m$, $m\geq 1$,  we denote by $\xi(\xi_1, \ldots, \xi_m)$ the largest vertex $\xi$ in $(\mathcal{A},\preceq)$ with $\xi\preceq \xi_i$ for all $1\leq i \leq m$. In other words, $\xi(\xi_1, \ldots, \xi_m)$  is the vector of maximal length $l$ such that the vector build by the first $l$ entries of any of the $\xi_1, \ldots, \xi_m$ is equal to $\xi$. If $\xi_i$, $1\leq i\leq m$, have different first entry, then we set $\xi(\xi_1, \xi_2, \ldots, \xi_m) = \xi_0$ to be the root.
Note that $\xi(\xi_1) = \xi_1$, for any vertex $\xi_1$. 

We equip $\R^2$ and $S^1= \R/\Z$  with the Euclidean metric and  equip  $S^1 \times \R^2$ with the product metric.  
Let $w\in \mathcal{W}$, let $\psi$ be the corresponding function in formula~\eqref{eq:asympt_form}, and write $\xi(w) = (e_1, \ldots, e_L)$.
For any $l\in \N$,  $0< l <L$, and $s\geq R$, 
\begin{align*}
\| &(p_2 \circ \widetilde{\Phi} \circ w \circ \psi)(s,t) - (Kt, \sum_{i=1}^{l} e^{\lambda_i s} e_i(t)) \| \leq \\&e^{\lambda_{l+1}s} \left( \|e_{l+1} (t) \|  +  e^{(\lambda_{l+2}-\lambda_{l+1})s} \|e_{l+2}(t)\| + \cdots + e^{(\lambda_{L}-\lambda_{l+1})s} \|e_{L}(t)\| \right).  
\end{align*}
Hence there exist $c_1>0$ and $R_1\geq R$ such that for $0<l<L$ and $s\geq R_1$,
\begin{align}\label{eq:c_1}
\|(p_2 \circ \widetilde{\Phi} \circ w \circ \psi)(s,t) &- (Kt, \sum_{i=1}^{l} e^{\lambda_i s} e_i(t))\| \leq c_1e^{\lambda_{l+1}s}.
\end{align}
On the other hand, if $(e'_1, \ldots, e'_{L})$ is a vector and  $e'_l \neq e_l$, for some $l\in \{1, \ldots, L\}$,  then there is $c_0>0$ and  $R_0 \geq R$ such that for all $s\geq R_0$, 
\begin{align}\label{eq:c_0}
c_0e^{\lambda_{l}s}\leq \|(p_2 \circ \widetilde{\Phi} \circ w \circ \psi)(s,t) - (Kt, \sum_{i=1}^{l} e^{\lambda_i s} e'_i(t))\|.
\end{align}

Let $\mathcal{B}$ be the subset of elements $b \in \mathcal{A}$ that can be written in the form $$b = \xi(\xi(\widetilde{v}^{q_1}_{i_1}),\xi(\widetilde{v}^{q_2}_{i_2}),\ldots,  \xi(\widetilde{v}^{q_m}_{q_m}) ),$$ for a non-empty collection of pairwise distinct curves $\widetilde{v}^{q_1}_{i_1}, \widetilde{v}^{q_2}_{i_2},\ldots,   \widetilde{v}^{q_m}_{i_m}$ in $\mathcal{V}$.  There is exactly one minimal element $b_{\min}$ in $\mathcal{B}$. To every $b\in \mathcal{B}$ with $b\neq b_{\min}$ there is exactly one $\hat{b}\in \mathcal{B}$ that is maximal among the ${b}'\in \mathcal{B}$ with ${b}'\prec b$; we call $\hat{b}$ the \textit{predecessor} of $b$.  Note also that $\# \mathcal{B} \leq 2N'-1$. (Note that in the example in Figure~\ref{fig:tree}, e.g.\ $\xi_0$, $(e^0_{2,1})$, $(e^0_{2,1},e^0_{2,2},e^0_{2,3})\in \mathcal{B}$, whereas $(e_{1,1}^0), (e^0_{2,1},e^0_{2,2})\notin\mathcal{B}$.) 

The $\Z_K$-action on the set of functions $\{ e^q_i, \, |\,  i \in \{1, \ldots, N\}, q\in \{0, \ldots, k_i -1\}\, \}$ above induces a $\Z_{K}$-action on $\mathcal{B}$, given by $\frac{r}{K} \star \xi = \frac{r}{K} \star (e_1, \ldots, e_l) = ( \frac{r}{K} \star e_1, \ldots, \frac{r}{K} \star e_l)$, and fixing the root if it lies in $\mathcal{B}$.   Here $\Z_{K} \cong \{0,\frac{1}{K}, \ldots, \frac{K-1}{K}  \} \mod 1$. We define $\underline{\mathcal{B}} = \mathcal{B}/\sim$, where $b\sim b'$ 
if $b$ and $b'$ lie in the same orbit of the $\Z_K$-action.
Since the action preserves the order $\preceq$ and the level, the level of $\underline{b}\in\underline{\mathcal{B}}$ is well defined, and for every $\underline{b}\neq [b_{\min}]$ there is a well-defined predecessor $\underline{\hat{b}}$ of $\underline{b}$.

Given $\underline{b}\in \underline{\mathcal{B}}$, we denote by $\mathcal{V}(\underline{b})$  the set of all $\widetilde{v} \in \mathcal{V}$ with $b\preceq \xi(\widetilde{v})$ for some $b$ with $[b] =  \underline{b}$. Given $s\geq R$, we denote  by $\mathcal{L}_s\subset S^1 \times \R^2$ the link whose components are obtained by the images of the curves $p_2 \circ \widetilde{\Phi} \circ w^0_i \circ \psi^0_i(s,t)$, $i=1, \ldots, N$. For simplicity, we will say that the components are \textit{induced  by} the curves  $w^0_i$.

We fix now $\underline{b} \in \underline{\mathcal{B}}$ and let $l=l_{\underline{b}}$ the level of $\underline{b}$. If $\underline{b} \neq [b_{\min}]$, let $\hat{l}= \hat{l}_{\underline{b}}< l$ be the level of the predecessor $\underline{\hat{b}}$ of $\underline{b}$.
Choose in the following also  $b = ({e}_1, \ldots, {e}_l) \in \mathcal{B}$ with $[b] = \underline{b}$, and consider the functions 
\begin{align}\label{def:zsb}
z^s_{b}: S^1 \to S^1 \times \R^2, \quad t \mapsto \left(Kt, \sum_{i=1}^l e^{\lambda_i s}{e}_i(t)\right), \quad s\geq R.
\end{align}
In the following, when we consider neighbourhoods of $\Im(z^s_b)$, we exclusively consider such neigbourhoods whose intersection with the horizontal planes $\{t_0\} \times \R^2$, $t_0\in S^1$,  consists of a union of pairwise disjoint disks. We say that such a tubular neigbourhood has \textit{width at most (resp.\ at least) $r$} if the radii of all the disks are at most (resp.\ at least) $r$.
\begin{claim}\label{cl:Y}
There is $R_2 \geq R$, and for all $s\geq R_2$ there are two   tubular neighbourhoods  $Q^s_{\underline{b}}\subset     \hat{Q}^s_{\underline{b}}  \subset S^1 \times \R^2$ of $\Im (z^s_b)$, and constants $c_2,c_3, c_4>0$ such that 
\begin{enumerate}[(i)]
\item for each $t_0 \in S^1$, $Q^s_{\underline{b}} \cap \left(\{t_0\} \times \R^2\right)$ is a disjoint union of disks $D^s_i$, $i=0, \ldots, p-1$, $p\in \N$,  of radius $\rho_s$ with 
\begin{equation}\label{eq:ds}
\rho_s \leq c_2e^{\lambda_{l+1}s}, 
\end{equation}
and if ${b} \neq b_{\min}$, 
\begin{equation}\label{eq:ds2}
c_3 e^{\lambda_{\hat{l}+1}s} \leq \mathrm{dist}(D^s_i, D^s_j), \quad i\neq j, 
\end{equation}
\item all components of $\mathcal{L}_s$ that are induced by some $\widetilde{v}\in \mathcal{V}(\underline{b})$ are contained in $Q^s_{\underline{b}}$,
    \item   all  components of $\mathcal{L}_s$ that are induced by some $\widetilde{v}'\in  \mathcal{V} \setminus \mathcal{V}(\underline{b})$ do not intersect $\hat{Q}^s_{\underline{b}}$, and the width of $\hat{Q}_{\underline{b}}^s$ is at least $c_4e^{\lambda_{\hat{l}+1} s}$ if $b\neq b_{\min}$  and at least $1$ otherwise.  
\end{enumerate}
\end{claim}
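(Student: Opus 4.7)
The plan is to construct, for each $\underline{b}\in\underline{\mathcal{B}}$ of level $l$, tubular neighbourhoods $Q^s_{\underline{b}}\subset \hat{Q}^s_{\underline{b}}$ of the union of the model loops $z^s_{b_r}$ associated to the $\Z_K$-orbit of a chosen lift $b$ of $\underline{b}$. Concretely, fix a representative $b=(e_1,\ldots,e_l)\in \mathcal{B}$ with $[b]=\underline{b}$ and list the distinct elements of its $\Z_K$-orbit in $\mathcal{B}$ as $b_0,\ldots,b_{p-1}$; associate to each the model loop $z^s_{b_r}$ defined as in \eqref{def:zsb}. I would then set $Q^s_{\underline{b}}$ to be the tubular neighbourhood of $\bigcup_r\mathrm{Im}(z^s_{b_r})$ of radius $\rho_s=c_2 e^{\lambda_{l+1}s}$ for some $c_2\geq c_1$, and $\hat{Q}^s_{\underline{b}}$ the tubular neighbourhood of the same set of radius $c_4 e^{\lambda_{\hat{l}+1}s}$ for small $c_4>0$. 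In the edge case $b=b_{\min}$ one has $\mathcal{V}(\underline{b_{\min}})=\mathcal{V}$, so the second part of (iii) is vacuous and $\hat{Q}^s_{\underline{b_{\min}}}$ may be enlarged to have width at least $1$.

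The key combinatorial observation is that $\mathcal{B}$ is closed under the tree-infimum $\xi(\cdot,\cdot)$: the union of two non-empty collections of $\xi(\widetilde{v}^{q_j}_{i_j})$'s again defines an element of $\mathcal{B}$. From this I would deduce two facts:
\begin{enumerate}[(a)]
\item For distinct $r,r'$, $\xi(b_r,b_{r'})\in\mathcal{B}$ has level strictly less than $l$, so its class in $\underline{\mathcal{B}}$ lies strictly below $\underline{b}$ and hence $\preceq \underline{\hat{b}}$; in particular the first disagreement level $l''_{r,r'}$ of $b_r$ and $b_{r'}$ satisfies $l''_{r,r'}\leq \hat{l}+1$.
\item For any $\widetilde{v}'\in\mathcal{V}\setminus\mathcal{V}(\underline{b})$ and any $r$, $\xi(b_r,\xi(\widetilde{v}'))\in\mathcal{B}$ is strictly below $b_r$ (otherwise $b_r\preceq \xi(\widetilde{v}')$, contradicting $\widetilde{v}'\notin\mathcal{V}(\underline{b})$), so $\xi(\widetilde{v}')$ disagrees with $b_r$ at some level $\leq \hat{l}+1$.
\end{enumerate}

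With these in hand, the three conditions of the claim would follow. For the separation in (i), expanding $z^s_{b_r}(t)-z^s_{b_{r'}}(t)$ via \eqref{def:zsb} and using that the leading nonzero term appears at level $l''_{r,r'}$ produces a pointwise bound $|z^s_{b_r}(t)-z^s_{b_{r'}}(t)|\geq c' e^{\lambda_{l''_{r,r'}}s}\geq c' e^{\lambda_{\hat{l}+1}s}$ with some $c'>0$ depending only on uniform lower bounds for the eigenfunction differences on $S^1$. Choosing $c_2,c_4$ with $2c_2+2c_4<c'$ and enlarging $R_2$ so that $e^{(\lambda_{l+1}-\lambda_{\hat{l}+1})s}$ is as small as needed, the tubes around distinct $\mathrm{Im}(z^s_{b_r})$ become pairwise disjoint, producing $p$ disks of radius $\rho_s$ in each horizontal slice separated by at least $c_3 e^{\lambda_{\hat{l}+1}s}$ with, say, $c_3=c'/2$. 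Condition (ii) is a direct application of \eqref{eq:c_1}: if $\widetilde{v}\in\mathcal{V}(\underline{b})$ corresponds to $w^0_i$ with $b_r\preceq \xi(w^0_i)$, the induced loop lies within $c_1 e^{\lambda_{l+1}s}$ of $\mathrm{Im}(z^s_{b_r})\subset Q^s_{\underline{b}}$. Condition (iii) uses \eqref{eq:c_0} together with (b): for $\widetilde{v}'\in \mathcal{V}\setminus\mathcal{V}(\underline{b})$, the induced loop stays at distance $\geq c_0 e^{\lambda_{\hat{l}+1}s}$ from each $\mathrm{Im}(z^s_{b_r})$, hence outside $\hat{Q}^s_{\underline{b}}$ once $c_4<c_0/2$.

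The main technical obstacle is to arrange all the constants $c_2,c_3,c_4$ and the threshold $R_2$ uniformly over the finite set $\underline{\mathcal{B}}$, while ensuring the nested inclusion $Q^s_{\underline{b}}\subset \hat{Q}^s_{\underline{b}}$, which is automatic from $\lambda_{\hat{l}+1}>\lambda_{l+1}$ after possibly enlarging $R_2$. Since the family $w_1,\ldots,w_n$ and the finitely many decompositions $\xi(w)=(e_{i,j})$ involve only smooth eigenfunctions of $A_K$ that are nowhere vanishing on $S^1$, uniform lower bounds for their magnitudes are available, and all constants can be selected at the end after inspecting the finitely many branching nodes in $\underline{\mathcal{B}}$.
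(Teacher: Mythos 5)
Your proof is correct and essentially the same as the paper's: you use the same facts that $\mathcal{B}$ is closed under the tree-infimum and that any element of $\mathcal{B}$ strictly below a lift of $\underline{b}$ has level at most $\hat{l}$, and then \eqref{eq:c_1}, \eqref{eq:c_0} together with the equal-parameter separation $\|z^s_{b_r}(t)-z^s_{b_{r'}}(t)\|\geq c'e^{\lambda_{\hat{l}+1}s}$ (the paper's \eqref{eq:distz}) to build $Q^s_{\underline{b}}\subset\hat{Q}^s_{\underline{b}}$ with widths of order $e^{\lambda_{l+1}s}$ and $e^{\lambda_{\hat{l}+1}s}$, exactly as in the paper's proof. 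One cosmetic remark: the images $\Im(z^s_{b_r})$ of the shifted lifts all coincide (each is the same $K$-fold wrapping loop up to a shift in $t$), so your union over the orbit is a single curve and ``disjointness of the tubes around distinct $\Im(z^s_{b_r})$'' must be read slice-wise, as separation of the $p$ strand points in each plane $\{t_0\}\times\R^2$ — which is precisely what your pointwise estimate delivers, so the argument is unaffected.
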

\begin{claimproof}
Let $p$ be the period of $z_b^s$, i.e.\ the minimal positive  number such that $\frac{p}{K} \star b = b$.  
If $p=1$, then the image of  $z_b^s(t)$ intersects each horizontal surface  $\{t_0\}\times \R^2$, $t_0\in S^1$, exactly once, and so \eqref{eq:ds} always holds.  If $p>1$, let   $q\in \{1, \ldots, p-1\}$.  Then $\frac{q}{K} \star e_i \neq e_i$, for  $\hat{l}+1\leq i\leq l$. Otherwise, the level $l_0$ of $\xi(b,\frac{q}{K} \star b) \in \mathcal{B}$ would satisfy $\hat{l} < l_0 < l$.
It follows that there is $c_3>0$ and $R'_2\geq R$ such that for all $s\geq R'_2$, $q\in \{1, \ldots, p-1\}$,   
\begin{align}\label{eq:distz}2c_3 e^{\lambda_{\hat{l}+1}s} \leq \|z^s_b(t) - z^s_b(t + \frac{q}{K})\|.
\end{align}
Since $l>\hat{l}$, we can choose $R_2\geq R'_2$, a constant $c_2>0$, and  tubular neighbourhoods $Q^s_{\underline{b}}$ of $\Im(z^s_{\underline{b}})$, $s\geq R_2$,  such that (i) holds.

If $\widetilde{v}  \in \mathcal{V}(\underline{b})$, then for some $q\in \N$, $ b \prec \frac{q}{K} \star \widetilde{v}$. Hence, by \eqref{eq:c_1}, there is $R_1\geq 0$, $c_1>0$ such that for all $s\geq R_1$ the component of $\mathcal{L}_s$ induced by  $\widetilde{v}$ lies in a tubular  neighbourhood of the image of $z^s_b$ of width at most  $c_1e^{\lambda_{l+1}s}$.
Since $l>\hat{l}$, we can assure, by choosing $R_2$ larger if necessary, that also  (ii) holds. 

If $b=b_{\min}$, there is nothing to show for (iii). Otherwise, if  $\widetilde{v}'\in \mathcal{V} \setminus \mathcal{V}(\underline{b})$,  then, by definition  $\xi(b',\xi(\widetilde{v}))\prec b$ for all $b'\in \mathcal{B}$ with $[b'] = \underline{b}$. The maximal level of such vertices  $\xi(b', \xi(\widetilde{v}))$ is at most $\hat{l}$. 
By  \eqref{eq:c_0} and \eqref{eq:distz},    there is  $c_4>0$  and $R_0\geq R$ such that for all $s\geq R_0$, the component of $\mathcal{L}_s$ induced by $\widetilde{v}$ lies outside a tubular neighbourhood $\hat{Q}^s_{\underline{b}}$  of $\Im (z^s_b)$ of width at least  $c_4e^{\lambda_{\hat{l}+1}s}$.  Hence, by choosing $R_2$ to be larger if necessary, we can assure that also (iii) holds. 
\end{claimproof}

Consider a partition $\mathcal{U}(\underline{b})$, $\underline{b} \in \underline{\mathcal{B}}$, of the set  $\mathcal{U} = \{\widetilde{u}_1, \ldots, \widetilde{u}_M\}$ as follows: We define $\mathcal{U}(\underline{b}) = \bigcup_{b\in \mathcal{B}, [b] = \underline{b}} \mathcal{U}(b)$, and let $\mathcal{U}({b})$ be the set of all $\widetilde{u} \in \mathcal{U}$ such that the following two conditions hold.
\begin{enumerate}[(A)]
\item either $\xi(\xi(\widetilde{u}), b) \prec b$,  or, for all $b^+\in \mathcal{B}$ with $b\preceq b^+$, $\xi(\xi(\widetilde{u}), b^+) = b$. 
\item if $b\neq b_{\min}$, then $\hat{b}\prec \xi(\xi(\widetilde{u}),b)$, where $\hat{b}$ is the predecessor of $b$.  
\end{enumerate}
It is easy to check that the sets $\mathcal{U}(\underline{b})$ form a partition of $\mathcal{U}$. (Note that in the example in Figure~\ref{fig:tree}, with $\underline{b}_1 := [(e^0_{2,1},e^0_{2,2},e^0_{2,3})]$, $\underline{b}_2 := [(e^0_{3,1},e^0_{3,2},e^0_{3,3})]$, $\underline{b}_3 := [(e^0_{2,1})]$,  we have that $\mathcal{U}(\underline{b}_1) = \{\widetilde{u}_1, \widetilde{u}_2\}$, $\mathcal{U}(\underline{b}_2) = \{\widetilde{u}_3\}$,  $\mathcal{U}(\underline{b}_3) = \{\widetilde{u}_4\}$.)

Fix again $\underline{b}\in \underline{\mathcal{B}}$ and denote by $l$ the level of $\underline{b}$.  
\begin{claim}\label{cl:vclass}
There is $R_3 \geq R_2$ such that for any $\widetilde{u}\in \mathcal{U}(\underline{b})$ and for all $s\geq R_3$, the loop $t \mapsto p_2 \circ \widetilde{\Phi} \circ \widetilde{u} \circ \psi(s,t)$ is homotopic in $\hat{Q}^s_{\underline{b}} \setminus (\mathcal{L}_s \cap \hat{Q}^s_{\underline{b}})$  to a loop in $\partial Q^s_{\underline{b}}$, where $\psi$ is the function in formula \eqref{eq:asympt_form} that is associated to  $\widetilde{u}$.  
\end{claim}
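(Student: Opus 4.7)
The plan is to use Siefring's expansion \eqref{eq:asympt_form} to place the loop $t\mapsto p_2\circ\widetilde{\Phi}\circ\widetilde{u}\circ\psi(s,t)$ inside $\hat{Q}^s_{\underline{b}}$ and then to construct the homotopy by a linear deformation of the asymptotic eigenvector data. Write $\xi(\widetilde{u})=(\widetilde{e}_1,\ldots,\widetilde{e}_L)$ and let $l_0$ be the level of $\xi(\xi(\widetilde{u}),b)$. Condition (B) forces $l_0\ge \hat{l}+1$ (or $l_0\ge 1$ when $b=b_{\min}$), so $\widetilde{e}_i=e_i$ for $i\le l_0$. Applying \eqref{eq:c_1} with the truncation $(e_1,\ldots,e_{l_0})$ together with the elementary bound $\|z^s_{(e_1,\ldots,e_{l_0})}-z^s_b\|=O(e^{\lambda_{l_0+1}s})$ places the loop within distance $O(e^{\lambda_{l_0+1}s})=o(e^{\lambda_{\hat{l}+1}s})$ of $\Im(z^s_b)$; by Claim~\ref{cl:Y}(iii) it lies in $\hat{Q}^s_{\underline{b}}$ for $s$ large. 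It remains to produce a homotopy inside $\hat{Q}^s_{\underline{b}}\setminus\mathcal{L}_s$ to a loop on $\partial Q^s_{\underline{b}}$, split according to the two alternatives in (A).

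In case A1, i.e.\ $l_0<l$ with $\widetilde{e}_{l_0+1}\neq e_{l_0+1}$, estimate \eqref{eq:c_0} applied with the truncation $(e_1,\ldots,e_{l_0},\widetilde{e}_{l_0+1})$ combined with $\|z^s_{(e_1,\ldots,e_{l_0+1})}-z^s_b\|=O(e^{\lambda_{l_0+2}s})=o(e^{\lambda_{l_0+1}s})$ yields $\mathrm{dist}(\mathrm{loop},\Im(z^s_b))\ge \tfrac{c_0}{2}\,e^{\lambda_{l_0+1}s}$ for $s$ large. Since $\lambda_{l+1}<\lambda_{l_0+1}$, this exceeds the width $c_2 e^{\lambda_{l+1}s}$ of $Q^s_{\underline{b}}$, so the loop already lies in the annular shell $\hat{Q}^s_{\underline{b}}\setminus Q^s_{\underline{b}}$, which by Claim~\ref{cl:Y}(ii)--(iii) contains no point of $\mathcal{L}_s$. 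A radial retraction in the tubular-neighbourhood coordinates then gives a homotopy onto $\partial Q^s_{\underline{b}}$ entirely within this shell.

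In case A2, i.e.\ $b\preceq\xi(\widetilde{u})$ with no $b^+\in\mathcal{B}$ strictly above $b$ extending $\xi(\widetilde{u})$, the loop may enter $Q^s_{\underline{b}}$ and must be pushed outward through the $\mathcal{L}_s$-components sitting there. The key structural observation, obtained from the definition of $\mathcal{B}$ combined with (A), is that $\widetilde{e}_{l+1}$ does not coincide with any $(l+1)$-th coefficient of $\xi(\widetilde{v})$ that is \emph{shared} with another $\xi(\widetilde{v}')$ for $\widetilde{v},\widetilde{v}'\in\mathcal{V}(\underline{b})$; otherwise the common prefix would belong to $\mathcal{B}$ above $b$ and contradict (A). Fix an eigenvector $\eta(t)$ of the asymptotic operator $A_K$ for the largest negative eigenvalue, rescaled so that $(Kt,e^{\lambda_{l+1}s}\eta(t))$ traces $\partial Q^s_{\underline{b}}$, and connect the $\widetilde{u}$-loop in two stages: first to its leading-order approximation $(Kt,\sum_{i=1}^L e^{\lambda_i s}\widetilde{e}_i(t))$ by absorbing the Siefring remainder controlled by \eqref{eq:c_1}, then via the linear interpolation $r\mapsto (Kt,\sum_{i\le l}e^{\lambda_i s}e_i(t)+e^{\lambda_{l+1}s}((1-r)\widetilde{e}_{l+1}(t)+r\eta(t))+(1-r)\sum_{i>l+1}e^{\lambda_i s}\widetilde{e}_i(t))$ terminating at a loop on $\partial Q^s_{\underline{b}}$. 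For every $\widetilde{v}\in\mathcal{V}(\underline{b})$, \eqref{eq:c_0} applied with $(l+1)$-th coefficient $(1-r)\widetilde{e}_{l+1}+r\eta$ gives an order-$e^{\lambda_{l+1}s}$ separation from the $\widetilde{v}$-loop, provided the convex combination never equals the $(l+1)$-th coefficient of any $\xi(\widetilde{v})$---an arrangement achievable for generic $\eta$ by the key observation together with property (w), which separates eigenvectors of distinct eigenvalues by their winding numbers. The main obstacle is the combinatorial verification of this last point, together with handling the "singleton" $\widetilde{v}\in\mathcal{V}(\underline{b})$ whose $(l+1)$-th coefficient happens to equal $\widetilde{e}_{l+1}$---these cases require iterating the argument one level deeper in the Siefring expansion and again invoking property (w) to guarantee controlled splitting of the $\widetilde{u}$-loop from the $\widetilde{v}$-loop.
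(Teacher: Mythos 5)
Your Case A1 reproduces the paper's own argument (the two estimates \eqref{eq:c_1} and \eqref{eq:c_0} sandwich the loop in the shell $\hat{Q}^s_{\underline{b}}\setminus Q^s_{\underline{b}}$, which contains no point of $\mathcal{L}_s$ and retracts to $\partial Q^s_{\underline{b}}$), and that part is fine. Also, your ``singleton'' worry is unfounded: one-element collections are allowed in the definition of $\mathcal{B}$, so $\xi(\widetilde{v})\in\mathcal{B}$ for every $\widetilde{v}\in\mathcal{V}$, and taking $b^+=\xi(\widetilde{v})$ in alternative (A) already forces $\widetilde{e}_{l+1}$ to differ from the $(l+1)$st coefficient of \emph{every} $\widetilde{v}\in\mathcal{V}(b)$; no ``iteration one level deeper'' is needed. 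The genuine gap is the homotopy you build in Case A2. The mechanism that keeps a deformed $(l+1)$st coefficient away from the coefficients $e^q_{i,l+1}$ of the curves in $\mathcal{V}(b)$ is that a difference of two distinct eigenfunctions of the \emph{same} eigenvalue solves a first-order linear ODE and is therefore nowhere vanishing. Your interpolation $(1-r)\widetilde{e}_{l+1}+r\eta$ leaves the $\lambda_{l+1}$-eigenspace, since $\eta$ is an eigenfunction of $\lambda^{K}_{*}$, which is in general strictly larger than $\lambda_{l+1}$; the difference $(1-r)\widetilde{e}_{l+1}(t)+r\eta(t)-e^q_{i,l+1}(t)$ is then not an eigenfunction and can vanish at some $(r,t)$. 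Worse, there is a topological obstruction: the loop $t\mapsto\widetilde{e}_{l+1}(t)-e^q_{i,l+1}(t)$ has winding number $\mathbf{w}^{\Phi}(\lambda_{l+1},K)$, while the endpoint loop $t\mapsto\eta(t)-e^q_{i,l+1}(t)$ can have a different winding number (property (w) only gives monotonicity, not equality, so $\mathbf{w}^{\Phi}(\lambda^K_*,K)>\mathbf{w}^{\Phi}(\lambda_{l+1},K)$ is perfectly possible); whenever the winding numbers differ, \emph{every} straight-line interpolation must pass through zero, i.e.\ your homotopy is forced through a link component to leading order, and no genericity of $\eta$ repairs this. In particular, (w) does not ``separate eigenvectors of distinct eigenvalues by their winding numbers''; it even points against your construction here.

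This is precisely why the paper's proof splits according to the dimension of the $\lambda_{l+1}$-eigenspace. If it is two-dimensional, one stays \emph{inside} the eigenspace, moving the initial value $\widetilde{e}_{l+1}(0)$ along a path avoiding the initial values $e^q_{i,l+1}(0)$ and ending at an eigenfunction of large norm; all relevant differences are then eigenfunctions of $\lambda_{l+1}$, hence nowhere zero, and the loop is pushed into the shell. If the eigenspace is one-dimensional — a case your proposal does not address, and where escaping within the eigenspace is impossible because all coefficients are multiples of one generator — the paper instead pushes in the direction $J_0\mathfrak{e}_{\lambda_{l+1}}$, which separates collinear coefficients pointwise. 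You also do not treat the sub-case $b=\xi(\widetilde{v})$ (a leaf, $l=L$, no $(l+1)$st coefficient), which needs a separate, easy retraction argument. Without these ingredients the central step of Case A2 — a homotopy in $\hat{Q}^s_{\underline{b}}\setminus(\mathcal{L}_s\cap\hat{Q}^s_{\underline{b}})$ out to $\partial Q^s_{\underline{b}}$ — is not established.
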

\begin{claimproof}
Let $\widetilde{u} \in \mathcal{U}(\underline{b})$. Note that since  $\mathcal{U}(\underline{b})$ is finite, it is sufficient to show that there is $R_3\geq R_2$ such that for all $s\geq R_3$, the loop $t\mapsto p_2 \circ \widetilde{\Phi} \circ \widetilde{u} \circ \psi(s,t)$ is homotopic in $\hat{Q}^s_{\underline{b}} \setminus (\mathcal{L}_s \cap \hat{Q}^s_{\underline{b}})$  to a loop in $\partial Q_{\underline{b}}^s$.
According to the alternatives in condition 
 (A) we distinguish two cases. 

\textit{Case 1:   There is $b\in \mathcal{B}$ with $[b] = \underline{b}$ such that $\xi(\xi(\widetilde{u}), b) \prec b$}.   

Fix a choice of such $b\in \mathcal{B}$, define $z^s_b$ as in \eqref{def:zsb},  and let $l_0<l$  be the level of $\xi(\xi(\widetilde{u}), b)$ in $\mathcal{T}$. 
It follows from  \eqref{eq:c_1} that there is $c_1$ and $R_1\geq R$ such that for all $s\geq R_1$, 
\begin{align}\label{cl2:eq1} 
\| (p_2 \circ \widetilde{\Phi} \circ \widetilde{u} \circ {\psi})(s,t)- z^s_b(t)\| \leq c_1e^{\lambda_{l_0+1}s}.
\end{align}
On the other hand, by \eqref{eq:c_0},  since $(l_0+1)$st entry of the vectors $b$ and $\xi(\hat{u})$ differ, there is $c_0>0$ and $R_0\geq R$ such that for all $s \geq R_0$, 
\begin{align}\label{cl2:eq2}
c_0e^{\lambda_{l_0+1}s} \leq \|(p_2 \circ \widetilde{\Phi} \circ \widetilde{u} \circ {\psi})(s,t)- z^s_b(t)\|.
\end{align}
By (B) and our assumption, $\hat{l} < l_0< l$, and hence by Claim \ref{cl:Y} (i), (iii), there is $R_3 \geq \max\{R_0,R_1,R_2\}$ such that for all $s\geq R_3$, 
the loop $t\mapsto p_2 \circ \widetilde{\Phi} \circ \widetilde{u} \circ \psi(s,t)$ is contained in $\hat{Q}_{\underline{b}}^s \setminus Q^s_{\underline{b}}$.
Since   $\hat{Q}^s_{\underline{b}}\setminus Q^s_{\underline{b}}$ retracts to $\partial Q^s_{\underline{b}}$, this finishes the proof in this case. 

\textit{Case 2:  There is $b\in \mathcal{B}$ with $[b] = \underline{b}$ such that for all $b^+\in \mathcal{B}$ with $b\preceq b^+$,  $\xi(\xi(\widetilde{u}), b^+) = b$.}

Fix $b\in \mathcal{B}$ with this property. In particular it holds that  $\xi(\xi(\widetilde{u}), b) = b$, and we can assume that the tubular neighbourhoods $Q^s_{\underline{b}}$, $s\geq R_2$,  from Claim \ref{cl:Y} contain also the images of the loops $t\mapsto p_2 \circ \widetilde{\Phi} \circ \widetilde{u} \circ {\psi}(s,t)=:  (Kt, \sum_{j=1}^L e^{\lambda_{j}s}\hat{e}_{j}(t))$. Note that with this notation,   $z^s_b(t) =(Kt, \sum_{j=1}^{l} e^{\lambda_{j}s}\hat{e}_{j}(t)).$ 

If $b$ is of the form $b = \xi(\widetilde{v})$ for some $\widetilde{v} \in \mathcal{V}$, then for $s \geq R_2$ the set $Q^s_{\underline{b}}$ contains only one component of $\mathcal{L}_s$, namely the image of $z^s_b$, and the assertion of the claim holds.
Hence, assume in the following that $b$ is not of this form. Then,  $\hat{e}_{l+1}$  
 is distinct to the $(l+1)$st entry of $\xi(\widetilde{v})$ for all $\widetilde{v} \in \mathcal{V}(b)$. This follows from the  assumptions with   $b^+ = \xi(\widetilde{v})$.  

 If $l\leq L-2$, there is a constant $c_5>0$ such that for all $s\geq R$,   $1\leq i\leq N$, $0\leq q \leq k_i-1$,  
\begin{align}\label{eq:7}\|\sum_{j=l+2}^L\hat{e}^q_{i,j}(t)\|\leq c_5e^{\lambda_{l+2}s}.
\end{align}

We distinguish the following  two sub-cases.  
\begin{enumerate}[(a)]
\item The eigenspace to the eigenvalue $\lambda_{l+1}$ has dimension $2$. 
\item The eigenspace to the eigenvalue $\lambda_{l+1}$ has dimension $1$.
\end{enumerate}

Assume first that 
 (a) holds. 
Since the eigenspace of $\lambda_{l+1}$ is two-dimensional and the eigenfunctions solve the linear ODE  $(\hat{A}_K - \lambda_{l+1} \id) = 0$, the solution of this ODE with any initial condition $y_0$ is an eigenfunction $\hat{e}_{l+1}^{y_0}$ of $\hat{A}_K$.

Let $c_2>0$ be as in Claim \ref{cl:Y}. We choose a path $\alpha:[0,1] \to \R^2$ with 
$\alpha(0) = \hat{e}_{l+1}(0)$ and 
\begin{align}\label{eq:c_4} 2c_2\leq \sup_{t\in S^1}\|\hat{e}^{\alpha(1)}_{l+1}(t)\|.  \end{align}
We may choose $\alpha$ such that for a constant  $c_6>0$, for all $\tau\in [0,1]$, 
\begin{equation}\label{eq:alpha}
c_6 \leq \dist(\alpha(\tau),\hat{e}^q_{i,l+1}(0)), 
\end{equation}
where $i\in \{1, \ldots, N\}$ and $q\in \{0, \ldots, k_i-1\}$ are any numbers with $w^q_i \in \mathcal{V}(b)$.

By \eqref{eq:7} and \eqref{eq:alpha},  there is $R_3 \geq R_2$ such that for all $s \geq R_3$, 
 the function $F^s:[0,1]\times S^1 \to S^1 \times \R^2$,  defined by 
$$F^s(\tau,t) = \left(Kt, \sum_{j=1}^{l} e^{\lambda_js} \hat{e}_{j}(t) + e^{\lambda_{l+1}s}\hat{e}^{\alpha(\tau)}_{l+1}(t) + \sum_{j=l+2}^L e^{\lambda_{j}s}\hat{e}_{j}(t)\right),$$ provides a homotopy in $\left(S^1 \times \R^2 \right) \setminus (\mathcal{L}_s\cap Q_{\underline{b}}^s)$ from the loop $t\mapsto p_2 \circ \widetilde{\Phi} \circ \widetilde{u} \circ \psi(s,t)$ to some loop $\gamma^s:[0,1] \to S^1 \times \R^2$.
By Claim \ref{cl:Y}(iii), we can enlarge $R_3$ if necessary such that for all $s\geq R_3$ the image of $F^s$ lies in $\hat{Q}^s_{\underline{b}}$ and the image of $\gamma^s$ lies in $\hat{Q}^s_{\underline{b}}\setminus Q^s_{\underline{b}}$. 
Hence,  $F^s$ is in fact a homotopy in $\left(S^1 \times \R^2 \right) \setminus \mathcal{L}_s$. This finishes the  proof for sub-case (a).

{To treat sub-case (b), we let $\mathfrak{e}_{\lambda_{l+1}}$ be a generator of the eigenspace of $\lambda_{l+1}$. We then define for $s\geq R_2$ a function   $F^s:[0,1] \times S^1 \to S^1 \times \R^2$ by 
$$F^s(\tau,t) = \left(Kt, \sum_{j=1}^{l} {e}^{\lambda_js} \hat{e}_{j}(t) + e^{\lambda_{l+1}s}\big(\hat{e}_{l+1}(t) + \tau \mu  J_0\mathfrak{e}_{\lambda_{l+1}}(t)\big) + \sum_{j=l+2}^L e^{\lambda_j s}\hat{e}_{i}(t)\right),$$
where $J_0 = 
\big(\begin{smallmatrix} 0 & -1 \\ 1 & 0 \end{smallmatrix}\big)$,   and where we chose $\mu>0$ suitably below. }

By the assumptions, the loop 
$\hat{e}_{l+1}(t)$ is distinct to $e^q_{i,l+1}(t)$ for all 
$i\in \{1, \ldots, N\}$, $q \in \{0, \ldots, k_i-1\}$ with $w_i^q \in \mathcal{V}(\underline{b})$, and in our case,  these curves are equal up to scalar multiplication. 
Hence, by  \eqref{eq:7}, we can choose $R_3$ large if necessary such that for a suitable $\mu>0$ and all $s\geq R_3$, $F_s$  provides again a homotopy in $\left(S^1 \times \R^2 \right) \setminus (\mathcal{L}_s \cap Q_{\underline{b}}^s)$ from the loop $t\mapsto p_2 \circ \widetilde{\Phi} \circ \widetilde{u} \circ \psi(s,t)$ to some loop $\gamma^s:[0,1] \to S^1 \times \R^2$ with the same properties as in sub-case (a). Then we can proceed as in case (a) to obtain a homotopy in $\hat{Q}^s_{\underline{b}}\setminus (\mathcal{L}_s \cap \hat{Q}^s_{\underline{b}})$ to a loop in $\partial Q^s_{\underline{b}}$. 
\end{claimproof}

If $\underline{b}\in \underline{\mathcal{B}}$ is of the form
$\underline{b} = [b]$ with $b = \xi(w^q_i)$, for some $i\in \{1, \ldots, N\}$, and some (and hence any) $q \in \{0, \ldots, k_i-1\}$, then the image $\Im(z^s_b)$ of $z^s_b$ coincides with the image of $t \mapsto p_2 \circ \widetilde{\Phi} \circ w^q_i \circ \psi^q_i(s,t)$ and  for $s\geq R_2$, $t_0 \in S^1$,  each disk in   $Q^s_{\underline{b}}\cap \{t_0\} \times \R^2$ does intersect $\mathcal{L}_s$ in only one point. This means that a loop in $\partial Q^s_{\underline{b}}$ is homotopic in the complement of $\mathcal{L}_s\setminus \hat{V}^i_s$ to a multiple  of $t \mapsto p_2 \circ \widetilde{\Phi} \circ w^q_i \circ \psi^q_i(s,t)$. Hence, by assumption (*),   $\mathcal{U}(\underline{b})$ is empty.
We therefore assume in the following that $\underline{b}$ is not of the above form. We are left with at most $N'-1$ many options for $b$.

We have an  embedding $\iota_{s,b}: S^1 \times \D \to S^1 \times \R^2$ onto $Q^s_{\underline{b}}$, given by $\iota_{s,b} (t,\zeta) = z^s_b(\frac{p}{K}t) + (0,\rho_s\zeta)$, where $p$ is the period of $z^s_b$, and $\rho_s$ is chosen as in Claim \ref{cl:Y}. 
Hence the map $\iota_{s,b}|_{S^1 \times \partial \D}$ induces a homomorphism  $$f_{s,{b}}: \pi_1(S^1 \times \partial \D, y)\cong \Z \times \Z \to \pi_1\left((S^1 \times \R^2)\setminus \mathcal{L}_s, \iota_{s,b}(y)\right).$$  Here, $y=(y_1,y_2)= (0,(1,0))$,  and,  in the identification  $\pi_1(S^1 \times \partial \D, y) \cong \pi_1(S^1, y_1) \times \pi_1(\partial \D, y_2) \cong \Z \times \Z$, we map the generator of the first coordinate to $(1,0)$, and the generator that is obtained by negative parametrization of $\partial \D$ to $(0,1)$. 
Note that $f_{s,b}$ 
 factors through an isomorphism $\pi_1(S^1 \times \partial \D) \to \pi_1(\partial Q^s_{\underline{b}})$ and also through an injective homomorphism $\pi_1(S^1 \times \partial \D) \to \pi_1(\hat{Q}_{\underline{b}}^s\setminus (\mathcal{L}_s \cap \hat{Q}_{\underline{b}}^s)).$

We have shown in 
 Claim \ref{cl:vclass} that for any $\widetilde{u} \in \mathcal{U}(\underline{b})$ and for $s\geq R_3$, 
 the loop $t\mapsto p_2\circ \widetilde{\Phi} \circ \widetilde{u} \circ \psi(s,t)= (Kt, \sum_{j=1}^L e^{\lambda_{j}s}\hat{e}_{j}(t))$ is homotopic  in $\hat{Q}^s_{\underline{b}} \setminus (\mathcal{L}_s \cap \hat{Q}^s_{\underline{b}}) $ to a representative loop  $\eta_s$ of a class $\alpha_s\in\Im f_{s,b}$. Identify $\alpha_s$ via   $f^{-1}_{s,b}$  
 with a tuple $(m_1, m_2) \in \Z \times \Z$. 
 Then $m_1$   is equal to the number of times the loop $t\mapsto p_2\circ \widetilde{\Phi} \circ \widetilde{u} \circ \psi(s,t)$ passes through $\{0\} \times \R^2$, that is, $$m_1=K,$$ and $m_2$ is the negative winding number of the projection of $t\mapsto \iota_{s,b}^{-1}(\eta_s(t))$ in $S^1 \times \partial \D$  to the second coordinate.  
Choose the vertex $b\in \mathcal{B}$ for which $\widetilde{u} \subset \mathcal{U}(b)$, and let $l_0$ be the level of $\xi(\xi(\widetilde{u}), b)$. It follows directly from the proof of Claim \ref{cl:vclass} that for $s$ sufficiently large,  the only term that contributes to $m_2$ is $\hat{e}_{l_0 + 1}$, i.e.\ $$m_2=  -\mathbf{w}^{\Phi}(\lambda_{l_0+1}, K).$$
By (w), we have 
$$ 0 < -\mathbf{w}^{\Phi}(\lambda^K_*, K) \leq -\mathbf{w}^{\Phi}(\lambda_{l_0+1}, K) \leq -\mathbf{w}^{\Phi}(\lambda_{l+1}, K),
$$
where $l\geq l_0$ is the level of $\underline{b}$. 
By our assumption on $\underline{b}$, there are two distinct  
$w^q_i$ and $w^{q'}_{i'}$ in $\mathcal{V}$  with $b \preceq \xi(\xi(w^q_i), \xi(w^{q'}_{i'}))$, and in fact  we can choose $w^q_i$ and $w^{q'}_{i'}$ such that $b =  \xi(\xi(w^q_i), \xi(w^{q'}_{i'}))$. 
Since 
$$\|\widetilde{\Phi} \circ w^q_i(s,t) \circ \psi^q_i(s,t) - \widetilde{\Phi} \circ w^{q'}_{i'} \circ \psi^{q'}_{i'}(s,t)\| = e^{\lambda_{l+1}s}\left( e_{l+1}(t) +  o(e^{(\lambda_{l+1} -\lambda_{l})s})\right),$$
where $e_{l+1}$ is an eigenvector for the eigenvalue $\lambda_{l+1}$,  
it holds that 
$$\mathbf{w}^{\Phi}(w^q_i,w^{q'}_{i'}) = \mathbf{w}^{\Phi}(\lambda_{l+1}, K).$$
Again, by rescaling, we 
 have that  $$\mathbf{w}^{\Phi}(w^q_i,w^{q'}_{i'}) = \frac{K}{\hat{K}}\mathbf{w}^{\Phi}(\overline{v}^q_i,\overline{v}^{q'}_{i'}).$$
Hence, if we set
$\mu = \min\mathbf{w}^{\Phi}(\overline{v}^q_i,\overline{v}^{q'}_{i'})<0$, where the minimum is taken over all $i,i', q,q'$, with $1\leq i,i'\leq N$, $1\leq q \leq k_{i}-1$, $1 \leq q'\leq k_{i'}-1$, we obtain that 
$$0< m_2 \leq - \mu \frac{K}{\hat{K}}\leq -\mu k.$$
We conclude, that for $s$ sufficiently large, there are at most $-\mu (N'-1)k$ homotopy classes of loops in $S^1 \times \R^2 \setminus \mathcal{L}_s$ of the form $t\mapsto p_2\circ \widetilde{\Phi} \circ \widetilde{u} \circ \psi(s,t)$, 
 $\widetilde{u} \in \mathcal{U}$, and where  
$C:=-\mu (N'-1)$ is independent of $k$. 
\end{proof}

\subsection{Conclusions}\label{sec:conclusions}
Let us conclude this section with the proof of Theorem \ref{thm:stability}, using the results obtained so far.

\begin{proof}[Proof of Theorem \ref{thm:stability}]
Let $(Y,\xi)$, $\alpha_0$, $\mathcal{L}_0$ as given in the assumptions, let  
$$\Gamma:= \limsup_{T \to +\infty } \frac{\log\left(\# \Omega^T_{\alpha_0}(\mathcal{L}_{0})\right)}{T} >0.$$
Let $\widetilde{J}_0$ be a regular almost complex structure on $\R \times Y$ as fixed in the beginning of Section \ref{sec:proofmainresult}.  Let $\epsilon>0$, and choose $\delta>0$ such that     
$$\delta < \min\{ \log(\Gamma) - \log(\Gamma-\epsilon),  \delta_{\alpha_{0},\mathcal{L}_0}\},$$
 where $\delta_{\alpha_{0},\mathcal{L}_0}$ is such that Assumption 1 in Section \ref{sec:compactness} holds. 
Let $\alpha$ be any non-degenerate contact form with $d_{C^0}(\alpha,\alpha_0) < \delta$. We let $C^+$ and $C^-$ satisfy the hypothesis of Proposition~\ref{prop:compactness}, and with these choices construct the exact homotopy of exact cobordisms and the moduli spaces as in Sections~\ref{sec:sympcobordisms} and~\ref{sec:moduli}. The results in Sections~\ref{sec:moduli}, \ref{sec:isotopy},  \ref{sec:buildings_limit}, and~\ref{sec:cobordism levels} allow us to work in the setting of Section~\ref{sec:non-collaps} as follows. 
Let $\mathrm{n}_0$ be the number of components of $\mathcal{L}_0$. By Lemma~\ref{lem:v_no_topbot_level}, the bottom level of the buildings $\mathbf{v}_i(1)$, $i=1, \ldots, \mathrm{n_0}$, lies in the cobordism $(\R \times Y, \lambda^-_1,\omega^-_1,J^{-\infty}_1)$ from $\alpha$ to $C^-\alpha$ and their connected components are: 
\begin{itemize}
    \item finite energy holomorphic  cylinders $\hat{v}_1, \ldots, \hat{v}_{\mathrm{n}_0}$
that are negatively asymptotic to the components $\gamma_0^1, \ldots, \gamma^{\mathrm{n}_0}_0$ of $\mathcal{L}_0$, and positively asymptotic to some 
 Reeb orbits of $\alpha$, 
 \item finite energy holomorphic planes $\hat{v}_{\mathrm{n}_0+1}, \ldots,  \hat{v}_{\hat{\mathrm{n}}_0}$ that are positively asymptotic to some Reeb orbits of $\alpha$. 
 \end{itemize} Consider the link $\mathcal{L}_{\epsilon}({\alpha})\subset Y$ formed by (the simple covers of) the Reeb orbits in $\alpha$ to which $\hat{v}_{1}, \ldots, \hat{v}_{\hat{\mathrm{n}}_0}$ are asymptotic to. We claim that $\mathcal{L}_{\epsilon}(\alpha)$ satisfies the statement of the theorem. 

For every $\rho \in \Omega_{\alpha_0}(\mathcal{L}_0)$, by considering the bottom level of the associated building $\mathbf{u}_{\rho}$, we obtain by Lemma  \ref{lem:u_firstlevels} a holomorphic cylinder $\hat{u}_{\rho}$ in the cobordism $(\R \times Y, \lambda^-_1,\omega^-_1,J^{-\infty}_1)$  that is negatively asymptotic to $\gamma_{\rho}$ and positively asymptotic to some Reeb orbit $\eta_{\rho}$ for the contact form $\alpha$.  
Since $\mathbf{u}_{\rho}$ is obtained in the limit as $R\to \infty$ of cylinders $\widetilde{u}_{\rho}^R(1)$  that are positively asymptotic to the Reeb orbit $\gamma_{\rho}$ (for the contact form 
 $C^{+}\alpha_0$), it holds that 
\begin{align}\label{eq:action_in_alpha}
\mathcal{A}_{\alpha}(\eta_{\rho}) \leq  \mathcal{A}_{C^+\alpha_0}(\gamma_{\rho}) =C^+\mathcal{A}_{\alpha_0}(\gamma_{\rho}).
\end{align}
We consider, for any   component $\gamma$ of $\mathcal{L}_{\epsilon}(\alpha)$, the subset $\mathcal{C}_{\alpha}(\gamma)$ of  classes  that "collaps onto $\gamma$", that is, the subset of elements  $\rho\in \Omega_{\alpha_0}(\mathcal{L}_0)$ 
for which $\eta_{\rho}$ is equal to a multiple of $\gamma$. Let  $\mathcal{NC}_{\alpha}$ be the set of the remaining, "non-collapsing"   classes, that is, the set of those $\rho\in \Omega_{\alpha_0}(\mathcal{L}_0)$ for which $\eta_{\rho}$ cannot be written as a multiple of  any component $\gamma$ of $\mathcal{L}_{\epsilon}(\alpha)$. One can show, by using Lemma \ref{lem:homot1}, that $\mathcal{C}_{\alpha}(\gamma) = \emptyset$ 
for all components $\gamma$ of $\mathcal{L}_{\epsilon}(\alpha)$ for which no multiple is a positive asymptotic orbit of the cylinders $\hat{v}_{1}, \ldots, \hat{v}_{{\mathrm{n}}_0}$. Moreover, one can show that all the Reeb orbits $\eta_{\rho}$ with   $\rho \in \mathcal{NC}_{\alpha}$ are 
  pairwise non-homotopic in $Y\setminus \mathcal{L}_{\epsilon}(\alpha)$. The arguments are very similar to those in Section \ref{sec:cobordism levels} and we leave them to the reader.

We claim that for all components $\gamma$ of $\mathcal{L}_{\epsilon}(\alpha)$,
\begin{align}\label{eq:growth_in_C}
\limsup_{T\to \infty} \frac{\log(\#\mathcal{C}^T(\gamma))}{T} = 0,
\end{align}
where $\mathcal{C}_{\alpha}^T(\gamma) := \mathcal{C}_{\alpha}(\gamma)\cap \Omega_{\alpha_0}^T(\mathcal{L}_0)$.  
To see this, let us fix a component $\gamma$ of $\mathcal{L}_{\epsilon}(\alpha)$. For each component $\hat{v}_i$, $i=1,\ldots, \hat{\mathrm{n}}_0$,  (either a cylinder or a plane) that is  positively asymptotic to a multiple of $\gamma$, we choose a holomorphic half-cylinder $\overline{v}_i:[0,+\infty)\times S^1  \to (0,+\infty)\times Y$ in $(\R \times Y, \lambda^-_1,\omega^-_1,J^{-\infty}_1)$ that is asymptotic to the same multiple and with image contained in $\hat{v}_i$. For convenience we reorder those half-cylinders, and enumerate them as $\overline{v}_1, \ldots, \overline{v}_N$, for some $N\leq \hat{\mathrm{n}}_0$. 
Similarly, for each $\rho \in \mathcal{C}_{\alpha}(\gamma)$, we choose a half-cylinder $\overline{u}_{\rho}:[0,\infty) \times S^1 \to (0,+\infty)\times Y$ with the same positive asymptotic as $\hat{u}_{\rho}$ and with image contained in that of $\hat{u}_{\rho}$.   Let $\Xi$ be the collection of the  half-cylinders $\overline{u}_{\rho}$, $\rho \in \mathcal{C}_{\alpha}(\gamma)$. We may assume that all half-cylinders in $\Xi$ and $\overline{v}_1, \ldots, \overline{v}_N$ are pairwise disjoint (we can argue that this holds by taking smaller half-cylinders, or directly deduce it from the construction). By Corollary \ref{cor:Lambdaa} (ii), condition (*) in Section \ref{sec:non-collaps} holds, hence we can apply Proposition \ref{prop:Xi_equi}. To this end, let $\Xi_{k}$ be the set of those $\overline{u} \in \Xi$ that are asymptotic to the $k$th iterate $\gamma_k$, as defined in Section \ref{sec:non-collaps}, and denote by $T_{0}$ the  period of $\gamma$. 
By   \eqref{eq:action_in_alpha}, $C^+\mathcal{A}_{\alpha_0}(\gamma_{\rho}) \geq \mathcal{A}_{\alpha}(\eta_{\rho}) = kT_0$, and hence   
\begin{align}\label{eq:C_Xi}
\#\mathcal{C}_{\alpha}^T(\gamma) \leq  \sum_{k=1}^{\lceil C_+T/T_0\rceil}\# \Xi_k.
\end{align} 
 By Corollary \ref{cor:Lambdaa} (i), the curves in $\Xi$ are pairwise non-equivalent, where the notion of equivalence is that defined in Section \ref{sec:non-collaps}. Hence by Proposition \ref{prop:Xi_equi} and \eqref{eq:C_Xi}, we have that $\mathcal{C}_{\alpha}^T(\gamma)$ grows at most quadratically in $T$. In particular, \eqref{eq:growth_in_C} holds. 

Let $\mathcal{NC}_{\alpha}^T := \mathcal{NC}_{\alpha} \cap \Omega_{\alpha_0}^T(\mathcal{L}_0)$. 
We obtain  
\begin{align}\label{eq:lowerbound_Lambda}
\begin{split}
\limsup_{T\to +\infty} \frac{\log\left(\# \Lambda^{ T}_{\alpha}(\mathcal{L}_{\epsilon}(\alpha))\right)}{ T} &\geq  \limsup_{T\to +\infty} \frac{\log\left(\# \mathcal{NC}_{\alpha}^{(C^+)^{-1}T}\right)}{T}  \\ &=   \limsup_{T\to +\infty} \frac{\log (\#\Omega_{\alpha_0}^{(C^+)^{-1}T}(\mathcal{L}_0))}{T} \\
&= \frac{1}{C^+}\Gamma\\ 
&\geq e^{-\delta} \Gamma > \Gamma-\epsilon,
\end{split}
\end{align}
where we used \eqref{eq:action_in_alpha} in the first line and \eqref{eq:growth_in_C} in the second line. 
The left hand side in \eqref{eq:lowerbound_Lambda} bounds $h_{\topo}(\phi_{\alpha})$ from below (see~\cite{AlvesPirnapasov}) and hence we have
$$h_{\topo}(\phi_{\alpha})> \Gamma-\epsilon.$$
Our choice of $\delta$ depends only on $\epsilon$ and the choice of $\delta_{\alpha_0, \mathcal{L}_0}$, and hence on  $\epsilon, \mathrm{sys}(\alpha_0)$, and $\mathfrak{h}(\alpha_0, \mathcal{L}_0, \widetilde{J}_0)$. 
Note that if we replace in the beginning of Section \ref{sec:proofmainresult} the regular almost complex structure  $\widetilde{J}_0$ with $\widetilde{J}_1$ for which $\mathfrak{h}(\alpha_0,\mathcal{L}_0,\widetilde{J}_1)\geq \mathfrak{h}(\alpha_0,\mathcal{L}_0,\widetilde{J}_0)$, we can keep  $\delta_{\alpha_0,\mathcal{L}_0}$ unchanged.  Hence we can choose $\delta$  depending only on  $\epsilon, \mathrm{sys}(\alpha_0)$, and $\mathfrak{h}(\alpha_0, \mathcal{L}_0)$. 
\end{proof}

\section{Geodesic flows on surfaces}\label{sec:geodesicflows}
An important subclass of Reeb flows on three dimensional manifolds is given by the geodesic flows on the unit cosphere bundles of two dimensional Riemannian surfaces. We consider the map 
    $$h_\topo: \mathrm{Met}(S) \to [0,+\infty)$$
on the space $\mathrm{Met}(S)$ of Riemannian metrics on $S$ that assigns to a Riemannian metric~$g$ the topological entropy $h_\topo(\phi_g)$ of the geodesic flow $\phi_g^t$ on the unit sphere bundle $(T_1)_g S$. 
The objective of this section is to prove Theorem \ref{thm:geod} and Corollary~\ref{cor:robust_T2}.

Theorem~\ref{thm:geod} is not an automatic corollary of the main theorem. Also it does not follow from the main theorem that there is a $C^{\infty}$-generic set of Riemannian metrics for which the assertion holds: the set of contact forms $\alpha_g$ originating from Riemannian metrics  is meager in the set of contact forms on $(T^1 S, \xi_{\rm geo})$. The theorem will follow once the genericity requirements of the main theorem have been independently supplied for geodesic flows. To establish this, we recall some preliminary notions.

{\begin{defn}
Let $g$ be a Riemannian metric on $S$. Each unparametrized closed geodesic $\upgamma$ of $g$ generates two closed Reeb orbits $\upgamma^+$ and $\upgamma^-$ of $\alpha_g$. The Reeb orbits $\upgamma^+$ and $\upgamma^-$ correspond to the two lifts of $\upgamma$  to $(T^1 S, \xi_{\rm geo})$, given by the two unit speed parametrizations of $\upgamma$ corresponding to the two possible orientations of $\upgamma$. We call the $2$-component link $\{\upgamma^+,\upgamma^-\}$ the \textit{transverse lift} of $\upgamma$.
\end{defn}
We will use the following lemma.
\begin{lem} \label{lem:transverseliftsimple}
Let $S$ be a closed surface.
If $\upgamma$ is a simple contractible closed geodesic on $\Sigma$ and $\{\upgamma^+,\upgamma^-\}$ is its transverse lift, then any iterate of $\upgamma^{+}$ is non-contractible in $T^{1} S \setminus \upgamma^{-}$. Similarly, any iterate of $\upgamma^{-}$ is non-contractible in $T^{1} S \setminus \upgamma^{+}$. 
\end{lem}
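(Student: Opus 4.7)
The plan is a Seifert--van Kampen computation in $T^1 S\setminus \upgamma^-$. Since $\upgamma$ is a simple closed curve that is contractible in $S$, the Schoenflies theorem furnishes an embedded disk $D\subset S$ with $\partial D = \upgamma$; set $D':= S\setminus\mathrm{int}(D)$ and $T^2 := T^1 S|_{\upgamma}$. Both $T^1 S|_D$ and $T^1 S|_{D'}$ are trivial circle bundles (the first because $D$ is contractible, the second because $D'$ retracts onto a $1$-complex and hence admits a nowhere-vanishing vector field), so I would fix trivializations $T^1 S|_D \cong D\times S^1$ and $T^1 S|_{D'} \cong D'\times S^1$; let $f$ denote the fiber class of $T^1 S\to S$.

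In the $D$-trivialization, the tangent turning theorem (Umlaufsatz) forces $\upgamma^+$ and $\upgamma^-$ to be parallel simple closed curves on $T^2$ of slope $(1,1)$ in the basis $(\partial D, f)$. Such a $(1,1)$-curve is isotopic in the solid torus $T^1 S|_D$ to the core, so
\[A := T^1 S|_D\setminus \upgamma^-\simeq (D\setminus\{\mathrm{pt}\})\times S^1\simeq T^2,\]
and $\pi_1(A)\cong \mathbb{Z}^2$ with generators $\partial D$ and $f$; the class of $\upgamma^+$ is $(1,1)$, so every iterate $[\upgamma^+]^k = (k,k)$ is non-trivial in $\pi_1(A)$ for $k\neq 0$. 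The rest of the argument will promote this non-triviality from $\pi_1(A)$ to $\pi_1(T^1 S\setminus \upgamma^-)$.

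To do so, I would apply Seifert--van Kampen to the cover $T^1 S\setminus \upgamma^- = A \cup C$, where $C$ is an open thickening of $T^1 S|_{D'}$ and the intersection $B$ retracts onto $T^2\setminus \upgamma^-$, an open annulus with $\pi_1(B)\cong \mathbb{Z}$ generated by a parallel pushoff $\beta$ of $\upgamma^-$ inside $T^2$. By the standard injectivity criterion for amalgamated free products, the inclusion $\pi_1(A)\hookrightarrow \pi_1(T^1 S\setminus \upgamma^-)$ will be injective once both maps $\pi_1(B)\to \pi_1(A)$ and $\pi_1(B)\to \pi_1(C)$ are injective. The first sends $\beta$ to $(1,1)\in\mathbb{Z}^2$ and is clearly so.

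For the second, I will compare the two trivializations on $T^2$: they differ on $\upgamma$ by a clutching map of degree equal to the Euler number $\chi(S) = 2-2g$ of the unit tangent bundle, where $g$ is the genus of $S$. A short calculation then identifies the image of $\beta$ in $\pi_1(C) = \pi_1(D')\times \mathbb{Z}\langle f\rangle$ with an element of the form $(\partial D')^{\pm 1}\cdot f^{N}$, where $N$ is an integer of the form $\pm(2g\pm 1)$ depending on sign conventions. Since $2g\pm 1\neq 0$ for every integer $g\geq 0$, we have $N\neq 0$, and since $f$ is central of infinite order, every iterate $\beta^k = (\partial D')^{\pm k}\cdot f^{kN}$ retains a non-zero power of $f$ and is therefore non-trivial, proving injectivity. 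Combining these two injectivity statements via Seifert--van Kampen yields $[\upgamma^+]^k\neq 1$ in $\pi_1(T^1 S\setminus\upgamma^-)$ for all $k\neq 0$; the second half of the lemma is the same assertion with $\upgamma^+$ and $\upgamma^-$ swapped and follows by symmetry. The main technical point will be the computation of $\beta$ in $\pi_1(C)$ via the Euler class, which is what makes the argument uniform in the genus; in particular, in the case $S=S^2$ the class $[\upgamma^+]$ is already $2$-torsion in $\pi_1(T^1 S^2)\cong \mathbb{Z}/2$, so removing $\upgamma^-$ is genuinely needed in order to detect the non-triviality of $(\upgamma^+)^2$.
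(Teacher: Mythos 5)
Your overall strategy (split $T^1S$ along the torus $T^2 = T^1S|_{\upgamma}$, use the Umlaufsatz to pin down the slopes of $\upgamma^\pm$, and run a van Kampen argument) could in principle work, but the two key homotopy-type computations are wrong, and the error is not cosmetic. First, $A := T^1S|_D\setminus\upgamma^-$ is \emph{not} homotopy equivalent to $(D\setminus\{\mathrm{pt}\})\times S^1$: the curve $\upgamma^-$ lies on the boundary torus of the solid torus $T^1S|_D$, and deleting a closed subset of the boundary of a compact manifold does not change its homotopy type (push inward along a collar). So $A$ is homotopy equivalent to a solid torus and $\pi_1(A)\cong\mathbb{Z}$, not $\mathbb{Z}^2$. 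The isotopy of the $(1,1)$-curve to the core leaves the boundary and is not realized by any homeomorphism of the piece, so it does not identify the complement of the boundary curve with the complement of the core (compare: the complement of a boundary longitude is $(D\setminus\{p\})\times S^1$ with $p\in\partial D$, which is contractible times $S^1$). Second, in any legitimate open van Kampen decomposition of the $3$-manifold $T^1S\setminus\upgamma^-$ the overlap is $3$-dimensional: if the splitting torus is pushed off $\upgamma^-$ into the $D'$-side, the overlap is a thickened torus with $\pi_1\cong\mathbb{Z}^2$; if it passes through $\upgamma^-$, the overlap is homotopy equivalent to $S^1\times(\text{pair of pants})$ with $\pi_1\cong\mathbb{Z}\times F_2$. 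In neither case is it the annulus $T^2\setminus\upgamma^-$ with $\pi_1\cong\mathbb{Z}$, so the amalgamation you set up is not the one that actually arises.

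This matters because after correcting the decomposition (splitting torus pushed into the $D'$-side, so that $\pi_1(A)\cong\mathbb{Z}^2$ and $\pi_1(B)\cong\mathbb{Z}^2\to\pi_1(A)$ is an isomorphism), the other injectivity you need, $\pi_1(B)\cong\mathbb{Z}^2\to\pi_1(C)\cong\pi_1(D')\times\mathbb{Z}$, holds only when $[\partial D']$ has infinite order in $\pi_1(D')$, i.e.\ only when $S\neq S^2$; for $S=S^2$ the $D'$-side meridian dies, and the standard injectivity criterion for amalgamated products is unavailable precisely in the one case that is not already immediate (for genus $\geq 1$, iterates of $\upgamma^\pm$ are freely homotopic to iterates of the fiber, which have infinite order in $\pi_1(T^1S)$ itself -- this is exactly how the paper disposes of that case). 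One can still salvage your route for $S^2$ by computing the pushout directly: $\pi_1(T^1S^2\setminus\upgamma^-)\cong\mathbb{Z}^2/\langle\mu\rangle$, where $\mu$ is the $D'$-meridian whose fiber coefficient is odd by your Euler-number computation, so the classes $(k,k)$ survive; but that argument is not in your write-up, and it is exactly the missing content. For comparison, the paper's proof avoids all of this: for $S^2$ it isotopes $\upgamma^-$ in the complement of $\upgamma^+$ to a fiber $T^1_pS^2$, identifies $T^1S^2\setminus T^1_pS^2 = T^1(S^2\setminus\{p\})$ with a solid torus, and observes that $\upgamma^+$ is isotopic there to a fiber, i.e.\ to a generator of $\pi_1\cong\mathbb{Z}$.
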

\begin{proof}
We prove the first assertion of the lemma, since the proof of the second assertion is identical. 

The conclusion of the lemma is automatically true if the genus of $S$ is $\geq 1$,  because in this case an iterate of $\upgamma^{+}$ is isotopic to an iterate of the unit fiber of $T^1 S$, and these are  non-trivial in $\pi_{1}(T^{1} S)$.

We now prove the lemma when $S=S^{2}$.
Since $\upgamma$ is a simple closed curve in $S^2$, it is the boundary of an embedded disk $\mathcal{D}$ in $S^2$. Let $p$ be a point in the interior of $\mathcal{D}$.
The knot $\upgamma^{-}$ is isotopic in the complement of $\upgamma^{+}$ to the unit fiber $T^1_p S^2$ over $p$. Therefore, it suffices to show that any cover of $\upgamma^{+}$ is non-contractible in $T^{1}S^{2} \setminus T^1_p S^2$.  

To show that $\upgamma^{+}$ is not contractible in $T^{1}S^{2}\setminus T^1_p S^2$, notice that $T^{1}S^{2}\setminus T^1_p S^2$  is the unit tangent bundle $T^{1}(S^{2}\setminus p )$ of $S^2 \setminus p$. Letting $p'$ be a point in $S^{2}\setminus p $, it is easy to see that  $\upgamma^{+}$ is isotopic inside $T^{1}(S^{2}\setminus p )$ to the unit tangent fiber  $T^1_{p'}(S^{2}\setminus p )$ of $p'$. The manifold $T^{1}(S^{2}\setminus p )$ is diffeomorphic to the solid torus, and the fiber $T^1_{p'}(S^{2}\setminus p )$ is a generator of $\pi_1(T^{1}(S^{2}\setminus p )) \simeq \mathbb{Z}$. It follows that any iterate of $T^1_{p'}(S^{2}\setminus p )$ is non-contractible in $T^1_{p'}(S^{2}\setminus p )$, and therefore any iterate of $\upgamma^+$ is non-contractible in $T^1(S^{2}\setminus p )$. As explained above, the first assertion of the lemma follows from this fact.
\end{proof}
}

\textit{Proof of Theorem~\ref{thm:geod}.} A Riemannian metric $g$ induces a contact form $\alpha_g$ on the unit cosphere bundle $(T^1 S, \xi_{\rm geo})$ such that the Reeb flow is the geodesic flow. We will show that if $g$ is non-degenerate, then it satisfies the conditions of Theorem~\ref{thm:stability}.

\emph{Non-degeneracy.} The Riemannian metric $g$ is $C^\infty$-generically non-degenerate (i.e., all non-constant closed geodesics are non-degenerate), which is equivalent to the fact that the contact form $\alpha_g$ is non-degenerate. 

\emph{Surface of section.} Furthermore, if $g$ is non-degenerate, then the authors of \cite{CKMS} constructed a special surface of section $\Sigma$ of $\phi_{\alpha_g}$ with the following properties:
\begin{itemize}
    \item[$\diamondsuit$] the Reeb vector field $X_{\alpha_g}$ is positively transverse to the interior of $\Sigma$,
    \item[$\heartsuit$] $\partial \Sigma$ is composed of the lifts of a finite collection of simple contractible closed geodesics of $g$, and the lifts of a finite collection of non-contractible closed geodesics of $g$,
    \item[$\spadesuit$] every trajectory of $\phi_{\alpha_g}$ intersects $\Sigma$.
\end{itemize}

We will use the boundary of the surface of section $\Sigma$ as part of the link. The projection of $\partial \Sigma$ to the surface $S$ is a finite collection $\mathcal{B}$ of unparametrized closed geodesics of $g$. As explained above, each geodesic of $\mathcal{B}$ can be lifted of a pair of closed Reeb orbits of $\alpha_g$. The link $\mathcal{L}$ is formed by the transverse lifts of all closed geodesics contained in $\mathcal{B}$:
\begin{equation}
    \mathcal{L}:= \bigcup_{\upgamma \in \mathcal{B}} \{\gamma^+,\gamma^-\}.
\end{equation}
Clearly, $\partial \Sigma $ is contained in $\mathcal{L}$, but the opposite need not be true. There might be $\upgamma \in \mathcal{B}$ such that only one component of the transverse lift of $\upgamma$ is contained in $\partial \Sigma$.

\emph{Hypertightness in the complement of $\mathcal{L}$.}
Let $\gamma$ be a periodic orbit that does not cover a boundary component of $\Sigma$. By $\spadesuit$, $\gamma$ intersects $\Sigma$ at least once and $\diamondsuit$ implies that all such intersections are positive, thus the topological intersection number of $\gamma$ and $\Sigma$ is positive.  This shows that $\gamma$ and $\partial \Sigma $ are linked. We conclude that $\gamma$ is not contractible in the complement of $\mathcal L$.

If $\gamma$ is contained in $\mathcal{L}$, then $\heartsuit$ gives us two possibilities: either $\gamma$ is a cover of a lift of a non-contractible closed geodesic of $g$, or $\gamma$ is a cover of a lift of a simple contractible closed geodesic of $g$. In the first case $\gamma$ does not bound a disk in $T^1S$  since the lift of a non-contractible curve in $S$ is non-contractible in $T^1 S$. In the second case we have that $\gamma$ is the cover of an  element of the transverse lift $\{\upgamma^+,\upgamma^-\}$ of a simple contractible closed geodesic $\upgamma$ of $g$. Up to renaming the elements of $\{\upgamma^+,\upgamma^-\}$, we can assume that $\gamma$ is a cover of $\upgamma^+$. By Lemma \ref{lem:transverseliftsimple} we know that $\gamma$ is not contractible in $T^1 S \setminus \upgamma^-$. Since $\upgamma^- \subset \mathcal{L}$, we conclude that any disk filling $\gamma$ must intersect $\mathcal{L}$.

It follows that any disk filling of a closed orbit of $\alpha_g$ must intersect $\mathcal{L}$, so that $\alpha_g$ is hypertight in the complement of $\mathcal L$.

\emph{Summary.} 
If $h_{\rm top}(\phi_{\alpha_g})=0$, then $g$ is automatically a point of lower semi-continuity, so we assume in the following that $h_{\rm top}(\phi_{\alpha_g})>0$.
If $g\in \mathrm{Met}_{\rm nd}(S)$, then the contact form $\alpha_g$ is non-degenerate and admits a link $\mathcal L$ such that $\alpha_g$ is hypertight in the complement of $\mathcal{L}$. We can apply Theorem \ref{thm:stability} to $\alpha_0$ and links $\mathcal{L}_0$ that we obtain as a union of $\mathcal{L}$ and suitable links guaranteed by Theorem~\ref{thm:CH_recover}.  
As a consequence, $\alpha_g$ is a point of $d_{C^0}$-lower semi-continuity of $h_{\rm top}$ in the space of contact forms and thus also a point of $d_{C^0}$-lower semi-continuity of the restriction of $h_{\rm top}$ to the space of contact forms induced by Riemannian metrics. Finally, using that the map associating $g$ to $\alpha_g$ is an isometric embedding\footnote{See the discussion in Section \ref{subsec:c0distRiemann}.} of $(\mathrm{Met}(S),\overline{d}_{C^0})$  in $ (\mathcal{R}(T^1 S,\xi_{\rm geo}),{d}_{C^0}) $, we conclude that $g$ is a point of $\overline{d}_{C^0}$-lower semi-continuity of $h_{\rm top}$ in $\mathrm{Met}(S)$, concluding the proof of Theorem \ref{thm:geod}. 
 \qed

We also include the proof of Corollary \ref{cor:robust_T2}. 
\begin{proof}[Proof of Corollary \ref{cor:robust_T2}]
It was previously shown that $h_{\topo}$ is robust at a metric $g_0$ in $T^2$ with $h_{\topo}(\phi_{g_0})>0$ if there exist two (or one)  minimal geodesics that bound an annulus in $T^2$ that is not foliated by heteroclininics between them, \cite[Theorem 34, \S 4.4]{ADMM}. Hence let us consider the (hypothetical) situation that $h_{\topo}(\phi_{g_0})>0$ and that we always, for any pair of neighbouring minimial geodesics, have such a foliation.  Let $0< \epsilon < h_{\topo}(\phi_{g_0})/2$. Let $\alpha_0=\alpha_{g_0}$ be the contact form on $(T^1 T^2, \xi_{\rm geo})$ induced by $g_0$.  By Theorem~\ref{thm:CH_recover}, there is a link $\mathcal{L}_0= \mathcal{L}^{\alpha_0}_{\epsilon}$ of hyperbolic periodic orbits of $\phi_{\alpha_0}$ such that
$$\limsup_{T\to +\infty}\frac{\log(\# \Omega_{\alpha_0}^T(\mathcal{L}_0))}{T} > h_{\topo}(\phi_{g_0})-\epsilon.$$ 
By our  assumptions on the metric, the Reeb flow $\phi_{\alpha'}$ of any contact form $\alpha'$ in a $C^{\infty}$-small neighbourhood of $\alpha$ does not have contractible periodic orbits. Hence we can argue as in the proof of Theorem \ref{thm:main} and apply Theorem \ref{thm:stability} to a sequence of suitably chosen contact forms $\alpha_k$ converging to $\alpha_0$ in the $C^{\infty}$-topology. As in the proof of Theorem \ref{thm:main},  we  obtain that there is $\delta>0$ such that for all $g$ with $\overline{d}_{C^0}(g, g_0)< \delta$ we have that  
$$h_{\topo}(\phi_{g})\geq h_{\topo}(\phi_{g_0})-2\epsilon.$$
\end{proof}

\bibliographystyle{plain}
\bibliography{ReferencesS5}
\end{document}